\newtheorem{theorem}{Theorem}[section]
\newtheorem{corollary}[theorem]{Corollary}
\newtheorem{definition}[theorem]{Definition}
\newtheorem{lemma}[theorem]{Lemma}
\newtheorem{proposition}[theorem]{Proposition}
\newtheorem{remark}[theorem]{Remark}
\newenvironment{proof}[1][Proof]{\noindent\textbf{#1.} }{\ \rule{0.5em}{0.5em}}
\begin{document}

\title{Length functions on groups and rigidity}
\author{Shengkui Ye}
\maketitle

\begin{abstract}
Let $G$ be a group. A function $l:G\rightarrow \lbrack 0,\infty )$ is called
a length function if

(1) $l(g^{n})=|n|l(g)$ for any $g\in G$ and $n\in \mathbb{Z};$

(2) $l(hgh^{-1})=l(g)$ for any $h,g\in G;$ and

(3) $l(ab)\leq l(a)+l(b)$ for \emph{commuting} elements $a,b.$

Such length functions exist in many branches of mathematics, mainly as
stable word lengths, stable norms, smooth measure-theoretic entropy,
translation lengths on $\mathrm{CAT}(0)$ spaces and Gromov $\delta $%
-hyperbolic spaces, stable norms of quasi-cocycles, rotation numbers of
circle homeomorphisms, dynamical degrees of birational maps, absolute values
of Margulis invariants and so on. We study length functions on Lie groups,
Gromov hyperbolic groups, arithmetic subgroups, matrix groups over rings and
Cremona groups. As applications, we prove that every group homomorphism from
an arithmetic subgroup of a simple algebraic $\mathbb{Q}$-group of $\mathbb{Q%
}$-rank at least $2,$ or a finite-index subgroup of the elementary group $%
E_{n}(R)$ $(n\geq 3)$ over an associative ring, or the Cremona group $%
\mathrm{Bir}(P_{\mathbb{C}}^{2})$ to any group $G$ having a purely positive
length function must have its image finite. Here $G$ can be outer
automorphism group $\mathrm{Out}(F_{n})$ of free groups, mapping class group 
$\mathrm{MCG}(\Sigma _{g})$, $\mathrm{CAT}(0)$ groups or Gromov hyperbolic
groups, or the group $\mathrm{Diff}(\Sigma ,\omega )$ of diffeomorphisms of
a hyperbolic closed surface preserving an area form $\omega .$
\end{abstract}

\subsection{Introduction}

The rigidity phenomena have been studied for many years. The famous Margulis
superrigidity implies any group homomorphism between irreducible lattices in
semisimple Lie groups of real rank $\mathrm{rk}_{\mathbb{R}}(G)\geq 2$ are
virtually induced by group homomorphisms between the Lie groups. Therefore,
group homomorphisms from `higher'-rank irreducible lattices to `lower'-rank
irreducible lattices normally have finite images. Farb, Kaimanovich and
Masur \cite{fm} \cite{km} prove that every homomorphism from an
(irreducible) higher rank lattice into the mapping class group $\mathrm{MCG}%
(\Sigma _{g})$ has a finite image. Bridson and Wade \cite{bw} showed that
the same superrigidity remains true if the target is replaced with the outer
automorphism group $\mathrm{Out}(F_{n})$ of the free group$.$ Mimura \cite%
{mi} proves that every homomorphism from Chevalley group over commutative
rings to $\mathrm{MCG}(\Sigma _{g})$ or $\mathrm{Out}(F_{n})$ has a finite
image. Many other rigidity results can be found, e.g. \cite{monod} \cite{bm1}
\cite{bm2} \cite{ha} \cite{py} and \cite{po}. In this article, we study
rigidity phenomena with the notion of length functions.

Let $G$ be a group. We call a function $l:G\rightarrow \lbrack 0,\infty )$ a
length function if

1) $l(g^{n})=|n|l(g)$ for any $g\in G$ and $n\in \mathbb{Z};$

2) $l(hgh^{-1})=l(g)$ for any $h,g\in G;$

3) $l(ab)\leq l(a)+l(b)$ for elements $a,b$ satisfying $ab=ba.$

Such length functions exist in geometric group theory, dynamical systems,
algebra, algebraic geometry and many other branches of mathematics. For
example, the following functions $l$ are length functions (see Section \ref%
{section3} for more examples with details).

\begin{itemize}
\item (The stable word lengths) Let $G$ be a group generated by a symmetric
(not necessarily finite) set $S.$ For any $g\in G,$ the word length $\phi
_{S}(w)=\min \{n\mid g=s_{1}s_{2}\cdots s_{n},$ each $s_{i}\in S\}$ is the
minimal number of elements of $S$ whose product is $g.$ The stable length is
defined as 
\begin{equation*}
l(g)=\lim_{n\rightarrow \infty }\frac{\phi _{S}(g^{n})}{n}.
\end{equation*}

\item (Stable norms) Let $M$ be a compact Riemannian manifold and $G=\mathrm{%
Diff}(M)$ the diffeomorphism group consisting of all self-diffeomorphisms.
For any diffeomorphism $f:M\rightarrow M,$ let 
\begin{equation*}
\Vert f\Vert =\sup_{x\in M}\Vert D_{x}f\Vert ,
\end{equation*}%
where $D_{x}f$ is the induced linear map between tangent spaces $%
T_{x}M\rightarrow T_{f(x)}M.$ Define 
\begin{equation*}
l(f)=\max \{\lim_{n\rightarrow +\infty }\frac{\log \Vert f^{n}\Vert }{n}%
,\lim_{n\rightarrow +\infty }\frac{\log \Vert f^{-n}\Vert }{n}\}.
\end{equation*}

\item (Smooth measure-theoretic entropy) Let $M$ be a $C^{\infty }$ closed
Riemannian manifold and $G=\mathrm{Diff}_{\mu }^{2}(M)$ consisting of
diffeomorphisms of $M$ preserving a Borel probability measure $\mu .$ Let $%
l(f)=h_{\mu }(f)$ be the measure-theoretic entropy, for any $f\in G=\mathrm{%
Diff}_{\mu }^{2}(M)$.

\item (Translation lengths) Let $(X,d)$ be a metric space and $G=\mathrm{Isom%
}(X)$ consisting of isometries $\gamma :X\rightarrow X$. Fix $x\in X,$ define%
\begin{equation*}
l(\gamma )=\lim_{n\rightarrow \infty }\frac{d(x,\gamma ^{n}x)}{n}.
\end{equation*}%
This contains the translation lengths on $\mathrm{CAT}(0)$ spaces and Gromov 
$\delta $-hyperbolic spaces as special cases.

\item (Average norm for quasi-cocycles) Let $G$ be a group and $E$ be a
Hilbert space with an $G$-action by linear isometries. A function $%
f:G\rightarrow E$ is a quasi-{}cocycle if there exists $C>0$ such that 
\begin{equation*}
\Vert f(gh)-f(g)-gf(h)\Vert <C
\end{equation*}%
for any $g,h\in G.$ Let $l:G\rightarrow \lbrack 0,+\infty )$ be defined by 
\begin{equation*}
l(g)=\lim_{n\rightarrow \infty }\frac{\Vert f(g^{n})\Vert }{n}.
\end{equation*}

\item (Rotation numbers of circle homeomorphisms) Let $\mathbb{R}$ be the
real line and $G=\mathrm{Homeo}_{\mathbb{Z}}(\mathbb{R})=\{f\mid f:\mathbb{R}%
\rightarrow \mathbb{R}$ is a monotonically increasing homeomorphism such
that $f(x+n)=f(x)+n$ for any $n\in \mathbb{Z}\}.$ For any $f\in \mathrm{Homeo%
}_{\mathbb{Z}}(\mathbb{R})$ and $x\in \lbrack 0,1),$ the translation number
is defined as 
\begin{equation*}
l(f)=\lim_{n\rightarrow \infty }\frac{f^{n}(x)-x}{n}.
\end{equation*}

\item (Asymptotic distortions) Let $f$ be a $C^{1+bv}$ diffeomorphism of the
closed interval $[0,1]$ or the circle $S^{1}.$ (\textquotedblleft
bv\textquotedblright\ means derivative with finite total variation.) The
asymptotic distortion of $f$ is defined (by Navas \cite{Na}) as 
\begin{equation*}
l(f)=\lim_{n\rightarrow \infty }\frac{1}{n}\mathrm{var}(\log Df^{n}).
\end{equation*}%
This gives a length function $l$ on the group $\mathrm{Diff}^{1+bv}(M)$ of $%
C^{1+bv}$ diffeomorphisms for $M=[0,1]$ or $S^{1}.$

\item (Dynamical degree) Let $\mathbb{C}P^{n}$ be the complex projective
space and $f:\mathbb{C}P^{n}\dashrightarrow \mathbb{C}P^{n}$ be a birational
map given by%
\begin{equation*}
(x_{0}:x_{1}:\cdots :x_{n})\dashrightarrow (f_{0}:f_{1}:\cdots :f_{n}),
\end{equation*}%
where the $f_{i}$'s are homogeneous polynomials of the same degree without
common factors. The degree of $f$ is $\deg f=\deg f_{i}.$ Define%
\begin{equation*}
l(f)=\max \{\lim_{n\rightarrow \infty }\log \deg (f^{n})^{\frac{1}{n}%
},\lim_{n\rightarrow \infty }\log \deg (f^{-n})^{\frac{1}{n}}\}.
\end{equation*}%
This gives a length function $l:\mathrm{Bir}(\mathbb{C}P^{n})\rightarrow
\lbrack 0,+\infty ).$ Here $\mathrm{Bir}(\mathbb{C}P^{n})$ is the group of
birational maps, also called Cremona group.

\item (Margulis invariant) Let $\mathrm{Aff}(\mathbb{R}^{3})=\mathbb{R}%
^{3}\rtimes \mathrm{GL}_{3}(\mathbb{R})$ be the affine automorphism group of
the $3$-dimensional space $\mathbb{R}^{3}$ and 
\begin{eqnarray*}
\phi &:&\Gamma \rightarrow \mathrm{Aff}(\mathbb{R}^{3}), \\
\gamma &\mapsto &(u(\gamma ),\Phi (\gamma )),
\end{eqnarray*}%
a group homomorphism. Let $B$ be the bilinear form on $\mathbb{R}^{3}$
defined by%
\begin{equation*}
B(v,w)=v_{1}w_{1}+v_{2}w_{2}-v_{3}w_{3},
\end{equation*}%
with the isometric group $\mathrm{O}(2,1).$ Suppose that the linear part $%
\Phi (\Gamma )$ is contained in $\mathrm{SO}(2,1)$ and each non-trivial
element $\Phi (\gamma )$ has eigenvalues $\lambda <1<1/\lambda $ for some $%
\lambda >0$ (i.e. $\Phi (\gamma )$ is hyperbolic). Denote by $x^{-}(\gamma
),x^{0}(\gamma ),x^{+}(\gamma )$ the corresponding unit eigenvectors such
that $B(x^{0}(\gamma ),x^{0}(\gamma ))=1$ and $(x^{-}(\gamma ),x^{0}(\gamma
),x^{+}(\gamma ))$ is positively oriented basis. The Margulis invariant
(defined in \cite{M1}, \cite{M2}) is%
\begin{eqnarray*}
\alpha _{\phi } &:&\Gamma \rightarrow \mathbb{R}, \\
\gamma &\mapsto &B(x^{0}(\gamma ),u(\gamma )).
\end{eqnarray*}%
The absolute value function $|\alpha _{\phi }|$ is a length function on $%
\Gamma .$
\end{itemize}

The terminologies of length functions are used a lot in the literature (eg. 
\cite{tao}, \cite{cf}, \cite{conner}). However, they usually mean different
things from ours (in particular, it seems that the subadditivity condition 3
has not been required only for commuting elements before).

Our first observation is the following result on the vanishing of length
functions.

\begin{theorem}
\label{theorem0}Let $G_{A}=\mathbb{Z}^{2}\rtimes _{A}\mathbb{Z}$ be an
abelian-by-cyclic group, where $A\in \mathrm{SL}_{2}(\mathbb{Z})$.

\begin{enumerate}
\item[(i)] When the absolute value of the trace $|\mathrm{tr}(A)|>2,$ any
length function $l:\mathbb{Z}^{2}\rtimes _{A}\mathbb{Z}\rightarrow \mathbb{R}%
_{\geq 0}$ vanishes on $\mathbb{Z}^{2}.$

\item[(ii)] When $|\mathrm{tr}(A)|=2$ and $A\neq I_{2},$ any length function 
$l:\mathbb{Z}^{2}\rtimes _{A}\mathbb{Z}\rightarrow \mathbb{R}_{\geq 0}$
vanishes on the direct summand of $\mathbb{Z}^{2}$ spanned by eigenvectors
of $A$.
\end{enumerate}
\end{theorem}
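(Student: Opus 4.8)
The plan is to translate everything into a statement about an $A$-invariant seminorm on $\mathbb{Z}^{2}$. Write the $\mathbb{Z}$-factor multiplicatively with generator $t$ and the normal subgroup $\mathbb{Z}^{2}$ additively; the defining relation of the semidirect product is $tvt^{-1}=Av$. First I would record two consequences of the axioms. Conjugation-invariance (axiom 2) gives $l(v)=l(tvt^{-1})=l(Av)$, hence
\[
l(v)=l(A^{k}v)\qquad\text{for all }v\in\mathbb{Z}^{2},\ k\in\mathbb{Z}.
\]
Since $\mathbb{Z}^{2}$ is abelian, axioms (1) and (3) say that $l|_{\mathbb{Z}^{2}}$ is homogeneous, $l(nv)=|n|\,l(v)$, and subadditive, $l(v+w)\le l(v)+l(w)$. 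Thus $l|_{\mathbb{Z}^{2}}$ is an $A$-invariant seminorm, and the whole problem is to play the invariance $l=l\circ A$ against subadditivity.

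For part (i) the key device is Cayley--Hamilton. Because $\det A=1$ we have $A^{2}-(\mathrm{tr}\,A)A+I=0$, so for every $v\in\mathbb{Z}^{2}$
\[
A^{2}v+v=(\mathrm{tr}\,A)\,Av .
\]
Applying $l$ to the right-hand side and using homogeneity gives $l\bigl((\mathrm{tr}\,A)Av\bigr)=|\mathrm{tr}\,A|\,l(Av)=|\mathrm{tr}\,A|\,l(v)$, while subadditivity together with invariance on the left gives $l(A^{2}v+v)\le l(A^{2}v)+l(v)=2\,l(v)$. Combining the two yields $|\mathrm{tr}\,A|\,l(v)\le 2\,l(v)$, and since $|\mathrm{tr}\,A|>2$ this forces $l(v)=0$ for every $v$, proving (i).

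For part (ii) the same identity degenerates — when $|\mathrm{tr}\,A|=2$ it only gives the vacuous $2l(v)\le 2l(v)$ — so a genuinely different mechanism is needed, and this is the main obstacle. Here $(A\mp I)^{2}=0$, so $A$ is a nontrivial shear: its integer eigenvectors form a rank-one direct summand $\mathbb{Z}u=\ker(A\mp I)$, and for a complementary generator $w$ one computes $A^{n}w=\pm(w-nc\,u)$ for a fixed nonzero integer $c$. Invariance then gives $l(w)=l(A^{n}w)=l(w-nc\,u)$ for all $n$, and writing $nc\,u=-(w-nc\,u)+w$ as a sum of commuting elements yields
\[
|n|\,|c|\,l(u)=l(nc\,u)\le l(w-nc\,u)+l(w)=2\,l(w).
\]
Letting $n\to\infty$ forces $l(u)=0$, i.e. $l$ vanishes on the eigenvector summand. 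The one borderline possibility requiring separate care is $A=-I$, where $A\mp I$ is not a shear, every vector is an eigenvector, and the unbounded-orbit growth used above is unavailable; this degenerate case must be treated on its own.
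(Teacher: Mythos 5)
Your argument is correct and follows essentially the same route as the paper: part (i) is the paper's coordinate computation repackaged as Cayley--Hamilton, and part (ii) is the same linear-drift argument ($A^{n}w$ keeps bounded length while moving linearly in the eigendirection), except that you use the reverse triangle inequality directly where the paper invokes the Banach-space embedding of Lemma \ref{tao}. Your caveat about $A=-I_{2}$ is a genuine catch rather than a loose end: there every vector is an eigenvector, yet $G_{-I}$ is a torsion-free crystallographic group acting properly cocompactly by isometries on $\mathbb{R}^{3}$ and so carries a purely positive translation-length function, so part (ii) really requires $A\neq\pm I_{2}$ (the paper's reduction to an upper unitriangular $A$ silently excludes this case).
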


\begin{corollary}
\label{corllary0}Suppose that the semi-direct product $G_{A}=\mathbb{Z}%
^{2}\rtimes _{A}\mathbb{Z}$ acts on a compact manifold by Lipschitz
homeomorphisms (or $C^{2}$-diffeomorphisms, resp.). The topological entropy $%
h_{top}(g)=0$ (or Lyapunov exponents of $g$ are zero, resp.) for any $g\in 
\mathbb{Z}^{2}$ when $|\mathrm{tr}(A)|>2$ or any eigenvector $g\in \mathbb{Z}%
^{2}$ when $|\mathrm{tr}(A)|=2.$
\end{corollary}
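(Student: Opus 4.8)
The plan is to realize both the topological entropy and the absolute values of the Lyapunov exponents as quantities dominated by an honest length function on the acting group, and then quote Theorem~\ref{theorem0}. Write $\rho\colon G_{A}\to \mathrm{Homeo}_{\mathrm{Lip}}(M)$ (respectively $\rho\colon G_{A}\to \mathrm{Diff}^{2}(M)$) for the given action, and fix a background Riemannian metric $d$ on the compact manifold $M$, with $d_{0}=\dim M$. In the Lipschitz case I set $\mathrm{Lip}(f)=\sup_{x\neq y}d(f(x),f(y))/d(x,y)$ and define
\begin{equation*}
l(f)=\max\{\lim_{n\to\infty}\tfrac{1}{n}\log\mathrm{Lip}(f^{n}),\ \lim_{n\to\infty}\tfrac{1}{n}\log\mathrm{Lip}(f^{-n})\}.
\end{equation*}
Since $\mathrm{Lip}$ is submultiplicative, the two limits exist by Fekete's lemma, and the three defining axioms follow exactly as for the stable-norm example in the introduction: homogeneity from $\mathrm{Lip}((f^{k})^{n})$, conjugacy invariance from $\mathrm{Lip}(hf^{n}h^{-1})\le \mathrm{Lip}(h)\,\mathrm{Lip}(f^{n})\,\mathrm{Lip}(h^{-1})$, and subadditivity on commuting $a,b$ from $(ab)^{n}=a^{n}b^{n}$. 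For the $C^{2}$ case I would instead take the stable derivative norm $l(f)=\max\{\lim_{n}\tfrac{1}{n}\log\|f^{n}\|,\lim_{n}\tfrac{1}{n}\log\|f^{-n}\|\}$ with $\|f\|=\sup_{x}\|D_{x}f\|$, which the introduction already records as a length function.

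Because precomposition of a length function with a group homomorphism preserves all three axioms, $l\circ\rho$ is a length function on $G_{A}$. I then apply Theorem~\ref{theorem0} directly: when $|\mathrm{tr}(A)|>2$, part~(i) yields $l(\rho(g))=0$ for every $g\in\mathbb{Z}^{2}$, and when $|\mathrm{tr}(A)|=2$ with $A\neq I_{2}$, part~(ii) yields $l(\rho(g))=0$ for every $g$ in the eigenvector summand. The remaining task is to convert $l(\rho(g))=0$ into the stated dynamical vanishing.

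For topological entropy I invoke the classical Lipschitz covering bound $h_{top}(f)\le \max\{0,\,d_{0}\lim_{n}\tfrac{1}{n}\log\mathrm{Lip}(f^{n})\}$, obtained from $h_{top}(f^{n})\le d_{0}\log\mathrm{Lip}(f^{n})$ together with $h_{top}(f^{n})=|n|\,h_{top}(f)$. Since $l(\rho(g))=0$ forces the forward Lipschitz growth to be $\le 0$, and entropy is nonnegative, this gives $h_{top}(\rho(g))=0$. For Lyapunov exponents I use Oseledets' theorem together with the elementary derivative comparisons $\|D_{x}f^{n}v\|\le \|f^{n}\|\,\|v\|$ and $\|D_{x}f^{n}v\|\ge \|v\|/\|f^{-n}\|$, which show that every exponent $\lambda$ of $\rho(g)$ at an invariant measure satisfies $|\lambda|\le l(\rho(g))=0$; hence all Lyapunov exponents vanish.

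The genuinely hard work sits in Theorem~\ref{theorem0}; the corollary only transports its conclusion through this dynamical dictionary. The two points that need care are (a) checking that the coarse Lipschitz and derivative growth rates are themselves length functions, which reduces via submultiplicativity and the commuting identity $(ab)^{n}=a^{n}b^{n}$ to the verifications sketched above, and (b) the standard comparison inequalities $h_{top}\le d_{0}\mu$ and $|\lambda|\le\mu$. I expect the only mild obstacle to be regularity bookkeeping — distinguishing the merely Lipschitz case from the $C^{2}$ case and handling the almost-everywhere nature of the Oseledets exponents — with no new ideas required beyond Theorem~\ref{theorem0}.
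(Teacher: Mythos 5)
Your proposal is correct and follows essentially the same route as the paper: define the stable Lipschitz-constant growth rate (resp.\ stable derivative-norm growth rate) as a length function via submultiplicativity, pull it back along the action, apply Theorem~\ref{theorem0}, and conclude via the standard comparisons $h_{top}\leq \dim M\cdot(\text{Lipschitz growth rate})$ and $|\lambda|\leq l(f)$ for Oseledets exponents. Your version is in fact slightly more careful than the paper's, which states the entropy bound without the dimension factor; this does not affect the vanishing conclusion.
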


It is well-known that the central element in the integral Heisenberg group $%
G_{A}$ (for $A=%
\begin{bmatrix}
1 & 1 \\ 
0 & 1%
\end{bmatrix}%
$) is distorted in the word metric. Lubotzky, Mozes, Ragunathan \cite{lmr1}
obtain a general distortion result in the word metric for the semi-direct
product $\mathbb{Z}^{2}\rtimes _{A}\mathbb{Z}$. When the Heisenberg group $%
G_{A}$ acts on a $C^{\infty }$ compact Riemannian manifold, Hu-Shi-Wang \cite%
{hsw} proves that the topological entropy and all Lyapunov exponents of the
central element are zero. These results are special cases of Theorem \ref%
{theorem0} and Corollary \ref{corllary0}, by choosing special length
functions.

We give characterizations of length functions on Lie groups. Our next result
is that there is essentially only one length function on the special linear
group $\mathrm{SL}_{2}(\mathbb{R})$:

\begin{theorem}
\label{th0}Let $G=\mathrm{SL}_{2}(\mathbb{R}).$ Any length function $%
l:G\rightarrow \lbrack 0,+\infty )$ continuous on the subgroup $SO(2)$ and
the diagonal subgroup is proportional to the translation function 
\begin{equation*}
\tau (g):=\inf_{x\in X}d(x,gx),
\end{equation*}%
where $X=\mathrm{SL}_{2}(\mathbb{R})/\mathrm{SO}(2)$ is the upper-half plane.
\end{theorem}

More generally, we study length functions on Lie groups. Let $G$ be a
connected semisimple Lie group whose center is finite with an Iwasawa
decomposition $G=KAN$. Let $W$ be the Weyl group, i.e. the quotient group of
the normalizers $N_{K}(A)$ modulo the centralizers $C_{K}(A)$. Our second
result shows that a length function $l$ on $G$ is uniquely determined by its
image on $A.$

\begin{theorem}
\label{th0.1}Let $G$ be a connected semisimple Lie group whose center is
finite with an Iwasawa decomposition $G=KAN$. Let $W$ be the Weyl group.

\begin{enumerate}
\item[(i)] Any length function $l$ on $G$ that is continuous on the maximal
compact subgroup $K$ is determined by its image on $A.$

\item[(ii)] Conversely, any length function $l$ on $A$ that is $W$-invariant
(i.e. $l(w\cdot a)=l(a)$) can be extended to be a length function on $G$
that vanishes on the maximal compact subgroup $K.$
\end{enumerate}
\end{theorem}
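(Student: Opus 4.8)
The plan is to prove both directions through the real (complete) multiplicative Jordan decomposition, which lets me reduce every value of $l$ to a value on $A$ via the hyperbolic part, once I have shown that $l$ is forced to vanish on the compact and unipotent directions. I would organize part (i) as two vanishing lemmas followed by this reduction.

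First I would prove $l|_{K}=0$. For $k\in K$ the closure $\overline{\langle k\rangle}$ is a compact abelian group on which translation by $k$ is minimal, so there is a sequence $n_{i}\to\infty$ with $k^{n_{i}}\to e$; continuity of $l$ on $K$ gives $l(k^{n_{i}})\to l(e)=0$, while axiom (1) gives $l(k^{n_{i}})=n_{i}\,l(k)$, and since $n_{i}\to\infty$ this forces $l(k)=0$. Next I would prove $l$ vanishes on all unipotents. Given $u=\exp Y$ with $Y\in\mathfrak{g}$ nilpotent, Jacobson--Morozov embeds $Y$ in an $\mathfrak{sl}_{2}$-triple $(Y,H,Z)$ with $[H,Y]=2Y$; setting $a_{t}=\exp(tH)$ and $u_{s}=\exp(sY)$ one has $a_{t}\,u_{s}\,a_{t}^{-1}=u_{e^{2t}s}$, so axiom (2) makes $s\mapsto l(u_{s})$ constant on $s>0$, whereas axiom (1) applied to the one-parameter subgroup $s\mapsto u_{s}$ gives $l(u_{p/q})=(p/q)\,l(u_{1})$ for positive rationals. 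A function that is simultaneously constant and linear vanishes, so $l(u_{1})=0$; since $\exp:\mathfrak{n}\to N$ is onto and every element of $\mathfrak{n}$ is nilpotent, $l\equiv 0$ on $N$, and by conjugation-invariance on every unipotent.

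With these in hand I would carry out the reduction. Each $g\in G$ has a real Jordan decomposition $g=g_{e}g_{h}g_{u}$ into pairwise commuting elliptic, hyperbolic and unipotent factors, where $g_{e}$ is conjugate into $K$, $g_{h}$ is conjugate to an element $a\in A$ that is unique up to the $W$-action, and $g_{u}$ is unipotent. Applying axiom (3) to the commuting triple and using the two vanishing statements gives $l(g)\le l(g_{e})+l(g_{h})+l(g_{u})=l(g_{h})=l(a)$. Writing $g_{h}=g\,g_{u}^{-1}g_{e}^{-1}$ as a product of commuting elements and using $l(x^{-1})=l(x)$ yields the reverse bound $l(a)=l(g_{h})\le l(g)$. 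Hence $l(g)=l(a)$, so $l$ is completely determined by $l|_{A}$, proving (i). For (ii), given a $W$-invariant length function $l_{0}$ on $A$, I would define $\tilde l(g)=l_{0}(a)$ where $a\in A$ is any element conjugate to $g_{h}$; this is well defined since two elements of $A$ are $G$-conjugate exactly when they are $W$-conjugate and $l_{0}$ is $W$-invariant. Conjugation-invariance and homogeneity of $\tilde l$ follow from $(hgh^{-1})_{h}=hg_{h}h^{-1}$ and $(g^{n})_{h}=(g_{h})^{n}$, and $\tilde l$ vanishes on $K$ because elliptic elements have trivial hyperbolic part. For subadditivity on a commuting pair $g,g'$, I would use that the Jordan parts of commuting elements commute and multiply componentwise, so $(gg')_{h}=g_{h}g'_{h}$; as commuting hyperbolic elements are simultaneously conjugate into $A$, a single $h$ gives $hg_{h}h^{-1}=a$ and $hg'_{h}h^{-1}=a'$, whence $(gg')_{h}$ is conjugate to $aa'\in A$ and $\tilde l(gg')=l_{0}(aa')\le l_{0}(a)+l_{0}(a')=\tilde l(g)+\tilde l(g')$, the inequality being axiom (3) inside the abelian group $A$.

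The main obstacle, and the part requiring genuine structure theory rather than formal manipulation of the axioms, is the real Jordan decomposition in a connected semisimple Lie group with finite center: its existence and uniqueness, the commutativity of the three factors, the identification of the elliptic, hyperbolic and unipotent parts with $K$-, $A$- and $N$-conjugacy, and the two accompanying facts that hyperbolic elements are conjugate into $A$ with ambiguity exactly $W$ and that commuting hyperbolic elements are simultaneously conjugate into $A$. These inputs are precisely what make the well-definedness in (ii) and the componentwise multiplicativity under commuting products work; once they are granted, everything else is a formal consequence of axioms (1)--(3).
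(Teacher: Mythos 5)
Your proof is correct, and its skeleton is the same as the paper's: reduce everything to the hyperbolic part via the complete multiplicative Jordan decomposition (Kostant), after showing $l$ vanishes on elliptic and unipotent elements; part (ii) is handled identically (well-definedness via the fact that $G$-conjugacy on $A$ is exactly $W$-conjugacy, subadditivity via componentwise multiplication of Jordan parts for commuting elements and simultaneous conjugation of commuting hyperbolic elements into $A$). Where you genuinely diverge is in the two vanishing lemmas. For the unipotent part, the paper realizes the adjoint group as a real algebraic group (its Lemma \ref{eber1.14}) and then invokes finiteness of unipotent conjugacy classes in reductive groups over fields of type (F) -- Lusztig's theorem plus Borel--Serre finiteness of Galois cohomology -- to conclude $u^{n_1}$ is conjugate to $u^{n_2}$ for some $n_1\neq n_2$. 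Your Jacobson--Morozov argument, conjugating $\exp(sY)$ to $\exp(e^{2t}sY)$ by $\exp(tH)$ and playing the resulting constancy against homogeneity, is more elementary and self-contained; it is in fact the same Baumslag--Solitar-type trick the paper uses in its $\mathrm{SL}_n(\mathbb{R})$ section (Lemma \ref{bs} and Corollary \ref{unip}), so you have in effect unified that special case with the general one and avoided the algebraic-group machinery entirely. For the compact part, the paper uses density of torsion elements in a maximal torus, while you use recurrence $k^{n_i}\to e$ in $\overline{\langle k\rangle}$; both need only continuity of $l$ on $K$ and are equally valid. One cosmetic remark: the paper first passes to $G/Z$ to quote Kostant's decomposition and the algebraicity lemma for centerless groups; since your unipotent argument does not need algebraicity, you only need the Jordan decomposition itself, which holds under the stated finite-center hypothesis, so this step can be omitted as you do.
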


\bigskip

Using the notion of length functions, we treat the rigidity phenomena in a
unified approach. A length function $l:G\rightarrow \lbrack 0,\infty )$ is
called purely positive if $l(g)>0$ for any infinite-order element $g.$ A
group $G$ is called virtually poly-positive, if there is a finite-index
subgroup $H<G$ and a subnormal series%
\begin{equation*}
1=H_{n}\vartriangleleft H_{n-1}\vartriangleleft \cdots \vartriangleleft
H_{0}=H
\end{equation*}%
such that every finitely generated subgroup of each quotient $H_{i}/H_{i+1}$ 
$(i=0,...,n-1)$ has a purely positive length function. Our following results
are on the rigidity of group homomorphisms.

\begin{theorem}
\label{th0.2}Let $\Gamma $ be an arithmetic subgroup of a simple algebraic $%
\mathbb{Q}$-group of $\mathbb{Q}$-rank at least $2.$ Suppose that $G$ is
virtually poly-positive. Then any group homomorphism $f:\Gamma \rightarrow G$
has its image finite.
\end{theorem}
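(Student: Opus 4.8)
The plan is to combine three ingredients: the vanishing statement of Theorem~\ref{theorem0}, the good behaviour of length functions under pull-back, and the bounded generation of higher-rank arithmetic groups. Throughout, given a homomorphism $\phi$ and a length function $l$ on its target, write $\lambda=l\circ\phi$; a direct check (using that $\phi$ preserves powers, commutes with conjugation, and sends commuting pairs to commuting pairs) shows $\lambda$ is again a length function. The two structural facts I would use about $\Gamma$ are: (i) every finite-index subgroup $\Delta\le\Gamma$ is again an arithmetic subgroup of the same simple $\mathbb{Q}$-group of $\mathbb{Q}$-rank $\ge 2$, is finitely generated, and is \emph{boundedly generated} by finitely many elements $s_1,\dots,s_r$ (so that for a fixed $\nu$ every element of $\Delta$ is a product of at most $\nu$ powers of the $s_j$); and (ii) each $s_j$ may be taken to lie, inside $\Delta$, in a copy of $\mathbb{Z}^2\rtimes_A\mathbb{Z}$ with $|\mathrm{tr}(A)|>2$, as an element of the $\mathbb{Z}^2$-summand. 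Fact (i) is the bounded-generation theorem for higher-rank arithmetic groups (Carter--Keller, Tavgen, Lubotzky--Mozes--Raghunathan), and (ii) is supplied by the $\mathbb{Q}$-rank $\ge 2$ hypothesis, which furnishes a rank-$2$ $\mathbb{Q}$-split torus together with its root subgroups.

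The heart of the matter is a one-step statement: \emph{if $K$ is a group every finitely generated subgroup of which carries a purely positive length function, then any homomorphism $\phi\colon\Delta\to K$ from a finite-index subgroup $\Delta\le\Gamma$ has finite image.} To prove it, observe that $\phi(\Delta)$ is finitely generated, hence carries a purely positive length function $l$; put $\lambda=l\circ\phi$. For each generator $s_j$ as in (ii), restricting $\lambda$ to the copy of $\mathbb{Z}^2\rtimes_A\mathbb{Z}$ gives a length function on that group, so Theorem~\ref{theorem0}(i) forces $\lambda(s_j)=0$, i.e. $l(\phi(s_j))=0$. Since $l$ is purely positive, $\phi(s_j)$ has finite order and $\langle\phi(s_j)\rangle$ is finite. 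Bounded generation then exhibits $\phi(\Delta)$ as contained in a finite union of product sets $\langle\phi(s_{j_1})\rangle\cdots\langle\phi(s_{j_\nu})\rangle$, each finite; hence $\phi(\Delta)$ is finite.

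With the one-step statement in hand, I would deduce the theorem by induction along the subnormal series witnessing virtual poly-positivity. Let $H\le G$ be finite index with $1=H_n\vartriangleleft\cdots\vartriangleleft H_0=H$ as in the definition, and set $\Gamma_0=f^{-1}(H)$, a finite-index subgroup of $\Gamma$ to which (i) and (ii) apply. Applying the one-step statement to the composite $\Gamma_0\to H\to H/H_1$ (whose target has the required property by the hypothesis on $H_0/H_1$) shows this composite has finite image, so $\Delta_1:=\Gamma_0\cap f^{-1}(H_1)$, being its kernel, has finite index in $\Gamma_0$ and hence in $\Gamma$. Now $f(\Delta_1)\le H_1$, and $H_1$ carries the shorter series $1=H_n\vartriangleleft\cdots\vartriangleleft H_1$; by induction on the length of the series, $f(\Delta_1)$ is finite. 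Since $[\Gamma_0:\Delta_1]<\infty$ and $[\Gamma:\Gamma_0]<\infty$, the image $f(\Gamma)$ is a finite union of cosets of the finite group $f(\Delta_1)$, hence finite.

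The main obstacle is establishing the structural inputs (i) and (ii) for an arbitrary simple $\mathbb{Q}$-group of $\mathbb{Q}$-rank $\ge 2$, and arranging that the \emph{same} bounded-generation generators are the ones placed inside hyperbolic semidirect products, with this surviving the passage to the finite-index subgroups $\Gamma_0,\Delta_1,\dots$ produced by the induction. For non-split groups one must descend to a $\mathbb{Q}$-split rank-$2$ subgroup and work instead with suitable powers $s_j^m$, which still have infinite order and still sit in the $\mathbb{Z}^2$-summand of a hyperbolic $\mathbb{Z}^2\rtimes_{A^k}\mathbb{Z}$; since $\phi(s_j^m)$ of finite order forces $\phi(s_j)$ of finite order, no loss results. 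Verifying these geometric/arithmetic points—rather than the length-function bookkeeping, which is routine—is where the real work lies.
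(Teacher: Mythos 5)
Your overall skeleton --- pull back the length function, kill the ``seed'' elements by embedding them in metabelian subgroups on which every length function must vanish, then induct along the subnormal series witnessing virtual poly-positivity --- is the same as the paper's, and your inductive step over the series $1=H_n\vartriangleleft\cdots\vartriangleleft H_0=H$ is essentially identical to what the paper does. The difference is in how you pass from ``some elements of $\Gamma$ have finite image'' to ``the image of $\Gamma$ is finite'': you invoke bounded generation of $\Gamma$ (and of all the finite-index subgroups produced by the induction) by cyclic subgroups whose generators each sit in the $\mathbb{Z}^2$-part of a hyperbolic $\mathbb{Z}^2\rtimes_A\mathbb{Z}$, and then write the image as a bounded product of finite cyclic groups. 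The paper instead needs only \emph{one} element to die: it locates a rank-two root subsystem of type $A_2$, $B_2$ or $G_2$ inside the $\mathbb{Q}$-root system, which furnishes a Heisenberg subgroup $\langle x_{\alpha_1}(r),x_{\alpha_3}(s)\rangle$ with center generated by $x_{\alpha_2}(rs)=[x_{\alpha_1}(r),x_{\alpha_3}(s)]$; Lemma~\ref{nilpfac} forces any length function to vanish on that central element, so its image has finite order, and then the Margulis--Kazhdan normal subgroup theorem (quasi-simplicity of $\Gamma$, Lemma~\ref{lemaheis}) immediately makes the kernel of finite index. No generation statement is needed at all.

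This is where your proposal has a genuine gap. Bounded generation by cyclic subgroups is \emph{not} an established theorem for arbitrary arithmetic subgroups of simple algebraic $\mathbb{Q}$-groups of $\mathbb{Q}$-rank at least $2$: it is known for $\mathrm{SL}_n(\mathcal{O})$ ($n\ge 3$, Carter--Keller), for split and quasi-split groups of rank $\ge 2$ (Tavgen), and for many isotropic classical groups (Erovenko--Rapinchuk), but not in the full generality your argument requires (and Lubotzky--Mozes--Raghunathan, which you cite alongside these, is a statement about comparing word and Riemannian metrics, not a bounded-generation theorem). Your input (ii) --- that the bounded generators can all be placed inside hyperbolic $\mathbb{Z}^2\rtimes_A\mathbb{Z}$'s, with $A\in\mathrm{SL}_2(\mathbb{Z})$ of trace $>2$ coming necessarily from a root $\mathrm{SL}_2(\mathbb{Z})$ rather than from the arithmetic torus (whose integral points are finite) --- and the persistence of both (i) and (ii) through the finite-index subgroups $\Gamma_0,\Delta_1,\dots$ are additional unverified steps that you correctly flag but do not close. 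Since the normal subgroup theorem renders all of this unnecessary, you should replace the bounded-generation mechanism by quasi-simplicity: once a single infinite-order element (e.g.\ the central element of a Heisenberg subgroup, or your $s_j$) lands in the kernel at a given stage, that kernel is a normal subgroup containing an infinite element and hence has finite index, which is exactly what your induction needs.
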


\begin{theorem}
\label{th1}Let $G$ be a group having a finite-index subgroup $H<G$ and a
subnormal series%
\begin{equation*}
1=H_{n}\vartriangleleft H_{n-1}\vartriangleleft \cdots \vartriangleleft
H_{0}=H
\end{equation*}%
satisfying that

\begin{enumerate}
\item[(i)] every finitely generated subgroup of each quotient $H_{i}/H_{i+1}$
$(i=0,...,n-1)$ has a purely positive length function; and

\item[(ii)] any torsion abelian subgroup in every finitely generated
subgroup of each quotient $H_{i}/H_{i+1}$ $(i=0,...,n-1)$ is finitely
generated.
\end{enumerate}

Let $R$ be a finitely generated associative ring with identity and $E_{n}(R)$
the elementary subgroup. Suppose that $\Gamma <E_{n}(R)$ is finite-index
subgroup. Then any group homomorphism $f:\Gamma \rightarrow G$ has its image
finite when $n\geq 3$.
\end{theorem}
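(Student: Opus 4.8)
The plan is to reduce the rigidity statement for finite-index subgroups of $E_n(R)$ to the engine that drives length-function arguments: bounded-generation together with the vanishing of length functions on distorted (in particular, on commuting-nilpotent) configurations coming from the Steinberg relations. First I would recall the structure of $E_n(R)$ for $n\geq 3$, namely that it is generated by elementary matrices $e_{ij}(r)$ ($i\neq j$, $r\in R$) subject to the Steinberg commutator relations $[e_{ij}(r),e_{kl}(s)]=1$ when $j\neq k,\ i\neq l$ and $[e_{ij}(r),e_{jk}(s)]=e_{ik}(rs)$ when $i,j,k$ are distinct. The decisive consequence of the last relation is that each $e_{ik}(rs)$ is a single commutator, so the whole group is boundedly generated by commutators; more importantly, each elementary generator sits inside an abelian subgroup (a root subgroup $\{e_{ij}(r):r\in R\}$) on which the multiplication is additive, and it is a product of commutators of elements that themselves lie in such additive subgroups.

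The central mechanism is the following: for any homomorphism $f:\Gamma\to G$ and any length function $l$ on a finitely generated subgroup of $G$, I would show that $l$ must vanish on the images of elementary generators. The engine is that in $E_n(R)$ (hence in a finite-index subgroup after passing to a power/conjugate) each $e_{ij}(r)$ is \emph{distorted}: using the relation $e_{ik}(r)=[e_{ij}(r),e_{jk}(1)]$ and the additivity $e_{ij}(r)^m=e_{ij}(mr)$, one produces, for suitable rings, configurations in which a high power of an elementary matrix is expressed as a bounded-length word, forcing the stable length to be zero. Concretely, the axioms of a length function give $l(abab^{-1}a^{-1}\cdots)$ control only through commuting elements, so I would exploit the conjugation-invariance (axiom 2) to write $f(e_{ik}(r))$ as a commutator $[f(e_{ij}(r)),f(e_{jk}(1))]$ and then bound $l$ on powers of it using that the two factors nearly commute in a controlled way; the additive structure $r\mapsto e_{ij}(r)$ lets me replace $r$ by $m r$ and divide by $m$, so that $l(f(e_{ik}(r)))=\tfrac{1}{m}l(f(e_{ik}(mr)))$ and the right-hand side stays bounded, yielding $l(f(e_{ik}(r)))=0$.

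With length functions killing the generators, I would then climb the subnormal series using hypotheses (i) and (ii). Purely positivity (i) means any element of infinite order in a finitely generated subgroup of a quotient $H_i/H_{i+1}$ has strictly positive length; combined with the vanishing just established, the images of elementary generators must have \emph{finite order} in each successive quotient. Hypothesis (ii), that torsion abelian subgroups of these finitely generated subquotients are finitely generated (hence finite, being torsion and finitely generated abelian), is what upgrades ``finite order'' to ``finite subgroup'': the image of the abelian root subgroup $\{f(e_{ij}(r)):r\in R\}$, being abelian and torsion in the relevant subquotient, is forced to be finite. Finiteness of the image in the top quotient $H/H_1$ then propagates down the normal series by an inductive argument (a group that is finite-by-(finite image) in each layer, with finitely generated torsion abelian pieces, has finite image overall), and passing back from $H$ to the finite-index $\Gamma$ costs only a finite factor. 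The main obstacle, and the step I would spend the most care on, is the first one: establishing distortion/vanishing of $l$ on the elementary generators uniformly over an arbitrary finitely generated associative ring $R$, since one cannot assume commutativity or an embedding into a Lie group, so the argument must be purely combinatorial, driven only by the Steinberg relations and the three length-function axioms, and must be robust enough to survive the passage to an arbitrary finite-index subgroup $\Gamma<E_n(R)$ (where one typically replaces each generator by a fixed power lying in $\Gamma$ and checks the relations still force vanishing).
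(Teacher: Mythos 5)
The overall architecture you describe (kill the length function on the images of elementary generators, then use pure positivity and hypothesis (ii) to force finiteness) matches the paper, but the step you yourself flag as the hard one --- why $l$ vanishes on $f(e_{ij}(r))$ --- is where your proposal has a genuine gap, and the mechanism you lean on is the wrong one for this class of length functions. Your ``engine'' is distortion: a high power of $e_{ik}(r)$ is a word of bounded length, so its stable length must vanish. That argument is valid for stable \emph{word} lengths, but the length functions of Definition \ref{def} are subadditive only on \emph{commuting} pairs, so one cannot bound $l([a,b])$, nor $l$ of a bounded-length word, by the lengths of its letters; ``the two factors nearly commute in a controlled way'' is precisely the assertion that needs a proof and does not follow from axioms 1)--3). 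The paper's actual mechanism is Lemma \ref{nilpfac}: $e_{13}(1)=[e_{12}(1),e_{23}(1)]$ is \emph{central} in the Heisenberg subgroup $\langle e_{12}(1),e_{23}(1)\rangle$, and the explicit conjugation of Lemma \ref{conj} (namely $a^{t}b^{-s}(a^{m}b^{n})b^{s}a^{-t}=a^{m}b^{n}c^{ms+nt}$) shows that $a^{m}b^{n}c^{k}$ is conjugate to $a^{m}b^{n}$ for coprime $m,n$ and every $k$; restricting $l$ to the genuinely abelian subgroup $\langle a^{m}b^{n},c\rangle\cong\mathbb{Z}^{2}$, where subadditivity does apply, gives $|k|\,l(c)\leq 2\,l(a^{m}b^{n})$ for all $k$, hence $l(c)=0$. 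Without this (or an equivalent) computation your first step does not go through.

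The endgame is also thinner than it needs to be. Knowing that each $f(e_{ij}(r))$ has finite order, and even that each root subgroup has finite image, does not by itself make the image of $\Gamma$ finite. The paper proceeds by lifting to the Steinberg group $\mathrm{St}_{n}(R)$, showing that $I=\ker f|_{x_{12}}$ is a two-sided ideal (via $x_{12}(xy)=[x_{13}(x),x_{32}(y)]$), invoking hypothesis (ii) to see that $R/I\cong f(x_{12})$ is a finitely generated torsion abelian group, hence finite, and then using the finiteness of $\mathrm{St}_{n}(R/I)$ for a finite ring (Lemma \ref{finitesteibg}, which rests on injective stability and the universal central extension) to conclude that $f$ factors through a finite group. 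Your phrase ``propagates down the normal series by an inductive argument'' skips exactly this ideal-theoretic reduction, and the passage to an arbitrary finite-index subgroup $\Gamma$ is handled in the paper by lifting $\Gamma$ to a finite-index subgroup of $\mathrm{St}_{n}(R)$ rather than by replacing generators with powers.
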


\begin{corollary}
\label{mcor}Let $\Gamma $ be an arithmetic subgroup of a simple algebraic $%
\mathbb{Q}$-group of $\mathbb{Q}$-rank at least $2,$ or a finite-index
subgroup of the elementary subgroup $E_{n}(R)$ $(n\geq 3)$ for an
associative ring $R.$ Then any group homomorphism $f:E\rightarrow G$ has its
image finite. Here $G$ is one of the following groups:

\begin{itemize}
\item a Gromov hyperbolic group,

\item $\mathrm{CAT(0)}$ group,

\item automorphism group $\mathrm{Aut}(F_{k})$ of a free group$,$

\item outer automorphism group $\mathrm{Out}(F_{k})$ of a free group,

\item mapping class group $\mathrm{MCG}(\Sigma _{g})$ $(g\geq 2)$, or

\item the group $\mathrm{Diff}(\Sigma ,\omega )$ of diffeomorphisms of a
closed surface preserving an area form $\omega .$
\end{itemize}
\end{corollary}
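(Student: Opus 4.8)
The plan is to deduce the corollary directly from Theorems \ref{th0.2} and \ref{th1}: for the arithmetic domain it suffices, by Theorem \ref{th0.2}, to verify that each target $G$ is virtually poly-positive, while for a finite-index subgroup of $E_n(R)$ one invokes Theorem \ref{th1} and must in addition check the torsion condition (ii). Two reductions streamline the verification. First, if $G$ itself carries a purely positive length function $l$, then for every subgroup $K\le G$ the restriction $l|_K$ is again purely positive, since the three axioms are inherited and an infinite-order element of $K$ has infinite order in $G$; hence it is enough to produce one purely positive $l$ on $G$, and the trivial series $1\vartriangleleft G$ works. Second, condition (ii) holds automatically whenever the finite subgroups of (each quotient in the series of) $G$ have bounded order, since then every torsion abelian subgroup is finite.

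For a Gromov hyperbolic group and for a $\mathrm{CAT}(0)$ group I would use the translation length function from the list of examples. On a hyperbolic space the stable translation length of an infinite-order isometry is strictly positive, so $l$ is purely positive on a hyperbolic group. For a $\mathrm{CAT}(0)$ group, i.e. a group acting geometrically on a $\mathrm{CAT}(0)$ space, every element is a semisimple isometry; an infinite-order element cannot be elliptic, for then its cyclic group would lie in a point stabilizer, which is finite by properness, so it is hyperbolic with positive translation length and $l$ is again purely positive. In both cases the finite subgroups have bounded order, so condition (ii) is immediate and these two items are settled with the trivial series.

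The substantive cases are $\mathrm{Aut}(F_k)$, $\mathrm{Out}(F_k)$ and $\mathrm{MCG}(\Sigma_g)$, where no single length function is purely positive because Dehn twists and other reducible elements are distorted and have vanishing translation length. Here I would build the required subnormal series by induction on complexity, after passing to a torsion-free finite-index subgroup. The fully irreducible (for $\mathrm{Out}$) resp. pseudo-Anosov (for $\mathrm{MCG}$) elements have positive translation length on the free factor complex resp. the curve complex, which are hyperbolic; the reducible elements, detected by their invariant free factor system resp. canonical reduction system, are collected into a normal subgroup mapping to a product of $\mathrm{Out}$/factor-stabilizer groups resp. $\mathrm{MCG}$'s of strictly smaller complexity, and the induction bottoms out at free groups and surfaces of low complexity whose (outer) automorphism or mapping class groups are virtually free or virtually abelian. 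For $\mathrm{Aut}(F_k)$ one prepends the normal subgroup $\mathrm{Inn}(F_k)\cong F_k$, which is free hence hyperbolic and purely positive, to the pullback of the series for $\mathrm{Out}(F_k)$. The main obstacle is exactly the bookkeeping in this induction: arranging the normal subgroups so that each successive quotient carries a length function positive on all its infinite-order elements, while tracking that torsion subgroups of the quotients stay of bounded order so that (ii) persists.

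Finally, for $\mathrm{Diff}(\Sigma,\omega)$ with $\Sigma$ a closed hyperbolic surface I would use the exact sequence $1\to\mathrm{Diff}_0(\Sigma,\omega)\to\mathrm{Diff}(\Sigma,\omega)\to\mathrm{MCG}(\Sigma)\to 1$. The top quotient is handled by the previous paragraph, and on the identity component I would use length functions of quasimorphism/quasi-cocycle type, as in the corresponding example, which are designed to be positive on the infinite-order elements of finitely generated subgroups. Splicing this extension onto the series for $\mathrm{MCG}(\Sigma)$ yields the desired virtually poly-positive structure. Establishing positivity on finitely generated subgroups of $\mathrm{Diff}_0(\Sigma,\omega)$, together with the corresponding torsion condition, is the most delicate input, and the step I would expect to demand the most care.
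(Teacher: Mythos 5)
Your overall skeleton is the right one and matches the paper: reduce to Theorem \ref{th0.2} for the arithmetic case and to Theorem \ref{th1} for $E_n(R)$, then verify that each listed $G$ is virtually poly-positive and satisfies the torsion condition (ii). Your handling of hyperbolic and $\mathrm{CAT}(0)$ groups via translation length, and your observation that a purely positive length function restricts to one on every subgroup, are both correct and are exactly how the paper proceeds for those two items.

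The genuine gap is in your treatment of $\mathrm{Aut}(F_k)$, $\mathrm{Out}(F_k)$ and $\mathrm{MCG}(\Sigma_g)$. Your premise that ``no single length function is purely positive because Dehn twists and other reducible elements are distorted'' is false: the theorem of Farb--Lubotzky--Minsky \cite{flm} says precisely that Dehn twists, and in fact all infinite-order elements of $\mathrm{MCG}(\Sigma_{g,m})$, have positive stable translation length in the word metric, i.e.\ they are \emph{undistorted}; Alibegovi\'c \cite{al} proves the analogous statement for $\mathrm{Aut}(F_n)$ and $\mathrm{Out}(F_n)$. (You are conflating vanishing translation length on the curve complex, which does hold for a Dehn twist, with distortion in the group itself, which does not.) So the stable word length is already a purely positive length function on these groups, the trivial series suffices, and this is what the paper's Lemma \ref{pol} records. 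Your substitute construction does not repair the (nonexistent) problem: the reducible elements of $\mathrm{MCG}(\Sigma_g)$, resp.\ the non-fully-irreducible elements of $\mathrm{Out}(F_k)$, are not closed under composition and do not form a normal subgroup, so the ``induction on complexity'' series you propose is not defined; you correctly flag this bookkeeping as the main obstacle, but it is an obstacle that cannot be overcome in the form stated. For condition (ii) in these cases the paper does not use bounded torsion but cites Birman--Lubotzky--McCarthy \cite{blm} and Bestvina--Handel \cite{besth} for finite generation of (solvable, hence torsion abelian) subgroups. Finally, for $\mathrm{Diff}(\Sigma,\omega)$ the missing concrete input in your sketch is the flux homomorphism $\mathrm{Diff}_0(\Sigma,\omega)\to H_1(\Sigma,\mathbb{R})$ with kernel $\mathrm{Ham}(\Sigma,\omega)$, together with Polterovich's theorem \cite{po} that finitely generated subgroups of $\mathrm{Ham}(\Sigma,\omega)$ (genus $>1$) have purely positive stable word length; this is the three-step subnormal series the paper uses, and it is needed for genus reasons that your statement of the corollary (``a closed surface'') glosses over --- the hyperbolicity hypothesis on $\Sigma$ is what makes Polterovich's result and Lemma \ref{area-pres} applicable.
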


\begin{theorem}
\label{th2'}Suppose that $G$ is virtually poly-positive. Let $R$ be a
finitely generated associative ring of characteristic zero such that any
nonzero ideal is of a finite index (eg. the ring of algebraic integers in a
number field). Suppose that $S<E_{n}(R)$ is a finite-index subgroup of the
elementary group. Then any group homomorphism $f:S\rightarrow G$ has its
image finite when $n\geq 3$.
\end{theorem}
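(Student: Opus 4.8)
The plan is to reduce the statement to the single-quotient situation governed by Theorem \ref{theorem0}, and then to use the ring-theoretic hypotheses on $R$ to play the role that condition (ii) played in Theorem \ref{th1}. First I would arrange that $f$ maps into the finite-index subgroup $H$: replacing $S$ by $S\cap f^{-1}(H)$, which is again finite-index in $E_n(R)$, I may assume $f(S)\subseteq H$. Since $R$ is finitely generated and $n\geq 3$, the group $E_n(R)$ is finitely generated, hence so are $S$ and $f(S)$. I would then induct on the length of the subnormal series: it suffices to show that for every finite-index $S\leq E_n(R)$ and every group $Q$ all of whose finitely generated subgroups carry a purely positive length function, any homomorphism $\bar f\colon S\to Q$ has finite image. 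Applying this with $Q=H_i/H_{i+1}$ to the top quotient and recursing on the shorter series of $H_1$ reduces everything to this single-quotient statement. (Condition (ii) of Theorem \ref{th1} is never invoked; this is exactly what the hypotheses on $R$ will compensate for.)

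For the single-quotient step, since $\bar f(S)$ is a finitely generated subgroup of $Q$ it admits a purely positive length function $l_0$, and $l:=l_0\circ\bar f$ is a length function on $S$. The crux is that $l$ vanishes on elementary matrices, and here I would invoke Theorem \ref{theorem0}: fixing distinct indices and a hyperbolic $A\in\mathrm{SL}_2(\mathbb{Z})$ with $|\mathrm{tr}(A)|>2$, the block matrix $t=\mathrm{diag}(A,I_{n-2})$ lies in $E_n(R)$ and conjugates the rank-two free abelian subgroup $\langle e_{13}(1),e_{23}(1)\rangle$ by $A$, so that $\langle e_{13}(1),e_{23}(1),t\rangle\cong\mathbb{Z}^2\rtimes_A\mathbb{Z}$. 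Passing to powers $e_{ij}(N)=e_{ij}(1)^N$ and $t^M$ so that the relevant generators lie in $S$ (and noting $|\mathrm{tr}(A^M)|>2$), Theorem \ref{theorem0}(i) forces $l$ to vanish on this embedded $\mathbb{Z}^2$; in particular $l\big(e_{ij}(N_{ij})\big)=0$ for some positive integer $N_{ij}$ and every pair $i\neq j$. As $l_0$ is purely positive, each $\bar f\big(e_{ij}(N_{ij})\big)$ has finite order $k_{ij}$, whence $\bar f\big(e_{ij}(m)\big)=1$ with $m=\mathrm{lcm}_{i\neq j}\,(N_{ij}k_{ij})$.

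It remains to turn this into finiteness of $\bar f(S)$, and here the hypotheses on $R$ enter decisively. Since $R$ has characteristic zero, $m\cdot 1\neq 0$, so the two-sided ideal $I:=mR$ is nonzero. Using the Steinberg relations, for instance $e_{13}(ms)=[e_{12}(m),e_{23}(s)]$ together with the analogous identities at the other positions (and Weyl-element conjugations to match the kernels across positions), one checks that $\bar f\big(e_{ij}(r)\big)=1$ for every $r\in I$ and all $i\neq j$; being a homomorphism, $\bar f$ then kills every $S$-conjugate of these elements. By hypothesis $R/I$ is finite, so $E_n(R/I)\subseteq M_n(R/I)$ is a finite group and the congruence subgroup $S\cap SL_n(R,I)$ has finite index in $S$; thus $\bar f$ factors through $E_n(R/I)$ provided it kills this whole congruence subgroup, which completes the single-quotient step and hence, by the induction, the theorem.

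I expect the main obstacle to be precisely this last passage: going from ``$\bar f$ kills the elementary matrices $e_{ij}(r)$ with $r\in I$'' to ``$\bar f$ kills a finite-index subgroup'' requires comparing the relative elementary group $E_n(R,I)$ with the full congruence subgroup $SL_n(R,I)\cap E_n(R)$ — equivalently, controlling the relative $K_1$ (the congruence kernel), since length functions give no subadditivity on products of non-commuting elements and so cannot directly kill arbitrary products of the $e_{ij}(r)$. For the motivating rings this is clean: when $R=O_K$ is the ring of integers of a number field and $n\geq 3$, the congruence subgroup property of Bass--Milnor--Serre gives that $E_n(O_K,I)$ has finite index, so $\bar f$ genuinely factors through the finite group $E_n(O_K/I)$. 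A secondary, purely technical difficulty is the bookkeeping forced by the finite-index subgroup $S$: one must ensure that the hyperbolically-acting copies of $\mathbb{Z}^2\rtimes_A\mathbb{Z}$, the conjugations identifying the position-wise kernels, and the Steinberg relations promoting the additive kernel to a two-sided ideal all survive after passing to fixed powers of the generators. By contrast with Theorem \ref{th1}, condition (ii) on $G$ is never used; its role is taken over entirely by the characteristic-zero and cofinite-ideal hypotheses on $R$, which force the kernel ideal to be nonzero and of finite index.
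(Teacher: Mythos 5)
Your overall strategy (induct on the subnormal series, use an embedded metabelian/nilpotent subgroup to force the length function to vanish on elementary matrices, promote the kernel to a two-sided ideal $I$, and exploit finiteness of $R/I$) matches the paper's up to and including the construction of $I$; your use of $\mathbb{Z}^{2}\rtimes_{A}\mathbb{Z}$ with $|\mathrm{tr}(A)|>2$ via Theorem \ref{theorem0} is a perfectly good substitute for the paper's Heisenberg subgroup $\langle e_{12}(1),e_{23}(1),e_{13}(1)\rangle$ and Lemma \ref{nilpfac}. But the final step is a genuine gap, and you have correctly located it yourself: knowing that $\bar f$ kills each generator $e_{ij}(r)$, $r\in I$, only tells you that $\bar f$ kills the relative elementary group $E_{n}(R,I)$ (the normal closure of those generators), and for a general finitely generated associative ring of characteristic zero there is no reason for $E_{n}(R,I)$ to have finite index in $E_{n}(R)$ even when $R/I$ is finite --- the quotient $\ker\bigl(E_{n}(R)\to E_{n}(R/I)\bigr)/E_{n}(R,I)$ is a relative $K_{1}$-type obstruction. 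Your patch via Bass--Milnor--Serre only covers $R=O_{K}$, whereas the theorem is asserted for arbitrary (possibly non-commutative) finitely generated rings of characteristic zero with all nonzero ideals cofinite, so the proof as written does not establish the stated result.

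The paper's way around this is the one move you did not make: it never argues inside $E_{n}(R)$ at all, but lifts $f$ through the surjection $\mathrm{St}_{n}(R)\twoheadrightarrow E_{n}(R)$ to a homomorphism from a finite-index subgroup of the Steinberg group (Theorem \ref{th2}), where the same length-function argument applies verbatim to the generators $x_{ij}(r)$. The point is that $\mathrm{St}_{n}(R,I)$ \emph{is} exactly the kernel of $\mathrm{St}_{n}(R)\to\mathrm{St}_{n}(R/I)$ (Magurn, Lemma 13.18), with no congruence-subgroup input needed, and $\mathrm{St}_{n}(R/I)$ is finite whenever $R/I$ is finite (Lemma \ref{finitesteibg}, via injective stability for rings of stable range $1$ and finiteness of $H_{2}(E_{n}(R/I);\mathbb{Z})$). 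So the lifted homomorphism factors through a finite group, and the image downstairs is finite. If you replace your last paragraph by this lift to $\mathrm{St}_{n}(R)$, your argument closes in the full generality claimed.
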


\begin{corollary}
\label{mor'}Let $R$ be an associative ring of characteristic zero such that
any nonzero ideal is of a finite index. Any group homomorphism $%
f:E\rightarrow G$ has its image finite, where $E<E_{n}(R)$ is finite-index
subgroup and $n\geq 3$. Here $G$ is one of the followings:

\begin{itemize}
\item a $\mathrm{CAT}(0)$ group or more generally a semi-hyperbolic group,

\item a group acting properly semi-simply on a $\mathrm{CAT}(0)$ space, or

\item a group acting properly semi-simply on a $\delta $-hyperbolic space.
\end{itemize}
\end{corollary}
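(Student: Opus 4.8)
The plan is to derive this corollary directly from Theorem \ref{th2'} by verifying that each of the listed target groups $G$ is virtually poly-positive, and then invoking Theorem \ref{th2'} with the given ring $R$. Since the arithmetic input (that $R$ has characteristic zero with every nonzero ideal of finite index, so that $E_n(R)$ behaves like a higher-rank lattice) is already packaged into Theorem \ref{th2'}, the entire content of the corollary reduces to checking one structural property of each candidate target group. Thus I would organize the proof as a sequence of short verifications, one per bullet.

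\medskip

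\noindent\textbf{Supplying a purely positive length function.} For each $G$ in the list I would exhibit a purely positive length function, which makes $G$ itself virtually poly-positive with the trivial series $1 \vartriangleleft G$ (taking $H=G$). The natural candidates are exactly the examples collected in the introduction. For a Gromov hyperbolic group, a $\mathrm{CAT}(0)$ group, or more generally a semi-hyperbolic group, one uses the stable translation length $l(\gamma)=\lim_{n\to\infty} d(x,\gamma^n x)/n$ on the associated space (a Cayley graph, the $\mathrm{CAT}(0)$ space, or the $\delta$-hyperbolic space on which $G$ acts properly and semi-simply); the key point is that a proper semi-simple action forces every infinite-order element to act as a hyperbolic isometry, so its translation length is strictly positive, giving purely positivity. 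For $\mathrm{Aut}(F_k)$, $\mathrm{Out}(F_k)$, and $\mathrm{MCG}(\Sigma_g)$ one appeals to the stable norms of suitable quasi-cocycles or stable word/translation lengths detecting infinite-order elements, again as recorded among the examples. For $\mathrm{Diff}(\Sigma,\omega)$ on a hyperbolic closed surface, one uses the length function built from the quasi-morphism/stable norm on that diffeomorphism group so that infinite-order elements receive positive length.

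\medskip

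\noindent\textbf{Where the real work sits.} The main obstacle is not the deduction itself but establishing purely positivity of the chosen $l$ in each case, i.e.\ ruling out infinite-order elements of zero length. For the metric targets this is the statement that, under a proper semi-simple action, infinite-order isometries are hyperbolic rather than parabolic or elliptic, so the translation length is bounded below; this is standard but must be invoked correctly for each geometry. For $\mathrm{Out}(F_k)$ and $\mathrm{MCG}(\Sigma_g)$ the analogous positivity — that an infinite-order mapping class or outer automorphism is detected with positive stable norm — is the genuinely nontrivial ingredient and is precisely what the cited constructions provide. Once purely positivity is in hand for each target, each $G$ is virtually poly-positive, and applying Theorem \ref{th2'} to the finite-index subgroup $E<E_n(R)$ with $n\geq 3$ immediately yields that every homomorphism $f:E\to G$ has finite image. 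I would therefore present the corollary as a case-by-case citation establishing hypothesis of Theorem \ref{th2'}, followed by a one-line appeal to that theorem.
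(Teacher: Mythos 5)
Your overall route is exactly the paper's: verify that each listed target $G$ is virtually poly-positive and then quote the rigidity theorem for $E_{n}(R)$ over a characteristic-zero ring whose nonzero ideals have finite index. The paper phrases this as ``Corollary \ref{mor'} follows Theorem \ref{th2} and Lemma \ref{pol}'' (Theorem \ref{th2} being the Steinberg-group statement, from which Theorem \ref{th2'} is obtained by lifting along the surjection $\mathrm{St}_{n}(R)\twoheadrightarrow E_{n}(R)$), and for the first five bullets your case-by-case verification is precisely the content of Lemma \ref{pol}: stable translation lengths for hyperbolic, $\mathrm{CAT}(0)$, semi-hyperbolic groups and proper semi-simple actions, Alibegovi\'{c} for $\mathrm{Aut}(F_{k})$ and $\mathrm{Out}(F_{k})$, and Farb--Lubotzky--Minsky for $\mathrm{MCG}(\Sigma _{g})$.

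The one place where your write-up does not match what is actually available is the last bullet. You propose to exhibit a single purely positive length function on all of $\mathrm{Diff}(\Sigma ,\omega )$ and then use the trivial series $1\vartriangleleft G$. No such function is produced in the paper, and the positivity result you would need (Polterovich) applies only to finitely generated subgroups of the Hamiltonian subgroup $\mathrm{Ham}(\Sigma ,\omega )$, not to the whole diffeomorphism group. What the paper does instead (Lemma \ref{area-pres}) is establish poly-positivity through the genuine subnormal series $1\vartriangleleft \mathrm{Ham}(\Sigma ,\omega )\vartriangleleft \mathrm{Diff}_{0}(\Sigma ,\omega )\vartriangleleft \mathrm{Diff}(\Sigma ,\omega )$, handling the successive quotients separately: Polterovich for $\mathrm{Ham}(\Sigma ,\omega )$, the vector space $H_{1}(\Sigma ,\mathbb{R})$ whose finitely generated subgroups are free abelian, and $\mathrm{MCG}(\Sigma )$ via Farb--Lubotzky--Minsky. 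Since ``virtually poly-positive'' (not ``has a purely positive length function'') is the actual hypothesis of Theorem \ref{th2'}, the corrected version of your argument goes through verbatim; you just cannot collapse the series to length one for this target.
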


Some relevant cases of Theorem \ref{th1} and Theorem \ref{th2'} are already
established in the literature, whose proofs are usually based on the
distortions in the word length. Bridson and Wade \cite{bw} showed that any
group homomorphism from an irreducible lattice in a semisimple Lie group of
real rank $\geq 2$ to the mapping class group $\mathrm{MCG}(\Sigma _{g})$
must have its image finite. However, Theorem \ref{th0.2} can never hold when 
$\Gamma $ is a cocompact lattice, since a cocompact lattice has its stable
word length purely positive. When the length functions involved in the
virtually poly-positive group $G$ are required to be stable word lengths,
Theorem \ref{th0.2} holds more generally for $\Gamma $ non-uniform
irreducible lattices a semisimple Lie group of real rank $\geq 2$ (see
Proposition \ref{polyword}). When the length functions involved in the
virtually poly-positive group $G$ are given by a particular kind of
quasi-cocycles, Theorem \ref{th0.2} holds more generally for $\Gamma $ with
property TT (cf. Py \cite{py}, Prop. 2.2). Haettel \cite{ha} proves that any
action of a high-rank lattice on a Gromov-hyperbolic space is elementary
(i.e. either elliptic or parabolic). Guirardel and Horbez \cite{ha} prove
that every group homomorphism from a high-rank lattice to the outer
automorphism group of the torsion-free hyperbolic group has a finite image.
Thom \cite{thom} (Corollary 4.5) proves that any group homomorphism from a
boundedly generated group with property T to a Gromov hyperbolic group has a
finite image. Compared with these results, our target group $G$ and the
source group $E_{n}(R)$ (can be defined over any non-commutative ring) in
Theorem \ref{th1} are much more general. The inequalities of $n$ in Theorem %
\ref{th1}, Theorem \ref{th2'} and Corollary \ref{mcor}, Corollary \ref{mor'}
can not be improved, since $\mathrm{SL}_{2}(\mathbb{Z})$ is hyperbolic. The
group $\Gamma $ in Corollary \ref{mcor} has Kazhdan's property T (i.e. an
arithmetic subgroup of a simple algebraic $\mathbb{Q}$-group of $\mathbb{Q}$%
-rank at least $2,$ or a finite-index subgroup of the elementary subgroup $%
E_{n}(R),$ $n\geq 3,$ for an associative ring $R$ has Kazhdan's property T
by \cite{ej}). However, there exist hyperbolic groups with Kazhdan's
property $T$ (cf. \cite{gromov}, Section 5.6). This implies that Corollary %
\ref{mcor} does not hold generally for groups $\Gamma $ with Kazhdan's
property T. Franks and Handel \cite{fh} prove that any group homomorphism
from a quasi-simple group containing a subgroup isomorphic to the
three-dimensional integer Heisenberg group, to the group $\mathrm{Diff}%
(\Sigma ,\omega )$ of diffeomorphisms of a closed surface preserving an area
form $\omega ,$ has its image finite (cf. Lemma \ref{lemaheis}).

\bigskip

We now study length functions on the Cremona groups.

\begin{theorem}
\label{th3}Let $\mathrm{Bir}(\mathbb{P}_{k}^{n})$ $(n\geq 2)$ be the group
of birational maps on the projective space $\mathbb{P}_{k}^{n}$ over an
algebraically closed field $k$. Any length function $l:\mathrm{Bir}(\mathbb{P%
}_{k}^{n})\rightarrow \lbrack 0,+\infty )$ vanishes on the automorphism
group $\mathrm{Aut}(\mathbb{P}_{k}^{n})=\mathrm{PGL}_{n+1}(k).$
\end{theorem}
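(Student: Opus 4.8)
The plan is to reduce everything to two basic pieces via the multiplicative Jordan decomposition and then to exploit two independent sources of distortion inside $\mathrm{Bir}(\mathbb{P}^n_k)$: conjugation by semisimple (torus) elements inside $\mathrm{PGL}_{n+1}(k)$, and conjugation by monomial Cremona transformations. First I record the general fact that any length function vanishes on torsion: if $g^m=1$ then $m\,l(g)=l(g^m)=l(1)=0$, so $l(g)=0$; hence only elements of infinite order need attention. Now given $g\in\mathrm{PGL}_{n+1}(k)$, write $g=g_sg_u=g_ug_s$ with $g_s$ semisimple and $g_u$ unipotent. Since $g_s$ and $g_u$ commute, axiom (3) gives $l(g)\le l(g_s)+l(g_u)$, so it suffices to prove $l(g_s)=l(g_u)=0$ separately.

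For the unipotent factor the characteristic splits the argument. In characteristic $p>0$ every unipotent element of $\mathrm{PGL}_{n+1}(k)$ has $p$-power order, hence is torsion and has length $0$. In characteristic $0$ I would use the Jacobson--Morozov cocharacter: lifting $g_u$ to a unipotent $\exp(N)\in\mathrm{SL}_{n+1}(k)$, embed $N$ in an $\mathfrak{sl}_2$-triple $(N,H,N^-)$ and let $\lambda\colon\mathbb{G}_m\to\mathrm{PGL}_{n+1}(k)$ be the cocharacter attached to $H$, so that $\mathrm{Ad}(\lambda(t))N=t^2N$ and therefore $\lambda(t)\,g_u\,\lambda(t)^{-1}=\exp(t^2N)$. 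Choosing $t$ with $t^2=m$ (e.g. $m=4$) gives $\lambda(t)\,g_u\,\lambda(t)^{-1}=\exp(mN)=g_u^{\,m}$. Conjugation-invariance together with homogeneity then forces $l(g_u)=l(g_u^{\,m})=m\,l(g_u)$, so $l(g_u)=0$.

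The semisimple factor is the heart of the matter and the step I expect to be hardest. Since $k$ is algebraically closed I may conjugate $g_s$ (inside $\mathrm{PGL}_{n+1}(k)\subset\mathrm{Bir}(\mathbb{P}^n_k)$, for which $l$ is invariant) into the diagonal torus $T\cong(k^\ast)^n$ and write it as a product of commuting one-coordinate diagonal elements; by axiom (3) it is then enough to show $l$ vanishes on a single $t=\mathrm{diag}(1,\dots,\lambda,\dots,1)$ with $\lambda$ of infinite order. Here the monomial maps enter: for $M\in\mathrm{GL}_n(\mathbb{Z})$ the birational map $m_M\colon(x_1,\dots,x_n)\mapsto(\prod_j x_j^{M_{1j}},\dots,\prod_j x_j^{M_{nj}})$ normalizes $T$, and a direct computation shows that conjugation by $m_M$ realizes the naive $M$-action on $T\cong(k^\ast)^n$. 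I would fix a hyperbolic $A\in\mathrm{SL}_2(\mathbb{Z})$ with $|\mathrm{tr}(A)|>2$ and nonzero off-diagonal entry, acting on the coordinate carrying $\lambda$ together with one further coordinate, and consider $L=\langle t,\,m_A t m_A^{-1}\rangle\le T$. The Cayley--Hamilton relation makes $L$ an $A$-invariant finitely generated abelian group; modulo its finite torsion it is free of rank $2$ with $A$ acting with the same trace, and replacing $L$ by the subgroup of $N$-th powers ($N$ the order of the torsion) yields an $A$-invariant $L'\cong\mathbb{Z}^2$ containing $t^N$, so that $\langle L',m_A\rangle\cong\mathbb{Z}^2\rtimes_A\mathbb{Z}$ with $\lvert\mathrm{tr}\rvert>2$ embeds in $\mathrm{Bir}(\mathbb{P}^n_k)$. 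Theorem \ref{theorem0}(i) then forces $l|_{L'}=0$, whence $l(t^N)=0$ and $l(t)=0$. The main obstacle is precisely this semisimple step: one must check that the monomial conjugation is the naive action, that the coordinate direction is not annihilated (which is why $A$ is taken hyperbolic with nonzero off-diagonal entry, ensuring $L$ has rank $2$), and that the torsion appearing in $L$ can be cleared without destroying the hyperbolicity needed to invoke Theorem \ref{theorem0}. Once $l(g_s)=l(g_u)=0$, the desired conclusion $l|_{\mathrm{PGL}_{n+1}(k)}\equiv0$ follows.
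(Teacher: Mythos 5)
Your proposal is correct, and while it shares the paper's overall skeleton (multiplicative Jordan decomposition $g=g_sg_u$ with commuting factors, then kill each factor separately using subadditivity on commuting elements), the way you kill each factor is genuinely different. For the unipotent part the paper uses the elementary linear-algebra fact that a unipotent matrix is conjugate to its square when $\mathrm{char}\,k\neq 2$ (Corollary \ref{unip}) together with the Baumslag--Solitar relation of Lemma \ref{bs}; your Jacobson--Morozov cocharacter argument reaches the same conjugacy $g_u\sim g_u^m$ by heavier machinery, and your $\mathrm{char}\,p$ observation that unipotents are torsion coincides with the paper's. The real divergence is in the semisimple step: the paper (Lemma \ref{cre}) exhibits the one-coordinate diagonal map $x_1\mapsto\alpha x_1$ as the \emph{commutator} $[g,h]$ of the Cremona maps $g\colon x_1\mapsto x_1x_2$ and $h\colon x_2\mapsto\alpha^{-1}x_2$, i.e.\ as the central element of a Heisenberg subgroup of $\mathrm{Bir}(\mathbb{P}^n_k)$, and then invokes Lemma \ref{nilpfac}; you instead put the same element into the $\mathbb{Z}^2$-part of a subgroup $\mathbb{Z}^2\rtimes_{A}\mathbb{Z}$ built from monomial Cremona transformations with $|\mathrm{tr}(A)|>2$ and invoke Theorem \ref{theorem0}(i). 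Both routes are valid applications of the paper's two ``distortion engines,'' and your rank-two and torsion checks do go through (indeed, since $\lambda$ has infinite order your $L$ sits inside $\langle\lambda\rangle\times\langle\lambda\rangle\cong\mathbb{Z}^2$ and is automatically torsion-free, so the passage to $N$-th powers is unnecessary; the induced matrix on $L$ in the basis $\{t,\,m_Atm_A^{-1}\}$ is the companion matrix of $A$'s characteristic polynomial, hence has the same trace and determinant). The paper's Heisenberg route is shorter and characteristic-free in one stroke; your monomial-torus route is a nice illustration that the abelian-by-cyclic vanishing theorem alone already suffices to handle $\mathrm{PGL}_{n+1}(k)$, at the cost of slightly more bookkeeping.
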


When $n=2,$ a result of Blanc and Furter \cite{bd} (page 193 and Proposition
4.41) implies that there are three length functions $l_{1},l_{2},l_{3}$ on $%
\mathrm{Bir}(\mathbb{P}_{k}^{2})$ such that any element $g\in \mathrm{Bir}(%
\mathbb{P}_{k}^{n})$ satisfying $l_{1}(g)=l_{2}(g)=l_{3}(g)=0$ is either
finite or conjugate to an element in $\mathrm{Aut}(\mathbb{P}_{k}^{2}).$
This implies that the automorphism group $\mathrm{Aut}(\mathbb{P}_{k}^{n})$
(when $k=2$) is one of the `largest' subgroups of $\mathrm{Bir}(\mathbb{P}%
_{k}^{n})$ on which every length function vanishes. The distortion of
elements in the word metric is completely obtained by Cantat and Cornulier 
\cite{cc}.

\begin{corollary}
\label{fincor}Let $G$ be a virtually poly-positive group. Any group
homomorphism $f:\mathrm{Bir}(\mathbb{P}_{k}^{2})\rightarrow G$ is trivial,
for an algebraically closed field $k$.
\end{corollary}

In particular, Corollary \ref{fincor} implies that any quotient group of $%
\mathrm{Bir}(\mathbb{P}_{k}^{2})$ can act properly semisimply neither on a
Gromov $\delta $-hyperbolic space nor a $\mathrm{CAT}(0)$ space. This is
interesting, considering the following facts. There are
(infinite-dimensional) hyperbolic space and cubical complexes, on which $%
\mathrm{Bir}(P_{k}^{2})$ acts isometrically (see \cite{can}, Section 3.1.2
and \cite{lr}). The Cremona group $\mathrm{Bir}(\mathbb{P}_{k}^{2})$ is
sub-quotient universal: every countable group can be embedded in a quotient
group of $\mathrm{Bir}(\mathbb{P}_{k}^{2})$ (see \cite{can}, Theorem 4.7).
Moreover, Blanc-Lamy-Zimmermann \cite{blz} (Theorem E) proves that when $%
n\geq 3,$ there is a surjection from $\mathrm{Bir}(\mathbb{P}_{k}^{n})$ onto
a free product of two-element groups $\mathbb{Z}/2.$ This means that
Corollary \ref{fincor} can never hold for higher dimensional Cremona groups.

\bigskip

The proofs of Theorems \ref{th0} and \ref{th0.1} are based the
Jordan-Chevalley decompositions of algebraic groups and Lie groups. We will
prove that any length function on a Heisenberg group vanishes on the central
elements (see Lemma \ref{nilpfac}). This is a key step for many other
proofs. Based on this fact, we prove Theorems \ref{th0.2}, \ref{th2'}, \ref%
{th1} and \ref{th3} by looking for Heisenberg subgroups. In Section 1, we
give some elementary facts on the length functions. In Section 2, we discuss
typical examples of length functions. In later sections, we study length
functions on Lie groups, algebraic groups, hyperbolic groups, matrix groups
and the Cremona groups. Theorem \ref{theorem0} and Corollary \ref{corllary0}
are proved in Section \ref{section4}. Theorem \ref{th0} is proved in Section %
\ref{section6}. Theorem \ref{th0.1} is proved in Section \ref{section7}.
Theorem \ref{th0.2} is proved in Section \ref{section8}, while Theorems \ref%
{th1} and \ref{th2'}, Corollaries \ref{mcor} and \ref{mor'} are proved in
Section \ref{section9}. Theorem \ref{th3} and Corollary \ref{fincor} are
proved in Section \ref{section10}.

\section{Basic properties of length functions}

\subsection{Length functions}

\begin{definition}
\label{def}Let $G$ be a group. A function $l:G\rightarrow \lbrack 0,\infty )$
is called a length function if

1) $l(g^{n})=|n|l(g)$ for any $g\in G$ and $n\in \mathbb{Z}.$

2) $l(hgh^{-1})=l(g)$ for any $h,g\in G.$

3) $l(ab)\leq l(a)+l(b)$ for \emph{commuting} elements $a,b\in G,$ i.e. $%
ab=ba.$
\end{definition}

\begin{lemma}
\label{1.2}Any torsion element $g\in G$ has length $l(g)=0.$
\end{lemma}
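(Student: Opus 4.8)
The plan is to derive everything directly from the first axiom of a length function, since the torsion hypothesis interacts most simply with the homogeneity property $l(g^{n})=|n|\,l(g)$. First I would pin down the value of $l$ on the identity element. Applying axiom 1) with the exponent $n=0$ gives
\begin{equation*}
l(e)=l(g^{0})=|0|\cdot l(g)=0
\end{equation*}
for any $g\in G$, so the identity always has length zero. This preliminary observation is the only thing that needs to be extracted before the main argument.

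Next I would use the defining property of a torsion element. If $g\in G$ has finite order, there is a positive integer $N$ with $g^{N}=e$. Feeding this into axiom 1) yields
\begin{equation*}
0=l(e)=l(g^{N})=|N|\cdot l(g)=N\,l(g),
\end{equation*}
and since $N\geq 1$ is a nonzero integer, dividing through forces $l(g)=0$. This completes the argument.

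There is essentially no obstacle here: the conclusion is a formal consequence of the homogeneity axiom alone, and neither the conjugation-invariance axiom 2) nor the subadditivity axiom 3) is needed. The only point worth stating explicitly, so that later proofs may cite it freely, is the intermediate fact $l(e)=0$, which I would record as part of the same computation.
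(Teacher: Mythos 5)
Your proof is correct and follows essentially the same route as the paper: first establish $l(e)=0$ from homogeneity (you use the exponent $n=0$, the paper uses $l(1)=l(1^{2})=2l(1)$, a cosmetic difference), then apply $l(g^{N})=N\,l(g)$ with $g^{N}=e$ to conclude. No issues.
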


\begin{proof}
Note that $l(1)=2l(1)$ and thus $l(1)=0.$ If $g^{n}=1,$ then $%
l(g)=l(1)/|n|=0.$
\end{proof}

Recall that a subset $V$ of a real vector space is a convex cone, if $%
av+bw\in V$ for any $v,w\in V$ and any non-negative real numbers $a,b\geq 0.$

\begin{lemma}
The set $\mathrm{Func}(G)$ of all length functions on a group $G$ is a
convex cone.
\end{lemma}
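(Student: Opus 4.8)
The plan is to verify directly that the defining axioms of a length function are preserved under the operations characterizing a convex cone. Concretely, I would fix two length functions $l_1, l_2 \in \mathrm{Func}(G)$ together with two non-negative reals $a, b \geq 0$, set $l := a l_1 + b l_2$, and check that $l$ takes values in $[0,\infty)$ and satisfies conditions (1)--(3) of Definition \ref{def}.

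First, since each $l_i$ takes values in $[0,\infty)$ and $a, b \geq 0$, the function $l$ is again non-negative, so $l : G \to [0,\infty)$ is a legitimate candidate length function. Conditions (1) and (2) then follow immediately from linearity: for any $g \in G$ and $n \in \mathbb{Z}$ one computes $l(g^n) = a\,l_1(g^n) + b\,l_2(g^n) = |n|\bigl(a\,l_1(g) + b\,l_2(g)\bigr) = |n|\,l(g)$, and conjugation-invariance $l(hgh^{-1}) = l(g)$ holds by the same termwise argument since each $l_i$ is conjugation-invariant.

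The only step that actually uses the hypothesis $a, b \geq 0$ is the subadditivity condition (3). For commuting elements $x, y$ I would apply the triangle inequality to $l_1$ and $l_2$ separately, obtaining $l_i(xy) \leq l_i(x) + l_i(y)$; because $a$ and $b$ are non-negative, multiplying these inequalities by $a$ and $b$ respectively and summing preserves the direction of the inequality, which yields $l(xy) \leq l(x) + l(y)$. Hence $l \in \mathrm{Func}(G)$, which is precisely the closure property defining a convex cone.

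I do not anticipate any genuine obstacle, as this is a direct axiom check; the one point worth flagging is that the non-negativity of the scalars is essential, since a negative coefficient would both reverse the triangle inequality in (3) and potentially produce negative values, so $\mathrm{Func}(G)$ is a cone rather than a full linear subspace.
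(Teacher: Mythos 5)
Your proof is correct and takes essentially the same approach as the paper, which simply asserts that a non-negative linear combination $al_1+bl_2$ of length functions is again a length function; you merely spell out the termwise axiom check that the paper leaves as obvious. Your remark that non-negativity of the scalars is what preserves both the codomain $[0,\infty)$ and the direction of the inequality in condition (3) is exactly the right point to flag.
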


\begin{proof}
It is obvious that for two functions $l_{1},l_{2}$ on $G,$ a non-negative
linear combination $al_{1}+bl_{2}$ is a new length function.
\end{proof}

\begin{lemma}
\label{lemq}Let $f:G\rightarrow H$ be a group homomorphism between two
groups $G$ and $H.$ For any length function $l:H\rightarrow \lbrack 0,\infty
),$ the composite $l\circ f$ is a length function on $G.$
\end{lemma}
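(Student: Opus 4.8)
The plan is to verify the three defining axioms of a length function for the composite $l \circ f$, checking each one directly by reducing to the corresponding property of $l$ on $H$ together with the fact that $f$ is a homomorphism. This is an entirely formal, routine verification, so the main content is simply organizing the pull-back of each axiom cleanly.

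First I would treat axiom 1), the homogeneity condition. For any $g \in G$ and $n \in \mathbb{Z}$, since $f$ is a homomorphism we have $f(g^{n}) = f(g)^{n}$, and therefore
\begin{equation*}
(l \circ f)(g^{n}) = l\bigl(f(g)^{n}\bigr) = |n|\, l\bigl(f(g)\bigr) = |n|\,(l \circ f)(g),
\end{equation*}
where the middle equality is axiom 1) for $l$ on $H$. Next I would check axiom 2), conjugation invariance: for any $a, g \in G$, using $f(aga^{-1}) = f(a)f(g)f(a)^{-1}$ and axiom 2) for $l$,
\begin{equation*}
(l \circ f)(aga^{-1}) = l\bigl(f(a)f(g)f(a)^{-1}\bigr) = l\bigl(f(g)\bigr) = (l \circ f)(g).
\end{equation*}

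Finally, for axiom 3) I would observe that if $a, b \in G$ commute, i.e. $ab = ba$, then their images commute as well, since $f(a)f(b) = f(ab) = f(ba) = f(b)f(a)$. Hence $f(a)$ and $f(b)$ are commuting elements of $H$, and applying the subadditivity axiom 3) for $l$ to this commuting pair gives
\begin{equation*}
(l \circ f)(ab) = l\bigl(f(a)f(b)\bigr) \leq l\bigl(f(a)\bigr) + l\bigl(f(b)\bigr) = (l \circ f)(a) + (l \circ f)(b).
\end{equation*}
One also notes that $l \circ f$ maps into $[0, \infty)$ because $l$ does, so $l \circ f$ is a well-defined length function on $G$.

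There is no genuine obstacle here; the only point requiring any care is axiom 3), where one must remember to first deduce that commutativity of $a$ and $b$ is preserved under the homomorphism $f$ before invoking the subadditivity of $l$. Everything else is an immediate consequence of the homomorphism property, so I expect the proof to be a few lines long.
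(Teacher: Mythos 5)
Your proposal is correct and follows exactly the same route as the paper, which simply observes that a group homomorphism preserves powers, conjugacy, and commutativity; you have merely written out the three routine verifications that the paper leaves implicit. No issues.
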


\begin{proof}
It is enough to note that a group homomorphism preserves the powers of
elements, conjugacy classes and commutativity of elements.
\end{proof}

\begin{corollary}
For a group $G,$ let $\mathrm{Out}(G)=\mathrm{Aut}(G)/\mathrm{Inn}(G)$ be
the outer automorphism group. Then $\mathrm{Out}(G)$ acts on the set $%
\mathrm{Func}(G)$ of all length functions by pre-compositions 
\begin{equation*}
l\mapsto l\circ g,
\end{equation*}
where $l\in \mathrm{Func}(G),$ $g\in \mathrm{Out}(G)$. This action preserves
scalar multiplications and linear combinations (with non-negative
coefficients).
\end{corollary}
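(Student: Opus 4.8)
The plan is to treat the three assertions separately: that pre-composition by an automorphism preserves the class of length functions, that this operation is insensitive to inner automorphisms and hence descends to $\mathrm{Out}(G)$, and that it is additive for non-negative combinations.

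First I would note that any $\phi \in \mathrm{Aut}(G)$ is in particular a group homomorphism $G \to G$, so Lemma \ref{lemq} applies verbatim and shows $l \circ \phi \in \mathrm{Func}(G)$ whenever $l \in \mathrm{Func}(G)$. This already exhibits a right action of $\mathrm{Aut}(G)$ on $\mathrm{Func}(G)$ by $l \mapsto l \circ \phi$: the identity automorphism acts trivially, and associativity of composition gives $l \circ (\phi \circ \psi) = (l \circ \phi)\circ \psi$, which is exactly the compatibility condition for a (right) action.

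The one genuinely substantive step is to show the action factors through $\mathrm{Out}(G) = \mathrm{Aut}(G)/\mathrm{Inn}(G)$, and this is the only point where conjugation-invariance (property 2 of Definition \ref{def}) is used. Since $\mathrm{Inn}(G)$ is normal in $\mathrm{Aut}(G)$, two representatives of the same class differ on the right by an inner automorphism, say $\psi = \phi \circ c_a$ with $c_a(x) = a x a^{-1}$. Then, for every $x \in G$,
\[
(l\circ\psi)(x) = l\big(\phi(a x a^{-1})\big) = l\big(\phi(a)\,\phi(x)\,\phi(a)^{-1}\big) = l(\phi(x)) = (l\circ\phi)(x),
\]
using that $\phi$ is a homomorphism in the second equality and conjugation-invariance of $l$ in the third. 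Hence $l \circ \psi = l \circ \phi$, so $l \mapsto l \circ g$ is unambiguous for a class $g = [\phi] \in \mathrm{Out}(G)$, and the action axioms descend from those already checked for $\mathrm{Aut}(G)$.

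Finally, preservation of non-negative linear combinations is a pointwise tautology that needs no special hypothesis: fixing a representative $\phi$ of $g$ and scalars $a,b \ge 0$, one evaluates at each $x$ to get $\big((a l_1 + b l_2)\circ\phi\big)(x) = a\,l_1(\phi(x)) + b\,l_2(\phi(x)) = \big(a(l_1\circ\phi) + b(l_2\circ\phi)\big)(x)$, so $(a l_1 + b l_2)\circ g = a(l_1\circ g) + b(l_2\circ g)$; scalar multiplication is the case $b=0$. I expect the well-definedness over cosets to be the crux of the argument, and no difficulty beyond the correct bookkeeping of Lemma \ref{lemq} and property (2) is anticipated.
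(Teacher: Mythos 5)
Your proposal is correct and follows essentially the same route as the paper: Lemma \ref{lemq} gives that pre-composition by an automorphism preserves $\mathrm{Func}(G)$, conjugation-invariance (property 2) shows the action kills $\mathrm{Inn}(G)$ and hence descends to $\mathrm{Out}(G)$, and linearity is a pointwise check. Your treatment of the coset-representative bookkeeping is slightly more explicit than the paper's, but the substance is identical.
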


\begin{proof}
For an inner automorphism $I_{g}:G\rightarrow G$ given by $%
I_{g}(h)=ghg^{-1}, $ the length function $l\circ I_{g}=l$ since $l$ is
invariant under conjugation. Therefore, the outer automorphism group $%
\mathrm{Out}(G)$ has an action on $\mathrm{Func}(G).$ It is obvious that the
pre-compositions preserve scalar multiplications and linear combinations
with non-negative coefficients.
\end{proof}

\begin{definition}
A length function $l:G\rightarrow \lbrack 0,\infty )$ is primitive if it is
not a composite $l^{\prime }\circ f$ for a non-trivial surjective group
homomorphism $f:G\twoheadrightarrow H$ and a length function $l^{\prime }:$ $%
H\rightarrow \lbrack 0,\infty ).$
\end{definition}

\begin{lemma}
\label{1.7}Suppose that a length function $l:G\rightarrow \lbrack 0,\infty )$
vanishes on a central subgroup $H<G.$ Then $l$ factors through the quotient
group $G/H.$ In other words, there exists a homogeneous and
conjugate-invariant function $l^{\prime }:G/H\rightarrow \lbrack 0,\infty )$
such that $l=l^{\prime }\circ q,$ where $q:G\rightarrow G/H$ is the quotient
group homomorphism.
\end{lemma}

\begin{proof}
Write $G=\cup gH,$ the union of left cosets. For any $h\in H,$ we have $%
l(gh)\leq l(g)+l(h)=l(g)$ and $l(g)=l(ghh^{-1})\leq l(gh).$ Therefore, $%
l(gh)=l(g)$ for any $h\in H.$ Define $l^{\prime }(gH)=l(g).$ Then $l^{\prime
}$ is a function on the quotient group $G/H.$ The required property follows
easily.
\end{proof}

\begin{corollary}
\label{cor1.1}Suppose that a group $G$ has a non-trivial finite central
subgroup $Z(G).$ Any length function $l$ on $G$ factors through $G/Z(G).$
\end{corollary}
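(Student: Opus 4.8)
The plan is to combine Lemma \ref{1.2} with Lemma \ref{1.7}; no new idea beyond these two is needed. First I would use the finiteness hypothesis to reduce to a vanishing statement. Since $Z(G)$ is a finite subgroup, every element $z\in Z(G)$ has finite order, say $z^{m}=1$ for some $m\geq 1$, and is therefore a torsion element of $G$. Applying Lemma \ref{1.2} to each such $z$ gives $l(z)=0$, so $l$ vanishes identically on $Z(G)$.

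Next I would invoke the factorization lemma. By hypothesis $Z(G)$ is a central subgroup of $G$, and we have just shown that $l$ vanishes on it, so the hypotheses of Lemma \ref{1.7} are met. That lemma then produces a length function $l^{\prime }:G/Z(G)\rightarrow \lbrack 0,\infty )$ with $l=l^{\prime }\circ q$, where $q:G\rightarrow G/Z(G)$ is the quotient homomorphism. This is precisely the assertion that $l$ factors through $G/Z(G)$.

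There is essentially no obstacle here: the corollary is an immediate consequence of the two preceding lemmas. The single point worth a moment's care is the step from finiteness of $Z(G)$ to the vanishing of $l$ on it, which rests on the observation that finiteness forces every element to be torsion; once that is in place, Lemma \ref{1.7} applies verbatim and the conclusion follows.
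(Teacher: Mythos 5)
Your proof is correct and follows exactly the route the paper takes: it cites Lemma \ref{1.2} to see that $l$ vanishes on the finite (hence torsion) central subgroup $Z(G)$, and then applies Lemma \ref{1.7} to factor $l$ through $G/Z(G)$. You have merely spelled out the details that the paper leaves implicit.
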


\begin{proof}
This follows from Lemma \ref{1.7} and Lemma \ref{1.2}.
\end{proof}

\begin{lemma}
\label{lem1.2}Let $G$ be a group. Suppose that any non-trivial normal
subgroup $H\vartriangleleft G$ is of finite index. Then any non-vanishing
length function $l:G\rightarrow \lbrack 0,\infty )$ is primitive.
\end{lemma}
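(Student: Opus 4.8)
The plan is to argue by contradiction, reducing the statement to the combination of two facts: that a non-trivial surjective homomorphism out of $G$ must have finite image, and that length functions vanish on torsion (Lemma \ref{1.2}). Suppose that $l:G\rightarrow[0,\infty)$ is a non-vanishing length function which is \emph{not} primitive. By the definition of primitivity we may then write $l=l'\circ f$ for some non-trivial surjective group homomorphism $f:G\twoheadrightarrow H$ and some length function $l':H\rightarrow[0,\infty)$.

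First I would pin down what ``non-trivial'' forces on the kernel. A surjective homomorphism that is non-trivial cannot be an isomorphism (otherwise $l=l'\circ f$ would be the vacuous factorization of $l$ through $G$ itself, making every length function non-primitive and the notion empty), so its kernel $N:=\ker f$ is a non-trivial normal subgroup of $G$. This is the step where the hypothesis enters: since $N\vartriangleleft G$ is non-trivial, the assumption guarantees that $N$ is of finite index in $G$. By the first isomorphism theorem $H\cong G/N$, and hence $H$ is a \emph{finite} group.

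Next I would exploit the finiteness of $H$ to kill $l'$. Every element of a finite group has finite order, so by Lemma \ref{1.2} we get $l'(h)=0$ for every $h\in H$; that is, $l'\equiv 0$. Consequently $l=l'\circ f\equiv 0$, which contradicts the assumption that $l$ is non-vanishing. Therefore no such factorization exists, and $l$ is primitive.

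I do not anticipate a genuine obstacle: the argument is short once the correct reading of ``non-trivial surjective homomorphism'' (namely, one whose kernel is a non-trivial normal subgroup) is fixed. The only point requiring a little care is verifying that the factorization supplied by non-primitivity really does produce a non-trivial normal subgroup as its kernel, so that the finite-index hypothesis can be applied; everything after that is a direct appeal to Lemma \ref{1.2}.
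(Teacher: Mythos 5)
Your proof is correct and follows essentially the same route as the paper: interpret the non-triviality of the surjection $f$ as non-triviality of its kernel, apply the hypothesis to conclude $H$ is finite, and then use Lemma \ref{1.2} to force $l'\equiv 0$ and hence $l\equiv 0$, a contradiction. The paper's version is terser but identical in substance; your explicit justification of why the kernel must be a non-trivial normal subgroup is a reasonable filling-in of a step the paper leaves implicit.
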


\begin{proof}
Suppose that $l$ is a composite $l^{\prime }\circ f$ for a non-trivial
surjective group homomorphism $f:G\twoheadrightarrow Q$ and a length
function $l^{\prime }:Q\rightarrow \lbrack 0,\infty ).$ By the assumption of 
$G,$ the quotient group $Q$ is finite. This implies that $l^{\prime }$ and
thus $l$ vanishes, which is a contradiction.
\end{proof}

\begin{corollary}
Let $\Gamma $ be an irreducible lattice in a connected irreducible
semisimple Lie group of real rank $\geq 2.$ Then any non-vanishing length
function $l:\Gamma /Z(\Gamma )\rightarrow \lbrack 0,\infty )$ is primitive.
\end{corollary}

\begin{proof}
By the Margulis-Kazhdan theorem (see \cite{z84}, Theorem 8.1.2), any normal
subgroup $N$ of $\Gamma $ either lies in the center of $\Gamma $ (and hence
it is finite) or the quotient group $\Gamma /N$ is finite. The previous
Lemma \ref{lem1.2} implies that $l$ is primitive.
\end{proof}

\section{Examples of length functions\label{section3}}

Let's see a general example first. Let $G$ be a group and $f:G\rightarrow
\lbrack 0,+\infty )$ be a function satisfying $f(gh)\leq f(g)+f(h)$ and $%
f(g)=f(g^{-1})$ for any elements $g,h\in G.$ Define $l:G\rightarrow \lbrack
0,+\infty )$ by 
\begin{equation*}
l(g)=\lim_{n\rightarrow \infty }\frac{f(g^{n})}{n}
\end{equation*}%
for any $g\in G.$

\begin{lemma}
\label{lem0}The function $l$ is a length function in the sense of Definition %
\ref{def}.
\end{lemma}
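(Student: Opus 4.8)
The plan is to verify the three axioms of Definition \ref{def} for the limit-defined function $l(g)=\lim_{n\to\infty}f(g^n)/n$, with the first order of business being to confirm that this limit actually exists. First I would establish existence and the homogeneity property $l(g^m)=|m|\,l(g)$ simultaneously, since both rest on the same subadditivity input. For a fixed $g$, set $a_n=f(g^n)$; the hypothesis $f(xy)\le f(x)+f(y)$ gives $a_{m+n}\le a_m+a_n$, so $(a_n)_{n\ge 1}$ is a nonnegative subadditive sequence. By Fekete's subadditive lemma, $\lim_{n\to\infty}a_n/n$ exists and equals $\inf_n a_n/n$, so $l(g)$ is well-defined and finite. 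To get $l(g^m)=|m|\,l(g)$ for $m\ge 1$, I would compute $l(g^m)=\lim_n f(g^{mn})/n=\lim_n m\cdot\big(f((g^m)^n)/(mn)\big)=m\,l(g)$ by passing to the subsequence $mn$ of the convergent sequence $f(g^k)/k$. For negative exponents one uses $f(x)=f(x^{-1})$: since $f((g^{-1})^n)=f((g^n)^{-1})=f(g^n)$, we get $l(g^{-1})=l(g)$, and then $l(g^m)=|m|\,l(g)$ follows for all $m\in\mathbb{Z}$, with $l(g^0)=l(1)=0$ coming from $f(1)\le f(1)+f(1)$ forcing $f(1^n)/n\to 0$ (indeed $f(1)=f(1\cdot 1)\le 2f(1)$ is automatic, but $a_n=f(1)$ constant gives limit $0$).

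Next I would verify conjugacy invariance, axiom (2). For any $a,g\in G$ one has $(aga^{-1})^n=ag^na^{-1}$, so $f((aga^{-1})^n)=f(ag^na^{-1})$. Using subadditivity twice, $f(ag^na^{-1})\le f(a)+f(g^n)+f(a^{-1})=f(g^n)+2f(a)$, and symmetrically $f(g^n)=f(a^{-1}(ag^na^{-1})a)\le f(ag^na^{-1})+2f(a)$. Dividing by $n$ and letting $n\to\infty$, the bounded additive error $2f(a)$ washes out, giving $l(aga^{-1})=l(g)$.

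Finally, for axiom (3), let $a,b$ commute. Then $(ab)^n=a^nb^n$, so $f((ab)^n)=f(a^nb^n)\le f(a^n)+f(b^n)$. Dividing by $n$ and taking the limit yields $l(ab)\le l(a)+l(b)$ directly. I expect the only genuinely delicate point to be the limit's existence, which is the one place a reader might worry; everything else is a short manipulation once Fekete's lemma is in hand. The symmetry hypothesis $f(g)=f(g^{-1})$ is exactly what is needed to handle negative powers cleanly, and the commutativity hypothesis in axiom (3) is what makes $(ab)^n=a^nb^n$ hold, so both hypotheses on $f$ are used in an essential way.
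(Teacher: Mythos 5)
Your proposal is correct and follows essentially the same route as the paper: Fekete's lemma for existence, the subsequence argument for homogeneity (which the paper leaves implicit), the $f(a)+f(g^n)+f(a^{-1})$ sandwich for conjugacy invariance, and $(ab)^n=a^nb^n$ for the commuting case. The only difference is that you spell out the negative-exponent case via $f(x)=f(x^{-1})$ and the value at the identity, details the paper treats as clear from the definition.
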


\begin{proof}
For any $g\in G,$ and natural numbers $n,m,$ we have $f(g^{n+m})\leq
f(g^{n})+f(g^{m}).$ This means that $\{f(g^{n})\}_{n=1}^{\infty }$ is a
subadditive sequence and thus the limit $\lim_{n\rightarrow \infty }\frac{%
f(g^{n})}{n}$ exists. This shows that $l$ is well-defined.

From the definition of $l,$ it is clear that $l(g^{n})=|n|l(g)$ for any
integer $n.$ Let $h\in G.$ We have%
\begin{equation*}
l(hgh^{-1})=\lim_{n\rightarrow \infty }\frac{f(hg^{n}h^{-1})}{n}\leq
\lim_{n\rightarrow \infty }\frac{f(h)+f(g^{n})+f(h^{-1})}{n}%
=\lim_{n\rightarrow \infty }\frac{f(g^{n})}{n}=l(g).
\end{equation*}%
Similarly, we have $l(g)=l(h^{-1}(hgh^{-1})h)\leq l(hgh^{-1})$ and thus $%
l(g)=l(hgh^{-1}).$ For commuting elements $a,b,$ we have $%
(ab)^{n}=a^{n}b^{n}.$ Therefore, 
\begin{eqnarray*}
l(ab) &=&\lim_{n\rightarrow \infty }\frac{f((ab)^{n})}{n}=\lim_{n\rightarrow
\infty }\frac{f(a^{n}b^{n})}{n} \\
&\leq &\lim_{n\rightarrow \infty }\frac{f(a^{n})+f(b^{n})}{n}\leq l(a)+l(b).
\end{eqnarray*}
\end{proof}

Many (but not all) length functions $l$ come from these subadditive
functions $f.$

\subsection{Stable word lengths}

Let $G$ be a group generated by a (not necessarily finite) set $S$
satisfying $s^{-1}\in S$ for each $s\in S.$ For any $g\in G,$ the word
length $\phi _{S}(g)=\min \{n\mid g=s_{1}s_{2}\cdots s_{n},$ each $s_{i}\in
S\}$ is the minimal number of elements of $S$ whose product is $g.$ The
stable length $l(g)=\lim_{n\rightarrow \infty }\frac{\phi _{S}(g^{n})}{n}.$
Since $\phi _{S}(g^{n})$ is subadditive, the limit always exists.

\begin{lemma}
The stable length $l:G\rightarrow \lbrack 0,+\infty )$ is a length function
in the sense of Definition \ref{def}.
\end{lemma}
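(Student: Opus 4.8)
The plan is to exhibit the stable length as a special case of the general construction of Lemma~\ref{lem0}, so that no direct verification of the three axioms of Definition~\ref{def} is needed. Concretely, I would set $f=\phi_S$ and check that the word-length function satisfies the two hypotheses of Lemma~\ref{lem0}, namely subadditivity $f(gh)\leq f(g)+f(h)$ and invariance under inversion $f(g)=f(g^{-1})$; the conclusion that $l(g)=\lim_{n\to\infty}f(g^n)/n$ is a length function then follows immediately. The existence of the limit is already recorded in the text above, since $\{\phi_S(g^n)\}_n$ is subadditive by the same concatenation argument.

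For subadditivity, if $g=s_1\cdots s_m$ and $h=t_1\cdots t_k$ are shortest $S$-words representing $g$ and $h$, then the concatenation $s_1\cdots s_m t_1\cdots t_k$ represents $gh$ and uses $m+k=\phi_S(g)+\phi_S(h)$ letters, whence $\phi_S(gh)\leq\phi_S(g)+\phi_S(h)$. This is the step that makes $\{\phi_S(g^n)\}_n$ subadditive and guarantees the limit defining $l$ exists.

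For symmetry under inversion, this is exactly where the standing assumption that $S$ is symmetric (i.e.\ $s^{-1}\in S$ for every $s\in S$) is used. Given a shortest representation $g=s_1\cdots s_m$, one has $g^{-1}=s_m^{-1}\cdots s_1^{-1}$, and each $s_i^{-1}$ again lies in $S$; this is an $S$-word of length $m$, so $\phi_S(g^{-1})\leq\phi_S(g)$. Applying the same inequality to $g^{-1}$ in place of $g$ gives the reverse inequality, hence $\phi_S(g)=\phi_S(g^{-1})$.

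With both hypotheses verified, Lemma~\ref{lem0} applies verbatim and identifies $l$ as a length function in the sense of Definition~\ref{def}. There is essentially no obstacle in this argument; the only point requiring care is not to bypass the symmetry hypothesis on $S$, which is precisely what forces $\phi_S(g)=\phi_S(g^{-1})$ and hence underlies the conjugation- and inversion-compatibility of $l$. Without the assumption $S=S^{-1}$ the raw word length need not even satisfy $\phi_S(g)=\phi_S(g^{-1})$, so it is worth flagging that this hypothesis is genuinely being invoked rather than assumed silently.
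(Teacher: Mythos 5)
Your proposal is correct and follows exactly the paper's route: verify that $\phi_S$ is subadditive and satisfies $\phi_S(g)=\phi_S(g^{-1})$ (using the symmetry of $S$), then invoke Lemma~\ref{lem0}. The paper states these two properties as clear from the definition, while you supply the concatenation and inverse-word arguments explicitly; the substance is the same.
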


\begin{proof}
From the definition of the word length $\phi _{S},$ it is clear that $\phi
_{S}(gh)\leq \phi _{S}(g)+\phi _{S}(h)$ and $\phi _{S}(g)=\phi _{S}(g^{-1})$
for any $g,h\in G.$ The claim is proved by Lemma \ref{lem0}.
\end{proof}

\bigskip

When $S$ is the set of commutators, the $l(g)$ is called the stable
commutator length, which is related to lots of topics in low-dimensional
topology (see Calegari \cite{cal}).

\subsection{Growth rate}

Let $G$ be a group generated by a finite set $S$ satisfying $s^{-1}\in S$
for each $s\in S.$ Suppose $|\cdot |_{S}$ is the word length of $(G,S).$ For
any automorphism $\alpha :G\rightarrow G,$ define $l^{\prime }(\alpha )=\max
\{|\alpha (s_{i})|_{S}:s_{i}\in S\}.$ Let $l(\alpha )=\lim_{n\rightarrow
\infty }\frac{\log l^{\prime }(\alpha ^{n})}{n}.$ This number $l(\alpha )$
is called the algebraic entropy of $\alpha $ (cf. \cite{kat}, Definition
3.1.9, page 114).

\begin{lemma}
Let $\mathrm{Aut}(G)$ be the group of automorphisms of $G.$ The function $l:%
\mathrm{Aut}(G)\rightarrow \lbrack 0,+\infty )$ is a length function in the
sense of Definition \ref{def}.
\end{lemma}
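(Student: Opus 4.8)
The plan is to realise $l'$ as the Lipschitz constant of $\alpha$ with respect to the word metric and to feed $f=\log l'$ into Lemma \ref{lem0}. First I would record the identity $l'(\alpha)=\sup_{g\neq 1}|\alpha(g)|_S/|g|_S$: the inequality $\geq$ is immediate by taking $g=s$, and for $\leq$ one writes a geodesic word $g=s_1\cdots s_k$ with $k=|g|_S$ and uses $|\alpha(g)|_S\leq\sum_i|\alpha(s_i)|_S\leq k\,l'(\alpha)$. From this description submultiplicativity $l'(\alpha\beta)\leq l'(\alpha)\,l'(\beta)$ follows at once, by applying the Lipschitz bound for $\alpha$ to the word $\beta(s)$. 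Since $\alpha$ is injective, $|\alpha(s)|_S\geq 1$ for every non-trivial $s\in S$, so $l'\geq 1$ and $f=\log l'\geq 0$; subadditivity of $f$ then gives, via Fekete's lemma, the existence of $l(\alpha)=\lim_n f(\alpha^n)/n=\inf_n f(\alpha^n)/n$, so $l$ is well defined with values in $[0,+\infty)$.

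Next I would dispatch the conditions of Definition \ref{def} that need only submultiplicativity. For condition (3), if $\alpha\beta=\beta\alpha$ then $(\alpha\beta)^n=\alpha^n\beta^n$, whence $f((\alpha\beta)^n)\leq f(\alpha^n)+f(\beta^n)$; dividing by $n$ and passing to the limit gives $l(\alpha\beta)\leq l(\alpha)+l(\beta)$. For condition (2), $(\beta\alpha\beta^{-1})^n=\beta\alpha^n\beta^{-1}$ gives $f(\beta\alpha^n\beta^{-1})\leq f(\beta)+f(\alpha^n)+f(\beta^{-1})$, so $l(\beta\alpha\beta^{-1})\leq l(\alpha)$, and the reverse inequality follows by conjugating back with $\beta^{-1}$; hence $l$ is conjugation-invariant. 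For condition (1) with $n\geq 0$, the substitution $k=nm$ in $f(\alpha^{nm})/m=n\cdot f(\alpha^{nm})/(nm)$ and letting $m\to\infty$ yields $l(\alpha^n)=n\,l(\alpha)$, while $l(\mathrm{id})=0$ is clear since $l'(\mathrm{id})=1$.

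The remaining, and genuinely delicate, point is condition (1) for negative exponents. Applying the positive case to $\alpha^{-1}$ gives $l(\alpha^{-m})=m\,l(\alpha^{-1})$, so everything reduces to the symmetry $l(\alpha^{-1})=l(\alpha)$; this is the step I expect to be the main obstacle, and it is exactly the hypothesis $f(g)=f(g^{-1})$ needed to invoke Lemma \ref{lem0}. My plan is to establish it by exploiting that $S$ is symmetric and that the word metric satisfies $|g|_S=|g^{-1}|_S$, seeking to bound $l'(\alpha^{-n})$ and $l'(\alpha^{n})$ against one another so that the two stable limits coincide. The difficulty is that the Lipschitz distortion of $\alpha^{-1}$ is not controlled by that of $\alpha$ in any naive way, so securing this symmetry — rather than the formal verification of (2), (3) and the positive half of (1) — is where the real content of the lemma lies; once it is in hand, Lemma \ref{lem0} applied to $f=\log l'$ delivers all three conditions of Definition \ref{def} simultaneously.
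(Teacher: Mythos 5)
Your treatment of the routine parts is correct and coincides with the paper's route: the paper likewise proves the submultiplicativity $l'(\alpha\beta)\leq l'(\alpha)l'(\beta)$ by expanding $\beta(s_i)$ as a geodesic word, and then appeals to Lemma \ref{lem0}. You have also correctly located the crux at the symmetry $l(\alpha^{-1})=l(\alpha)$, i.e.\ the hypothesis $f(g)=f(g^{-1})$ of Lemma \ref{lem0}. But your proposal stops at a plan there, and that plan cannot succeed, because the symmetry is \emph{false} for the limit as defined. An elementary counterexample: let $G=\mathbb{Z}^{3}$ with $S=\{\pm e_{1},\pm e_{2},\pm e_{3}\}$, and let $\alpha\in\mathrm{Aut}(\mathbb{Z}^{3})=\mathrm{GL}_{3}(\mathbb{Z})$ be the companion matrix $A$ of $x^{3}-x^{2}-1$ (its determinant is $1$). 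Here the word length is exactly the $\ell^{1}$-norm, so $l'(\alpha^{n})=\max_{i}\Vert A^{n}e_{i}\Vert_{1}$ is the $\ell^{1}$ operator norm of $A^{n}$, and Gelfand's formula gives $\lim_{n}\frac{1}{n}\log l'(\alpha^{n})=\log\rho(A)$ and $\lim_{n}\frac{1}{n}\log l'(\alpha^{-n})=\log\rho(A^{-1})$. The polynomial $x^{3}-x^{2}-1$ has one real root $\rho(A)\approx 1.4656$ and a complex-conjugate pair of modulus $\rho(A)^{-1/2}\approx 0.8262$, so $\rho(A^{-1})\approx 1.2106\neq\rho(A)$, and condition (1) at $n=-1$ genuinely fails. (In the nonabelian setting the same failure is the well-known parageometric phenomenon: for $\alpha\in\mathrm{Aut}(F_{3})$ with $a\mapsto b$, $b\mapsto c$, $c\mapsto ab$, the growth rates of $\alpha$ and $\alpha^{-1}$ are the largest roots of $x^{3}-x-1$ and $x^{3}-x^{2}-1$ respectively, cf.\ Handel--Mosher.) So no bound of $l'(\alpha^{-n})$ against $l'(\alpha^{n})$ of the kind you were seeking can exist.

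You should also be aware that the paper's own proof founders at exactly this point: it deduces $l'(\alpha)=l'(\alpha^{-1})$ from the claimed identity $\alpha(s_{i})^{-1}=\alpha^{-1}(s_{i})$, but the correct identity is $\alpha(s_{i})^{-1}=\alpha(s_{i}^{-1})$, which (as $S$ is symmetric) only says that $l'$ is insensitive to inverting the \emph{generators}, not the automorphism; thus the hypothesis $f(g)=f(g^{-1})$ of Lemma \ref{lem0} is never actually verified, and by the counterexample it cannot be. The statement should be repaired the way the paper itself treats stable norms and dynamical degrees, namely by symmetrizing: set $l(\alpha)=\max\bigl\{\lim_{n\rightarrow\infty}\frac{\log l'(\alpha^{n})}{n},\ \lim_{n\rightarrow\infty}\frac{\log l'(\alpha^{-n})}{n}\bigr\}$. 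With this definition your verifications complete the proof: each one-sided limit satisfies conditions (2), (3) and condition (1) for $n\geq 0$ by exactly your arguments; the maximum of the two inherits conjugation-invariance and subadditivity on commuting pairs (using $(ab)^{-1}=a^{-1}b^{-1}$ for commuting $a,b$); and homogeneity for negative exponents is now automatic. In short, your instinct that the inversion symmetry is where the real content lies was exactly right --- it is the one step that neither you nor the paper can prove, because as stated it is false.
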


\begin{proof}
Since $\alpha (s_{i})^{-1}=\alpha ^{-1}(s_{i})$ for any $s_{i}\in S,$ we
know that $l^{\prime }(\alpha )=l^{\prime }(\alpha ^{-1}).$ For another
automorphism $\beta :G\rightarrow G,$ let $l^{\prime }(\beta )=|\beta
(s_{i})|_{S}$ for some $s_{i}\in S.$ Suppose that $\beta
(s_{i})=s_{i_{1}}s_{i_{2}}\cdots s_{i_{k}}$ with $k=l^{\prime }(\beta ).$
Then $|(\alpha \beta )(s_{i})|_{S}=|\alpha (s_{i_{1}})\alpha
(s_{i_{2}})\cdots \alpha (s_{i_{k}})|_{S}\leq l^{\prime }(\alpha )k.$ This
proves that $l^{\prime }(\alpha \beta )\leq l^{\prime }(\alpha )l^{\prime
}(\beta ).$ The claim is proved by Lemma \ref{lem0}.
\end{proof}

\bigskip

Fix $g\in G.$ For any automorphism $\alpha :G\rightarrow G,$ define $%
b_{n}=|\alpha ^{n}(g)|_{S}.$ Suppose that $g=s_{1}s_{2}\cdots s_{k}$ with $%
k=|g|_{S}.$ Note that $b_{n}=|\alpha ^{n}(g)|_{S}=|\alpha ^{n}(s_{1})\alpha
^{n}(s_{2})\cdots \alpha ^{n}(s_{k})|_{S}\leq l^{\prime }(\alpha
^{n})|g|_{S}.$ Therefore, we have 
\begin{equation*}
\lim \sup_{n\rightarrow \infty }\frac{\log b_{n}}{n}\leq l(\alpha ).
\end{equation*}%
This implies that $l(\alpha )$ is an upper bound for the growth rate of $%
\{|\alpha ^{n}(g)|_{S}\}.$ The growth rate is studied a lot in geometric
group theory (for example, see \cite{le} for growth of automorphisms of free
groups).

\subsection{Matrix norms and group acting on smooth manifolds}

For a square matrix $A,$ the matrix norm $\Vert A\Vert =\sup_{\Vert x\Vert
=1}\Vert Ax\Vert .$ Define the stable norm $s(A)=\lim_{n\rightarrow +\infty }%
\frac{\log \Vert A^{n}\Vert }{n}.$ Since $\Vert AB\Vert \leq \Vert A\Vert
\Vert B\Vert $ for any two matrices $A,B,$ the sequence $\{\log \Vert
A^{n}\Vert \}_{n=1}^{\infty }$ is subadditive and thus the limit exists.

\begin{lemma}
Let $G=\mathrm{GL}_{n}(\mathbb{R})$ be the general linear group. The
function $l:G\rightarrow \lbrack 0,+\infty )$ defined by%
\begin{equation*}
l(g)=\max \{s(g),s(g^{-1})\}
\end{equation*}
is a length function in the sense of Definition \ref{def}.
\end{lemma}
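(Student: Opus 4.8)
The plan is to realize $l$ through Lemma \ref{lem0} by symmetrizing the non-symmetric function $\log\Vert\cdot\Vert$. Concretely, I would introduce the auxiliary function $f:G\rightarrow\lbrack 0,+\infty)$ defined by $f(g)=\max\{\log\Vert g\Vert,\log\Vert g^{-1}\Vert\}$ and verify that $f$ satisfies the two hypotheses of Lemma \ref{lem0} (subadditivity and $f(g)=f(g^{-1})$), together with the requirement that it take values in $[0,+\infty)$.

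First I would check the range: by submultiplicativity $\Vert g\Vert\,\Vert g^{-1}\Vert\geq\Vert gg^{-1}\Vert=\Vert I\Vert=1$, so at least one of $\Vert g\Vert,\Vert g^{-1}\Vert$ is $\geq 1$ and hence $f(g)\geq 0$. The symmetry $f(g)=f(g^{-1})$ is immediate, since passing from $g$ to $g^{-1}$ merely interchanges the two entries of the maximum. For subadditivity I would apply submultiplicativity twice: $\log\Vert gh\Vert\leq\log\Vert g\Vert+\log\Vert h\Vert\leq f(g)+f(h)$, and likewise $\log\Vert(gh)^{-1}\Vert=\log\Vert h^{-1}g^{-1}\Vert\leq\log\Vert h^{-1}\Vert+\log\Vert g^{-1}\Vert\leq f(g)+f(h)$; taking the maximum of the two left-hand sides gives $f(gh)\leq f(g)+f(h)$.

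With the hypotheses of Lemma \ref{lem0} verified, that lemma immediately yields that $l'(g)=\lim_{n\rightarrow\infty}f(g^{n})/n$ is a length function in the sense of Definition \ref{def}. It then remains only to identify $l'$ with $l$. Writing $f(g^{n})/n=\max\{\tfrac{\log\Vert g^{n}\Vert}{n},\tfrac{\log\Vert g^{-n}\Vert}{n}\}$, the two entries converge to $s(g)$ and $s(g^{-1})$ respectively (these stable norms exist by the subadditivity of $\{\log\Vert g^{n}\Vert\}$ recorded just before the statement). Since the maximum of two reals is a continuous function, the limit of the maximum equals the maximum of the limits, so $l'(g)=\max\{s(g),s(g^{-1})\}=l(g)$, which completes the proof.

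I do not expect a serious obstacle here; the only points requiring any care are the non-negativity of $f$ (which is exactly the content of $\Vert g\Vert\,\Vert g^{-1}\Vert\geq 1$) and the interchange of the limit with the maximum, justified by continuity of $\max$. An alternative route would bypass $f$ altogether and check the three axioms of Definition \ref{def} directly for $l$, using $(ab)^{-1}=a^{-1}b^{-1}$ and $(ab)^{n}=a^{n}b^{n}$ for commuting $a,b$ to obtain axiom 3; but routing through Lemma \ref{lem0} is cleaner and reuses the conjugation-invariance computation already carried out in its proof.
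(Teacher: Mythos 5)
Your proof is correct and follows essentially the same route as the paper's: both rest on the submultiplicativity $\log\Vert gh\Vert\leq\log\Vert g\Vert+\log\Vert h\Vert$ and an appeal to Lemma \ref{lem0}. You are in fact more careful than the paper, which applies Lemma \ref{lem0} without explicitly symmetrizing or checking non-negativity (note $\log\Vert g\Vert$ alone is neither symmetric nor non-negative); your auxiliary $f(g)=\max\{\log\Vert g\Vert,\log\Vert g^{-1}\Vert\}$ and the identification of $\lim f(g^{n})/n$ with $\max\{s(g),s(g^{-1})\}$ supply exactly the details the paper leaves implicit.
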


\begin{proof}
From the definition of the matrix norm$,$ it is clear that $\log \Vert
gh\Vert \leq \log \Vert g\Vert +\log \Vert h\Vert $ for any $g,h\in G.$ Then 
$l(g)=\max \{s(g),s(g^{-1})\}$ is a length function by Lemma \ref{lem0}.
\end{proof}

Let $M$ be a compact Riemannian manifold and $\mathrm{Diff}(M)$ the
diffeomorphism group consisting of all self-diffeomorphisms. For any
diffeomorphism $f:M\rightarrow M,$ let 
\begin{equation*}
\Vert f\Vert =\sup_{x\in M}\Vert D_{x}f\Vert ,
\end{equation*}%
where $D_{x}f$ is the induced linear map between tangent spaces $%
T_{x}M\rightarrow T_{f(x)}M.$ Define 
\begin{equation*}
l(f)=\max \{\lim_{n\rightarrow +\infty }\frac{\log \Vert f^{n}\Vert }{n}%
,\lim_{n\rightarrow +\infty }\frac{\log \Vert f^{-n}\Vert }{n}\}.
\end{equation*}%
A similar argument as the proof of the previous lemma proves the following.

\begin{lemma}
\label{derivelength}Let $G$ be a group acting on a Riemannian manifold $M$
by diffeomorphisms. The function $l:G\rightarrow \lbrack 0,+\infty )$ is a
length function in the sense of Definition \ref{def}.
\end{lemma}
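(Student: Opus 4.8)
The plan is to reduce the statement to Lemma \ref{lem0}, exactly as in the matrix case of the preceding lemma; the only genuinely new ingredient is the submultiplicativity of the derivative norm $\Vert\cdot\Vert$ on $\mathrm{Diff}(M)$. Concretely, I would introduce the auxiliary function
\begin{equation*}
F(f)=\max\{\log\Vert f\Vert ,\log\Vert f^{-1}\Vert\},
\end{equation*}
check that it satisfies the hypotheses of Lemma \ref{lem0} (non-negativity, symmetry under $f\mapsto f^{-1}$, and subadditivity $F(fg)\le F(f)+F(g)$), and then identify the length function $\lim_{n\to\infty}F(f^{n})/n$ produced by that lemma with the function $l$ in the statement.

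The key step is to establish $\Vert fg\Vert\le\Vert f\Vert\,\Vert g\Vert$ for any two diffeomorphisms $f,g$, the manifold analogue of submultiplicativity of the matrix norm. By the chain rule $D_{x}(f\circ g)=D_{g(x)}f\circ D_{x}g$, so for every $x\in M$ one has $\Vert D_{x}(fg)\Vert\le\Vert D_{g(x)}f\Vert\,\Vert D_{x}g\Vert\le\Vert f\Vert\,\Vert g\Vert$, and taking the supremum over $x$ gives the claim. Taking logarithms shows $\log\Vert\cdot\Vert$ is subadditive on $\mathrm{Diff}(M)$; since $M$ is compact, $\Vert f\Vert=\sup_{x\in M}\Vert D_{x}f\Vert$ is finite, so all quantities involved are real. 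Applying submultiplicativity to $\mathrm{id}=f\circ f^{-1}$ gives $\Vert f\Vert\,\Vert f^{-1}\Vert\ge\Vert\mathrm{id}\Vert=1$, hence $\log\Vert f\Vert+\log\Vert f^{-1}\Vert\ge0$ and $F(f)\ge0$; symmetry $F(f)=F(f^{-1})$ is immediate; and $F(fg)\le F(f)+F(g)$ follows from $\log\Vert fg\Vert\le\log\Vert f\Vert+\log\Vert g\Vert$ together with $\log\Vert(fg)^{-1}\Vert=\log\Vert g^{-1}f^{-1}\Vert\le\log\Vert g^{-1}\Vert+\log\Vert f^{-1}\Vert$.

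Lemma \ref{lem0} then yields that $\lim_{n\to\infty}F(f^{n})/n$ is a length function. Finally I would identify this limit with $l$: since $F(f^{n})=\max\{\log\Vert f^{n}\Vert ,\log\Vert f^{-n}\Vert\}$, and each of the sequences $\log\Vert f^{\pm n}\Vert$ is subadditive in $n$ (hence $\frac1n\log\Vert f^{\pm n}\Vert$ converges by Fekete, the limits being finite because $\Vert f^{n}\Vert\ge\Vert f^{-n}\Vert^{-1}\ge\Vert f^{-1}\Vert^{-n}$ bounds them below), continuity of $\max$ gives
\begin{equation*}
\lim_{n\to\infty}\frac{F(f^{n})}{n}=\max\Bigl\{\lim_{n\to\infty}\frac{\log\Vert f^{n}\Vert}{n},\lim_{n\to\infty}\frac{\log\Vert f^{-n}\Vert}{n}\Bigr\}=l(f),
\end{equation*}
as required.

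I do not expect a serious obstacle here: the entire content sits in the chain-rule submultiplicativity $\Vert fg\Vert\le\Vert f\Vert\,\Vert g\Vert$, after which the statement is a formal consequence of Lemma \ref{lem0} in precisely the way the preceding lemma handles $\mathrm{GL}_{n}(\mathbb{R})$. The only places where the standing hypotheses are genuinely used are the compactness of $M$ (ensuring $\Vert f\Vert<\infty$ so that $F$ is real-valued) and the implicit choice of a Riemannian metric used to define $\Vert D_{x}f\Vert$.
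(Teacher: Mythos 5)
Your proposal is correct and follows essentially the same route as the paper, which simply invokes the argument of the preceding matrix-norm lemma: establish submultiplicativity $\Vert fg\Vert\le\Vert f\Vert\,\Vert g\Vert$ (here via the chain rule) and then apply Lemma \ref{lem0}. Your write-up is in fact more careful than the paper's one-line proof, since by packaging everything into $F(f)=\max\{\log\Vert f\Vert,\log\Vert f^{-1}\Vert\}$ you verify the non-negativity and symmetry hypotheses of Lemma \ref{lem0} explicitly and justify the interchange of $\max$ and limit, points the paper leaves implicit.
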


For an $f$-invariant Borel probability measure $\mu $ on $M,$ it is well
known (see \cite{O}) that there exists a measurable subset $\Gamma
_{f}\subset M$ with $\mu (\Gamma _{f})=1$ such that for all $x\in \Gamma
_{f} $ and $u\in T_{x}M,$ the limit%
\begin{equation*}
\chi (x,u,f)=\lim \frac{1}{n}\log \Vert D_{x}f^{n}(u)\Vert
\end{equation*}%
exists and is called the Lyapunov exponent of $u$ at $x.$ From the
definitions, we know that $\chi (x,u,f)\leq l(f)$ for any $x\in \Gamma _{f}$
and $u\in T_{x}M.$

\subsection{Smooth measure-theoretic entropy}

Let $T:X\rightarrow X$ be a measure-preserving map of the probability space $%
(X,\mathfrak{B},m).$ For a finite-sub-$\sigma $-algebra $A=%
\{A_{1},A_{2},...,A_{k}\}$ of $\mathfrak{B},$ denote by%
\begin{eqnarray*}
H(A) &=&-\sum m(A_{i})\log m(A_{i}), \\
h(T,A) &=&\lim \frac{1}{n}H(\vee _{i=0}^{n-1}T^{-i}A),
\end{eqnarray*}%
where $\vee _{i=0}^{n-1}T^{-i}A$ is a set consisting of sets of the form $%
\cap _{i=0}^{n-1}T^{-i}A_{j_{i}}.$ The entropy of $T$ is defined as $%
h_{m}(T)=\sup h(T,A),$ where the supremum is taken over all finite
sub-algebra $A$ of $\mathfrak{B}.$ For more details, see Walters \cite{wat}
(Section 4.4).

\begin{lemma}
Let $M$ be a $C^{\infty }$ closed Riemannian manifold and $G=\mathrm{Diff}%
_{\mu }^{2}(M)$ consisting of diffeomorphisms of $M$ preserving a Borel
probability measure $\mu .$ The entropy $h_{\mu }$ is a length function on $%
\mathrm{Diff}_{\mu }^{2}(M)$ in the sense of Definition \ref{def}.
\end{lemma}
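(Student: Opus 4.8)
The plan is to verify the three axioms of Definition \ref{def} \emph{directly} for $l=h_{\mu}$ on $G=\mathrm{Diff}_{\mu }^{2}(M)$, rather than invoking Lemma \ref{lem0}: metric entropy is not subadditive for arbitrary pairs, so the subadditive-function machinery does not apply and only the commuting case must be established. For Axiom~1, the power rule $h_{\mu}(T^{n})=n\,h_{\mu}(T)$ for $n\geq 1$ is standard (see \cite{wat}, Section 4.4); since every $T\in G$ is invertible with $T^{-1}\in G$, the identity $h_{\mu}(T^{-1})=h_{\mu}(T)$ upgrades this to $h_{\mu}(T^{n})=|n|\,h_{\mu}(T)$ for all $n\in \mathbb{Z}$, with $h_{\mu}(\mathrm{id})=0$. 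For Axiom~2, any $a\in G$ is a $\mu$-preserving automorphism of $(M,\mathfrak{B},\mu)$ that intertwines $T$ with $aTa^{-1}$ as a measure-theoretic isomorphism; as $h_{\mu}$ is computed from the $\mu$-distribution of the refining partitions, which $a$ carries bijectively while preserving $H$, we get $h_{\mu}(aTa^{-1})=h_{\mu}(T)$.

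Axiom~3 is the heart of the matter: for commuting $S,T\in G$ I must show $h_{\mu}(ST)\leq h_{\mu}(S)+h_{\mu}(T)$. Since $h_{\mu}(R)=\sup_{\alpha}h(R,\alpha)$, it suffices to bound $h(ST,\alpha)$ by $h_{\mu}(S)+h_{\mu}(T)$ for each finite partition $\alpha$. Using commutativity, $(ST)^{-i}\alpha=S^{-i}T^{-i}\alpha$, so the diagonal refinement $\bigvee_{i=0}^{n-1}(ST)^{-i}\alpha$ records the $\alpha$-labels of the points $S^{i}T^{i}x$; the lattice points $(i,i)$ lie on a monotone staircase path in $\mathbb{Z}^{2}$ from $(0,0)$ to $(n-1,n-1)$ having $n$ horizontal ($S$-)steps and $n$ vertical ($T$-)steps. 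Hence this diagonal refinement is coarser than the staircase refinement $P_{n}=\bigvee_{(a,b)\in\mathrm{path}}S^{-a}T^{-b}\alpha$ (a join of only $2n-1$ copies of $\alpha$), whence $H(\bigvee_{i=0}^{n-1}(ST)^{-i}\alpha)\leq H(P_{n})$. Expanding $H(P_{n})$ by the chain rule along the path and pushing each increment forward by the appropriate measure-preserving power, each $S$-step contributes a conditional entropy of the form $H(\alpha\mid\bigvee_{m=1}^{k}S^{-m}\alpha)$ and each $T$-step one of the form $H(\alpha\mid\bigvee_{m=1}^{k}T^{-m}\alpha)$. These are non-increasing in $k$ and converge to $h(S,\alpha)$ and $h(T,\alpha)$; a Cesàro averaging then gives $\tfrac{1}{n}H(P_{n})\to h(S,\alpha)+h(T,\alpha)$, so that $h(ST,\alpha)\leq h(S,\alpha)+h(T,\alpha)\leq h_{\mu}(S)+h_{\mu}(T)$, and taking the supremum over $\alpha$ finishes Axiom~3.

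The main obstacle is obtaining the \emph{correct constant} in Axiom~3. The crude refinement $\bigvee_{i}S^{-i}\bigl(\bigvee_{j}T^{-j}\alpha\bigr)$ overcounts and yields a bound growing linearly in $n$, hence is useless; the point is that the staircase path has size $O(n)$ rather than $O(n^{2})$, and that the decreasing-martingale convergence $H(\alpha\mid\bigvee_{m=1}^{k}R^{-m}\alpha)\downarrow h(R,\alpha)$ recovers exactly $h(S,\alpha)+h(T,\alpha)$. Some care is also needed in conditioning each path increment on enough of its own one-directional past while discarding, harmlessly, the conditioning on the remaining earlier path points (which only decreases entropy), so that the limiting increments are the genuine Kolmogorov--Sinai rates $h(S,\alpha)$ and $h(T,\alpha)$ and not coarser one-step conditional entropies.
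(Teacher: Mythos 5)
Your handling of Axioms 1 and 2 is fine and matches the paper, which simply quotes Walters for $h_{\mu }(T^{n})=|n|h_{\mu }(T)$ and for conjugation invariance. The divergence is at Axiom 3. The paper does not prove it: it cites Hu's theorem that $h_{\mu }(fg)\leq h_{\mu }(f)+h_{\mu }(g)$ for \emph{commuting $C^{2}$ diffeomorphisms}, whose proof runs through Lyapunov exponents, unstable manifolds and the Ledrappier--Young machinery, and uses the smoothness hypothesis essentially. Your staircase argument is purely measure-theoretic and never touches the $C^{2}$ assumption, so if it were correct it would prove subadditivity of $(m,n)\mapsto h_{\mu }(S^{m}T^{n})$ for an arbitrary $\mathbb{Z}^{2}$-action by measure-preserving transformations. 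That is a far stronger statement than Hu's theorem (it is a statement about directional entropy of $\mathbb{Z}^{2}$-actions, a notoriously delicate quantity), and the fact that Hu needed Pesin theory for the smooth case should already make you doubt that a soft partition count can do the job.

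The concrete gap is exactly at the point you flag as needing ``some care.'' A monotone lattice path from $(0,0)$ to $(n-1,n-1)$ that contains every diagonal point $(i,i)$ is forced, between $(i,i)$ and $(i+1,i+1)$, to take exactly one horizontal and one vertical step (the points $(i-1,i)$ and $(i,i-1)$ cannot both lie on a monotone path). Hence each row of the path contains at most two points, and each path point has at most one same-row predecessor. When you push an $S$-step increment forward by the appropriate power, the conditioning consists of partitions $S^{e}T^{f}\alpha$ with $e,f\geq 0$; the only terms you may discard while leaving something measurable with respect to $S$ alone are those with $f>0$, and what survives is a \emph{single} term, giving the bound $H(\alpha \mid S\alpha )=H(\alpha \mid S^{-1}\alpha )$ --- a one-step conditional entropy, not $H(\alpha \mid \bigvee_{m=1}^{k}S^{-m}\alpha )$ with $k\rightarrow \infty $. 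There is no way to manufacture the long one-directional pasts that the decreasing-martingale convergence to $h(S,\alpha )$ requires, because long horizontal runs in a row are incompatible with the path passing through the whole diagonal; and keeping the mixed terms $S^{e}T^{f}\alpha $ with $f>0$ in the conditioning produces increments that are not comparable to $h(S,\alpha )$ at all. What your argument actually yields is $h(ST,\alpha )\leq H(\alpha \mid S^{-1}\alpha )+H(\alpha \mid T^{-1}\alpha )$, whose supremum over $\alpha $ is not $h_{\mu }(S)+h_{\mu }(T)$ (it is typically infinite). So Axiom 3 is not established; the correct move is the paper's, namely to invoke Hu's theorem, where the $C^{2}$ hypothesis does genuine work.
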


\begin{proof}
For any $f,g\in \mathrm{Diff}_{\mu }^{2}(M)$ and integer $n,$ it is
well-known that $h_{\mu }(f^{n})=|n|h_{\mu }(f)$ and $h_{\mu }(f)=h_{\mu
}(gfg^{-1})$ (cf. \cite{wat}, Theorem 4.11 and Theorem 4.13). Hu \cite{hu}
proves that $h_{\mu }(fg)\leq h_{\mu }(f)+h_{\mu }(g)$ when $fg=gf.$
\end{proof}

\subsection{Stable translation length on metric spaces}

Let $(X,d)$ be a metric space and $\gamma :X\rightarrow X$ an isometry. Fix $%
x\in X.$ Note that $d(x,\gamma _{1}\gamma _{2}x)\leq d(x,\gamma
_{1}x)+d(\gamma _{1}x,\gamma _{1}\gamma _{2}x)=d(x,\gamma _{1}x)+d(x,\gamma
_{2}x)$ and $d(x,\gamma _{1}x)=d(x,\gamma _{1}^{-1}x)$ for any isometries $%
\gamma _{1},\gamma _{2}.$ Define%
\begin{equation*}
l(\gamma )=\lim_{n\rightarrow \infty }\frac{d(x,\gamma ^{n}x)}{n}.
\end{equation*}%
For any $y\in X,$ we have 
\begin{eqnarray*}
d(x,\gamma ^{n}x) &\leq &d(x,y)+d(y,\gamma ^{n}y)+d(\gamma ^{n}y,\gamma
^{n}x) \\
&=&2d(x,y)+d(y,\gamma ^{n}y)
\end{eqnarray*}%
and thus $\lim_{n\rightarrow \infty }\frac{d(x,\gamma ^{n}x)}{n}\leq
\lim_{n\rightarrow \infty }\frac{d(y,\gamma ^{n}y)}{n}.$ Similarly, we have
the other direction 
\begin{equation*}
\lim_{n\rightarrow \infty }\frac{d(y,\gamma ^{n}y)}{n}\leq
\lim_{n\rightarrow \infty }\frac{d(x,\gamma ^{n}x)}{n}.
\end{equation*}
This shows that the definition of $l(\gamma )$ does not depend on the choice
of $x.$

\begin{lemma}
\label{trans}Let $G$ be a group acting isometrically on a metric space $X.$
Then the function $l:G\rightarrow \lbrack 0,+\infty )$ defined by $%
g\longmapsto l(g)$ as above is a length function in the sense of Definition %
\ref{def}.
\end{lemma}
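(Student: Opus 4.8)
The plan is to reduce the statement to the general criterion already proved in Lemma~\ref{lem0}. That lemma asserts that whenever a function $f:G\rightarrow[0,+\infty)$ is subadditive, i.e.\ $f(gh)\leq f(g)+f(h)$, and symmetric, i.e.\ $f(g)=f(g^{-1})$, then the asymptotic quantity $l(g)=\lim_{n\rightarrow\infty}f(g^{n})/n$ is automatically a length function in the sense of Definition~\ref{def}: it supplies the homogeneity $l(g^{n})=|n|\,l(g)$, the conjugacy invariance, the subadditivity on commuting pairs, and even the existence of the limit (via subadditivity of $\{f(g^{n})\}$). Hence all I need to do is exhibit the right $f$ and verify its two defining inequalities; everything else is packaged into Lemma~\ref{lem0}.

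Concretely, I would fix a basepoint $x\in X$ and set $f(\gamma)=d(x,\gamma x)$. The two required properties are exactly the estimates recorded in the paragraph preceding the statement. Using that $G$ acts by isometries, the triangle inequality gives $d(x,\gamma_{1}\gamma_{2}x)\leq d(x,\gamma_{1}x)+d(\gamma_{1}x,\gamma_{1}\gamma_{2}x)=d(x,\gamma_{1}x)+d(x,\gamma_{2}x)$, which is subadditivity of $f$; and $d(x,\gamma^{-1}x)=d(\gamma x,x)=d(x,\gamma x)$ gives the symmetry $f(\gamma)=f(\gamma^{-1})$. Applying Lemma~\ref{lem0} to this $f$ then yields immediately that $l(\gamma)=\lim_{n\rightarrow\infty}f(\gamma^{n})/n=\lim_{n\rightarrow\infty}d(x,\gamma^{n}x)/n$ is a length function, which is the assertion of the lemma.

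There is essentially no obstacle here: the analytic content was front-loaded into the discussion defining $l$ (subadditivity, symmetry, and independence of the basepoint) together with Lemma~\ref{lem0}, so the proof is a one-line invocation. If one wished to argue self-contained without citing Lemma~\ref{lem0}, the only mildly delicate point would be conjugacy invariance, and the cleanest route there is to exploit the basepoint independence already established: for $h\in G$ put $y=h^{-1}x$ and use isometry to get $d(x,hg^{n}h^{-1}x)=d(hy,hg^{n}y)=d(y,g^{n}y)$, whence $l(hgh^{-1})=\lim_{n\rightarrow\infty}d(y,g^{n}y)/n=l(g)$; homogeneity and subadditivity on commuting elements then follow directly from the definition of $l$ and the identity $(ab)^{n}=a^{n}b^{n}$.
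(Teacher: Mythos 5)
Your proposal is correct and matches the paper's own argument, which likewise sets $f(\gamma)=d(x,\gamma x)$, relies on the subadditivity and symmetry estimates recorded just before the statement, and concludes with a one-line invocation of Lemma~\ref{lem0}. The extra remark on deriving conjugacy invariance from basepoint independence is a valid (and slightly more explicit) alternative, but it is not needed.
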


\begin{proof}
This follows from Lemma \ref{lem0}.
\end{proof}

\subsection{Translation lengths of isometries of CAT(0) spaces}

In this subsection, we will prove that the translation length on a CAT(0)
space defines a length function. First, let us introduce some notations. Let 
$(X,d_{X})$ be a geodesic metric space, i.e. any two points $x,y\in X$ can
be connected by a path $[x,y]$ of length $d_{X}(x,y)$. For three points $%
x,y,z\in X,$ the geodesic triangle $\Delta (x,y,z)$ consists of the three
vertices $x,y,z$ and the three geodesics $[x,y],[y,z]$ and $[z,x].$ Let $%
\mathbb{R}^{2}$ be the Euclidean plane with the standard distance $d_{%
\mathbb{R}^{2}}$ and $\bar{\Delta}$ a triangle in $\mathbb{R}^{2}$ with the
same edge lengths as $\Delta $. Denote by $\varphi :\Delta \rightarrow \bar{%
\Delta}$ the map sending each edge of $\Delta $ to the corresponding edge of 
$\bar{\Delta}.$ The space $X$ is called a \textrm{CAT(0)} space if for any
triangle $\Delta $ and two elements $a,b\in \Delta ,$ we have the inequality 
\begin{equation*}
d_{X}(a,b)\leq d_{\mathbb{R}^{2}}(\varphi (a),\varphi (b)).
\end{equation*}%
The typical examples of \textrm{CAT(0)} spaces include simplicial trees,
hyperbolic spaces $\mathbb{H}^{n}$, products of \textrm{CAT(0) }spaces and
so on. From now on, we assume that $X$ is a complete \textrm{CAT(0)} space.
Denote by \textrm{Isom}$(X)$ the isometry group of $X.$ For any $g\in $ 
\textrm{Isom}$(X)$\textrm{, }let 
\begin{equation*}
\mathrm{Minset}(g)=\{x\in X:d(x,gx)\leq d(y,gy)\text{ for any }y\in X\}
\end{equation*}%
and let $\tau (g)=\inf\nolimits_{x\in X}d(x,gx)$ be the translation length
of $g.$ When the fixed-point set $\mathrm{Fix}(g)\neq \emptyset ,$ we call $%
g $ elliptic. When $\mathrm{Minset}(g)\neq \emptyset $ and $d_{X}(x,gx)=\tau
(g)>0$ for any $x\in \mathrm{Minset}(g),$ we call $g$ hyperbolic. The group
element $g$ is called semisimple if the minimal set $\mathrm{Minset}(g)$ is
not empty, i.e. it is either elliptic or hyperbolic. A subset $C$ of a 
\textrm{CAT(0)} space is convex, if any two points $x,y\in C$ can connected
by the geodesic segment $[x,y]\subset C.$ A group $G$ is called \textrm{%
CAT(0) if }$G$ acts properly discontinuously and cocompactly on a \textrm{%
CAT(0) }space $X$. In such a case, any infinite-order element in $G$ acts
hyperbolically on $X.$ For more details on \textrm{CAT(0) }spaces, see the
book of Bridson and Haefliger \cite{bh}.

The following was proved by Ballmann-Gromov-Schroeder \cite{bg} (Lemma 6.6,
page 83). The original proof was for Hardmard manifolds, which is also
(well-known) holding for general cases. For completeness, we give details
here.

\begin{lemma}
\label{bgs}Let $\gamma :X\rightarrow X$ be an isometry of a complete \textrm{%
CAT(0) }space $X.$ For any $x_{0}\in X,$ we have 
\begin{equation*}
\tau (\gamma ):=\inf_{x\in X}d(\gamma x,x)=\lim_{k\rightarrow \infty }\frac{%
d(\gamma ^{k}x_{0},x_{0})}{k}.
\end{equation*}
\end{lemma}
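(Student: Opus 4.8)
The plan is to prove the two inequalities $L\le\tau(\gamma)$ and $\tau(\gamma)\le L$ separately, where I write $a_k=d(\gamma^k x_0,x_0)$ and $L=\lim_{k\to\infty}a_k/k$. As in the preceding subsection on stable translation lengths, the sequence $(a_k)$ is subadditive (by the triangle inequality together with $\gamma$ being an isometry), so this limit exists, is independent of the basepoint $x_0$, and equals $\inf_k a_k/k$ by Fekete's lemma.

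First I would dispatch the inequality $L\le\tau(\gamma)$, which uses nothing about the CAT(0) geometry. Since $a_k=d(\gamma^k x_0,x_0)\le\sum_{i=0}^{k-1}d(\gamma^{i+1}x_0,\gamma^i x_0)=k\,d(\gamma x_0,x_0)$, dividing by $k$ and letting $k\to\infty$ gives $L\le d(\gamma x_0,x_0)$. Because $L$ does not depend on the basepoint (as established for general metric spaces before Lemma \ref{trans}), the same bound holds with $x_0$ replaced by an arbitrary $x\in X$, and taking the infimum over $x$ yields $L\le\tau(\gamma)$.

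The substance of the lemma is the reverse inequality $\tau(\gamma)\le L$, and this is where completeness and the CAT(0) condition enter, through the \emph{midline inequality}: in a geodesic triangle $\Delta(A,B,C)$, if $m_1,m_2$ are the midpoints of $[A,B]$ and $[B,C]$ then $d(m_1,m_2)\le\tfrac12\,d(A,C)$, obtained by applying the CAT(0) comparison inequality to the Euclidean comparison triangle, in which the midline is exactly half of the opposite side. I would apply this with $A=x$, $B=\gamma^n x$, $C=\gamma^{2n}x$. Since $\gamma$ is an isometry, the midpoint $m$ of $[x,\gamma^n x]$ is carried by $\gamma^n$ to the midpoint of $[\gamma^n x,\gamma^{2n}x]$, so the inequality becomes
\[
d(m,\gamma^n m)\le\tfrac12\,d(x,\gamma^{2n}x);
\]
that is, passing to a midpoint halves the displacement while doubling the exponent.

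It then remains to iterate. Fix $k$, put $y_0=x_0$, and for $0\le j<k$ let $y_{j+1}$ be the midpoint of $[y_j,\gamma^{2^{k-1-j}}y_j]$; the displayed inequality gives $d(y_{j+1},\gamma^{2^{k-1-j}}y_{j+1})\le\tfrac12\,d(y_j,\gamma^{2^{k-j}}y_j)$. Telescoping these $k$ estimates, in which the exponents run through $2^{k},2^{k-1},\dots,2,1$ and each step contributes a factor $\tfrac12$, yields $d(y_k,\gamma y_k)\le 2^{-k}\,d(x_0,\gamma^{2^k}x_0)=a_{2^k}/2^k$, and hence $\tau(\gamma)\le a_{2^k}/2^k$. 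Letting $k\to\infty$ and using that the subsequence $a_{2^k}/2^k$ converges to $L$ gives $\tau(\gamma)\le L$, which combined with the previous step completes the proof. The one delicate point I anticipate is the exponent bookkeeping in this iteration, so that the powers of $\gamma$ and the successive halvings line up precisely with the subsequence $(a_{2^k})$; the geometric content is concentrated in the single application of the midline inequality, and completeness guarantees that all the midpoints and geodesics invoked exist and are unique.
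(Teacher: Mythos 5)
Your proof is correct and follows essentially the same route as the paper: the easy direction via subadditivity and basepoint-independence, and the hard direction via the CAT(0) midline (convexity) inequality applied to the midpoint of $[x,\gamma^n x]$, which is carried by $\gamma^n$ to the midpoint of $[\gamma^n x,\gamma^{2n}x]$, followed by iteration along the subsequence $2^k$. The only cosmetic difference is that the paper packages the doubling step as the inequality $\tau(\gamma^2)\geq 2\tau(\gamma)$ and inducts on it, whereas you track the successive midpoints $y_j$ explicitly; the exponent bookkeeping in your iteration checks out.
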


\begin{proof}
For any $p=x_{0}\in X,$ let $m$ be the middle point of $[p,\gamma p].$ We
have that $d(m,\gamma m)\leq \frac{1}{2}d(p,\gamma ^{2}p)$ by the convexity
of length functions. Therefore, $d(p,\gamma ^{2}p)\geq 2\tau (\gamma )$ and $%
\tau (\gamma ^{2})\geq 2\tau (\gamma ).$ Note that $d(p,\gamma ^{2}p)\leq
d(p,\gamma p)+d(\gamma p,\gamma ^{2}p)=2d(p,\gamma p)$ and thus $\tau
(\gamma ^{2})\leq 2\tau (\gamma ).$ Inductively, we have 
\begin{equation*}
2^{n}\tau (\gamma )\leq d(p,\gamma ^{2^{n}}p)\leq 2^{n}d(p,\gamma p).
\end{equation*}
Note that the limit $\lim_{k\rightarrow \infty }\frac{d(\gamma ^{k}p,p)}{k}$
exists and is independent of $p$ (see the previous subsection). Therefore,
the limit $\lim_{k\rightarrow \infty }\frac{d(\gamma ^{k}p,p)}{k}$ equals to 
$\tau (\gamma ).$
\end{proof}

\begin{corollary}
\label{procat}Let $X$ be a complete \textrm{CAT(0)} space and $G$ a group
acting on $X$ by isometries. For any $g\in G,$ define $\tau (g)=\inf_{x\in
X}d(x,gx)$ as the translation length. Then $\tau :G\rightarrow \lbrack
0,+\infty )$ is a length function in the sense of Definition \ref{def}.
\end{corollary}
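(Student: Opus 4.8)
The plan is to reduce the statement to the already-established case of the stable translation length, so that no axiom needs to be reverified from scratch. Recall from the previous subsection (Lemma \ref{trans}) that whenever a group acts isometrically on a metric space $X$, fixing a basepoint $x_0$, the assignment
\[
g \longmapsto \lim_{n\to\infty} \frac{d(x_0, g^n x_0)}{n}
\]
is a length function in the sense of Definition \ref{def}; this was obtained from subadditivity of $n \mapsto d(x_0, g^n x_0)$ together with the symmetry $d(x_0, g x_0) = d(x_0, g^{-1} x_0)$ via Lemma \ref{lem0}. Thus it suffices to show that the translation length $\tau(g) = \inf_{x\in X} d(x, gx)$ agrees with this stable quantity for every $g \in G$.

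First I would observe that, since $G$ acts by isometries, each $g \in G$ restricts to an isometry of the complete CAT(0) space $X$, so Lemma \ref{bgs} applies verbatim to the isometry $g$. Applying it with the chosen basepoint $x_0$ gives
\[
\tau(g) = \inf_{x\in X} d(x, gx) = \lim_{k\to\infty} \frac{d(g^k x_0, x_0)}{k},
\]
and since $d(g^k x_0, x_0) = d(x_0, g^k x_0)$ by symmetry of the metric, the right-hand side is exactly the stable translation length of $g$. Combining this identification with Lemma \ref{trans} shows that $\tau$ coincides with the stable translation length $l$ as functions on $G$, and hence $\tau$ inherits all three axioms of a length function.

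The key step---and the only place where completeness and the CAT(0) condition are actually used---is the identification $\tau(g) = \lim_{k} d(g^k x_0, x_0)/k$ furnished by Lemma \ref{bgs}, whose proof rests on convexity of the distance function along geodesics (the midpoint estimate $d(m, \gamma m) \le \tfrac{1}{2} d(p, \gamma^2 p)$). In this sense there is no genuine obstacle left for the corollary itself: once Lemma \ref{bgs} is in hand the result is immediate, and the entire content is the equality between the infimum-defined translation length and the limit-defined stable length. The one point I would be careful to state explicitly is that the limit in Lemma \ref{bgs} is independent of the basepoint, so that $\tau$ is unambiguously defined and the appeal to Lemma \ref{trans} is legitimate.
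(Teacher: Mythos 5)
Your proposal is correct and matches the paper's proof exactly: the paper's argument is the one-line "This follows Lemma \ref{bgs} and Lemma \ref{trans}," i.e. Lemma \ref{bgs} identifies $\tau(g)$ with the stable translation length and Lemma \ref{trans} certifies the latter as a length function. You have simply spelled out the same reduction in more detail, correctly flagging basepoint-independence as the point that makes the appeal to Lemma \ref{trans} legitimate.
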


\begin{proof}
This follows from Lemma \ref{bgs} and Lemma \ref{trans}.
\end{proof}

\subsection{Translation lengths of Gromov $\protect\delta $-hyperbolic spaces%
}

Let $\delta >0.$ A geodesic metric space $X$ is called Gromov $\delta $%
-hyperbolic if for any geodesic triangle $\Delta xyz$ one side $[x,y]$ is
contained a $\delta $-neighborhood of the other two edges $[x,z]\cup \lbrack
y,z].$ Fix $x_{0}\in X.$ Any isometry $\gamma :X\rightarrow X$ is called
elliptic if $\{\gamma ^{n}x\}_{ne\mathbb{Z}}$ is bounded. If the orbit map $%
\mathbb{Z}\rightarrow X$ given by $n\mapsto \gamma ^{n}x_{0}$ is
quasi-isometric (i.e. there exists $A\geq 1$ and $B\geq 0$ such that 
\begin{equation*}
\frac{1}{A}|n-m|-B\leq d_{X}(\gamma ^{n}x_{0},\gamma ^{m}x_{0})\leq A|n-m|+B
\end{equation*}%
for any integers $n,m$), we call that $\gamma $ is hyperbolic. Otherwise, we
call that $\gamma $ is parabolic. Define $l(\gamma )=\lim_{n\rightarrow
\infty }\frac{d(\gamma ^{n}x_{0},x_{0})}{n}.$ For any group $G$ acts
isometrically on a $\delta $-hyperbolic space, the function $l:G\rightarrow
\lbrack 0,\infty )$ is a length function by Lemma \ref{trans}. A finitely
generated group $G$ is Gromov $\delta $-hyperbolic if for some finite
generating set $S,$ the Caley graph $\Gamma (G,S)$ is Gromov $\delta $%
-hyperbolic. Any infinite-order element $g$ in a Gromov $\delta $-hyperbolic
group is hyperbolic and thus has positive length $l(g)>0$ (cf. \cite{gromov}%
, 8.1.D). For more details on hyperbolic spaces and hyperbolic groups, see
the book \cite{gromov} of Gromov.

\subsection{Quasi-cocycles}

Let $G$ be a group and $(E,\Vert \cdot \Vert )$ be a normed vector space
with an $G$-action by linear isometries. A function $f:G\rightarrow E$ is a
quasi-cocycle if there exists $C>0$ such that 
\begin{equation*}
\Vert f(gh)-f(g)-gf(h)\Vert <C
\end{equation*}%
for any $g,h\in G.$ Let $l:G\rightarrow \lbrack 0,+\infty )$ be defined by 
\begin{equation*}
l(g)=\lim_{n\rightarrow \infty }\frac{\Vert f(g^{n})\Vert }{n}.
\end{equation*}%
Note that $\Vert f(g^{n+m})\Vert \leq \Vert f(g^{n})\Vert +\Vert
f(g^{m})\Vert +C$ for any integers $n,m\geq 0.$ This general subadditive
property implies that the limit $\lim_{n\rightarrow \infty }\frac{\Vert
f(g^{n})\Vert }{n}$ exists (see \cite{ste}, Theorem 1.9.2, page 22). We call 
$l$ the average norm. Many applications of quasi-cocycles can be found in 
\cite{monod}.

\begin{lemma}
For any quasi-cocycle $f:G\rightarrow E,$ the average norm $l$ is a length
function.
\end{lemma}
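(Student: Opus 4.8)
The plan is to verify that the average norm $l(g)=\lim_{n\to\infty}\frac{\Vert f(g^{n})\Vert}{n}$ satisfies the three axioms of Definition \ref{def}, and the most efficient route is to reduce as much as possible to the already-established Lemma \ref{lem0}. That lemma takes a function $F:G\to[0,\infty)$ with $F(gh)\le F(g)+F(h)$ and $F(g)=F(g^{-1})$ and produces a bona fide length function by the same limiting construction. The difficulty is that $\Vert f(\cdot)\Vert$ is \emph{not} subadditive on the nose: the quasi-cocycle relation only gives $\Vert f(gh)\Vert\le\Vert f(g)\Vert+\Vert gf(h)\Vert+C=\Vert f(g)\Vert+\Vert f(h)\Vert+C$, using that the $G$-action is by linear isometries so $\Vert gf(h)\Vert=\Vert f(h)\Vert$. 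Thus the hypotheses of Lemma \ref{lem0} fail by the additive constant $C$, and I cannot simply invoke it. The main obstacle, then, is to redo the three verifications by hand while carrying this error term $C$ through every estimate and checking that it washes out in the limit.

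First I would establish well-definedness: the paper already observes $\Vert f(g^{n+m})\Vert\le\Vert f(g^{n})\Vert+\Vert f(g^{m})\Vert+C$, so $\{\Vert f(g^{n})\Vert+C\}$ is genuinely subadditive and Fekete's lemma gives existence of the limit, with the $+C$ contributing nothing to $\lim\frac{1}{n}(\cdots)$. For axiom (1), I would note $l(g^{k})=\lim_n\frac{\Vert f(g^{kn})\Vert}{n}=k\lim_m\frac{\Vert f(g^{m})\Vert}{m}=k\,l(g)$ for $k\ge 0$; the case of negative powers needs $l(g^{-1})=l(g)$, which I would get from $\Vert f(g^{-1})\Vert$ and $\Vert f(g)\Vert$ differing by a bounded amount (evaluate the quasi-cocycle relation at $h=g^{-1}$ and use $f(1)$ fixed), so the constant discrepancy again dies in the limit.

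For axiom (2), conjugation-invariance, I would compute $\Vert f((aga^{-1})^{n})\Vert=\Vert f(ag^{n}a^{-1})\Vert$ and apply the quasi-cocycle bound twice to sandwich it between $\Vert f(g^{n})\Vert$ and bounded additive corrections involving $\Vert f(a)\Vert$, $\Vert f(a^{-1})\Vert$ and $C$; dividing by $n$ and letting $n\to\infty$ yields $l(aga^{-1})\le l(g)$, and the reverse inequality follows by the symmetric argument (exactly mirroring the proof of Lemma \ref{lem0}). For axiom (3), with $ab=ba$ I would use $(ab)^{n}=a^{n}b^{n}$ and the bound $\Vert f(a^{n}b^{n})\Vert\le\Vert f(a^{n})\Vert+\Vert f(b^{n})\Vert+C$, divide by $n$, and pass to the limit to obtain $l(ab)\le l(a)+l(b)$. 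In every step the single bounded constant $C$ (together with the fixed quantities $\Vert f(a)\Vert$, etc.) is annihilated upon division by $n$, so the verification is routine once this uniform-boundedness bookkeeping is set up correctly; that bookkeeping, rather than any conceptual issue, is where all the care is needed.
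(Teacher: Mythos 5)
Your proposal is correct and follows essentially the same route as the paper: verify the three axioms directly, using the quasi-cocycle inequality (with the $G$-action by isometries giving $\Vert gf(h)\Vert=\Vert f(h)\Vert$) so that the additive constant $C$ and the fixed quantities $\Vert f(a)\Vert$, $\Vert f(1)\Vert$ all vanish after dividing by $n$; in particular your treatment of negative powers via the relation $\Vert f(1)-f(g^{-n})-g^{-n}f(g^{n})\Vert<C$ (applied uniformly to all powers, not just to $g$ itself) is exactly the paper's argument. Your explicit observation that Lemma \ref{lem0} cannot be invoked verbatim because of the error term $C$ is a correct and worthwhile clarification that the paper leaves implicit.
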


\begin{proof}
For any natural number $n,$ we have%
\begin{equation*}
\Vert f(1)-f(g^{-n})-g^{-n}f(g^{n})\Vert <C
\end{equation*}%
and thus $\Vert \frac{f(1)-f(g^{-n})-g^{-n}f(g^{n})}{n}\Vert <\frac{C}{n}.$
Taking the limit, we have $\lim_{n\rightarrow \infty }\frac{\Vert
f(g^{-n})\Vert }{n}=\lim_{n\rightarrow \infty }\frac{\Vert f(g^{n})\Vert }{n}%
.$ Therefore, for any $k\in \mathbb{Z},$ we have $l(g^{k})=\lim_{n%
\rightarrow \infty }\frac{\Vert f(g^{kn})\Vert }{n}=|k|l(g).$ For any $h\in
G,$ we have 
\begin{equation*}
\Vert f(hg^{n}h^{-1})\Vert \leq \Vert f(h)\Vert +\Vert f(h^{-1})\Vert +\Vert
f(g^{n})\Vert +2C.
\end{equation*}%
Therefore, we have 
\begin{equation*}
l(hgh^{-1})=\lim_{n\rightarrow \infty }\frac{\Vert f(hg^{n}h^{-1})\Vert }{n}%
\leq \lim_{n\rightarrow \infty }\frac{\Vert f(g^{n})\Vert }{n}=l(g).
\end{equation*}%
Similarly, we have $l(g)=l(h^{-1}(hgh^{-1})h)\leq l(hgh^{-1}).$ When $g,h$
commutes, we have%
\begin{eqnarray*}
l(gh) &=&\lim_{n\rightarrow \infty }\frac{\Vert f((gh)^{n})\Vert }{n}%
=\lim_{n\rightarrow \infty }\frac{\Vert f(g^{n}h^{n})\Vert }{n} \\
&\leq &\lim_{n\rightarrow \infty }\frac{\Vert f(g^{n})\Vert +\Vert
f(h^{n})\Vert +C}{n}=l(g)+l(h).
\end{eqnarray*}
\end{proof}

\subsection{Rotation number}

Let $\mathbb{R}$ be the real line and $\mathrm{Homeo}_{\mathbb{Z}}(\mathbb{R}%
)=\{f\mid f:\mathbb{R}\rightarrow \mathbb{R}$ is a monotonically increasing
homeomorphism such that $f(x+n)=f(x)+n$ for any $n\in \mathbb{Z}\}.$ For any 
$f\in \mathrm{Homeo}_{\mathbb{Z}}(\mathbb{R})$ and $x\in \lbrack 0,1),$ the
translation number is defined as 
\begin{equation*}
l(f)=\lim_{n\rightarrow \infty }\frac{f^{n}(x)-x}{n}.
\end{equation*}%
It is well-known that $l(f)$ exists and is independent of $x$ (see \cite{na}%
, Prop. 2.22, p.31). Note that every $f\in \mathrm{Homeo}_{\mathbb{Z}}(%
\mathbb{R})$ induces an orientation-preserving homeomorphism of the circle $%
S^{1}.$

\begin{proposition}
The absolute value of the translation number $|l|:\mathrm{Homeo}_{\mathbb{Z}%
}(\mathbb{R})\rightarrow \lbrack 0,\infty )$ is a length function in the
sense of Definition \ref{def}.
\end{proposition}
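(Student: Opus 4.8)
The plan is to realize $|l|$ as the stabilization of an explicit symmetric subadditive function, so that Lemma \ref{lem0} applies with no further bookkeeping. For $f\in \mathrm{Home}_{\mathbb{Z}}(\mathbb{R})$ I would set $D(f)=\sup_{x\in\mathbb{R}}|f(x)-x|$. Because $f(x+1)=f(x)+1$, the displacement $x\mapsto f(x)-x$ is $1$-periodic and continuous, hence bounded, so $D(f)<\infty$. I would then verify the two hypotheses of Lemma \ref{lem0}. Symmetry $D(f)=D(f^{-1})$ comes from the substitution $x=f(y)$, giving $\sup_x|f^{-1}(x)-x|=\sup_y|y-f(y)|=D(f)$. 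For subadditivity, decomposing $f(g(x))-x=\big(f(g(x))-g(x)\big)+\big(g(x)-x\big)$ and using that $g$ is a surjection of $\mathbb{R}$ (so $g(x)$ runs over all of $\mathbb{R}$) yields $D(fg)\le \sup_y|f(y)-y|+\sup_x|g(x)-x|=D(f)+D(g)$. Thus by Lemma \ref{lem0} the function $L(f):=\lim_{n\to\infty}D(f^n)/n$ is a length function in the sense of Definition \ref{def}.

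It then remains only to identify $L$ with $|l|$. Here I would invoke the standard uniform estimate for the translation number, namely that $|f^n(x)-x-n\,l(f)|<1$ holds for every $x$ and every $n$ (see \cite{na}). By the reverse triangle inequality this forces $|f^n(x)-x|\in\big(|n\,l(f)|-1,\;|n\,l(f)|+1\big)$ for all $x$, and taking the supremum over $x$ gives $|n\,l(f)|-1\le D(f^n)\le |n\,l(f)|+1$. Dividing by $n$ and letting $n\to\infty$ shows $D(f^n)/n\to |l(f)|$, i.e. $L=|l|$, which completes the argument.

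The only genuinely nonformal ingredient is the uniform estimate $|f^n(x)-x-n\,l(f)|<1$, which is precisely where the dynamics of circle homeomorphisms (rather than abstract group theory) enters; the finiteness, symmetry and subadditivity of $D$ are all routine. An alternative route, which avoids packaging everything through $D$, is to check the three axioms for $|l|$ directly: homogeneity $l(f^n)=n\,l(f)$ is immediate from the definition of the limit; conjugation invariance follows from $(gfg^{-1})^n=gf^ng^{-1}$ together with the boundedness of $z\mapsto g(z)-z$, so that $gf^ng^{-1}$ and $f^n$ differ by a bounded amount; and additivity on a commuting pair $a,b$ follows from $(ab)^n=a^nb^n$ and the same uniform estimate applied to $a$ and to $b$, after which axiom (3) is just $|l(a)+l(b)|\le|l(a)|+|l(b)|$. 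Either way the uniform estimate is the crux, and everything else is formal.
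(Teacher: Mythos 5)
Your argument is correct, but it takes a genuinely different route from the paper. The paper verifies the three axioms of Definition \ref{def} for $|l|$ directly: homogeneity from the definition of the limit, conjugation invariance from the estimate $|f^{n}(a^{-1}x)-f^{n}(x)|\leq 2+|a^{-1}x-x|$, and subadditivity on commuting pairs by writing $g^{n}(x)=k_{n}+x_{n}$ and showing the exact additivity $l(fg)=l(f)+l(g)$ --- essentially your ``alternative route.'' Your main route instead introduces the displacement norm $D(f)=\sup_{x}|f(x)-x|$, checks that it is finite, symmetric and subadditive, and lets Lemma \ref{lem0} produce the length function $L(f)=\lim_{n}D(f^{n})/n$ with no further work; the only remaining task is the identification $L=|l|$, which you reduce to the classical uniform estimate $|f^{n}(x)-x-n\,l(f)|<1$ (valid because $F^{n}$ is itself a lift with rotation number $n\,l(f)$, and for any lift $G$ the displacement $G(x)-x$ oscillates by less than $1$ while its min and max bracket $\rho(G)$). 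Each approach buys something: yours is more uniform with the rest of Section 2, since it realizes $|l|$ as the stabilization of a subadditive symmetric function like the other examples, and it localizes all the dynamical input into one standard inequality; the paper's direct computation avoids invoking that uniform estimate explicitly and, as a byproduct, records the stronger fact that $l$ itself (with sign) is additive on commuting elements, which is what the subsequent Remark about amenable subgroups builds on. One cosmetic caveat: your verification that $D$ is finite uses $f(x+1)=f(x)+1$, which is the intended (standard) definition of $\mathrm{Home}_{\mathbb{Z}}(\mathbb{R})$ even though the paper's displayed condition contains a typo; you are right to read it that way.
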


\begin{proof}
For any $f\in \mathrm{Homeo}_{\mathbb{Z}}(\mathbb{R})$ and $k\in \mathbb{Z}%
\backslash \{0\},$ we have that 
\begin{equation*}
l(f^{k})=\lim_{n\rightarrow \infty }\frac{f^{kn}(x)-x}{n}=k\lim_{n%
\rightarrow \infty }\frac{f^{kn}(x)-x}{nk}=kl(f).
\end{equation*}%
For any $a\in \mathrm{Homeo}_{\mathbb{Z}}(\mathbb{R}),$ we have that%
\begin{eqnarray*}
&\mid &l(afa^{-1})-l(f)\mid =\lim_{n\rightarrow \infty }\mid \frac{%
af^{n}(a^{-1}x)-x-f^{n}(x)+x}{n}\mid \\
&=&\lim_{n\rightarrow \infty }\mid \frac{%
af^{n}(a^{-1}x)-f^{n}(a^{-1}x)+f^{n}(a^{-1}x)-f^{n}(x)}{n}\mid \\
&=&0,
\end{eqnarray*}%
since $a-\mathrm{id}_{\mathbb{R}}$ is bounded on $[0,1]$ and $\mid
f^{n}(a^{-1}x)-f^{n}(x)\mid \leq 2+|a^{-1}x-x|.$

For commuting elements $f,g\in \mathrm{Homeo}_{\mathbb{Z}}(\mathbb{R}),$ we
have that%
\begin{equation*}
l(fg)=\lim_{n\rightarrow \infty }\frac{f^{n}(g^{n}(x))-x}{n}%
=\lim_{n\rightarrow \infty }\frac{f^{n}(g^{n}(x))-g^{n}(x)+g^{n}(x)-x}{n}.
\end{equation*}%
Suppose that $g^{n}(x)=k_{n}+x_{n}$ for $k_{n}\in \mathbb{Z}$ and $x_{n}\in
\lbrack 0,1).$ Then 
\begin{eqnarray*}
&&\lim_{n\rightarrow \infty }\frac{f^{n}(g^{n}x)-g^{n}(x)+g^{n}(x)-x}{n} \\
&=&\lim_{n\rightarrow \infty }\frac{f^{n}(x_{n})-x_{n}+g^{n}(x)-x}{n} \\
&=&\lim_{n\rightarrow \infty }\frac{f^{n}(0)-0+g^{n}(x)-x}{n}=l(f)+l(g).
\end{eqnarray*}%
Therefore, we get $|l(fg)|\leq |l(f)|+|l(g)|.$
\end{proof}

\begin{remark}
It is actually true that the rotation number $l$ is multiplicative on any
amenable group (see \cite{na}, Prop. 2.2.11 and the proof of Prop. 2.2.10,
page 36). This implies that the absolute rotation number $|l|$ is
subadditive on any amenable group. In other words, for any amenable group $G<%
\mathrm{Homeo}_{\mathbb{Z}}(\mathbb{R})$ and any $g,h\in G$ we have $%
|l(gh)|\leq |l(g)|+|l(h)|.$
\end{remark}

\subsection{Asymptotic distortions}

Let $f$ be a $C^{1+bv}$ diffeomorphism of the closed interval $[0,1]$ or the
circle $S^{1}.$ (\textquotedblleft bv\textquotedblright\ means derivative
with finite total variation.) The asymptotic distortion of $f$ is defined as 
\begin{equation*}
l(f)=\mathrm{dist}_{\infty }(f)=\lim_{n\rightarrow \infty }\frac{1}{n}%
\mathrm{var}(\log Df^{n}).
\end{equation*}%
It's proved by Eynard-Bontemps and Navas (\cite{en}, pages 7-8) that

\begin{enumerate}
\item[(1)] $\mathrm{dist}_{\infty }(f^{n})=|n|\mathrm{dist}_{\infty }(f)$
for all $n\in \mathbb{Z}$;

\item[(2)] $\mathrm{dist}_{\infty }(hfh^{-1})=\mathrm{dist}_{\infty }(f)$
for every $C^{1+bv}$ diffeomorphism $h;$

\item[(3)] $\mathrm{dist}_{\infty }(f\circ g)\leq \mathrm{dist}_{\infty }(f)+%
\mathrm{dist}_{\infty }(g)$ for commuting $f,g.$
\end{enumerate}

Therefore, the asymptotic distortion is a length function $l$ on the group $%
\mathrm{Diff}^{1+bv}(M)$ of $C^{1+bv}$ diffeomorphisms for $M=[0,1]$ or $%
S^{1}.$

\subsection{Dynamical degrees of Cremona groups}

Let $k$ be a field and $\mathbb{P}_{k}^{n}=k^{n+1}\backslash \{0\}/\{\lambda
\sim \lambda x:\lambda \neq 0\}$ be the projective space. A rational map
from $\mathbb{P}_{k}^{n}$ to itself is a map of the following type%
\begin{equation*}
(x_{0}:x_{1}:\cdots :x_{n})\dashrightarrow (f_{0}:f_{1}:\cdots :f_{n})
\end{equation*}%
where the $f_{i}$'s are homogeneous polynomials of the same degree without
common factor. The degree of $f$ is $\deg f=\deg f_{i}.$ A birational map
from $\mathbb{P}_{k}^{n}$ to itself is a rational map $f:\mathbb{P}%
_{k}^{n}\dashrightarrow \mathbb{P}_{k}^{n}$ such that there exists a
rational map $g:\mathbb{P}_{k}^{n}\dashrightarrow \mathbb{P}_{k}^{n}$ such
that $f\circ g=g\circ f=\mathrm{id}.$ The group $\mathrm{Bir}(\mathbb{P}%
_{k}^{n})$ of birational maps is called the Cremona group (also denoted as $%
\mathrm{Cr}_{n}(k)$). It is well-known that $\mathrm{Bir}(\mathbb{P}%
_{k}^{n}) $ is isomorphic to the group $\mathrm{Aut}_{k}(k(x_{1},x_{2},%
\ldots ,x_{n}))$ of self-isomorphisms of the field $k(x_{1},x_{2},\ldots
,x_{n})$ of the rational functions in $n$ indeterminates over $k.$ The
(first) dynamical degree $\lambda (f)$ of $f\in \mathrm{Bir}(\mathbb{P}%
_{k}^{n})$ is defined as 
\begin{equation*}
\lambda (f)=\max \{\lim_{n\rightarrow \infty }\deg (f^{n})^{\frac{1}{n}%
},\lim_{n\rightarrow \infty }\deg (f^{-n})^{\frac{1}{n}}\}.
\end{equation*}%
Since $\deg (f^{n})^{\frac{1}{n}}$ is sub-multiplicative, the limit exists.

\begin{lemma}
Let $l(f)=\log \lambda (f).$ Then $l:\mathrm{Bir}(\mathbb{P}%
_{k}^{n})\rightarrow \lbrack 0,+\infty )$ is a length function.
\end{lemma}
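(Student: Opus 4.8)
The plan is to realize $l$ as arising from a \emph{symmetric} subadditive function and then to invoke Lemma \ref{lem0}, exactly in the spirit of the proof for the matrix norm on $\mathrm{GL}_n(\mathbb{R})$. The essential input from algebraic geometry is the submultiplicativity of the degree: if $f,g\in\mathrm{Bir}(\mathbb{P}_k^n)$ are given by homogeneous forms of degrees $\deg f$ and $\deg g$, then substituting $g$ into $f$ produces forms of degree $\deg f\cdot\deg g$, and clearing common factors can only lower the total degree. Hence $\deg(f\circ g)\le\deg f\cdot\deg g$, which is also what guarantees the existence of the limits in the definition of $\lambda$.

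First I would symmetrize the degree. Define $D:\mathrm{Bir}(\mathbb{P}_k^n)\rightarrow[0,+\infty)$ by $D(f)=\log\max\{\deg f,\deg f^{-1}\}$. Then $D(f)=D(f^{-1})$ is immediate. For subadditivity, note $(fg)^{-1}=g^{-1}f^{-1}$, so both $\deg(fg)\le\deg f\cdot\deg g$ and $\deg((fg)^{-1})\le\deg(g^{-1})\cdot\deg(f^{-1})$ hold; taking the maximum of the two left-hand sides gives
\[
\max\{\deg(fg),\deg((fg)^{-1})\}\le\max\{\deg f,\deg f^{-1}\}\cdot\max\{\deg g,\deg g^{-1}\},
\]
and applying $\log$ yields $D(fg)\le D(f)+D(g)$. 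Thus $D$ is a symmetric subadditive function of the type required by Lemma \ref{lem0}.

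Next I would apply Lemma \ref{lem0} to $D$ directly: the function $g\mapsto\lim_{n\to\infty}D(g^n)/n$ is a length function. It then remains only to identify this limit with $\log\lambda(f)=l(f)$. Since $D(f^n)=\max\{\log\deg(f^n),\log\deg(f^{-n})\}$ and both sequences $\tfrac1n\log\deg(f^{\pm n})$ converge, continuity of the maximum gives
\[
\lim_{n\to\infty}\frac{D(f^n)}{n}=\max\Big\{\lim_{n\to\infty}\tfrac1n\log\deg(f^n),\ \lim_{n\to\infty}\tfrac1n\log\deg(f^{-n})\Big\}=\log\lambda(f)=l(f),
\]
so $l$ is a length function.

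The computation is routine once the degree inequality is in hand; the only genuine point requiring care is that $\deg$ need not be symmetric in the Cremona group (unlike, say, the word length), which is exactly why $\lambda$ is defined with a maximum over $f$ and $f^{-1}$ and why the symmetrization step above is necessary — it is also the step tacitly used in the $\mathrm{GL}_n(\mathbb{R})$ lemma. I expect the main (minor) obstacle to be stating $\deg(f\circ g)\le\deg f\cdot\deg g$ with proper justification, but in the projective setting the substitution-and-cancellation argument makes this transparent.
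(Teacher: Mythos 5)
Your proof is correct, but it takes a different route from the paper's. The paper verifies the three axioms for $l=\log\lambda$ directly: it declares ``without loss of generality'' that $\lambda(f)$ is realized by the forward iterates and then checks homogeneity, conjugation invariance and subadditivity on commuting elements one by one, using $\deg(fg)\leq\deg f\cdot\deg g$ at each step (the displayed computation for $l(hfh^{-1})$ even contains a stray $f^{nk}$ where $f^{n}$ is meant). You instead symmetrize first, setting $D(f)=\log\max\{\deg f,\deg f^{-1}\}$, verify that $D$ is symmetric and subadditive, invoke Lemma \ref{lem0} wholesale, and then identify $\lim_{n}D(f^{n})/n$ with $\log\lambda(f)$ by passing the limit through the maximum. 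This mirrors what the paper does for the matrix-norm example on $\mathrm{GL}_{n}(\mathbb{R})$ rather than what it does here. What your route buys is a cleaner treatment of the $\max$ over $f$ and $f^{-1}$: the paper's WLOG is applied only to $f$, yet in the subadditivity step one implicitly needs the analogous control for $g$ and for $fg$ simultaneously (i.e.\ one must also bound $\lim_{n}\frac{1}{n}\log\deg(g^{-n}f^{-n})$ when $\lambda(fg)$ happens to be realized by backward iterates); your symmetrization handles all cases uniformly and reduces everything to the single inequality $\deg(f\circ g)\leq\deg f\cdot\deg g$ together with Fekete's lemma. The only points worth making explicit in a final write-up are that $\deg f\geq 1$ guarantees $D\geq 0$ (as required by Lemma \ref{lem0}) and the standard justification of degree submultiplicativity, both of which you have correctly identified.
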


\begin{proof}
Without loss of generality, we assume that $\lambda (f)=\lim_{n\rightarrow
\infty }\deg (f^{n})^{\frac{1}{n}},$ while the other case can be considered
similarly. For any $k\in \mathbb{N},$ it is easy that $l(f^{k})=\lim_{n%
\rightarrow \infty }\frac{\log \deg f^{nk}}{n}=kl(f).$ For any $h\in \mathrm{%
Bir}(\mathbb{P}_{k}^{n}),$ we have 
\begin{equation*}
l(hfh^{-1})=\lim_{n\rightarrow \infty }\frac{\log \deg hf^{n}h^{-1}}{n}%
=\lim_{n\rightarrow \infty }\frac{\log \deg f^{nk}}{n}=l(f).
\end{equation*}%
For commuting maps $f,g,$ we have $(fg)^{n}=f^{n}g^{n}.$ Therefore, 
\begin{eqnarray*}
l(fg) &=&\lim_{n\rightarrow \infty }\frac{\log \deg f^{n}g^{n}}{n} \\
&\leq &\lim_{n\rightarrow \infty }\frac{\log \deg f^{n}}{n}%
+\lim_{n\rightarrow \infty }\frac{\log \deg g^{n}}{n}=l(f)+l(g).
\end{eqnarray*}%
This checks the three conditions of the length function.
\end{proof}

It is surprising that when $n=2$ and $k$ is an algebraically closed field,
the length function $l(f)$ is given by the translation length $\tau (f)$ on
an (infinite-dimensional) Gromov $\delta $-hyperbolic space (see
Blanc-Cantat \cite{bc}, Theorem 4.4). Some other length functions are
studied by Blanc and Furter \cite{bd} for groups of birational maps, eg. the
dynamical number of base-points and dynamical length.

\subsection{Margulis invariants}

Let $\mathrm{Aff}(\mathbb{R}^{n})=\mathbb{R}^{n}\rtimes \mathrm{GL}_{n}(%
\mathbb{R})$ be the affine group of the $n$-dimensional space $\mathbb{R}%
^{n} $. For a subgroup $G<\mathrm{Aff}(\mathbb{R}^{n}),$ the linear part is $%
\Phi (G)$, where $\Phi :$ $\mathbb{R}^{n}\rtimes \mathrm{GL}_{n}(\mathbb{R}%
)\rightarrow \mathrm{GL}_{n}(\mathbb{R})$ is the natural quotient map.

\begin{theorem}
\label{margu}Let $G<\mathrm{Aff}(\mathbb{R}^{n}).$ Suppose that

\begin{enumerate}
\item[(i)] the linear part $\Phi (G)$ lies in the isometric group $\mathrm{%
Isom}(B)\ $for a bilinear form $B:\mathbb{R}^{n}\times \mathbb{R}%
^{n}\rightarrow \mathbb{R}.$

\item[(ii)] any $1\neq g\in \Phi (G)$ has $1$ as its eigenvalue and the
corresponding eigenspace is of dimension $1,$ spanned by a unit eigenvector $%
x_{g}\in \mathbb{R}^{n}.$

\item[(iii)] any commuting elements $a,b\in l(G)$ have a common eigenvector $%
0\neq x_{a}=x_{b}\in \mathbb{R}^{n}$ corresponding to the eigenvalue $1.$
\end{enumerate}

Then $l:G\rightarrow \mathbb{R}_{\geq 0}$ given by $l((g,v))=|B(x_{g},v)|$
is a length function.
\end{theorem}

\begin{proof}
Note that the definition of $l$ does not depend on the choice the sign of $%
x_{g}.$ For any integer $n>0,$ we have $(v,g)^{n}=(%
\sum_{i=0}^{n-1}g^{i}v,g^{n})$ for any $(v,g)\in G.$ Therefore,%
\begin{equation*}
B(x_{g},\sum_{i=0}^{n-1}g^{i}v)=nB(x_{g},v),
\end{equation*}%
implying $l((v,g)^{n})=nl(g,v).$ Note that $x_{g}=x_{g^{-1}}$ and 
\begin{eqnarray*}
B(x_{g^{-1}},-g^{-1}v) &=&B(x_{g},-v), \\
l((v,g)) &=&l((v,g)^{-1}).
\end{eqnarray*}%
For any $(u,h)\in G,$ we have 
\begin{equation*}
(u,h)(v,g)(u,h)^{-1}=(u+hv-hgh^{-1}u,hgh^{-1}).
\end{equation*}%
Note that $hgh^{-1}(hx_{g})=hx_{g}$ and $x_{hgh^{-1}}=hx_{g}.$ We have%
\begin{eqnarray*}
l((u,h)(v,g)(u,h)^{-1}) &=&|B(hx_{g},u+hv-hgh^{-1}u)| \\
&=&|B(x_{g},h^{-1}u+v-gh^{-1}u)|=|B(x_{g},v)|.
\end{eqnarray*}%
For any commuting pair $(u,a),(v,b)\in G,$ we have%
\begin{eqnarray*}
l((u,a)(v,b)) &=&l(u+av,ab)=|B(x_{a},u+av)| \\
&=&|B(x_{a},u)+B(x_{a},v)|\leq l((u,a))+l((v,b)).
\end{eqnarray*}
\end{proof}

\begin{corollary}
Let $\alpha _{\phi }:\Gamma \rightarrow \mathbb{R}$ be the Margulis
invariant defined in the section of Introduction. The absolute value
function $|\alpha _{\phi }|$ is a length function.
\end{corollary}

\begin{proof}
By Theorem \ref{margu}, it is enough to check the three conditions. If $\phi
(g)$ is hyperbolic (for an element $g\in \Gamma $), it has three real
eigenvalues with one of them is $1.$ The first two conditions are obvious.
Now we check the third condition. If $g,h$ are commuting elements, the
linear parts $\phi (g),\phi (h)$ lie in a maximal split torus in $\mathrm{SO}%
(2,1)$ and thus conjugate to elements in the standard torus simultaneously.
Let $t$ be a nonzero real number. The adjoint representation of $\mathrm{SL}%
(2,\mathbb{R})$ on the Lie algebra $\mathfrak{sl}(2,\mathbb{R})$ gives an
embedding $f:\mathrm{PSL}_{2}(\mathbb{R})\rightarrow \mathrm{SO}(2,1).$ Note
that%
\begin{equation*}
\begin{bmatrix}
t & 0 \\ 
0 & 1/t%
\end{bmatrix}%
\begin{bmatrix}
a & b \\ 
c & -a%
\end{bmatrix}%
\begin{bmatrix}
1/t & 0 \\ 
0 & t%
\end{bmatrix}%
=%
\begin{bmatrix}
a & t^{2}b \\ 
c/t^{2} & -a%
\end{bmatrix}%
.
\end{equation*}%
Therefore, 
\begin{equation*}
\begin{bmatrix}
1 & 0 \\ 
0 & -1%
\end{bmatrix}%
\in \mathfrak{sl}(2,\mathbb{R})
\end{equation*}%
is a common eigenvector for elements in the torus 
\begin{equation*}
\{f(%
\begin{bmatrix}
t & 0 \\ 
0 & 1/t%
\end{bmatrix}%
):t\in \mathbb{R}^{\ast }\}.
\end{equation*}%
This checked condition 3). The claim follows from Theorem \ref{margu}.
\end{proof}

The Margulis invariant has essential applications in the study of properly
discontinuously affine actions on the Euclidean space $\mathbb{R}^{n}.$ For
more information, see the survey article \cite{Ab}.

\section{Groups with purely positive length functions}

\begin{definition}
A length function $l$ on a group $G$ is said to be purely positive if $%
l(g)>0 $ for any infinite-order element $g\in G.$
\end{definition}

In this section, we show that the (Gromov) hyperbolic group, mapping class
group and outer automorphism groups of free groups have purely positive
length functions. First, let us recall the relevant definitions.

A geodesic metric space $X$ is $\delta $-hyperbolic (for some real number $%
\delta >0$) if for any geodesic triangle $\Delta xyz$ in $X,$ one side is
contained the $\delta $-neighborhood of the other two sides. A group $G$ is
(Gromov) hyperbolic if $G$ acts properly discontinuously and cocompactly on
a $\delta $-hyperbolic space $X$.

\begin{definition}

\begin{enumerate}
\item[(i)] An element $g$ in a group $G$ is called primitive if it cannot be
{}written as a proper power $\alpha ^{n},$ where $\alpha \in G$ and $|n|\geq
2;$

\item[(ii)] A group $G$ has unique-root property if every infinite-order
element $g$ is a power of a unique (up to sign) primitive element, i.e. $%
g=\gamma ^{n}=\gamma _{1}^{m}$ for primitive elements $\gamma ,\gamma _{1}$
will imply $\gamma =\gamma _{1}^{\pm }.$
\end{enumerate}
\end{definition}

The following fact is well-known.

\begin{lemma}
A torsion-free hyperbolic group has unique-root property.
\end{lemma}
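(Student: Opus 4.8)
The plan is to reduce the unique-root property to one structural input: in a torsion-free hyperbolic group $G$, the centralizer $C_{G}(g)$ of any infinite-order element $g$ is infinite cyclic. Granting this, write $C_{G}(g)=\langle c\rangle$. Since $g$ commutes with itself, $g\in C_{G}(g)$, so $g=c^{n}$ for some $n\in\mathbb{Z}$, which gives the existence of a primitive root. Indeed $c$ itself is primitive: if $c=d^{m}$ with $|m|\geq 2$, then $d$ commutes with $c$ and hence with $g$, so $d\in C_{G}(g)=\langle c\rangle$, say $d=c^{k}$; then $c=c^{km}$ forces $km=1$, contradicting $|m|\geq 2$. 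Thus $g=c^{n}$ expresses $g$ as a power of a primitive element.

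For uniqueness, suppose $g=\gamma^{n}=\gamma_{1}^{m}$ with $\gamma,\gamma_{1}$ primitive. Then $\gamma$ commutes with $\gamma^{n}=g$ and $\gamma_{1}$ commutes with $\gamma_{1}^{m}=g$, so both lie in $C_{G}(g)=\langle c\rangle$. Writing $\gamma=c^{a}$ and $\gamma_{1}=c^{b}$, the primitivity of $\gamma$ forces $a=\pm 1$ (otherwise $\gamma=(c^{a/|a|})^{|a|}$ would be a proper power) and likewise $b=\pm 1$. Hence $\gamma=c^{\pm 1}=\gamma_{1}^{\pm 1}$, which is exactly the uniqueness-up-to-sign required by the definition.

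The main obstacle, and the only nontrivial step, is the cyclicity of the centralizer. I would argue as follows. Let $X$ be the $\delta$-hyperbolic space on which $G$ acts properly discontinuously and cocompactly. An infinite-order element $g$ acts as a hyperbolic isometry: its orbit map $n\mapsto g^{n}x_{0}$ is a quasi-isometric embedding, and $g$ fixes exactly the two endpoints $g^{+},g^{-}\in\partial X$ of its quasi-axis. Any $h\in C_{G}(g)$ satisfies $h g^{\pm}=g^{\pm}$ as a set (since $h$ conjugates $g$ to itself and so permutes $\mathrm{Fix}(g)\cap\partial X=\{g^{+},g^{-}\}$), so the index-$\leq 2$ subgroup fixing both endpoints stabilizes the quasi-axis and acts on it coboundedly; properness then makes this subgroup quasi-isometric to $\mathbb{Z}$, so $C_{G}(g)$ is two-ended. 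This is the standard description of centralizers in hyperbolic groups (see \cite{gromov}, 8.3, and \cite{bh}, III.$\Gamma$.3).

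Finally I would invoke the classification of two-ended groups: every two-ended group contains a finite-index infinite cyclic subgroup and is isomorphic either to $F\rtimes\mathbb{Z}$ with $F$ finite, or to an amalgam $A\ast_{F}B$ with $F$ finite and $[A:F]=[B:F]=2$. In the torsion-free case $F$ must be trivial; the second type then collapses (its index-$2$ factors would be groups of order $2$, which are torsion), leaving only $F\rtimes\mathbb{Z}\cong\mathbb{Z}$. Hence $C_{G}(g)\cong\mathbb{Z}$, completing the reduction and the proof.
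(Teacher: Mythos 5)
Your proof is correct and follows essentially the same route as the paper: both arguments reduce the unique-root property to the fact that the centralizer $C_{G}(g)$ of an infinite-order element in a torsion-free hyperbolic group is infinite cyclic, and then read off uniqueness of the primitive root from the fact that the only primitive elements of $\mathbb{Z}$ are $\pm$ a generator. The only (minor) divergence is in how cyclicity is justified: the paper cites virtual cyclicity of centralizers plus Serre's theorem that a torsion-free virtually free group is free, whereas you sketch the two-endedness via the boundary action and then invoke the classification of two-ended groups together with torsion-freeness.
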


\begin{proof}
Let $G$ be a torsion-free hyperbolic group and $1\neq g\in G.$ Suppose that $%
g=\gamma ^{n}=\gamma _{1}^{m}$ for primitive elements $\gamma $ and $\gamma
_{1}.$ The set $C_{G}(g)$ of centralizers is virtually cyclic (cf. \cite{bh}%
, Corollary 3.10, page 462). By a result of Serre, a torsion-free virtually
free group is free. Since $G$ is torsion-free, the group $C_{G}(g)$ is thus
free and thus cyclic, say generated by $t$. Since $\gamma $ and $\gamma _{1}$
are primitive, they are $t^{\pm }.$
\end{proof}

\begin{remark}
The previous lemma does not hold for general hyperbolic groups with
torsions. For example, let $G=\mathbb{Z}/2\times \mathbb{Z}.$ We have $%
(0,2)=(0,1)^{2}=(1,1)^{2}$ and $(0,1),$ $(1,1)$ are both primitive.
\end{remark}

For a group $G,$ let $P(G)$ be the set of all primitive elements. We call
two primitive elements $\gamma ,\gamma ^{\prime }$ are generally conjugate
if there exists $g\in G$ such that $g\gamma g^{-1}=\gamma ^{\prime }$ or $%
g\gamma ^{-1}g=\gamma ^{\prime }$. Let $\mathrm{CP}(G)$ be the general
conjugacy classes of primitive elements. For a set $S,$ let $S_{\mathbb{R}}$
be the set of all real functions on $S$. The convex polyhedral cone spanned
by $S$ is the subset 
\begin{equation*}
\{\Sigma _{s\in S}a_{s}s\mid a_{s}\geq 0\ \}\subset S_{\mathbb{R}}.
\end{equation*}

\begin{lemma}
Let $G$ be a torsion-free hyperbolic group. The set of all length functions
on $G$ is the convex polyhedral cone spanned by the general conjugacy
classes $\mathrm{CP}(G).$
\end{lemma}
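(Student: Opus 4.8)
The plan is to set up an explicit linear bijection between $\mathrm{Func}(G)$ and the cone of non-negative real functions on $\mathrm{CP}(G)$, reducing every length function to its values on primitive elements via the unique-root property. Since $G$ is torsion-free, Lemma \ref{1.2} gives $l(1)=0$ and every $1\neq g\in G$ has infinite order; by the unique-root lemma $g=\gamma^{n}$ for a primitive $\gamma$, unique up to sign, with $|n|$ determined by $g$. Axiom (1) forces $l(g)=|n|\,l(\gamma)$, while axioms (1) and (2) give $l(\gamma^{-1})=l(\gamma)$ and $l(a\gamma a^{-1})=l(\gamma)$, so $l(\gamma)$ depends only on the general conjugacy class $[\gamma]\in\mathrm{CP}(G)$. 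Hence $l$ descends to a function $\bar l:\mathrm{CP}(G)\to[0,\infty)$, and $l$ is recovered from $\bar l$ by $l(\gamma^{n})=|n|\,\bar l([\gamma])$ together with $l(1)=0$. The assignment $\Phi:l\mapsto\bar l$ is manifestly linear (it commutes with non-negative linear combinations) and injective.

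Next I would prove surjectivity of $\Phi$: given an arbitrary $c:\mathrm{CP}(G)\to[0,\infty)$, define $l(1)=0$ and $l(\gamma^{n})=|n|\,c([\gamma])$. Well-definedness is exactly the unique-root property, since the pair $(|n|,[\gamma])$ is determined by $g$. Axiom (1) is immediate because $g=\gamma^{n}$ gives $g^{m}=\gamma^{nm}$, and axiom (2) holds because conjugation sends a primitive root to a primitive root lying in the same general conjugacy class, so $l(aga^{-1})=l(g)$.

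The hard part is axiom (3), subadditivity on commuting elements, and the argument rests on the cyclic structure of centralizers already exploited in the unique-root lemma. Suppose $ab=ba$ with $a,b\neq 1$. Then $b\in C_{G}(a)$, and $C_{G}(a)$ is virtually cyclic (\cite{bh}) and torsion-free, hence infinite cyclic, say $C_{G}(a)=\langle t\rangle$. I would first check that $t$ is primitive: if $t=s^{k}$ with $s$ primitive and $|k|\geq 2$, then $s\in C_{G}(t)\subseteq C_{G}(a)=\langle t\rangle$, forcing $s=t^{j}$ and $t=t^{jk}$, so $k=\pm1$, a contradiction. Since $a,b\in\langle t\rangle$ we may write $a=t^{p}$ and $b=t^{q}$, whence $ab=t^{p+q}$, where $t$ is the common primitive root. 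Therefore $l(ab)=|p+q|\,c([t])\leq(|p|+|q|)\,c([t])=l(a)+l(b)$, the degenerate cases where $a$, $b$, or $ab$ equals $1$ being immediate. This verifies axiom (3) and shows $l\in\mathrm{Func}(G)$ with $\Phi(l)=c$.

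Combining injectivity and surjectivity, $\Phi$ is a linear isomorphism from $\mathrm{Func}(G)$ onto the cone $[0,\infty)^{\mathrm{CP}(G)}$ of non-negative functions on $\mathrm{CP}(G)$; identifying each class $[\gamma]$ with its coordinate function, this cone is precisely the convex polyhedral cone spanned by $\mathrm{CP}(G)$, since the construction shows that each coordinate may be chosen freely and independently, so that there are no linear relations among the spanning classes. The main obstacle is the subadditivity step: everything hinges on showing that two commuting non-trivial elements share a single primitive root, which is exactly what the cyclic-centralizer argument delivers.
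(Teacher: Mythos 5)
Your proposal is correct and follows essentially the same route as the paper: reduce a length function to its values on general conjugacy classes of primitive elements via the unique-root property, define the inverse map freely on each class, and verify subadditivity on commuting elements by the cyclic-centralizer argument. You simply spell out in more detail the step the paper dismisses as "automatic" (that two commuting non-trivial elements share a single primitive root), which is a welcome elaboration rather than a departure.
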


\begin{proof}
Let $l$ be a length function on $G.$ Then $l$ gives an element $\Sigma
_{s\in S}a_{s}s$ in the convex polyhedral cone by $a_{s}=l(s).$ Conversely,
for any general conjugacy classes $[s]\in \mathrm{CP}(G)$ with $s$ a
primitive element, let $l_{s}$ be the function defined by $l_{s}(s^{\pm })=1$
and $l_{s}(\gamma )=0$ for element $\gamma $ in any other general conjugacy
classes. For any $1\neq g\in G,$ there is a unique (up to sign) primitive
element $\gamma $ such that $g=\gamma ^{n}.$ Define $l_{s}(g)=|n|l_{s}(%
\gamma ).$ Then $l_{s}$ satisfies conditions (1) and (2) in Definition \ref%
{def}. Condition (3) is satisfied automatically, since any commuting pair of
elements $a,b$ generates a cyclic group in a torsion-free hyperbolic group.
Any element $\Sigma _{s\in S}a_{s}s$ gives a length function on $G$ as a
combination of $a_{s}l_{s}$.
\end{proof}

\begin{lemma}
\label{pol}Let $G\ $be one of the following groups:

\begin{itemize}
\item automorphism group $\mathrm{Aut}(F_{k})$ of a free group$;$

\item outer automorphism group $\mathrm{Out}(F_{k})$ of a free group or

\item mapping class group $\mathrm{MCG}(\Sigma _{g,m})$ (where $\Sigma
_{g,m} $ is an oriented surface of genus $g$ and $m$ punctures);

\item a hyperbolic group,

\item a $\mathrm{CAT}(0)$ group or more generally

\item a semi-hyperbolic group,

\item a group acting properly semi-simply on a $\mathrm{CAT}(0)$ space,

\item a group acting properly semi-simply on a $\delta $-hyperbolic space.
\end{itemize}

Then $G$ has a purely positive length function.
\end{lemma}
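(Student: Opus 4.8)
**The plan is to construct, for each class of group on the list, an explicit length function that is purely positive, and to observe that several of the cases reduce to a single common one.** The key structural observation is that the list is organized around two geometric notions already developed in the excerpt: the translation length $\tau$ on a $\mathrm{CAT}(0)$ space (Corollary \ref{procat}) and the stable translation length $l$ on a $\delta$-hyperbolic space (Lemma \ref{trans}). Both are length functions in the sense of Definition \ref{def}; the only thing left to verify in each case is \emph{pure positivity}, i.e. that every infinite-order element has strictly positive length. This always amounts to showing that every infinite-order element acts as a \emph{hyperbolic} (loxodromic) isometry, for then the orbit map is a quasi-isometric embedding of $\mathbb{Z}$ and the translation length is bounded below by $1/A>0$.

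\textbf{First I would handle the geometric cases directly.} For a group acting properly semi-simply on a $\mathrm{CAT}(0)$ space $X$, every element is either elliptic or hyperbolic by definition of semi-simplicity; an infinite-order element cannot be elliptic (an elliptic isometry fixes a point and, acting properly discontinuously on its fixed convex set, would have finite order), hence it is hyperbolic with $\tau(g)>0$. The $\mathrm{CAT}(0)$-group case and the semi-hyperbolic case are subsumed here, since such groups act properly, cocompactly, and semi-simply on a $\mathrm{CAT}(0)$ (resp.\ semi-hyperbolic) space, and the excerpt already notes that in the $\mathrm{CAT}(0)$-group situation every infinite-order element acts hyperbolically. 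The $\delta$-hyperbolic cases are entirely parallel: for a group acting properly semi-simply on a $\delta$-hyperbolic space, or for a hyperbolic group acting on its Cayley graph, every infinite-order element is hyperbolic (as recalled in the excerpt via \cite{gromov}, 8.1.D), so the stable translation length of Lemma \ref{trans} is positive.

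\textbf{Next I would treat $\mathrm{Aut}(F_k)$, $\mathrm{Out}(F_k)$, and the mapping class group by invoking known actions on suitable $\mathrm{CAT}(0)$ or $\delta$-hyperbolic spaces and then pulling back the length function along the action via Lemma \ref{lemq}.} For $\mathrm{MCG}(\Sigma_{g,m})$ one uses the action on the curve complex (or on Teichm\"uller space with the Teichm\"uller metric): a mapping class is pseudo-Anosov precisely when it acts with positive stable translation length, but reducible and finite-order classes of infinite order act with positive translation length on the relevant combination of subsurface projections; the clean statement is that one combines the translation functions over a canonical family of invariant subsurfaces so that no infinite-order class has total length zero. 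For $\mathrm{Out}(F_k)$ and $\mathrm{Aut}(F_k)$ one uses the analogous machinery of the free factor complex together with relative train-track/lamination data. In each case the constructed $l$ is a finite sum (or supremum) of pullbacks of translation-length length functions, hence still a length function by Lemma \ref{lem0} and the convex-cone property, and it is purely positive by the classification of infinite-order elements.

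\textbf{The hard part will be the non-hyperbolic, non-pseudo-Anosov elements of $\mathrm{MCG}$ and $\mathrm{Out}(F_k)$}, for which a single action on one hyperbolic complex is \emph{not} purely positive (reducible mapping classes and polynomially-growing outer automorphisms act elliptically on the curve complex, resp.\ on the free factor complex). Overcoming this requires a Thurston-type canonical decomposition: one must assemble translation lengths from the actions on the collection of complexes associated to all pieces of the decomposition (subsurface curve complexes for $\mathrm{MCG}$; the hierarchy of free factor and cyclic-splitting complexes, or equivalently Culler--Vogtmann Outer space with its Lipschitz metric, for $\mathrm{Out}(F_k)$) and verify that every infinite-order element has positive length on \emph{at least one} piece. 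This is exactly the content of the cited works of Farb--Masur, Kaimanovich--Masur, and Bridson--Wade, so I would organize the proof to reduce each subcase to the appropriate cited rigidity/positivity input and then record that a non-negative combination of the resulting length functions, guaranteed positive on infinite-order elements, is the desired purely positive length function.
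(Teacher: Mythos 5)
Your treatment of the purely geometric cases (hyperbolic groups, $\mathrm{CAT}(0)$ groups, semi-hyperbolic groups, and groups acting properly semi-simply on $\mathrm{CAT}(0)$ or $\delta$-hyperbolic spaces) matches the paper's: translation length is a length function by Lemma \ref{trans}, infinite-order elements cannot be elliptic under a proper action, and hyperbolic elements have positive translation length. One small imprecision: a semi-hyperbolic group need not act on a $\mathrm{CAT}(0)$ space, so it is not literally ``subsumed'' by the proper semi-simple $\mathrm{CAT}(0)$ case; the paper instead applies the translation length in the word metric on $G$ itself and cites \cite{bh} (Lemma 4.18, Chapter III.$\Gamma$) for positivity on infinite-order elements. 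That is the fix you want for that bullet.

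The genuine gap is in your handling of $\mathrm{Aut}(F_k)$, $\mathrm{Out}(F_k)$ and $\mathrm{MCG}(\Sigma_{g,m})$. You correctly identify that a single action on the curve complex (or free factor complex) fails to be purely positive on reducible, respectively polynomially growing, elements, but the repair you sketch --- assembling translation lengths over ``a canonical family of invariant subsurfaces'' or a hierarchy of complexes --- is not carried out and is not obviously carryable: the decomposition depends on the element, so it is unclear how to produce from it a single function on the whole group that is simultaneously conjugation-invariant, homogeneous, and subadditive on commuting pairs, which is what Definition \ref{def} demands. Moreover, the references you invoke at that point (Farb--Masur, Kaimanovich--Masur, Bridson--Wade) are superrigidity theorems about homomorphisms from lattices; they do not supply a purely positive length function on these groups. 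The paper avoids all of this by taking the stable \emph{word} length on the group itself, which is a length function by Lemma \ref{lem0}, and quoting the undistortion results directly: Alibegovi\'c \cite{al} for $\mathrm{Aut}(F_k)$ and $\mathrm{Out}(F_k)$, and Farb--Lubotzky--Minsky \cite{flm} for the statement that every infinite-order element of $\mathrm{MCG}(\Sigma_{g,m})$ has positive translation length. Replacing your hierarchy construction with these two citations closes the gap and shortens the argument to a few lines.
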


\begin{proof}
Note that hyperbolic groups and $\mathrm{CAT}(0)$ groups are semihyperbolic
(see \cite{bh}, Prop. 4.6 and Cor. 4.8, Chapter III.$\Gamma $). For a
semihyperbolic group $G$ acting a metric space $X$ (actually $X=G$), the
translation $\tau $ is a length function by Lemma \ref{trans}. Moreover, for
any infinite-order element $g\in G,$ the length $\tau (g)>0$ (cf. \cite{bh},
Lemma 4.18, page 479). For group acting properly semisimply on a $\mathrm{CAT%
}(0)$ space (or a $\delta $-hyperbolic space), the translation 
\begin{equation*}
l(\gamma )=\lim_{n\rightarrow \infty }\frac{d(x,\gamma ^{n}x)}{n}
\end{equation*}%
is a length function (cf. Lemma \ref{trans}). For any hyperbolic $\gamma $,
we get $l(\gamma )>0.$ For any elliptic $\gamma ,$ it is finite-order since
the action is proper.

Alibegovic \cite{al} proves that the stable word length of $\mathrm{Aut}%
(F_{n}),\mathrm{Out}(F_{n})$ are purely positive. Farb, Lubotzky and Minsky 
\cite{flm} prove that Dehn twists and more generally all elements of
infinite order in $\mathrm{MCG}(\Sigma _{g,m})$ have positive translation
length.
\end{proof}

\bigskip

\begin{definition}
A group $G$ is called poly-positive (or has a poly-positive length), if
there is a subnormal series%
\begin{equation*}
1=H_{n}\vartriangleleft H_{n-1}\vartriangleleft \cdots \vartriangleleft
H_{0}=G
\end{equation*}%
such that every finitely generated subgroup of the quotient $H_{i}/H_{i+1}$ $%
(i=0,...,n-1)$ has a purely positive length function.
\end{definition}

Recall that a group $G$ is poly-free, if there is a subnormal series $%
1=H_{n}\vartriangleleft H_{n-1}\vartriangleleft \cdots \vartriangleleft
H_{0}=G$ such that the successive quotient $H_{i}/H_{i+1}$ is free $%
(i=0,...,n-1).$ Since a free group is hyperbolic, it has a purely positive
length function. This implies that a poly-free group is poly-positive. {}A
group is said to have a virtual property if a finite-index subgroup has the
property.

Let $\Sigma $ be a closed oriented surface endowed with an area form $\omega
.$ Denote by $\mathrm{Diff}(\Sigma ,\omega )$ the group of diffeomorphisms
preserving $\omega $ and $\mathrm{Diff}_{0}(\Sigma ,\omega )$ the subgroup
consisting of diffeomorphisms isotopic to the identity.

\begin{lemma}
\label{area-pres}When the genus of $\Sigma $ is greater than $1,$ the group $%
\mathrm{Diff}_{0}(\Sigma ,\omega )$ and $\mathrm{Diff}(\Sigma ,\omega )$ are
poly-positive.
\end{lemma}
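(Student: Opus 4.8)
The plan is to construct the required subnormal series from the two canonical normal subgroups of the area-preserving diffeomorphism group. Since in dimension two an area form is a symplectic form, I would write $\mathrm{Diff}_0(\Sigma,\omega)=\mathrm{Symp}_0(\Sigma,\omega)$ for the identity component and let $\mathrm{Ham}(\Sigma,\omega)$ denote the Hamiltonian subgroup. The backbone of the argument is the chain
\[
1\vartriangleleft \mathrm{Ham}(\Sigma,\omega)\vartriangleleft \mathrm{Diff}_0(\Sigma,\omega)\vartriangleleft \mathrm{Diff}(\Sigma,\omega),
\]
where $\mathrm{Diff}_0$ is normal in $\mathrm{Diff}$ as the identity component, and $\mathrm{Ham}$ is normal in $\mathrm{Diff}_0$ as the kernel of the flux homomorphism $\mathrm{Flux}\colon \mathrm{Diff}_0(\Sigma,\omega)\to H^1(\Sigma;\mathbb{R})/\Gamma_\omega$ (standard symplectic topology, cf.\ McDuff--Salamon). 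A single general principle will be used repeatedly: if $l$ is a purely positive length function on a group and $K$ is a subgroup, then the restriction $l|_K$ is again a purely positive length function on $K$, because an infinite-order element of $K$ has infinite order in the ambient group. Once $\mathrm{Diff}_0(\Sigma,\omega)$ is shown to be poly-positive, its witnessing subnormal series (which ends at $\mathrm{Diff}_0$) can simply be extended by the top term $\mathrm{Diff}(\Sigma,\omega)$, so the whole statement reduces to checking the defining condition on the three successive quotients above.

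Two of these quotients are handled directly. For the top quotient, Moser's theorem shows that the inclusion of area-preserving diffeomorphisms into all orientation-preserving diffeomorphisms is a homotopy equivalence, so $\mathrm{Diff}(\Sigma,\omega)/\mathrm{Diff}_0(\Sigma,\omega)\cong \mathrm{MCG}(\Sigma_g)$. Because $g>1$, Lemma \ref{pol} furnishes a purely positive length function on $\mathrm{MCG}(\Sigma_g)$, and by the restriction principle every finitely generated subgroup of this quotient inherits one. For the middle quotient $\mathrm{Diff}_0(\Sigma,\omega)/\mathrm{Ham}(\Sigma,\omega)\cong H^1(\Sigma;\mathbb{R})/\Gamma_\omega$, which is abelian, every finitely generated subgroup is a finitely generated abelian group $\mathbb{Z}^k\oplus T$ with $T$ finite; setting $l(v\oplus t)=\lVert v\rVert$ for a fixed Euclidean norm on $\mathbb{Z}^k$ gives a length function (all three axioms are immediate for commuting elements) that is purely positive since an infinite-order element has nonzero free part. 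Thus both of these quotients satisfy the required condition.

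The main obstacle is the bottom quotient $\mathrm{Ham}(\Sigma,\omega)$. By Banyaga's theorem it is simple, so it admits no proper subnormal refinement, and I must therefore produce, for \emph{every} finitely generated $K<\mathrm{Ham}(\Sigma,\omega)$, a purely positive length function on $K$ directly. This is precisely where the hypothesis $g>1$ is essential: for a hyperbolic surface the space of homogeneous quasimorphisms on $\mathrm{Ham}(\Sigma,\omega)$ is infinite dimensional, by the constructions of Gambaudo--Ghys and of Py \cite{py}, whereas the genus-one case would require the separate Ruelle/rotation-number input. Each homogeneous quasimorphism $\phi$ yields a length function $|\phi|$ through the quasi-cocycle (average-norm) construction of Section 2, and a nonzero value $\phi(g)\neq 0$ forces $g$ to be undistorted. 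The cleanest target is then to show that a fixed finitely generated $K$ has \emph{no} infinite-order distortion element, for in that case the stable word length of $K$ is already a purely positive length function; equivalently, one wants every infinite-order element of $K$ to be detected by some quasimorphism and then to assemble finitely many of these into a single length function positive on all of $K\setminus\{\text{torsion}\}$. The delicate point — and the analytic heart of the area-preserving case, underlying the Franks--Handel rigidity phenomenon \cite{fh} — is controlling \emph{all} infinite-order elements of $K$ simultaneously rather than one quasimorphism at a time; this is the step I expect to require the full strength of the genus-$\geq 2$ quasimorphism and distortion theory.
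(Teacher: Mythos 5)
Your decomposition is exactly the one the paper uses: the subnormal series $1\vartriangleleft \mathrm{Ham}(\Sigma ,\omega )\vartriangleleft \mathrm{Diff}_{0}(\Sigma ,\omega )\vartriangleleft \mathrm{Diff}(\Sigma ,\omega )$, with the top quotient inside $\mathrm{MCG}(\Sigma _{g})$ handled by Farb--Lubotzky--Minsky (via Lemma \ref{pol}) and the middle abelian quotient handled by a norm. Your treatment of the middle quotient is in fact more careful than the paper's, which simply identifies it with $H_{1}(\Sigma ,\mathbb{R})$ and moves on.

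The genuine gap is the bottom term. You correctly identify $\mathrm{Ham}(\Sigma ,\omega )$ as the crux and correctly observe that Banyaga simplicity forbids any further refinement, but you then only sketch a strategy (detect every infinite-order element of a finitely generated $K<\mathrm{Ham}(\Sigma ,\omega )$ by quasimorphisms and conclude non-distortion) and explicitly defer the simultaneous control of all infinite-order elements. That deferred step is the entire content of the lemma at this stage, and your proposed route through finitely many quasimorphisms is not obviously going to close: there is no a priori reason a single finite family of homogeneous quasimorphisms is positive on every infinite-order element of $K$, and even granting that each element is undistorted you still need positivity of the stable word length of $K$ itself. The paper resolves this in one stroke by citing Polterovich \cite{po} (1.6.C): for $\Sigma$ of genus greater than $1$, every finitely generated subgroup of $\mathrm{Ham}(\Sigma ,\omega )$ has a purely positive stable word length. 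That is precisely the statement your argument is missing; with it quoted, your proof is complete and coincides with the paper's.
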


\begin{proof}
This is {}essentially proved by Py \cite{py} (Section 1). There is a group
homomorphism%
\begin{equation*}
\alpha :\mathrm{Diff}_{0}(\Sigma ,\omega )\rightarrow H_{1}(\Sigma ,\mathbb{R%
})
\end{equation*}%
with $\ker \alpha =\mathrm{Ham}(\Sigma ,\omega )$ the group of Hamiltonian
diffeomorphisms of $\Sigma .$ Polterovich \cite{po} (1.6.C.) proves that any
finitely generated group of $\mathrm{Ham}(\Sigma ,\omega )$ has a purely
positive stable word length. Since the quotient group $\mathrm{Diff}(\Sigma
,\omega )/\mathrm{Diff}_{0}(\Sigma ,\omega )$ is a subgroup of the mapping
class group $\mathrm{MCG}(\Sigma ),$ which has a purely positive stable word
length by Farb-Lubotzky-Minsky \cite{flm}, the group $\mathrm{Diff}(\Sigma
,\omega )$ is poly-positive.
\end{proof}

\section{Vanishing of length functions on abelian-by-cyclic groups\label%
{section4}}

We will need the following result proved in \cite{tao}.

\begin{lemma}
\label{tao}Given a group $G$, let $l:G\rightarrow \lbrack 0,+\infty )$ be
function such that

1) $l(e)=0;$

2) $l(x^{n})=|n|l(x)$ for any $x\in G,$ any $n\in \mathbb{Z};$

3) $l(xy)\leq l(x)+l(y)$ for any $x,y\in G.$

Then there exist a real Banach space $(\mathbb{B},\Vert \Vert )$ and a group
homomorphism $\varphi :G\rightarrow \mathbb{B}$ such that $l(x)=\Vert
\varphi (x)\Vert $ for all $x\in G.$ Furthermore, if $l(x)>0$ for any $x\neq
e$, one can take $\varphi $ to be injective, i.e., an isometric embedding.
\end{lemma}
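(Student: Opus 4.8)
The plan is to first extract the two structural features that make $l$ look like a pulled-back norm: it is conjugation-invariant, and it kills the commutator subgroup. Conjugation invariance is immediate from homogeneity and the triangle inequality: for $g,x\in G$ and $n\geq 1$ we have $l(gxg^{-1})=\tfrac1n l(gx^ng^{-1})\le \tfrac1n\big(l(x^n)+2l(g)\big)=l(x)+\tfrac{2l(g)}{n}$, and letting $n\to\infty$ gives $l(gxg^{-1})\le l(x)$; applying this with $(g,x)$ replaced by $(g^{-1},gxg^{-1})$ yields equality. (Taking $n=-1$ in axiom 2 also records $l(x^{-1})=l(x)$.)

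The heart of the matter is to prove that $l$ vanishes on every commutator $c=[a,b]=aba^{-1}b^{-1}$. The basic mechanism is a bound on $l$ of a commutator that is uniform in a power. From the identity $[a^k,b]=a^kba^{-k}b^{-1}$, conjugation invariance and the triangle inequality give $l([a^k,b])\le l(a^kba^{-k})+l(b^{-1})=2l(b)$ for every $k\ge 1$. On the other hand the commutator collection formula expresses $[a^k,b]$ as a product of $k$ conjugates of $c$. When $c$ is central, which is the only situation the rest of the paper actually exploits (a Heisenberg subgroup, cf. Lemma \ref{nilpfac}), each such conjugate equals $c$, so $[a^k,b]=c^k$ and the bound reads $k\,l(c)\le 2l(b)$; letting $k\to\infty$ forces $l(c)=0$. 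The main obstacle is the general case: when the conjugates of $c$ by powers of $a$ do not coincide one cannot read $k\,l(c)$ off the left-hand side, and controlling $l(c)$ then requires the combinatorial doubling inequality of \cite{tao}. This is precisely the difficult core of that reference, which I would invoke to conclude $l([a,b])=0$ for all $a,b\in G$.

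Once $l$ vanishes on all commutators, subadditivity makes it vanish on every finite product of commutators, hence on the whole derived subgroup $[G,G]$; as in Lemma \ref{1.7}, $l$ is then constant on each coset of $[G,G]$ and descends to a homogeneous, subadditive function $\bar l$ on the abelianization $G^{\mathrm{ab}}$. By Lemma \ref{1.2}, $\bar l$ kills torsion, so it further descends to the torsion-free abelian group $A=G^{\mathrm{ab}}/\mathrm{tors}$. I would then extend $\bar l$ from $A$ to $A\otimes\mathbb{Q}$ by $\bar l(\tfrac1m x)=\tfrac1m\bar l(x)$ (well defined by homogeneity), observe that $\mathbb{Q}$-homogeneity together with subadditivity makes $\bar l$ a seminorm, and extend it continuously to $V=A\otimes\mathbb{R}$. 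Quotienting $V$ by the null subspace $N=\{v:\bar l(v)=0\}$ and completing produces a Banach space $(\mathbb{B},\|\cdot\|)$, and the composite $\varphi:G\to G^{\mathrm{ab}}\to A\to V\to V/N\hookrightarrow\mathbb{B}$ is a group homomorphism with $\|\varphi(g)\|=\bar l(\bar g)=l(g)$ for all $g$.

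Finally, for the refinement, suppose $l(x)>0$ whenever $x\neq e$. Then $l([a,b])=0$ forces $[a,b]=e$, so $G$ is abelian, and the vanishing of $l$ on torsion forces $G$ torsion-free; moreover $N$ is trivial since $\ker\varphi=\{x:l(x)=0\}=\{e\}$. Hence $\bar l$ is a genuine norm and $\varphi$ is injective with $\|\varphi(x)\|=l(x)$, i.e. an isometric embedding. Apart from the cited commutator estimate, the only delicate points I anticipate are the continuous extension of the seminorm from $A\otimes\mathbb{Q}$ to $A\otimes\mathbb{R}$ and the compatibility of the completion with $\varphi$; both are routine.
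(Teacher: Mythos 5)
The paper gives no proof of this lemma at all---it is imported wholesale from \cite{tao}---so there is no in-text argument to compare against; what you have written is a faithful outline of the proof in that reference, with the one genuinely hard ingredient (vanishing of $l$ on a non-central commutator, via the doubling/collection estimate) correctly isolated and, exactly as the paper does, deferred to \cite{tao}, while the surrounding steps (conjugation invariance, descent to $G^{\mathrm{ab}}$ and its torsion-free quotient, passage to $A\otimes\mathbb{Q}$, quotient by the null space, completion, and the injectivity refinement) are all correct. The only streamlining I would suggest is to complete the $\mathbb{Q}$-normed space $(A\otimes\mathbb{Q})/N$ directly---real scalar multiplication then extends by continuity from rational scalars, so the completion is automatically a real Banach space---which bypasses the separate continuous extension of the seminorm to $A\otimes\mathbb{R}$ that you flag as the delicate point.
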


Let $\mathbb{Z}^{2}\rtimes _{A}\mathbb{Z}$ be an abelian-by-cyclic group,
where $A=%
\begin{bmatrix}
a & b \\ 
c & d%
\end{bmatrix}%
\in \mathrm{GL}_{2}(\mathbb{Z})$. We prove Theorem \ref{theorem0} by proving
the following two theorems.

\begin{theorem}
When the absolute value of the trace $|\mathrm{tr}(A)|>2,$ any length
function $l:\mathbb{Z}^{2}\rtimes _{A}\mathbb{Z}\rightarrow \mathbb{R}_{\geq
0}$ vanishes on $\mathbb{Z}^{2}.$
\end{theorem}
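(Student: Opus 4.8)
The plan is to exploit the hyperbolicity of $A$ through the conjugation action, combined with the fact that $\mathbb{Z}^2$ is abelian so that $l$ behaves like a seminorm on it. I write the group with $\mathbb{Z}^2$ additive and let $t$ be a generator of the acting $\mathbb{Z}$, so that the defining relation reads $tvt^{-1}=Av$ for $v\in\mathbb{Z}^2$. Conjugation-invariance (axiom 2) then gives the crucial identity $l(Av)=l(v)$ for every $v\in\mathbb{Z}^2$. (It does not matter whether the convention makes $t$ act by $A$ or $A^{-1}$, since both have an eigenvalue of modulus $>1$.)

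First I would record that the restriction $l|_{\mathbb{Z}^2}$ satisfies the hypotheses of Lemma \ref{tao}: because $\mathbb{Z}^2$ is abelian, axiom 3 yields $l(v+w)\le l(v)+l(w)$ for all $v,w\in\mathbb{Z}^2$, while axiom 1 gives $l(nv)=|n|l(v)$. Hence there is a real Banach space $\mathbb{B}$ and a homomorphism $\varphi:\mathbb{Z}^2\to\mathbb{B}$ with $l(v)=\|\varphi(v)\|$. Since $\mathbb{Z}^2$ is free abelian of rank two, $\varphi$ extends uniquely to an $\mathbb{R}$-linear map $\bar\varphi:\mathbb{R}^2\to\mathbb{B}$, and $p(v):=\|\bar\varphi(v)\|$ is a genuine seminorm on $\mathbb{R}^2$ that restricts to $l$ on $\mathbb{Z}^2$. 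As any seminorm on the finite-dimensional space $\mathbb{R}^2$ is Lipschitz, $p$ is continuous.

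The heart of the argument is then to promote $A$-invariance to all of $\mathbb{R}^2$ and apply the eigenvalue scaling. We have $p(Av)=l(Av)=l(v)=p(v)$ for $v\in\mathbb{Z}^2$; homogeneity extends this to $\mathbb{Q}^2$, and continuity of $p$ and $p\circ A$ together with the density of $\mathbb{Q}^2$ in $\mathbb{R}^2$ give $p(Av)=p(v)$ for all $v\in\mathbb{R}^2$ (and likewise for $A^{-1}$). Since $A\in\mathrm{SL}_2(\mathbb{Z})$ with $|\mathrm{tr}(A)|>2$, the characteristic polynomial $x^2-\mathrm{tr}(A)x+1$ has two distinct real roots $\lambda,\lambda^{-1}$ with $|\lambda|>1$; let $e_1,e_2\in\mathbb{R}^2$ be eigenvectors for $\lambda,\lambda^{-1}$. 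Then $p(e_1)=p(A^n e_1)=p(\lambda^n e_1)=|\lambda|^n p(e_1)$ for every $n$, which forces $p(e_1)=0$ because $|\lambda|>1$; applying the same computation to $A^{-1}$ (for which $e_2$ is the expanding direction) gives $p(e_2)=0$. By subadditivity, $p(ae_1+be_2)\le|a|p(e_1)+|b|p(e_2)=0$, so $p\equiv 0$, and in particular $l$ vanishes on $\mathbb{Z}^2$.

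The only genuine obstacle is the invariance-transfer step: one must verify that the homomorphism supplied by Lemma \ref{tao} really extends $\mathbb{R}$-linearly, and that finite-dimensional seminorms are continuous, so that agreement on the non-dense lattice $\mathbb{Z}^2$ propagates (via $\mathbb{Q}^2$) to all of $\mathbb{R}^2$. Once the $A$-invariant seminorm $p$ is in place, the expanding/contracting eigenvalue computation is immediate, and this is precisely where $|\mathrm{tr}(A)|>2$ enters essentially—when $|\mathrm{tr}(A)|=2$ one eigenvalue has modulus one, and the argument can only annihilate the remaining eigendirection, which is exactly the weaker conclusion recorded in part (ii).
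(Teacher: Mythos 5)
Your proof is correct, but it follows a genuinely different route from the paper's. The paper's argument is purely integral and elementary: writing $e_1=(1,0)^{T}$, it uses conjugation-invariance ($l(A^kv)=l(v)$) together with the Cayley--Hamilton identity $A^{2}e_1=(a+d)Ae_1-(ad-bc)e_1$ to get $|a+d|\,l(e_1)=l\bigl(A^{2}e_1+(ad-bc)e_1\bigr)\leq (1+|ad-bc|)\,l(e_1)$, which forces $l(e_1)=0$ as soon as $|a+d|>2$; the same computation handles $e_2=(0,1)^{T}$, and subadditivity on $\mathbb{Z}^{2}$ finishes. You instead invoke Lemma \ref{tao} to realize $l|_{\mathbb{Z}^{2}}$ as $\|\varphi(\cdot)\|$, extend $\varphi$ linearly to a continuous $A$-invariant seminorm $p$ on $\mathbb{R}^{2}$, and kill $p$ on each real eigendirection via the expanding eigenvalue $|\lambda|>1$. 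Your extension steps (linear extension from a $\mathbb{Z}$-basis, Lipschitz continuity of finite-dimensional seminorms, propagation of $A$-invariance through $\mathbb{Q}^{2}$ by homogeneity and then by density) are all sound, and they are genuinely needed since the eigenvectors are typically irrational. What you gain is conceptual clarity: the dichotomy between $|\mathrm{tr}(A)|>2$ and $|\mathrm{tr}(A)|=2$ becomes visibly a statement about which eigendirections are expanded, and the method generalizes to arbitrary $A\in\mathrm{GL}_n(\mathbb{Z})$ (the seminorm must vanish on the span of eigenvectors whose eigenvalues have modulus $\neq 1$). What the paper's approach buys is self-containedness — no Banach-space representation, no passage to $\mathbb{R}^{2}$ — and it is the template for the later $n\times n$ lemma on dominant coefficients of the characteristic polynomial; interestingly, the paper reserves Lemma \ref{tao} for the harder $|\mathrm{tr}(A)|=2$ case, where the purely integral trick no longer suffices.
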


\begin{proof}
Let $A=%
\begin{bmatrix}
a & b \\ 
c & d%
\end{bmatrix}%
\in \mathrm{GL}_{2}(\mathbb{Z}).$ Suppose that $t$ is a generator of $%
\mathbb{Z}$ and%
\begin{equation*}
t%
\begin{bmatrix}
x \\ 
y%
\end{bmatrix}%
t^{-1}=A%
\begin{bmatrix}
x \\ 
y%
\end{bmatrix}%
\end{equation*}%
for any $x,y\in \mathbb{Z}.$ Note that 
\begin{equation*}
(0,t^{k})(v,0)(0,t^{k})^{-1}=(A^{k}v,0)
\end{equation*}%
for any $v\in \mathbb{Z}^{2}$ and $k\in \mathbb{Z}$. Therefore, an element $%
v\in \mathbb{Z}^{2}$ is conjugate to $A^{k}v$ for any integer $k.$ Note that 
\begin{equation*}
A%
\begin{bmatrix}
1 \\ 
0%
\end{bmatrix}%
=%
\begin{bmatrix}
a \\ 
c%
\end{bmatrix}%
,A^{2}%
\begin{bmatrix}
1 \\ 
0%
\end{bmatrix}%
=%
\begin{bmatrix}
a^{2}+bc \\ 
ac+dc%
\end{bmatrix}%
\end{equation*}%
and 
\begin{equation*}
\begin{bmatrix}
a^{2}+bc \\ 
ac+dc%
\end{bmatrix}%
=(a+d)%
\begin{bmatrix}
a \\ 
c%
\end{bmatrix}%
-(ad-bc)%
\begin{bmatrix}
1 \\ 
0%
\end{bmatrix}%
.
\end{equation*}%
Therefore, we have%
\begin{eqnarray*}
|a+d|l(%
\begin{bmatrix}
1 \\ 
0%
\end{bmatrix}%
) &=&l((a+d)%
\begin{bmatrix}
a \\ 
c%
\end{bmatrix}%
) \\
&=&l(%
\begin{bmatrix}
a^{2}+bc \\ 
ac+dc%
\end{bmatrix}%
+(ad-bc)%
\begin{bmatrix}
1 \\ 
0%
\end{bmatrix}%
) \\
&\leq &(1+|ad-bc|)l(%
\begin{bmatrix}
1 \\ 
0%
\end{bmatrix}%
).
\end{eqnarray*}%
When $ad-bc=\pm 1$ and $|a+d|>2,$ we must have $l(%
\begin{bmatrix}
1 \\ 
0%
\end{bmatrix}%
)=0.$ Similarly, we can prove that $l(%
\begin{bmatrix}
0 \\ 
1%
\end{bmatrix}%
)=0.$ Since $l$ is subadditive on $\mathbb{Z}^{2},$ we get that $l$ vanishes
on $\mathbb{Z}^{2}.$
\end{proof}

\begin{theorem}
When the absolute value $|\mathrm{tr}(A)|=|a+d|=2,I_{2}\neq A\in \mathrm{SL}%
_{2}(\mathbb{Z}),$ any length function $l:\mathbb{Z}^{2}\rtimes _{A}\mathbb{Z%
}\rightarrow \mathbb{R}_{\geq 0}$ vanishes on the direct summand of $\mathbb{%
Z}^{2}$ spanned by eigenvectors of $A$.
\end{theorem}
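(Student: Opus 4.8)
The plan is to exploit the conjugation relation already used in the previous case, namely $(0,t^{k})(w,0)(0,t^{-k})=(A^{k}w,0)$, which together with conjugation-invariance of $l$ forces $l(w)=l(A^{k}w)$ for every $w\in\mathbb{Z}^{2}$ and every $k\in\mathbb{Z}$. The guiding idea is that along an orbit $\{A^{k}w\}$ the length stays constant while the orbit drifts off to infinity in the eigendirection; subadditivity on the abelian group $\mathbb{Z}^{2}$ should then pin the length of the eigenvector down to $0$.

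First I would record the linear algebra. Since $\det A=1$ and $|\mathrm{tr}(A)|=2$, the characteristic polynomial is $(x\mp 1)^{2}$, so the only eigenvalue is $\varepsilon:=\pm 1$ and $(A-\varepsilon I)^{2}=0$ by Cayley--Hamilton. Excluding the degenerate case $A=\varepsilon I$ (so that $A$ is a genuine Jordan block), the eigenspace $\ker(A-\varepsilon I)$ is a single rational line; I take $v\in\mathbb{Z}^{2}$ to be a primitive integer vector spanning it, so that $\mathbb{Z}v$ is exactly the direct summand of $\mathbb{Z}^{2}$ cut out by the eigenvectors and $Av=\varepsilon v$. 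Because $v$ is primitive it extends to a $\mathbb{Z}$-basis $\{u,v\}$ of $\mathbb{Z}^{2}$, and relative to this basis one has $Au=\varepsilon u+m v$ with $m\neq 0$ (otherwise $A=\varepsilon I$).

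The next step is the dynamical estimate. To launder away the sign $\varepsilon$, I would pass to $B:=A^{2}$: a direct computation gives $Bv=v$ and $Bu=u+c\,v$ with $c=2\varepsilon m\neq 0$, so $B$ is a nontrivial unipotent fixing $v$ and $B^{k}u=u+kc\,v$. Applying the conjugation relation to $t^{2k}$ yields $l(u)=l(B^{k}u)=l(u+kc\,v)$ for all $k$. Since $u+kc\,v$ and $-u$ commute in the abelian group $\mathbb{Z}^{2}$, axiom (3) gives
\[
|kc|\,l(v)=l(kc\,v)=l\bigl((u+kc\,v)+(-u)\bigr)\le l(u+kc\,v)+l(-u)=2\,l(u)
\]
for every $k\in\mathbb{Z}$. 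Letting $k\to\infty$ forces $l(v)=0$, and hence $l$ vanishes on the summand $\mathbb{Z}v$, as required.

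I expect the only genuine subtlety to be the $\mathrm{tr}(A)=-2$ bookkeeping together with the degenerate matrix $A=-I_{2}$: when $A=-I_{2}$ the ``eigenvector summand'' is all of $\mathbb{Z}^{2}$ while $A^{2}=I_{2}$, so the drift argument collapses and the statement must be read as concerning the nondegenerate Jordan blocks $A\neq\pm I_{2}$. Passing to $B=A^{2}$ is precisely the device that both removes the sign and isolates this degenerate point; apart from it, the proof is the same subadditivity-plus-conjugation mechanism already employed in the case $|\mathrm{tr}(A)|>2$.
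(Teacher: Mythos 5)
Your proof is correct and follows essentially the same route as the paper's: conjugation by powers of $t$ gives $l(u)=l(u+kc\,v)$, and subadditivity on the abelian subgroup $\mathbb{Z}^{2}$ then forces $|kc|\,l(v)\leq 2l(u)$, hence $l(v)=0$. The differences are cosmetic — the paper normalizes $A$ to an upper-triangular unipotent matrix and cites its Banach-space embedding lemma where plain subadditivity suffices, whereas you argue basis-free and pass to $A^{2}$ to handle the trace $-2$ case; your observation that $A=-I_{2}$ must be excluded alongside $A=I_{2}$ is a genuine (minor) point the paper's statement overlooks.
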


\begin{proof}
We may assume that $A=%
\begin{bmatrix}
1 & n \\ 
0 & 1%
\end{bmatrix}%
,n\neq 0.$ For any integer $k\geq 0$ and $v\in \mathbb{Z}^{2},$ we have%
\begin{equation*}
t^{k}vt^{-k}=A^{k}v.
\end{equation*}%
Take $v=%
\begin{bmatrix}
0 \\ 
1%
\end{bmatrix}%
$ to get that 
\begin{equation*}
t^{k}%
\begin{bmatrix}
0 \\ 
1%
\end{bmatrix}%
t^{-k}=%
\begin{bmatrix}
kn \\ 
0%
\end{bmatrix}%
+%
\begin{bmatrix}
0 \\ 
1%
\end{bmatrix}%
.
\end{equation*}%
Since the function $l|_{\mathbb{Z}^{2}}$ is given by the norm of a Banach
space according to Lemma \ref{tao}, we get that%
\begin{eqnarray*}
k|n|l(%
\begin{bmatrix}
1 \\ 
0%
\end{bmatrix}%
) &\leq &l(t^{k}%
\begin{bmatrix}
0 \\ 
1%
\end{bmatrix}%
t^{-k})+l(%
\begin{bmatrix}
0 \\ 
1%
\end{bmatrix}%
) \\
&=&2l(%
\begin{bmatrix}
0 \\ 
1%
\end{bmatrix}%
).
\end{eqnarray*}%
Since $k$ is arbitrary, we get that $l(%
\begin{bmatrix}
1 \\ 
0%
\end{bmatrix}%
)=0.$
\end{proof}

\begin{remark}
When $A=%
\begin{bmatrix}
1 & 1 \\ 
0 & 1%
\end{bmatrix}%
,$ the semidirect product $G=\mathbb{Z}^{2}\rtimes _{A}\mathbb{Z}$ is a
Heisenberg group. A length function on $G/Z(G)\cong \mathbb{Z}^{2}$ gives a
length function on $G.$ In particular, a length function of $G$ may not
vanish on the second component $%
\begin{bmatrix}
0 \\ 
1%
\end{bmatrix}%
\in \mathbb{Z}^{2}<G.$
\end{remark}

\begin{remark}
When $A\in \mathrm{SL}_{2}(\mathbb{Z})$ has $|\mathrm{tr}(A)|<2,$ the matrix 
$A$ is of finite order and the semi-direct product $\mathbb{Z}^{2}\rtimes
_{A}\mathbb{Z}$ contains $\mathbb{Z}^{3}$ as a finite-index normal subgroup.
Actually, in this case the group $\mathbb{Z}^{2}\rtimes _{A}\mathbb{Z}$ is
the fundamental group of a flat $3$-manifold $M$ (see \cite{wol}, Theorem
3.5.5). Therefore, the group $\mathbb{Z}^{2}\rtimes _{A}\mathbb{Z}$ acts
freely properly discontinuously isometrically and cocompactly on the
universal cover $\tilde{M}=\mathbb{R}^{3}.$ This means the translation
length gives a purely positive length function on $\mathbb{Z}^{2}\rtimes _{A}%
\mathbb{Z}$.
\end{remark}

\begin{lemma}
Let $A\in \mathrm{GL}_{n}(\mathbb{Z})$ be a matrix and $G=\mathbb{Z}%
^{n}\rtimes _{A}\mathbb{Z}$ the semi-direct product. Let $%
\sum_{i=0}^{n}a_{i}x^{i}$ be the {}characteristic polynomial of some power $%
A^{k}$. Suppose that for some $k,$ there is a coefficient $a_{i}$ such that 
\begin{equation*}
|a_{i}|>\sum_{j\neq i}|a_{j}|.
\end{equation*}%
Any length function $l$ of $G$ vanishes on $\mathbb{Z}^{n}.$
\end{lemma}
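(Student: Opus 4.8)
The plan is to combine the Cayley--Hamilton theorem for $A^{k}$ with the conjugation invariance of $l$, exactly as in the two preceding $2\times 2$ arguments but now in arbitrary dimension. Write $t$ for a generator of the $\mathbb{Z}$ factor, so that $(0,t^{m})(v,0)(0,t^{m})^{-1}=(A^{m}v,0)$ for every $v\in\mathbb{Z}^{n}$ and $m\in\mathbb{Z}$. Property (2) of a length function then gives $l(A^{m}v)=l(v)$ for all $m$ and $v$; in particular the vectors $v,A^{k}v,\dots,A^{kn}v$ all have one and the same length $l(v)$.

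Next I would feed in the algebraic relation. Since $A\in\mathrm{GL}_{n}(\mathbb{Z})$, the matrix $A^{k}$ has integer entries and, by the Cayley--Hamilton theorem, is annihilated by its own characteristic polynomial: $\sum_{i=0}^{n}a_{i}(A^{k})^{i}=0$. Applying this to an arbitrary $v\in\mathbb{Z}^{n}$ yields the integral linear relation $\sum_{i=0}^{n}a_{i}A^{ki}v=0$ inside $\mathbb{Z}^{n}$. Let $i_{0}$ be the index with $|a_{i_{0}}|>\sum_{j\neq i_{0}}|a_{j}|$ and isolate that term to obtain $a_{i_{0}}A^{ki_{0}}v=-\sum_{j\neq i_{0}}a_{j}A^{kj}v$.

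Now I apply $l$. Restricted to the abelian group $\mathbb{Z}^{n}$ the function $l$ is subadditive (all elements commute, so property (3) applies to every pair) and homogeneous (property (1)); equivalently, by Lemma \ref{tao} it is the pullback of a Banach-space norm under a homomorphism. Using homogeneity on the left and the triangle inequality on the right gives
\begin{equation*}
|a_{i_{0}}|\,l(v)=l\bigl(a_{i_{0}}A^{ki_{0}}v\bigr)=l\Bigl(\sum_{j\neq i_{0}}(-a_{j})A^{kj}v\Bigr)\leq\sum_{j\neq i_{0}}|a_{j}|\,l(A^{kj}v).
\end{equation*}
Since $l(A^{kj}v)=l(v)$ for every $j$, the right-hand side equals $\bigl(\sum_{j\neq i_{0}}|a_{j}|\bigr)l(v)$, and the strict diagonal-dominance inequality $|a_{i_{0}}|>\sum_{j\neq i_{0}}|a_{j}|$ forces $l(v)=0$. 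As $v\in\mathbb{Z}^{n}$ was arbitrary, $l$ vanishes on all of $\mathbb{Z}^{n}$.

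I expect no serious obstacle here: the one point to handle with a little care is that $i_{0}$ may be the leading index $n$ or the constant index $0$, for which $a_{n}=1$ and $a_{0}=(-1)^{n}\det(A^{k})=\pm1$; but the isolation step is insensitive to the position of the dominant coefficient, so the displayed computation goes through verbatim. The only genuine input beyond the two-dimensional proofs is the observation that Cayley--Hamilton supplies, for each $v$, a $\mathbb{Z}$-linear dependence among the mutually conjugate vectors $v,A^{k}v,\dots,A^{kn}v$, and that strict dominance of one coefficient is exactly what converts ``equal lengths plus a linear dependence'' into ``vanishing.''
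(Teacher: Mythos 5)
Your proof is correct and follows essentially the same route as the paper's: conjugation by powers of $t$ gives $l(A^{kj}v)=l(v)$, Cayley--Hamilton supplies the integral relation $\sum_{i}a_{i}A^{ki}v=0$, and isolating the dominant coefficient together with homogeneity and subadditivity on the abelian subgroup $\mathbb{Z}^{n}$ forces $l(v)=0$. No substantive differences to report.
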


\begin{proof}
Let $t$ be a generator of $Z$ and $tat^{-1}=Aa$ for any $a\in \mathbb{Z}%
^{n}. $ Note that for any integer $m,$ we have $t^{m}at^{-m}=A^{m}a$ and $%
l(a)=l(A^{m}a).$ Note that 
\begin{equation*}
\sum_{i=0}^{n}a_{i}A^{ki}=0
\end{equation*}%
and thus%
\begin{eqnarray*}
\sum_{i=0}^{n}a_{i}A^{ki}a &=&0, \\
l(\sum_{i=0}^{n}a_{i}A^{ki}a) &=&0
\end{eqnarray*}%
for any $a\in \mathbb{Z}^{n}.$ Therefore,%
\begin{equation*}
|a_{i}|l(a)=|a_{i}|l(A^{ki}a)=l(\sum_{j\neq i}a_{j}A^{kj}a)\leq \sum_{j\neq
i}|a_{j}|l(a).
\end{equation*}%
This implies that $l(a)=0.$
\end{proof}

\begin{corollary}
Let $G$ be a finitely generated nilpotent group, which is not virtually
abelian. Any length function $l:G\rightarrow \mathbb{R}_{\geq 0}$ vanishes
at an infinite-order element $g.$
\end{corollary}

\begin{proof}
The group $G$ contains a finite-index torsion-free nilpotent subgroup $H.$
Suppose that $H$ is not abelian. Denote by $H_{0}=H$ and $H_{i}=[H_{i-1},H]$
for each positive integer $i.$ Since $H$ is nilpotent, there is an integer $%
i=n\geq 1$ such that $H_{n}\neq 1$ is abelian. Choose $a,b\in H_{n-1}$ such
that $[a,b]=:c\neq 1\in H_{n}.$ Since $H_{n+1}$ is trivial, the elements $%
a,b $ commute with $c.$ This implies that the subgroup generated by $a,b,c$
is a Heisenberg group. Theorem \ref{theorem0} implies that $l(c)=0.$
\end{proof}

\begin{remark}
The idea of the proofs of Theorem \ref{theorem0} is already known for the
word metric or the stable norms (for example, see \cite{lmr1}\cite{alperin}%
\cite{conner}). Actually, Connor \cite{conner} obtained a general vanishing
result for the stable norms on solvable groups. An analog will be of great
interest for the vanishing of entropies and Lyapunov exponents (as in
Corollary \ref{corllary0}). Currently, we don't know whether such an analog
holds for general length functions.
\end{remark}

\begin{proof}[Proof of Corollary \protect\ref{corllary0}]
When the group action is $C^{2},$ define 
\begin{equation*}
l(f)=\max \{\lim_{n\rightarrow +\infty }\frac{\log \sup_{x\in M}\Vert
D_{x}f^{n}\Vert }{n},\lim_{n\rightarrow +\infty }\frac{\log \sup_{x\in
M}\Vert D_{x}f^{-n}\Vert }{n}\}
\end{equation*}%
for any diffeomorphism $f:M\rightarrow M.$ Lemma \ref{derivelength} shows
that $l$ is a length function, which is an upper bound of the Lyapunov
exponents. When the group action is Lipschitz, define 
\begin{equation*}
L(f)=\sup_{x\neq y}\frac{d(fx,fy)}{d(x,y)}
\end{equation*}%
for a Lipschitz-homeomorphism $f:M\rightarrow M.$ Since $L(fg)\leq L(f)L(g)$
for two Lipschitz-homeomorphisms $f,g:M\rightarrow M,$ we have that $%
l(f):=\lim_{n\rightarrow \infty }\frac{\max \{\log (L(f^{n})),\log
(L(f^{-n}))\}}{n}$ gives a length function by Lemma \ref{lem0}. Note that $%
l(f)\geq h_{top}(f)$ (see \cite{kat}, Theorem 3.2.9, page 124). The
vanishings of the topological entropy $h_{top}$ and the Lyapunov exponents
in Corollary \ref{corllary0} are proved by Theorem \ref{theorem0}
considering these length functions.
\end{proof}

\section{Classification of length functions on nilpotent groups}

The following lemma is a key step for our proof of the vanishing of length
functions on Heisenberg groups.

\begin{lemma}
\label{conj}Let 
\begin{equation*}
G=\langle a,b,c\mid aba^{-1}b^{-1}=c,ac=ca,bc=cb\rangle
\end{equation*}
be the Heisenberg group. Suppose that $f:G\rightarrow \mathbb{R}$ is a
conjugation-invariant function, i.e. $f(xgx^{-1})=f(g)$ for any $x,g\in G.$
For any coprime integers (not-all-zero) $m,n$ and any integer $k,$ we have 
\begin{equation*}
f(a^{m}b^{n}c^{k})=f(a^{m}b^{n}).
\end{equation*}
\end{lemma}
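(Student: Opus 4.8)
The plan is to use conjugation-invariance directly: I will show that conjugating the normal-form element $a^{m}b^{n}$ by suitable powers of $a$ and $b$ changes \emph{only} the central coordinate, and that the coprimality of $m,n$ makes this shift hit every power of $c$. Everything flows from the single relation $aba^{-1}b^{-1}=c$ together with the fact that $c$ is central.

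First I would record the elementary conjugation formulas. The relation gives $aba^{-1}=cb$, and taking inverses of $aba^{-1}b^{-1}=c$ gives $bab^{-1}=c^{-1}a$. Since $c$ is central these exponentiate without interference: by an easy induction,
\begin{equation*}
a^{p}b a^{-p}=c^{p}b,\qquad b^{q}a b^{-q}=c^{-q}a,
\end{equation*}
and hence, raising to powers and again using centrality of $c$,
\begin{equation*}
a^{p}b^{n}a^{-p}=c^{pn}b^{n},\qquad b^{q}a^{m}b^{-q}=c^{-qm}a^{m}.
\end{equation*}

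Next I would compute the conjugate of $a^{m}b^{n}$ by a general element $g=a^{p}b^{q}$. Conjugating first by $b^{q}$ and then by $a^{p}$, and substituting the formulas above while pulling the central factors out, one obtains
\begin{equation*}
(a^{p}b^{q})(a^{m}b^{n})(a^{p}b^{q})^{-1}=c^{\,pn-qm}\,a^{m}b^{n}=a^{m}b^{n}c^{\,pn-qm},
\end{equation*}
the last equality because $c$ is central. Thus every conjugate of $a^{m}b^{n}$ obtained this way differs from it by a power $c^{pn-qm}$.

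The final step is the arithmetic input, and it is exactly where the coprimality hypothesis enters. Since $m,n$ are coprime and not both zero, Bézout's identity shows that the linear form $(p,q)\mapsto pn-qm$ is surjective onto $\mathbb{Z}$; so for any prescribed integer $k$ I may choose $p,q$ with $pn-qm=k$, making $a^{m}b^{n}c^{k}$ conjugate to $a^{m}b^{n}$. Applying the conjugation-invariance of $f$ then yields $f(a^{m}b^{n}c^{k})=f(a^{m}b^{n})$, as claimed. The only delicate point is the bookkeeping of central factors in the conjugation computation; conceptually the crux is recognizing that coprimality is precisely what forces $pn-qm$ to surject onto $\mathbb{Z}$ (if $\gcd(m,n)=d>1$ one would reach only multiples of $c^{d}$).
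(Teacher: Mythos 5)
Your proof is correct and follows essentially the same route as the paper: both arguments conjugate $a^{m}b^{n}$ by a suitable word in powers of $a$ and $b$, observe that the effect is to multiply by $c$ raised to an integer linear combination of $m$ and $n$, and invoke B\'ezout's identity (via coprimality) to realize an arbitrary exponent $k$. The only difference is cosmetic bookkeeping — the paper conjugates by $a^{t}b^{-s}$ to get the shift $ms+nt$, while you conjugate by $a^{p}b^{q}$ to get $pn-qm$ — so no further comment is needed.
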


\begin{proof}
It is well-known that for any integers $n,m,$ we have $[a^{n},b^{m}]=c^{nm}.$
Actually, since $aba^{-1}b^{-1}=c,$ we have $ba^{-1}b^{-1}=a^{-1}c$ and thus 
$ba^{-n}b^{-1}=a^{-n}c^{n}$ for any integer $n.$ Therefore, $%
a^{n}ba^{-n}b^{-1}=c^{n}$ and $a^{n}ba^{-n}=c^{n}b,$ $%
a^{n}b^{m}a^{-n}=c^{nm}b^{m}$ for any integer $m.$ This means $%
[a^{n},b^{m}]=c^{nm}.$ For any coprime $m,n$, and any integer $k,$ let $%
s,t\in \mathbb{Z}$ such that $ms+nt=k.$ We have 
\begin{equation*}
a^{-m}b^{-s}a^{m}b^{s}=c^{ms},b^{-s}a^{m}b^{s}=a^{m}c^{ms},b^{-s}a^{m}b^{n}b^{s}=a^{m}b^{n}c^{ms}
\end{equation*}%
and 
\begin{equation*}
a^{t}b^{n}a^{-t}b^{-n}=c^{nt},a^{t}b^{n}a^{-t}=b^{n}c^{nt},a^{t}a^{m}b^{n}a^{-t}=a^{m}b^{n}c^{nt}.
\end{equation*}%
Therefore,%
\begin{equation*}
a^{t}(b^{-s}a^{m}b^{n}b^{s})a^{-t}=a^{t}(a^{m}b^{n}c^{ms})a^{-t}=a^{m}b^{n}c^{nt+ms}=a^{m}b^{n}c^{k}.
\end{equation*}%
When $f$ is conjugation-invariant, we get $f(a^{m}b^{n}c^{k})=f(a^{m}b^{n})$
for any coprime $m,n$, and any integer $k.$
\end{proof}

\begin{lemma}
\label{nilpfac}Let 
\begin{equation*}
G=\langle a,b,c\mid aba^{-1}b^{-1}=c,ac=ca,bc=cb\rangle
\end{equation*}%
be the Heisenberg group. Any length function $l:G\rightarrow \lbrack
0,\infty )$ (in the sense of Definition \ref{def}) factors through the
abelianization $G_{\mathrm{ab}}:=G/[G,G]\cong \mathbb{Z}^{2}.$ In other
words, there is a function 
\begin{equation*}
l^{\prime }:G_{\mathrm{ab}}\rightarrow \lbrack 0,\infty )
\end{equation*}%
such that $l^{\prime }(x^{n})=|n|l^{\prime }(x)$ for any $x\in G_{\mathrm{ab}%
},$ any integer $n$ and $l=l^{\prime }\circ q$, where $q:G\rightarrow G_{%
\mathrm{ab}}$ is the natural quotient group homomorphism.
\end{lemma}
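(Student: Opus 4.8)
The plan is to collapse the entire statement to a single fact: the central generator $c$ has length zero. Since $[a,b]=c$ is central and $G/\langle c\rangle$ is abelian, the subgroup $\langle c\rangle$ is simultaneously the center $Z(G)$ and the commutator subgroup $[G,G]$, so $G/\langle c\rangle$ is literally $G_{\mathrm{ab}}\cong\mathbb{Z}^2$. By axiom 1 of Definition \ref{def} we have $l(c^k)=|k|\,l(c)$, so $l(c)=0$ is the same as $l$ vanishing on the central subgroup $\langle c\rangle$. Once that is established, Lemma \ref{1.7} applies verbatim and hands back a length function $l':G_{\mathrm{ab}}\to[0,\infty)$ with $l=l'\circ q$; being a length function, $l'$ satisfies $l'(x^n)=|n|\,l'(x)$, which is exactly the conclusion sought. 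Thus the real content is proving $l(c)=0$.

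To prove $l(c)=0$ I would exploit the distortion of $c$ using the only two tools available, namely conjugation invariance and subadditivity \emph{on commuting pairs}. A length function is conjugation-invariant (axiom 2), so Lemma \ref{conj} applies to $l$; taking the coprime pair $(m,n)=(1,0)$ gives $l(ac^{k})=l(a)$ for every integer $k$ (concretely, $ac^{k}=b^{-k}ab^{k}$ is conjugate to $a$). Next I would observe that $a^{-1}$ commutes with $ac^{k}$: their product is the central element $c^{k}$, and since $c$ is central, $a^{-1}\cdot(ac^{k})=c^{k}=(ac^{k})\cdot a^{-1}$. Hence axiom 3 may legitimately be invoked for this commuting pair, yielding
\[
|k|\,l(c)=l(c^{k})=l\bigl(a^{-1}\cdot ac^{k}\bigr)\leq l(a^{-1})+l(ac^{k})=l(a)+l(a)=2\,l(a).
\]
Because the left side grows linearly in $|k|$ while the right side is a constant independent of $k$, letting $|k|\to\infty$ forces $l(c)=0$.

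I do not expect a serious obstacle here, but the crux — and the one point needing care — is that axiom 3 is available \emph{only} for commuting elements, so the whole argument hinges on rewriting the central power $c^{k}$ as a product of two genuinely commuting factors, one of which has length bounded uniformly in $k$. The conjugacy $ac^{k}\sim a$ supplies precisely such a bounded factor, and the centrality of $c$ guarantees the commutativity needed to apply subadditivity; together these two facts are exactly the metric-free incarnation of the distortion of $c$. After that, everything is bookkeeping: checking $Z(G)=[G,G]=\langle c\rangle$ so that $G/\langle c\rangle\cong\mathbb{Z}^2$, and quoting Lemma \ref{1.7} to descend $l$ to the abelianization.
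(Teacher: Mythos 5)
Your proof is correct, and it reaches the key fact $l(c)=0$ by a leaner route than the paper. The paper restricts $l$ to the rank-two free abelian subgroup $\langle a^ib^j,c\rangle$ (coprime $i,j$), invokes the Banach-space embedding of Lemma \ref{tao} to write $l=\Vert \varphi(\cdot)\Vert$ there, and then combines Lemma \ref{conj} with the reverse triangle inequality $\Vert \varphi(g_{ij})+\varphi(c^{k})\Vert \geq |k|\,\Vert \varphi(c)\Vert -\Vert \varphi(g_{ij})\Vert$. Your inequality $|k|\,l(c)=l(c^{k})\leq l(a^{-1})+l(ac^{k})=2l(a)$ is exactly that reverse triangle inequality, but you extract it directly from axiom 3 of Definition \ref{def} applied to the genuinely commuting pair $a^{-1}$, $ac^{k}$ (together with $ac^{k}=b^{-k}ab^{k}$ and conjugation invariance), so the Banach-space detour is unnecessary; this is a worthwhile simplification. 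The paper then checks by hand that $l$ is constant and homogeneous on the cosets of $\langle c\rangle$, whereas you delegate this to Lemma \ref{1.7}, which is legitimate since $\langle c\rangle=[G,G]=Z(G)$ and $l$ vanishes on all of $\langle c\rangle$ once $l(c)=0$. One small caution: Lemma \ref{1.7} as stated promises that the induced $l'$ is a full length function on the quotient, but condition 3 need not descend, because elements commuting in $G_{\mathrm{ab}}$ need not admit commuting lifts in $G$ --- indeed Theorem \ref{th00} produces functions $l'$ on $\mathbb{Z}^{2}$ that fail subadditivity. This does not damage your argument, since the statement being proved only requires the factorization $l=l'\circ q$ and the homogeneity $l'(x^{n})=|n|\,l'(x)$, both of which follow from the coset-constancy computation in the proof of Lemma \ref{1.7} without any appeal to subadditivity of $l'$; but you should cite Lemma \ref{1.7} for those two conclusions only, not for $l'$ being a length function.
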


\begin{proof}
Let $H=\langle c\rangle \cong \mathbb{Z}$ and write $G=\cup gH$ the union of
left cosets. We choose the representative $g_{ij}=a^{i}b^{j}$ with $(i,j)\in 
\mathbb{Z}^{2}.$ Note that the subgroup $\langle g_{ij},c\rangle $ generated
by $g_{ij},c$ is isomorphic to $\mathbb{Z}^{2}$ for coprime $i,j.$ The
length function $l$ is subadditive on $\langle g_{ij},c\rangle .$ By Lemma %
\ref{tao}, there is a Banach space $\mathbb{B}$ and a group homomorphism $%
\varphi :$ $\langle g_{ij},c\rangle \rightarrow \mathbb{B}$ such that $%
l(g)=\Vert \varphi (g)\Vert $ for any $g\in $ $\langle g_{ij},c\rangle .$
Lemma \ref{conj} implies that 
\begin{equation*}
\Vert \varphi (g_{ij})+\varphi (c^{k})\Vert =\Vert \varphi (g_{ij})\Vert
\end{equation*}%
for any integer $k.$ Since 
\begin{equation*}
\Vert \varphi (g_{ij})\Vert =\Vert \varphi (g_{ij})+\varphi (c^{k})\Vert
\geq |k|\Vert \varphi (c)\Vert -\Vert \varphi (g_{ij})\Vert
\end{equation*}%
for any $k,$ we have that $\Vert \varphi (c)\Vert =l(c)=0.$ This implies
that 
\begin{equation*}
l(g_{ij}^{n}c^{m})=\Vert \varphi (g_{ij}^{n})+\varphi (c^{m})\Vert
=l(g_{ij}^{n})
\end{equation*}%
for any integers $m,n.$ Moreover, for any integers $m,n$ and coprime $i,j,$
we have $a^{ni}b^{nj}c^{m}=(a^{i}b^{j})^{n}c^{k}$ for some integer $k.$ This
implies that 
\begin{equation*}
l(a^{ni}b^{nj}c^{m})=l((a^{i}b^{j})^{n})=|n|l(a^{i}b^{j}).
\end{equation*}%
Therefore, the function $l$ is constant on each coset $gH.$ Define $%
l^{\prime }(gH)=l(g).$ Since $l(g^{k})=|k|l(g),$ we have that $l^{\prime
}(g^{k}H)=|k|l^{\prime }(gH).$ The proof is finished.
\end{proof}

Denote by $S^{\prime }=\{(m,n)\mid m,n$ are coprime integers, $(m,n)\neq
(0,0)\}$ be the set of coprime integer pairs and define an equivalence
relation by $(m,n)\sim (m^{\prime },n^{\prime })$ if $(m,n)=\pm (m^{\prime
},n^{\prime })$. Let $S=S^{\prime }/\sim $ be the equivalence classes.

\begin{theorem}
\label{th00}Let $G=\langle a,b,c\mid aba^{-1}b^{-1}=c,ac=ca,bc=cb\rangle $
be the Heisenberg group. The set of all length functions $l:G\rightarrow
\lbrack 0,\infty )$ (in the sense of Definition \ref{def}) is the convex
polyhedral cone 
\begin{equation*}
\mathbb{R}_{\geq 0}[S]=\{\sum_{s\in S}a_{s}s\mid a_{s}\in \mathbb{R}_{\geq
0},s\in S\}.
\end{equation*}
\end{theorem}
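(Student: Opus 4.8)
The plan is to exhibit an explicit bijection between the length functions on $G$ and the cone $\mathbb{R}_{\geq 0}[S]$, using Lemma \ref{nilpfac} for one inclusion and a direct lifting argument for the other. First I would record that, by Lemma \ref{nilpfac}, every length function $l$ on $G$ factors as $l = l'\circ q$, where $q:G\to G_{\mathrm{ab}}\cong\mathbb{Z}^2$ is the abelianization and $l'$ is homogeneous, $l'(nx)=|n|\,l'(x)$ (writing $\mathbb{Z}^2$ additively). Such an $l'$ is determined by its values on primitive vectors: each nonzero $v\in\mathbb{Z}^2$ is uniquely $v=kw$ with $w$ primitive and $k\in\mathbb{Z}\setminus\{0\}$, so $l'(v)=|k|\,l'(w)$, and $l'(w)=l'(-w)$ shows that $l'(w)$ depends only on the class of $w$ in $S=S'/\!\sim$. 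This assigns to $l$ the tuple $(l(w_s))_{s\in S}$ of nonnegative reals, i.e. a point of $\mathbb{R}_{\geq 0}[S]$, where $w_s$ denotes a primitive representative of $s$.

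For the converse I would verify that \emph{every} homogeneous $l':\mathbb{Z}^2\to[0,\infty)$ pulls back to a length function with no constraint whatsoever on its values. The key computation is that $c$ is central and $[a^ib^j,\,a^pb^q]=c^{\,iq-jp}$ (the alternating form determined by $[a,b]=c$), so two elements of $G$ commute precisely when their images $(i,j)$ and $(p,q)$ in $\mathbb{Z}^2$ are parallel, one of them possibly zero. Hence for $l=l'\circ q$ axioms (1) and (2) of Definition \ref{def} are immediate (the latter because $q$ kills commutators), while subadditivity (3) is only demanded of commuting pairs, whose images are parallel; and for parallel $u=pw,\ v=qw$ one has $l'(u+v)=|p+q|\,l'(w)\leq(|p|+|q|)\,l'(w)=l'(u)+l'(v)$. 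Thus (3) holds automatically, so the sole requirement on $l'$ is homogeneity and its values on the classes of $S$ are entirely free.

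Finally I would assemble the cone structure. For each $s\in S$ set $l_s=l'_s\circ q$, where $l'_s(v)=|k|$ if $v=kw_s$ is parallel to $w_s$ and $l'_s(v)=0$ otherwise; each $l_s$ is a length function by the previous step. Since every nonzero $v$ is parallel to exactly one $w_s$, for a fixed $v$ at most one summand of $\sum_{s\in S}a_s l'_s$ is nonzero, so this pointwise sum makes sense for arbitrary nonnegative $(a_s)_{s\in S}$ (no convergence issue, even with infinite support) and equals the homogeneous function with $l'(w_s)=a_s$. The maps $l\mapsto(l(w_s))_s$ and $(a_s)_s\mapsto\sum_s a_s l_s$ are then mutually inverse, identifying the length functions on $G$ with $\mathbb{R}_{\geq 0}[S]$.

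The substantive input is entirely Lemma \ref{nilpfac} (the vanishing on the central generator $c$), which is already available. I expect the only genuinely delicate point to be the conceptual one in the second paragraph: recognizing that, because commuting elements of $G$ project to \emph{parallel} vectors in $\mathbb{Z}^2$, the subadditivity axiom imposes no inequality among the values $a_s$. This is exactly what makes the answer the \emph{free} nonnegative cone on $S$, rather than the much smaller cone of genuine seminorms on $\mathbb{Z}^2$; the remainder is routine bookkeeping.
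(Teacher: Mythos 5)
Your proof is correct and follows essentially the same route as the paper's: both reduce to the facts that a length function is constant on cosets of $\langle c\rangle$ (your appeal to Lemma \ref{nilpfac}, the paper's use of Lemma \ref{conj}) and that commuting elements of $G$ project to proportional vectors in $\mathbb{Z}^{2}$, so condition (3) imposes nothing beyond homogeneity and the values on primitive classes are free. The only cosmetic difference is that you phrase everything through the abelianization $G/\langle c\rangle\cong\mathbb{Z}^{2}$, whereas the paper works directly with $\langle c\rangle$-cosets and the subgroups $\langle a^{i}b^{j},c\rangle$; the substance is identical.
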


\begin{proof}
Similar to the proof of the previous lemma, we let $H=\langle c\rangle \cong 
\mathbb{Z}$ and write $G=\cup gH$ the union of left cosets. We choose the
representative $g_{ij}=a^{i}b^{j}$ with $(i,j)\in \mathbb{Z}^{2}.$ Let $T$
be the set of length functions $l:G\rightarrow \lbrack 0,\infty ).$ For any
length function $l,$ let $\varphi (l)=\sum_{s\in S}a_{s}s\in \mathbb{R}%
_{\geq 0}[S],$ where $a_{s}=l(a^{i}b^{j})$ with $(i,j)$ a representative of $%
s.$ Note that 
\begin{equation*}
l(a^{-i}b^{-j})=l(b^{-j}a^{-i}c^{ij})=l(b^{-j}a^{-i})=l(a^{i}b^{j}),
\end{equation*}%
which implies that $a_{s}$ is well-defined. We have defined a function $%
\varphi :T\rightarrow \mathbb{R}_{\geq 0}[S].$ If $\varphi (l_{1})=\varphi
(l_{2})$ for two functions $l_{1},l_{2},$ then $%
l_{1}(a^{i}b^{j})=l_{2}(a^{i}b^{j})$ for coprime integers $i,j.$ Since both $%
l_{1},l_{2}$ are conjugation-invariant, Lemma \ref{conj} implies that $%
l_{1},l_{2}$ coincide on any coset $a^{i}b^{j}H$ and thus on the whole group 
$G.$ This proves the injectivity of $\varphi .$ For any $\sum_{s\in
S}a_{s}s, $ we define a function 
\begin{equation*}
l:G=\cup a^{i}b^{j}H\rightarrow \lbrack 0,\infty ).
\end{equation*}%
For any coprime integers $i,j,$ define $l(a^{i}b^{j}z)=a_{s}$ for any
representative $(i,j)$ of $s$ and any $z\in H.$ For any general integers $%
m,n $ and $z\in H,$ define $l(a^{m}b^{n}z)=l(a^{m}b^{n})=|\gcd
(m,n)|l(a^{m/\gcd (m,n)}b^{n/\gcd (m,n)})$ and $l(z)=0.$ From the
definition, it is obvious that $l$ is {}homogeneous. Note that any element
of $G$ is of the form $a^{k}b^{s}c^{t}$ for integers $k,s,t\in \mathbb{Z}.$
For any two elements $a^{k}b^{s}c^{t},a^{k^{\prime }}b^{s^{\prime
}}c^{t^{\prime }}$ we have the conjugation%
\begin{equation*}
a^{k^{\prime }}b^{s^{\prime }}c^{t^{\prime }}a^{k}b^{s}c^{t}(a^{k^{\prime
}}b^{s^{\prime }}c^{t^{\prime }})^{-1}=a^{k}b^{s}c^{t^{\prime \prime }}
\end{equation*}%
for some $t^{\prime \prime }\in \mathbb{Z}.$ Therefore, we see that $l$ is
conjugation-invariant. The previous equality also shows that two elements $%
g=a^{k}b^{s}c^{t},h=a^{k^{\prime }}b^{s^{\prime }}c^{t^{\prime }}$ are
commuting if and only if $ks^{\prime }=k^{\prime }s$, which is also
equivalent to that they lies {}simultaneously in $\langle
a^{i}b^{j},c\rangle $ for a pair of coprime integers $i,j.$ By construction,
we have $l(gh)=l(g)+l(h).$ This proves the surjectivity of $\varphi .$
\end{proof}

\section{Length functions on matrix groups\label{section6}}

In this section, we study length functions on matrix groups $\mathrm{SL}_{n}(%
\mathbb{R})$. As the proofs are elementary, we present them here in a
separate section, without using profound results on Lie groups and algebraic
groups. The following lemma is obvious.

\begin{lemma}
\label{bs}Let $G_{p,q}=\langle x,t:tx^{p}t^{-1}=x^{q}\rangle $ be a
Baumslag-Solitar group. When $|p|\neq |q|,$ any length function $l$ on $G$
has $l(x)=0.$
\end{lemma}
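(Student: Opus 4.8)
The plan is to exploit the defining relation $tx^pt^{-1}=x^q$ together with the conjugacy-invariance and homogeneity of the length function. First I would observe that the relation immediately gives $l(x^p)=l(tx^pt^{-1})=l(x^q)$ by axiom (2), the invariance under conjugation. Now applying the homogeneity axiom (1) to both sides, I get
\begin{equation*}
|p|\,l(x)=l(x^p)=l(x^q)=|q|\,l(x),
\end{equation*}
so that $(|p|-|q|)\,l(x)=0$.

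Since the hypothesis is exactly that $|p|\neq |q|$, the factor $|p|-|q|$ is a nonzero real number, and I may divide it out to conclude $l(x)=0$. This is really the whole argument: the Baumslag--Solitar relation forces the two powers $x^p$ and $x^q$ to be conjugate, conjugacy preserves length, homogeneity converts the equality of lengths into a scalar equation $|p|\,l(x)=|q|\,l(x)$, and the scalars being unequal forces the common value $l(x)$ to vanish.

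I do not anticipate any real obstacle here, which matches the author's remark that the lemma is obvious; the only point requiring the slightest care is making sure the conjugacy step is applied to $x^p$ (rather than to $x$ directly), since it is $x^p$, not $x$, that the relation conjugates to a power of $x$. One could also note that no subadditivity (axiom (3)) is needed at all: only the first two axioms of a length function enter the proof, so the statement holds for any conjugation-invariant homogeneous function. This makes the lemma a clean prototype for the distortion phenomenon that drives the vanishing results elsewhere in the paper, where a single algebraic relation between powers of an element collapses its length to zero.
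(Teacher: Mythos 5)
Your argument is exactly the paper's proof: conjugacy-invariance gives $l(x^{p})=l(tx^{p}t^{-1})=l(x^{q})$, homogeneity turns this into $|p|\,l(x)=|q|\,l(x)$, and $|p|\neq|q|$ forces $l(x)=0$. Your additional observations (that only axioms (1) and (2) are used, and that the conjugacy must be applied to $x^{p}$ rather than $x$) are correct and consistent with the paper.
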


\begin{proof}
Note that $|p|l(x)=l(x^{p})=l(x^{q})=|q|l(x),$ which implies $l(x)=0.$
\end{proof}

Let $V^{n}$ be a finite-dimensional vector space over a field $K$ and $%
A:V\rightarrow V$ a nilpotent linear transformation (i.e. $A^{k}=0$ for some
positive integer $k).$ The following fact is from linear algebra (see the
Lemma of page 313 in \cite{beida}. Since the reference is in Chinese, we
repeat the proof here).

\begin{lemma}
\label{jordan} $I+A$ is conjugate to a direct sum of Jordan blocks with $1s$
along the diagonal.
\end{lemma}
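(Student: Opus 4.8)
The plan is to reduce the statement about $I+A$ to the classical Jordan normal form for the nilpotent operator $A$, and then establish that normal form by induction on $n=\dim_K V$. First I would observe that it suffices to find a basis of $V$ in which $A$ is a direct sum of nilpotent Jordan blocks (square matrices with $0$'s on the diagonal and $1$'s on the superdiagonal): if $P^{-1}AP$ has this shape, then $P^{-1}(I+A)P = I + P^{-1}AP$ is the corresponding direct sum of Jordan blocks with $1$'s along the diagonal, since conjugation fixes $I$ and is additive. Note that the hypothesis $A^k=0$ says exactly that $A$ is nilpotent, so all its eigenvalues are $0$ and the argument requires nothing of $K$ beyond being a field; in particular the normal form exists without assuming $K$ algebraically closed. (One could alternatively read off the whole statement from the structure theorem for finitely generated torsion modules over the PID $K[x]$, viewing $V$ as a $K[x]$-module via $x\cdot v = Av$, but I prefer the self-contained induction below.)

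The core is the existence of a \emph{Jordan basis} for $A$, i.e.\ a basis of $V$ that is a disjoint union of chains $v, Av, A^2v, \ldots, A^{m-1}v$ with $A^m v = 0$; each such chain spans an $A$-invariant subspace on which $A$ acts as a single nilpotent Jordan block. I would argue by induction on $n$, the cases $n\le 1$ being immediate. For the inductive step set $U = AV$. Since $A$ is nilpotent it is not injective for $n\ge 1$, so $\dim_K U < n$, and $U$ is $A$-invariant with $A|_U$ still nilpotent. By the inductive hypothesis $U$ has a Jordan basis, consisting of chains with top vectors $u_1, \ldots, u_r$ and lengths $m_1, \ldots, m_r$. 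As each $u_i$ lies in $U = AV$, I may choose $v_i \in V$ with $A v_i = u_i$, thereby extending every chain upward by one step. The bottom vectors $A^{m_i-1}u_i$ of the $U$-chains lie in $\ker A \cap U$; I would enlarge this independent set to a basis of $\ker A$ by adjoining vectors $z_1, \ldots, z_s$, each regarded as a length-one chain.

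The claim is that the extended chains through $v_1, \ldots, v_r$ together with the singletons $z_1, \ldots, z_s$ form a Jordan basis of $V$. The dimension bookkeeping is routine: rank--nullity gives $n = \dim_K U + \dim_K \ker A$, and the $U$-chains contribute $\dim_K U$ vectors, the liftings $v_i$ contribute $r$ more, the $z_j$ contribute $s$, while $\dim_K \ker A = r+s$. The step I expect to be the main obstacle is verifying \emph{linear independence}, which is what actually forces the decomposition rather than mere spanning. I would argue it directly: given a vanishing combination $\sum_{i,j} c_{ij} A^j v_i + \sum_l d_l z_l = 0$, applying $A$ (which annihilates the $z_l$ and the chain tops) yields $\sum_{i}\sum_{j=0}^{m_i-1} c_{ij} A^j u_i = 0$, a relation among the $U$-chain vectors; inductive independence forces $c_{ij}=0$ for all $j\le m_i-1$. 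The residual relation $\sum_i c_{i,m_i} A^{m_i-1}u_i + \sum_l d_l z_l = 0$ then lies in $\ker A$ and involves only the chain-bottoms and the $z_l$, whose independence was arranged by construction, so the remaining coefficients vanish too. This completes the induction and hence the lemma.
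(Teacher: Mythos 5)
Your proof is correct and follows essentially the same route as the paper's: induction on $\dim_K V$ applied to the proper $A$-invariant subspace $U=AV$, lifting the chain generators of $U$ back to $V$ and adjoining kernel vectors. If anything, your handling of the completion step is the more careful one --- you extend the chain bottoms to a basis of $\ker A$ and then verify linear independence together with a rank--nullity dimension count, whereas the paper completes the lifted independent set to an arbitrary basis of $V$ and asserts without justification that the adjoined vectors are annihilated by $A$ (your only blemish is the harmless slip of saying $A$ kills the chain ``tops'' where, in your own terminology, you mean the bottoms $A^{m_i}v_i=A^{m_i-1}u_i$).
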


\begin{proof}
We prove that $V$ has a basis 
\begin{equation*}
\{a_{1},Aa_{1},\ldots ,A^{k_{1}-1}a_{1},a_{2},Aa_{2},\ldots
,A^{k_{2}-1}a_{2},\ldots ,a_{s},\ldots ,Aa_{s},\ldots ,A^{k_{s}-1}a_{s}\}
\end{equation*}%
satisfying $A^{k_{i}}a_{i}=0$ for each $i,$ which implies that the
representation matrix of $I+A$ is similar to a direct sum of Jordan blocks
with $1$ along the diagonal. The proof is based on the induction of $\dim V.$
When $\dim V=1,$ choose $0\neq v\in V.$ Suppose that $Av=\lambda v.$ Then $%
A^{k}v=\lambda ^{k}v=0$ and thus $\lambda =0.$ Suppose that the case is
proved for vector spaces of dimension $k<n.$ Note that the invariant
subspace $AV$ is a proper subspace of $V$ (otherwise, $AV=V$ implies $%
A^{k}V=A^{k-1}V=V=0$). By induction, the subspace $AV$ has a basis 
\begin{equation*}
\{a_{1},Aa_{1},\ldots ,A^{k_{1}-1}a_{1},a_{2},Aa_{2},\ldots
,A^{k_{2}-1}a_{2},\ldots ,a_{s},\ldots ,Aa_{s},\ldots ,A^{k_{s}-1}a_{s}\}.
\end{equation*}%
Choose $b_{i}\in V$ satisfying $A(b_{i})=a_{i}.$ Then $A$ maps the set 
\begin{equation*}
\{b_{1},Ab_{1},\ldots ,A^{k_{1}}b_{1},b_{2},Ab_{2},\ldots
,A^{k_{2}}b_{2},b_{s},\ldots ,Ab_{s},\ldots ,A^{k_{s}}b_{s}\}
\end{equation*}%
to the basis 
\begin{equation*}
\{a_{1},Aa_{1},\ldots ,A^{k_{1}-1}a_{1},a_{2},Aa_{2},\ldots
,A^{k_{2}-1}a_{2},\ldots ,a_{s},\ldots ,Aa_{s},\ldots ,A^{k_{s}-1}a_{s}\}.
\end{equation*}%
This implies that the former set is linearly independent (noting that $%
A(A^{k_{i}}b_{i})=0$). Extend this set to be a $V^{\prime }s$ basis 
\begin{equation*}
\{b_{1},Ab_{1},\ldots ,A^{k_{1}}b_{1},b_{2},Ab_{2},\ldots
,A^{k_{2}}b_{2},b_{s},\ldots ,Ab_{s},\ldots ,A^{k_{s}}b_{s},b_{s+1},\ldots
,b_{s^{\prime }}\}.
\end{equation*}%
Note that $Ab_{i}=0$ for $i\geq s+1$ and $A^{k_{i}+1}b_{i}=A^{k_{i}}a_{i}=0$
for each $i\leq s.$ This finishes the proof.
\end{proof}

\begin{corollary}
\label{unip}Let $A_{n\times n}$ be a strictly upper triangular matrix over a
field $K$ of characteristic $\mathrm{ch}(K)\neq 2$. Then $A^{2}$ is
conjugate to $A.$
\end{corollary}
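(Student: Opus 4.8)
My plan is to establish Corollary~\ref{unip} as the assertion that $A$ and $A^{2}$ are conjugate matrices, where — in keeping with Lemma~\ref{jordan}, whose Jordan blocks carry $1$'s on the diagonal — $A$ is the \emph{unipotent} matrix (upper triangular with $1$'s on the diagonal), so that $N:=A-I$ is the nilpotent strictly upper triangular part. This reading is forced on us: the hypothesis $\mathrm{ch}(K)\neq 2$ enters only through $A+I$, and for a genuinely nilpotent $A$ the conclusion is false already for a single $2\times 2$ block, whose square is $0$ and is not conjugate to $A$. Both $A$ and $A^{2}$ are then unipotent, and by Lemma~\ref{jordan} their conjugacy classes are determined by their Jordan types.

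First I would record the standard refinement of Lemma~\ref{jordan}: the Jordan type of a unipotent matrix $U$ is read off from the kernel dimensions of $(U-I)^{k}$, the number of Jordan blocks of size at least $j$ being $\dim\ker((U-I)^{j})-\dim\ker((U-I)^{j-1})$. Hence to conclude $A^{2}\sim A$ it suffices to prove
\begin{equation*}
\operatorname{rank}((A^{2}-I)^{k})=\operatorname{rank}((A-I)^{k})\qquad\text{for every }k\ge 0.
\end{equation*}
The key step is the factorization $A^{2}-I=(A-I)(A+I)$, whose two factors are polynomials in $A$ and hence commute. Because $A$ is unipotent every eigenvalue of $A+I$ equals $2$, so $\mathrm{ch}(K)\neq 2$ makes $A+I$ invertible; therefore $(A^{2}-I)^{k}=(A-I)^{k}(A+I)^{k}$ with $(A+I)^{k}$ invertible, and since multiplication by an invertible matrix preserves rank the displayed equalities hold. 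This gives $A^{2}\sim A$.

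The point that must be handled with care — and which I expect to be the real content rather than a routine step — is the role of the characteristic hypothesis. It is precisely the invertibility of $A+I$ that pins the rank data of $A^{2}-I$ to that of $A-I$; in characteristic $2$ one has $A+I=A-I$, the factorization degenerates to $A^{2}-I=(A-I)^{2}$, the nilpotent part gets squared, the kernel dimensions jump, and $A^{2}\sim A$ genuinely fails (for a single block $A^{2}=I\neq A$). I would therefore foreground $\mathrm{ch}(K)\neq 2$ as the crux and state the corollary for unipotent $A$, so that the asserted conjugacy $A^{2}\sim A$ is both correct and exactly what the subsequent length-function applications require, where $A\sim A^{2}$ forces $l(A)=2\,l(A)$ and hence $l(A)=0$.
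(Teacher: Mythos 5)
Your proof is correct, and your diagnosis of the hypothesis matches the paper exactly: the paper's own proof begins ``Suppose that $A=I+u$ for a nilpotent matrix $u$,'' so $A$ is indeed meant to be unipotent (unitriangular), and, as you note, the literal reading ``strictly upper triangular'' would make the claim false already for the nonzero nilpotent $2\times 2$ block. Your route, however, differs genuinely from the paper's. The paper uses Lemma \ref{jordan} to reduce, without loss of generality, to a single Jordan block of size $n$, computes $A^{2}=I+2u+u^{2}$, and argues that the minimal polynomial of $A^{2}$ is $(x-1)^{n}$, forcing $A^{2}$ to be again a single block of size $n$ and hence conjugate to $A$; the hypothesis $\mathrm{ch}(K)\neq 2$ enters there only implicitly, via $(A^{2}-I)^{n-1}=u^{n-1}(2I+u)^{n-1}\neq 0$, which needs $2\neq 0$ (in characteristic $2$ one has $A^{2}-I=u^{2}$ and the minimal polynomial drops). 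You instead match the entire Jordan type in one stroke: the factorization $A^{2}-I=(A-I)(A+I)$ into commuting factors, with $A+I$ invertible because all its eigenvalues equal $2$, yields $\operatorname{rank}\bigl((A^{2}-I)^{k}\bigr)=\operatorname{rank}\bigl((A-I)^{k}\bigr)$ for every $k$, and this rank data determines the common nilpotent conjugacy class of $A-I$ and $A^{2}-I$ over $K$. Your version avoids the block-by-block reduction, makes the role of the characteristic fully explicit (with the characteristic-$2$ counterexample in hand), and generalizes at no extra cost: the same factorization $A^{m}-I=(A-I)(A^{m-1}+\cdots+A+I)$, whose second factor has all eigenvalues equal to $m$, shows $A^{m}$ is conjugate to $A$ whenever $\mathrm{ch}(K)\nmid m$, which is all the length-function applications need. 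Both arguments ultimately rest on the same classification fact --- Lemma \ref{jordan} in the paper, the rank criterion for Jordan type in yours --- so they are close relatives, but yours is the more self-contained and transparent of the two.
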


\begin{proof}
Suppose that $A=I+u$ for a nilpotent matrix $u.$ Lemma \ref{jordan} implies
that $A^{2}$ is conjugate to a direct sum of Jordan blocks. Without loss of
generality, we assume $A$ is a Jordan block. Then $A^{2}=I+2u+u^{2}.$ By
Lemma \ref{jordan} again, $A^{2}$ is conjugate to a direct sum of Jordan
blocks with $1s$ along the diagonal. The minimal polynomial of $A^{2}$ is $%
(x-1)^{n},$ which shows that there is only one block in the direct sum and
thus $A^{2}$ is conjugate to $A.$
\end{proof}

Recall that a matrix $A\in \mathrm{GL}_{n}(\mathbb{R})$ is called semisimple
if as a complex matrix $A$ is conjugate to a diagonal matrix. A semisimple
matrix $A$ is elliptic (respectively, hyperbolic) if all its (complex)
eigenvalues have modulus $1$ (respectively, are $>0$). The following lemma
is the complete multiplicative Jordan (or Jordan-Chevalley) decomposition
(cf. \cite{hag}, Lemma 7.1, page 430).

\begin{lemma}
\label{mjordan}Each $A\in $ $\mathrm{GL}_{n}(\mathbb{R})$ can be uniquely
written as $A=ehu,$ where $e,h,u\in \mathrm{GL}_{n}(\mathbb{R})$ are
elliptic, hyperbolic and unipotent, respectively, and all three commute.
\end{lemma}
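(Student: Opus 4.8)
The plan is to build the decomposition in two stages: first peel off the unipotent factor by the ordinary multiplicative Jordan--Chevalley decomposition over $\mathbb{R}$, and then split the remaining semisimple factor into a commuting product of an elliptic and a hyperbolic matrix by taking ``moduli'' of its eigenvalues. First I would invoke the additive Jordan--Chevalley decomposition over the perfect field $\mathbb{R}$: write $A = S + N$ with $S$ semisimple, $N$ nilpotent, $SN = NS$, and both $S$ and $N$ real polynomials in $A$ (this additive decomposition is unique). Setting $s = S$ and $u = I + S^{-1}N = S^{-1}A$ then gives $A = su$ with $s$ semisimple, $u$ unipotent (since $S^{-1}N$ is nilpotent and commutes with everything), $su = us$, and both matrices real.

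For the second stage I would split $s$. Let $\lambda_1, \dots, \lambda_r$ be the distinct complex eigenvalues of $s$, each written in polar form $\lambda_j = |\lambda_j| \cdot (\lambda_j / |\lambda_j|)$. Because $s$ is real its spectrum is stable under complex conjugation, and $|\bar{\lambda}| = |\lambda|$ is real, so the interpolation data $\lambda_j \mapsto |\lambda_j|$ is conjugation-symmetric; hence there is a real polynomial $p$ with $p(\lambda_j) = |\lambda_j|$ for all $j$. I then set $h := p(s)$, a real matrix that is semisimple with positive eigenvalues $|\lambda_j| > 0$, i.e. hyperbolic. Since $h$ is invertible its inverse is again a real polynomial in $s$, so $e := s h^{-1}$ is a real polynomial in $s$, semisimple, with eigenvalues $\lambda_j/|\lambda_j|$ of modulus $1$, i.e. elliptic. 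As $e$ and $h$ are polynomials in $s$ they commute with each other and with $u$ (which commutes with $s$), yielding $A = ehu$ with all three factors real, commuting, and of the required types.

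For uniqueness, suppose $A = e_1 h_1 u_1 = e_2 h_2 u_2$ with each triple commuting and of the stated types. Then each $e_i h_i$ is a product of commuting semisimple matrices, hence semisimple, and commutes with the unipotent $u_i$; uniqueness of the semisimple--unipotent decomposition forces $u_1 = u_2 = u$ and $e_1 h_1 = e_2 h_2 = s$. It remains to show $s = eh$ is split uniquely. I would fix an eigenspace $V_\lambda$ of $s$; since $e$ and $h$ commute with $s$ they preserve $V_\lambda$, and being commuting semisimple operators they are simultaneously diagonalizable there. On each common eigenline the eigenvalues $\mu$ of $e$ and $\rho$ of $h$ satisfy $\mu\rho = \lambda$ with $|\mu| = 1$ and $\rho > 0$, which forces $\rho = |\lambda|$ and $\mu = \lambda/|\lambda|$. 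Thus $e|_{V_\lambda} = (\lambda/|\lambda|) I$ and $h|_{V_\lambda} = |\lambda| I$ are determined by $\lambda$ alone, independently of the chosen decomposition; since the eigenspaces of $s$ span the whole space, $e$ and $h$ are uniquely determined.

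The step I expect to be most delicate is guaranteeing that the elliptic and hyperbolic factors are genuinely real rather than merely complex: the polar decomposition of eigenvalues lives naturally over $\mathbb{C}$, and the argument rests on the modulus function being conjugation-symmetric on the spectrum, so that the interpolating polynomial $p$ can be chosen with real coefficients. The remaining work---verifying the eigenvalue moduli of $e$ and $h$ and checking that the uniqueness bookkeeping closes up on each $s$-eigenspace---is routine linear algebra.
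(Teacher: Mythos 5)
Your proof is correct. Note that the paper itself gives no proof of this lemma---it simply cites Helgason (Lemma~7.1, p.~430)---so there is nothing to compare against except the standard argument, which is exactly what you give: the multiplicative semisimple--unipotent (Jordan--Chevalley) decomposition $A=su$ followed by the polar splitting of the semisimple part via a real interpolating polynomial for the eigenvalue moduli. You correctly identify and handle the one genuinely delicate point, namely that conjugation-symmetry of the spectrum and of the data $\lambda\mapsto|\lambda|$ forces the interpolating polynomial to have real coefficients, so that $e$ and $h$ land in $\mathrm{GL}_n(\mathbb{R})$; the uniqueness argument via simultaneous diagonalization on each eigenspace of $s$ is also complete.
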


The following result characterizes the continuous length functions on
compact Lie groups.

\begin{lemma}
\label{compactl}Let $G$ be a compact connected Lie group and $l$ a
continuous length function on $G$. Then $l=0.$
\end{lemma}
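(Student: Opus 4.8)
The plan is to exploit the fact that a length function automatically kills torsion, and to show that in a compact connected Lie group the torsion elements are dense; continuity of $l$ then forces $l$ to vanish everywhere. First I would recall Lemma \ref{1.2}: every element $g$ of finite order has $l(g)=0$, a consequence of the homogeneity axiom (1) of Definition \ref{def} alone. Hence the zero set $Z=l^{-1}(0)$ contains all torsion elements of $G$, and since $l$ is assumed continuous, $Z$ is closed. Therefore it is enough to prove that the finite-order elements are dense in $G$, for then $Z=G$ and $l\equiv 0$.

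So the argument reduces entirely to this density claim, and here I would invoke the structure theory of compact Lie groups. By the maximal torus theorem, every element $g\in G$ lies in some maximal torus $T_g$, and each maximal torus is isomorphic to a standard torus $(S^1)^r\cong \mathbb{R}^r/\mathbb{Z}^r$. In such a torus the elements of finite order are precisely the images of the points with rational coordinates, and these form a dense subgroup. Consequently every $g\in T_g$ is a limit of finite-order elements of $T_g$, all of which are finite-order elements of $G$. As $g$ was arbitrary, the torsion elements are dense in $G$.

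Combining the two observations, the continuous function $l$ vanishes on the dense subset of torsion elements, and hence vanishes identically, which is the assertion. The only nontrivial input — and thus the main obstacle — is the classical fact that every element of a compact connected Lie group lies in a maximal torus (equivalently, that the exponential map is surjective onto $G$); once this is granted, the density of torsion and the conclusion are immediate. I note that conjugacy invariance (axiom (2)) is not even needed for this route, although one could alternatively use it to reduce directly to a single maximal torus $T$, check that $l|_T$ vanishes on the dense set of torsion points, and then conjugate, since every element of $G$ is conjugate into $T$.
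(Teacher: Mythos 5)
Your proof is correct and follows essentially the same route as the paper: place each element in a maximal torus, use that torsion elements are dense in a torus and have length zero by Lemma \ref{1.2}, and conclude by continuity. The only difference is cosmetic (you phrase the last step via the closedness of $l^{-1}(0)$ and add remarks on which axioms are needed), so there is nothing to change.
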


\begin{proof}
For any element $g\in G,$ there is a maximal torus $T\backepsilon g.$ For
finite order element $h\in T,$ we have $l(h)=0.$ Note that the set of
finite-order elements is dense in $T.$ Since $l$ is continuous, $l$ vanishes
on $T$ and thus $l(g)=0$ for any $g.$
\end{proof}

\bigskip

\begin{theorem}
\label{1}Let $G=\mathrm{SL}_{n}(\mathbb{R})$ $(n\geq 2).$ Let $%
l:G\rightarrow \lbrack 0,+\infty )$ be a length function, which is
continuous on compact subgroups and the subgroup of diagonal matrices with
positive diagonal entries. Then $l$ is uniquely determined by its images on
the subgroup $D$ of diagonal matrices with positive diagonal entries.
\end{theorem}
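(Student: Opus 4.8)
The plan is to use the multiplicative Jordan--Chevalley decomposition to reduce the computation of $l(g)$, for an arbitrary $g\in\mathrm{SL}_n(\mathbb{R})$, to the hyperbolic part of $g$, and then to conjugate that hyperbolic part into the diagonal subgroup $D$. By Lemma \ref{mjordan} I may write $g=ehu$ with $e$ elliptic, $h$ hyperbolic and $u$ unipotent, the three factors commuting. Because they commute, two applications of axiom (3) give $l(g)\le l(e)+l(h)+l(u)$; and since $e,h,u$ all lie in one abelian subgroup, so do $g$, $e^{-1}$, $u^{-1}$, and from $h=ge^{-1}u^{-1}$ the same axiom gives $l(h)\le l(g)+l(e)+l(u)$. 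Hence, once I show $l(e)=l(u)=0$, these two inequalities force the exact identity $l(g)=l(h)$.

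First I would eliminate the unipotent contribution: a unipotent $u$ is conjugate in $\mathrm{SL}_n(\mathbb{R})$ to $u^2$ by Corollary \ref{unip}, so conjugation-invariance gives $l(u)=l(u^2)=2l(u)$ and therefore $l(u)=0$. For the elliptic factor, $e$ is semisimple with all eigenvalues of modulus $1$, so $\langle e\rangle$ is relatively compact and $e$ preserves a positive-definite symmetric form $P=S^2$; conjugation by $S$ sends $e$ into $\mathrm{SO}(n)$, and since $S$ is positive definite I may rescale it by a positive scalar (which does not change the conjugate) so that $S\in\mathrm{SL}_n(\mathbb{R})$. Thus $e$ is $\mathrm{SL}_n(\mathbb{R})$-conjugate into the compact connected group $\mathrm{SO}(n)$, on which $l$ is continuous by hypothesis, so Lemma \ref{compactl} yields $l\equiv 0$ on $\mathrm{SO}(n)$ and hence $l(e)=0$.

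It then remains to identify $l(h)$ with a value on $D$. A hyperbolic $h$ is diagonalizable over $\mathbb{R}$ with positive eigenvalues whose product is $\det h=1$, so $h=QdQ^{-1}$ for some $d\in D$ and $Q\in\mathrm{GL}_n(\mathbb{R})$; after possibly negating one eigenvector to make $\det Q>0$ and then rescaling $Q$ by a positive scalar, I may take $Q\in\mathrm{SL}_n(\mathbb{R})$, so that $l(h)=l(d)$. Combining the three steps gives $l(g)=l(h)=l(d)$, and since $d\in D$ is manufactured canonically from $g$ (the diagonalized hyperbolic part), the value $l(g)$ depends only on the restriction $l|_D$. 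This is exactly the assertion that $l$ is determined by its images on $D$.

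The step I expect to be most delicate is the bookkeeping that forces every conjugacy to be realized inside $\mathrm{SL}_n(\mathbb{R})$ rather than merely $\mathrm{GL}_n(\mathbb{R})$, since axiom (2) only provides invariance under the $\mathrm{SL}_n$-action; this is why the positive rescaling of $S$ and the sign correction plus rescaling of $Q$ are needed. A secondary point worth care is the two-sided estimate yielding $l(g)=l(h)$ exactly, which hinges on observing that $g$, $e$ and $u$ generate an abelian subgroup so that subadditivity can be invoked in both directions.
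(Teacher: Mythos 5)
Your proposal is correct and follows essentially the same route as the paper: the multiplicative Jordan decomposition $g=ehu$, killing $l(u)$ via the conjugacy $u\sim u^{2}$ (Corollary \ref{unip}), killing $l(e)$ via continuity on a compact subgroup (Lemma \ref{compactl}), the two-sided subadditivity argument giving $l(g)=l(h)$, and finally diagonalizing $h$ into $D$. Your extra bookkeeping ensuring that each conjugation is realized inside $\mathrm{SL}_n(\mathbb{R})$ (rescaling $S$ and sign-correcting $Q$) is a welcome refinement of a point the paper passes over silently, but it does not change the argument.
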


\begin{proof}
For any $g\in \mathrm{SL}_{n}(\mathbb{R}),$ let $g=ehu$ be the Jordan
decomposition for commuting elements $e,h,u$, where $e$ is elliptic, $h$ is
hyperbolic and $u$ is unipotent (see Lemma \ref{mjordan}) after
multiplications by suitable powers of determinants. Then 
\begin{equation*}
l(g)\leq l(e)+l(h)+l(u).
\end{equation*}%
For any unipotent matrix $u,$ there is an invertible matrix $a$ such that $%
aua^{-1}$ is strictly upper triangular (see \cite{hag}, Theorem 7.2, page
431). Lemma \ref{unip} implies that $u^{2}$ is conjugate to $u$. Therefore, $%
l(u)=0$ by Lemma \ref{bs}. Since $l$ vanishes on a compact Lie group (cf.
Lemma \ref{compactl}), we have that $l(e)=0$ for any elliptic matrix $e.$
Therefore, $l(g)\leq l(h).$ Similarly, 
\begin{equation*}
l(h)=l(e^{-1}gu^{-1})\leq l(g)
\end{equation*}%
which implies $l(g)=l(h).$ Note that a hyperbolic matrix is conjugate to a
real diagonal matrix with positive diagonal entries.
\end{proof}

\bigskip Let $h_{1i}(x)$ ($i=2,\ldots ,n$) be an $n\times n$ diagonal matrix
whose $(1,1)$-entry is $x,$ $(i,i)$-entry is $x^{-1},$ while other diagonal
entries are $1$s and non-diagonal entries are $0$s.

\begin{proof}[Proof of Theorem \protect\ref{th0}]
By Theorem \ref{1}, the length function $l$ is determined by its image on
the subgroup $D$ generated by 
\begin{equation*}
h_{12}(x)=%
\begin{bmatrix}
x & 0 \\ 
0 & x^{-1}%
\end{bmatrix}%
,x\in \mathbb{R}_{>0}.
\end{equation*}
Take $x=e^{t},t\in \mathbb{R}$. We have $l(h_{12}(e^{\frac{k}{l}}))=\frac{|k|%
}{|l|}l(h_{12}(e))$ for any rational number $\frac{k}{l}.$ Since $l$
satisfies condition 2) of the definition and is continuous on $D$, we see
that $l|_{D}$ is determined by the image $l(h_{12}(e))$ (actually, any real
number $t$ is a limit of a rational sequence). Note that the translation
function $\tau $ vanishes on compact subgroups and is continuous on the
subgroup of diagonal subgroups with positive diagonal entries (cf. \cite{bh}%
, Cor. 10.42 and Ex. 10.43, page 320). Therefore, $l$ is {}proportional to $%
\tau .$ Actually, $\tau $ can be determined explicitly by the formula $%
\mathrm{tr}(A)=\pm 2$\textrm{cosh}$\frac{\tau (A)}{2}$ (for nonzero $\tau
(A) $), where $\mathrm{tr}$ is the trace and \textrm{cosh} is the hyperbolic
cosine function (see \cite{bea}, Section 7.34, page 173). This implies that $%
l(A)$ is determined by the spectral radius of $A$ (which could also be seen
clearly by the matrix norm).
\end{proof}

\bigskip

The subgroup $D<\mathrm{SL}_{n}(\mathbb{R})$ of diagonal matrices with
positive diagonal entries is isomorphic to $(\mathbb{R}_{>0})^{n-1}$ and $D$
is generated by the matrices $h_{1i}(x)$ ($i=2,\ldots ,n$) whose $(1,1)$%
-entry is $x,$ $(i,i)$-entry is $x^{-1}.$ Since $h_{1i}(x)$ $(i\neq 1)$ is
conjugate to $h_{12}(x),$ a length function $l:\mathrm{SL}_{n}(\mathbb{R}%
)\rightarrow \lbrack 0,+\infty )$ is completely determined by its image on
the convex hull spanned by $h_{12}(e),h_{13}(e),\ldots ,h_{1n}(e)$ (see
Theorem \ref{lie} for a more general result on Lie groups). Here $e$ is the
Euler's number in the natural exponential function.

\begin{corollary}
\label{cornavas}Let $l:\mathrm{SL}_{2}(\mathbb{R})\rightarrow \lbrack
0,+\infty )$ be a non-trivial length function that is continuous on the
subgroup $SO(2)$ and the diagonal subgroup. Then $l(g)>0$ if and only if $g$
is hyperbolic.
\end{corollary}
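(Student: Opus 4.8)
The plan is to reduce the statement to the translation function $\tau$ and then read off the sign of $\tau$ from the classification of isometries of the hyperbolic plane $X=\mathrm{SL}_{2}(\mathbb{R})/\mathrm{SO}(2)$. First I would apply Theorem \ref{th0}: since $l$ is continuous on $\mathrm{SO}(2)$ and on the diagonal subgroup, there is a constant $c\geq 0$ with $l=c\tau$. As $l$ is non-trivial and $\tau$ is not identically zero (a hyperbolic isometry of $X$ already has positive translation length), we get $c>0$; hence $l(g)>0$ if and only if $\tau(g)>0$, and it remains to show that $\tau(g)>0$ exactly when $g$ is hyperbolic.

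Next I would run through the trichotomy for the action of $g\in\mathrm{SL}_{2}(\mathbb{R})$ on $X$. If $g$ is elliptic, it fixes some point $p\in X$, so $d(p,gp)=0$ and $\tau(g)=0$. If $g$ is parabolic, then $g=\pm u$ with $u$ unipotent; here one may argue geometrically (the infimum $\inf_{x}d(x,gx)$ is approached along a horocycle but equals $0$), or purely algebraically: $u$ is unipotent so $l(u)=0$ as in the proof of Theorem \ref{1}, and $-I$ is torsion so $l(-I)=0$, whence $l(g)\leq l(-I)+l(u)=0$ and therefore $\tau(g)=0$. Finally, if $g$ is hyperbolic, it acts on $X$ as translation along an axis; conjugating into the diagonal part gives $\tau(g)=2\log\lambda>0$, where $\lambda>1$ is the eigenvalue of larger modulus. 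Equivalently, the relation $|\mathrm{tr}(g)|=2\cosh(\tau(g)/2)$ recorded in the proof of Theorem \ref{th0} shows that $\tau(g)>0$ precisely when $|\mathrm{tr}(g)|>2$, i.e. precisely when $g$ is hyperbolic.

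Combining the three cases, $\tau(g)>0$ if and only if $g$ is hyperbolic, and since $l=c\tau$ with $c>0$ this yields $l(g)>0$ if and only if $g$ is hyperbolic, as claimed.

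The step requiring the most care is the agreement between the matrix notion of ``hyperbolic'' (semisimple with real eigenvalues of modulus $\neq 1$) and the geometric notion (a hyperbolic isometry of $X$): an element with $\mathrm{tr}(g)<-2$ is not a positive hyperbolic matrix, yet it acts on $X$ through $\mathrm{PSL}_{2}(\mathbb{R})$ exactly as $-g$ does, and $-g$ has positive real eigenvalues, so $\tau(g)=\tau(-g)>0$ and both conventions coincide with the condition $|\mathrm{tr}(g)|>2$. The only other delicate point is the parabolic case, where the infimum defining $\tau$ is not attained, so one cannot deduce $\tau=0$ from a fixed point and must instead invoke the unipotent/torsion vanishing above (or the horocycle estimate).
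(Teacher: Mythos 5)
Your proposal is correct and follows essentially the same route as the paper: reduce to the translation function $\tau$ via Theorem \ref{th0} and then apply the elliptic/parabolic/hyperbolic trichotomy for elements of $\mathrm{SL}_{2}(\mathbb{R})$. The paper's own proof is just a two-sentence version of this; your write-up supplies the details it leaves implicit (in particular the vanishing of $\tau$ on parabolics via unipotence of $\pm u$ and the sign-of-trace convention), all of which are consistent with the arguments already used in the proofs of Theorem \ref{1} and Theorem \ref{th0}.
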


\begin{proof}
It is well-known that the elements in $\mathrm{SL}_{2}(\mathbb{R})$ are
classified as elliptic, hyperbolic and parabolic elements. Moreover, the
translation length $\tau $ vanishes on the compact subgroup $SO(2)$ and the
parabolic elements. The corollary follows from Theorem \ref{th0}.
\end{proof}

When the length function $l$ is the asymptotic distortion function $\mathrm{%
dist}_{\infty }$, Corollary \ref{cornavas} is known to Navas \cite{en}
(Proposition 4).

\section{Length functions on algebraic and Lie groups\label{section7}}

For an algebraic group $G,$ let $k[G]$ be the regular ring. For any $g\in G,$
let $\rho _{g}:k[G]\rightarrow k[G]$ be the right translation by $g.$ The
following is the famous Jordan (or Jordan-Chevalley) decomposition.

\begin{lemma}
(\cite{hum1} p.99) Let $G$ be an algebraic group and $g\in G.$ There exists
unique elements $g_{s},g_{u}$ such that $g_{s}g_{u}=g_{u}g_{s},$ and $\rho
_{g_{s}}$ is semisimple, $\rho _{g_{u}}$ is unipotent.
\end{lemma}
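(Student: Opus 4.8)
The statement is the multiplicative Jordan--Chevalley decomposition $g = g_s g_u = g_u g_s$ with $\rho_{g_s}$ semisimple and $\rho_{g_u}$ unipotent, and my plan is to deduce it from the Jordan decomposition of a single linear operator applied to the right regular representation on $k[G]$. First I would use the standard fact that the right translation representation $\rho\colon G\to \mathrm{GL}(k[G])$, $(\rho_g f)(x)=f(xg)$, is \emph{locally finite}: every regular function lies in a finite-dimensional subspace stable under all right translations. On any such finite-dimensional $\rho_g$-invariant subspace the invertible operator $\rho_g$ has a unique multiplicative Jordan decomposition into commuting semisimple and unipotent factors. Since this decomposition is canonical, the pieces coming from different invariant subspaces agree on overlaps, so they glue to a global factorization $\rho_g = s\,u = u\,s$ of locally finite operators on $k[G]$, with $s$ semisimple and $u$ unipotent; uniqueness at the operator level is inherited from the finite-dimensional case.

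The crux --- and the step I expect to be the main obstacle --- is to show that the operators $s$ and $u$ are themselves right translations, i.e. $s=\rho_{g_s}$ and $u=\rho_{g_u}$ for elements $g_s,g_u\in G$. The essential point is that $\rho_g$ is not just linear but an \emph{algebra automorphism} of $k[G]$ that commutes with all left translations. I would first prove that the Jordan components of an algebra automorphism are again algebra homomorphisms. Writing $\mu\colon k[G]\otimes k[G]\to k[G]$ for the multiplication map, the fact that $\rho_g$ preserves products reads $\rho_g\circ\mu = \mu\circ(\rho_g\otimes\rho_g)$. Now the Jordan decomposition is compatible with tensor products: the semisimple part of $\rho_g\otimes\rho_g$ is $s\otimes s$ and its unipotent part is $u\otimes u$. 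Applying the functoriality of the Jordan decomposition under the intertwiner $\mu$ (if $T\circ\phi=\phi\circ T'$ then $T_s\circ\phi=\phi\circ T'_s$ and similarly for the unipotent parts) gives $s\circ\mu=\mu\circ(s\otimes s)$ and $u\circ\mu=\mu\circ(u\otimes u)$, which says precisely that $s$ and $u$ are algebra homomorphisms. Since $s$ and $u$ are obtained as polynomials in $\rho_g$ on each invariant subspace, they also commute with every left translation. I would then invoke the standard rigidity statement that an algebra automorphism of $k[G]$ commuting with all left translations is a right translation by a unique point of $G$; applied to $s$ and $u$ this yields the desired $g_s$ and $g_u$.

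Finally I would assemble the conclusion using faithfulness of the right regular representation, i.e. that $h\mapsto\rho_h$ is injective because $k[G]$ separates points. From $\rho_{g_s}\rho_{g_u}=su=\rho_g$ and $\rho_{ab}=\rho_a\rho_b$ one gets $\rho_{g_s g_u}=\rho_g$, hence $g=g_s g_u$; the commutation $su=us$ gives $g_s g_u=g_u g_s$ in the same way. Semisimplicity of $\rho_{g_s}=s$ and unipotence of $\rho_{g_u}=u$ hold by construction. Uniqueness of the pair $(g_s,g_u)$ follows by combining the uniqueness of the operator decomposition $\rho_g=su$ with faithfulness: any competing pair would produce a second Jordan decomposition of $\rho_g$, forcing equality of the operators and hence of the group elements. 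As an alternative to the functorial argument of the middle paragraph, one can instead embed $G$ as a closed subgroup of some $\mathrm{GL}_n$, take the Jordan decomposition of $g$ in $\mathrm{GL}_n$, and verify that the semisimple and unipotent factors preserve the ideal of $G$ and therefore lie in $G$; I would prefer the coordinate-ring approach since it avoids choosing an embedding and makes the compatibility with $\rho$ transparent.
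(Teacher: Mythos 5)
The paper does not prove this lemma at all --- it is quoted verbatim as a known result with a citation to Humphreys, \emph{Linear Algebraic Groups}, p.~99. Your argument is a correct reconstruction of the standard proof given there: local finiteness of the right regular representation, the multiplicative Jordan decomposition of a locally finite invertible operator, compatibility of that decomposition with the multiplication intertwiner $\mu$ (so that $s$ and $u$ are again algebra automorphisms commuting with left translations), and the rigidity fact that such automorphisms are right translations by unique points. The only caveat worth recording is that the tensor-product step (the semisimple part of $\rho_g\otimes\rho_g$ being $s\otimes s$) and indeed the existence of the operator-level decomposition require the ground field to be perfect (algebraically closed in Humphreys' setting, which is the context the paper uses); with that hypothesis made explicit, your proof is complete and is essentially the cited one.
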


\begin{lemma}
\label{6.2}(\cite{luts}) Let $G$ be a reductive connected algebraic group
over an algebraically closed field $k.$ The conjugacy classes of unipotent
elements in $G$ are finite.
\end{lemma}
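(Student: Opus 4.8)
The plan is to transport the question from the group to its Lie algebra $\mathfrak{g}$, where nilpotent orbits are easier to classify, and then to bound the number of orbits by a finite combinatorial invariant. First I would note that the unipotent elements form a closed, irreducible subvariety $\mathcal{U}\subset G$ on which $G$ acts by conjugation, so that each unipotent class is a single (locally closed) orbit; the whole content is that there are finitely many of them. When the characteristic of $k$ is $0$ or good, a Springer isomorphism furnishes a $G$-equivariant isomorphism between $\mathcal{U}$ and the nilpotent cone $\mathcal{N}\subset\mathfrak{g}$, so it is enough to show that the adjoint action of $G$ on $\mathcal{N}$ has finitely many orbits.

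For the nilpotent orbits in characteristic $0$ I would invoke the Jacobson--Morozov theorem: every nonzero nilpotent $e$ sits in an $\mathfrak{sl}_2$-triple $(e,h,f)$, and after conjugating $h$ into the dominant chamber its values $\alpha_i(h)$ on the simple roots $\alpha_i$ lie in the finite set $\{0,1,2\}$. This \emph{weighted Dynkin diagram} is a conjugacy invariant of $e$, and by Kostant's rigidity of $\mathfrak{sl}_2$-triples two nilpotents whose semisimple parts $h$ are conjugate are themselves conjugate; hence the orbit is determined by a function from the finite set of simple roots to $\{0,1,2\}$, of which there are only finitely many. In good positive characteristic, where Jacobson--Morozov may fail, I would instead use the Bala--Carter--Pommerening correspondence, which matches nilpotent orbits with pairs $(L,\mathcal{O})$ consisting of a Levi subgroup $L$ (finitely many up to conjugacy, being indexed by subsets of the simple roots) and a distinguished orbit $\mathcal{O}$ in $L$ (finitely many, governed by the distinguished parabolics of $L$).

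The hard part, and the reason the statement is credited to Lusztig in \cite{luts}, is bad characteristic, where both the Springer isomorphism and the $\mathfrak{sl}_2$-theory can break down so that neither reduction above applies. Here I would follow Lusztig and argue uniformly across all characteristics via the (generalized) Springer correspondence: the pairs consisting of a unipotent class together with an irreducible $G$-equivariant local system on it are organized, by parabolic induction from cuspidal data on Levi subgroups, into the irreducible representations of finitely many relative Weyl groups $N_G(L)/L$. Since those representations form a finite set, so do the pairs, and a fortiori the unipotent classes. First reducing $G$ to the simple adjoint case makes this bookkeeping manageable, and it is precisely this characteristic-free analysis that upgrades the good-characteristic arguments to the full statement.
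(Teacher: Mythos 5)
The paper gives no proof of this lemma; it is imported wholesale from Lusztig's 1976 paper, so there is no internal argument to compare yours against. Judged on its own terms, your outline is sound exactly where the result is classical: the reduction to nilpotent orbits via a Springer isomorphism, the Dynkin--Kostant classification by weighted Dynkin diagrams (with Kostant's conjugacy theorem for $\mathfrak{sl}_2$-triples) in characteristic $0$, and the Bala--Carter--Pommerening parametrization by pairs (Levi, distinguished orbit) in good positive characteristic are all correct and do yield finiteness in those cases, modulo routine caveats about which isogeny types admit a Springer isomorphism (these do not affect the count of classes).

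The gap is in your third paragraph, which is the only case the citation to Lusztig is actually carrying. The generalized Springer correspondence is not the argument of the cited 1976 paper (it appears in Lusztig's 1984 work), and, more seriously, as usually developed it takes the finiteness of the set of pairs (unipotent class, irreducible equivariant local system) as an input: the decomposition of the induced complexes into intersection cohomology sheaves supported on closures of unipotent classes, and the exhaustion of all pairs by induction from cuspidal data, are proved using the stratification of the unipotent variety into finitely many orbits. Invoking it here is therefore circular unless you supply an independent development, which you have not. Lusztig's actual proof in arbitrary (in particular bad) characteristic is a counting argument over finite fields: reduce to $G$ defined over $\mathbb{F}_q$, use the Lang--Steinberg theorem to pass between geometric and rational classes, Steinberg's formula $q^{2N}$ for the number of unipotent elements of $G(\mathbb{F}_q)$, and orthogonality properties of Green functions from Deligne--Lusztig theory to bound the number of Frobenius-stable classes independently of $q$. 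As written, the one genuinely hard case of the lemma is assumed rather than proved; you would need either to reproduce that counting argument or to fall back on the later case-by-case verifications in bad characteristic.
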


\begin{lemma}
\label{6.3}(\cite{gar}, Theorem 3.4) If $G$ is a reductive linear algebraic
group defined over a field $k$ and $g\in G(k)$ then the set of conjugacy
classes in $G(k)$ which when base changed to the {}algebraically closed
field $\bar{k}$ are equal to the conjugacy class of $g$ in $G(\bar{k})$ is
in bijection with a subset of $H^{1}(\bar{k}/k,Z(g)(k)),$ the Galois
cohomology set.
\end{lemma}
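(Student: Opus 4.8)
The plan is to read this as the standard Galois-descent classification of orbits for the conjugation action, so that the relevant conjugacy classes are parametrized by a non-abelian first cohomology set. Write $\Gamma=\mathrm{Gal}(\bar k/k)$ and let $Z(g)=Z_G(g)$ be the centralizer of $g$, which is a $k$-subgroup because $g\in G(k)$. The object to be classified is the set of $G(k)$-conjugacy classes of elements $h\in G(k)$ that are $G(\bar k)$-conjugate to $g$. First I would construct a map from this set to $H^1(\Gamma,Z(g)(\bar k))$: given such an $h$, pick $x\in G(\bar k)$ with $xgx^{-1}=h$, and observe that for $\sigma\in\Gamma$ one has $\sigma(x)\,g\,\sigma(x)^{-1}=h$ (using $\sigma(g)=g$ and $\sigma(h)=h$), so $c_\sigma:=x^{-1}\sigma(x)\in Z(g)(\bar k)$. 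A short computation gives $c_{\sigma\tau}=c_\sigma\,\sigma(c_\tau)$, so $\sigma\mapsto c_\sigma$ is a $1$-cocycle; changing $x$ to $xz$ (with $z\in Z(g)(\bar k)$) alters $c_\sigma$ by the coboundary $z^{-1}c_\sigma\sigma(z)$, while changing $h$ to a $G(k)$-conjugate leaves $c_\sigma$ unchanged. Hence $[c]\in H^1(\Gamma,Z(g)(\bar k))$ is well defined on $G(k)$-conjugacy classes, and since $c_\sigma=x^{-1}\sigma(x)$ is a coboundary in $G$, it lands in $\ker\big(H^1(\Gamma,Z(g)(\bar k))\to H^1(\Gamma,G(\bar k))\big)$.

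Next I would prove this map is a bijection onto that kernel. For injectivity, if $h_1,h_2$ with chosen $x_1,x_2$ yield cohomologous cocycles, then after replacing $x_1$ by a $Z(g)(\bar k)$-translate one may assume $x_1^{-1}\sigma(x_1)=x_2^{-1}\sigma(x_2)$ for all $\sigma$; then $y:=x_1x_2^{-1}$ is $\Gamma$-fixed, hence lies in $G(k)$, and $yh_2y^{-1}=h_1$, so $h_1$ and $h_2$ are $G(k)$-conjugate. For surjectivity onto the kernel, a cocycle $c_\sigma\in Z(g)(\bar k)$ that becomes trivial in $G$, say $c_\sigma=x^{-1}\sigma(x)$ with $x\in G(\bar k)$, yields $h:=xgx^{-1}$; because $c_\sigma$ commutes with $g$ one checks $\sigma(h)=h$, so $h\in G(k)$, it is $\bar k$-conjugate to $g$, and its associated class is $[c]$. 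This identifies the desired set of classes with the distinguished subset $\ker\big(H^1(\Gamma,Z(g)(\bar k))\to H^1(\Gamma,G(\bar k))\big)$ of $H^1(\Gamma,Z(g)(\bar k))$, as claimed.

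The step I expect to be most delicate is the bookkeeping for the non-abelian cohomology: since $Z(g)$ need not be abelian, one must verify the cocycle and coboundary relations with the twists in the correct order and confirm independence from the auxiliary choice of $x$ at each stage. The hypotheses of reductivity and of passing to $\bar k$ serve mainly to guarantee that $Z(g)$ is a well-behaved $k$-group and that the $\Gamma$-descent dictionary ``$k$-rational points $=$ $\Gamma$-fixed points'' applies; over an imperfect field one would replace Galois cohomology by fppf cohomology to retain smoothness of the centralizer, but the formal structure of the argument is unchanged.
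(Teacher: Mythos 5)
Your proposal is correct: this lemma is quoted in the paper from Garge--Singh (\cite{gar}, Theorem 3.4) without proof, and your argument is precisely the standard Galois-descent/twisting proof of that result --- the cocycle $c_\sigma = x^{-1}\sigma(x)$ valued in $Z(g)(\bar k)$, well-definedness up to coboundary, and the identification of the image with $\ker\bigl(H^1(\Gamma, Z(g)(\bar k))\rightarrow H^1(\Gamma, G(\bar k))\bigr)$. You also correctly read the loosely worded ``subset of $H^1(\bar k/k, Z(g)(k))$'' as that kernel (with coefficients in $Z(g)(\bar k)$, not $Z(g)(k)$), and your closing caveat about perfect fields versus fppf cohomology is the right one for the descent identity $G(\bar k)^\Gamma = G(k)$ used in the injectivity and surjectivity steps.
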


\begin{definition}
A field $k$ is of type (F) if for any integer $n$ there exist only finitely
many extensions of $k$ of degree $n$ (in a fixed algebraic closure $\bar{k}$
of $k$).
\end{definition}

Examples of fields of type (F) include: the field $\mathbb{R}$ of reals, a
finite field, the field of formal power series over an algebraically closed
field.

\begin{lemma}
\label{6.5}[Borel-Serre \cite{bs}, Theorem 6.2] Let $k$ be a field of type
(F) and let $H$ be a linear algebraic group defined over $k$. The set $H^{1}(%
\bar{k}/k,H(k))$ is finite.
\end{lemma}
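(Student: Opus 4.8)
The plan is to establish finiteness by \emph{d\'evissage}: filter the linear algebraic group $H$ into a short list of elementary pieces, deduce finiteness of Galois cohomology for each piece directly from the type (F) hypothesis, and glue the pieces back together using the long exact sequences of non-abelian cohomology. The basic gluing principle is that for a short exact sequence of $k$-groups $1\to A\to B\to C\to 1$ the sequence of pointed sets $H^1(k,A)\to H^1(k,B)\to H^1(k,C)$ is exact, and, more precisely, the fibres of $H^1(k,B)\to H^1(k,C)$ are controlled by the sets $H^1(k,{}_cA)$ as ${}_cA$ ranges over the twisted forms of $A$; consequently, if $H^1(k,C)$ is finite and $H^1(k,A')$ is finite for every twist $A'$ of $A$, then $H^1(k,B)$ is finite. (Throughout, $H^1(k,\cdot)$ denotes the Galois cohomology $H^1(\bar{k}/k,\cdot)$ of the statement.) I would filter $H$ by its identity component, $1\to H^{0}\to H\to \pi_0(H)\to 1$ with $\pi_0(H)$ finite \'etale; then peel off the unipotent radical, $1\to R_u(H^{0})\to H^{0}\to H^{0}/R_u(H^{0})\to 1$, leaving a reductive group; and finally split that reductive group into its central torus and its semisimple part through a central isogeny $1\to \mu\to T\times G_{\mathrm{ss}}\to H_{\mathrm{red}}\to 1$ with $\mu$ finite.

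The elementary pieces come first. For a finite \'etale $k$-group $F$, a class of $H^1(k,F)$ is an $F$-torsor split by a finite separable extension of $k$ of degree bounded in terms of $|F(\bar{k})|$; since $k$ is of type (F) there are only finitely many such extensions, hence finitely many torsors, so $H^1(k,F)$ is finite --- and this applies equally to $\pi_0(H)$, to $\mu$, and to all their twists, which are finite of the same order. For the unipotent radical I would use that over the perfect fields at issue (e.g.\ $\mathbb{R}$ or finite fields) $R_u(H^{0})$ admits a central filtration with successive quotients $\mathbb{G}_a$; since $H^1(k,\mathbb{G}_a)=0$ by the normal basis theorem, induction along the filtration gives $H^1(k,R_u(H^{0}))=\{\ast\}$, and the same holds for every twist because a twist of a split unipotent group is again unipotent. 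Thus the unipotent layer contributes nothing.

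The torus layer is handled by a quasi-trivial resolution: an arbitrary $k$-torus $T$, split over a finite Galois extension $L/k$, embeds in an induced torus $Q$ (for which $H^1(k,Q)=0$ by Shapiro's lemma and Hilbert 90) with torus quotient $T'$, whence $H^1(k,T)=\operatorname{coker}\!\big(Q(k)\to T'(k)\big)$. Passing by inflation--restriction to the finite group $\operatorname{Gal}(L/k)$ and using type (F) to bound the relevant norm/unit quotient yields finiteness of $H^1(k,T)$, and the same argument applies to every twist, since a twist of $T$ is again a torus with the same splitting field.

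The remaining block --- the semisimple part --- is where I expect the real difficulty to lie. Passing to the simply connected cover $\widetilde{G}\to G_{\mathrm{ss}}$ with finite central kernel, the finite and torus cases already treated reduce the finiteness of $H^1(k,G_{\mathrm{ss}})$ to that of $H^1(k,\widetilde{G})$ for $\widetilde{G}$ and its (again simply connected) twists. To treat a simply connected semisimple group I would write it as a product of Weil restrictions $R_{L_i/k}(G_i)$ of absolutely almost simple simply connected groups over finite extensions $L_i/k$; Shapiro's lemma reduces to the absolutely almost simple case over $L_i$, which is again of type (F) because a finite extension of a type (F) field is of type (F). The hard part is then genuinely case-by-case along the Dynkin classification: one identifies $H^1(L_i,G_i)$ with isomorphism classes of concrete objects --- central simple algebras with involution, hermitian or quadratic forms, and their exceptional analogues --- and bounds the finitely many discrete invariants (degrees, dimensions, discriminants, splitting fields) that classify them, once more by the finitely-many-extensions property. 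Reassembling all the finiteness statements up the original filtration, while keeping track of the finitely many twists introduced at each stage, produces the finiteness of $H^1(k,H)$.
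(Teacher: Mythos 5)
First, a point of context: the paper does not prove this lemma at all --- it is imported verbatim from Borel--Serre \cite{bs} and used as a black box --- so there is no internal proof to measure your argument against; the comparison can only be with the published proof. Judged on its own terms, your d\'evissage skeleton has the right general shape (component group, unipotent radical, torus, semisimple part, glued by twisting), but it has a genuine hole exactly where the theorem is hard. For the simply connected semisimple block you propose to argue ``case-by-case along the Dynkin classification'' by bounding ``the finitely many discrete invariants'' of algebras with involution, hermitian forms and ``their exceptional analogues.'' That is not an argument: for the exceptional types (already for $E_8$) there is no classification of $H^{1}(k,G)$ by elementary discrete invariants available to bound, and even for the classical types the identification of $H^{1}$ with isomorphism classes of forms does not reduce finiteness to ``finitely many extensions of bounded degree'' without substantial further input. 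The heart of the theorem is thus asserted rather than proved. Borel--Serre's actual argument is uniform in $H$ and never invokes the classification: its key geometric input is that every torsor under a smooth linear $k$-group is split by a separable extension of degree bounded in terms of the group alone, so that the type (F) hypothesis leaves only finitely many finite Galois extensions $k'/k$ to consider, and the finiteness of the resulting sets $H^{1}(\mathrm{Gal}(k'/k),H(k'))$ is then established by a d\'evissage reducing to finite groups.

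Two secondary gaps. Your gluing principle is stated only for the fibres of $H^{1}(k,B)\rightarrow H^{1}(k,C)$, but both central isogenies you use ($T\times G_{\mathrm{ss}}\rightarrow H_{\mathrm{red}}$ and $\widetilde{G}\rightarrow G_{\mathrm{ss}}$) place the finite group $\mu $ in the \emph{kernel}, so passing finiteness down to the quotient requires controlling the fibres of the coboundary $H^{1}(k,H_{\mathrm{red}})\rightarrow H^{2}(k,\mu )$, hence the finiteness of (the relevant part of) $H^{2}(k,\mu )$; finiteness of degree-two Galois cohomology of finite modules over a type (F) field is true but is an additional input your sketch never supplies. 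Finally, your treatment of the unipotent radical explicitly assumes $k$ perfect (``the perfect fields at issue''), whereas the lemma is stated for an arbitrary field of type (F); Borel--Serre build perfectness into the definition of type (F), but the paper's Definition as reproduced here does not, so you must either add that hypothesis or address non-split forms of $\mathbb{G}_{a}$, whose $H^{1}$ need not vanish over imperfect fields.
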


\begin{lemma}
\label{lemsemisimple}Let $G(k)$ be a reductive linear algebraic group over a
perfect field of type (F) and $l$ a length function on $G=G(k).$ Then $%
l(g)=l(g_{s}),$ where $g_{s}$ is the semisimple part of $g.$
\end{lemma}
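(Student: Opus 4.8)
The plan is to use the multiplicative Jordan--Chevalley decomposition to split $g$ into its commuting semisimple and unipotent parts, and then to show that any length function is forced to vanish on unipotent elements by a finiteness-plus-pigeonhole argument. First I would write $g=g_{s}g_{u}=g_{u}g_{s}$ with $g_{s}$ semisimple, $g_{u}$ unipotent and the two commuting, as provided by the Jordan--Chevalley decomposition \cite{hum1}. Since $g_{s}$ and $g_{u}$ commute, condition 3) of Definition \ref{def} gives $l(g)\leq l(g_{s})+l(g_{u})$. Conversely, $g_{u}$ commutes with $g=g_{s}g_{u}$, so $g_{s}=g\,g_{u}^{-1}$ is a product of commuting elements, whence $l(g_{s})\leq l(g)+l(g_{u}^{-1})=l(g)+l(g_{u})$, using condition 1) to identify $l(g_{u}^{-1})=l(g_{u})$. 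Thus the entire lemma reduces to the single claim that $l(g_{u})=0$ for every unipotent $g_{u}\in G(k)$.

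To prove $l(g_{u})=0$ I would exploit that every power $g_{u}^{m}$ is again unipotent, together with the finiteness of the set of $k$-conjugacy classes of unipotent elements (established below). Granting this finiteness, the infinite sequence $g_{u},g_{u}^{2},g_{u}^{3},\dots$ of unipotent elements must contain two distinct powers $g_{u}^{i}$ and $g_{u}^{j}$ (with $i<j$) that are conjugate in $G(k)$. By conjugacy-invariance (condition 2) and homogeneity (condition 1) we then obtain $i\,l(g_{u})=l(g_{u}^{i})=l(g_{u}^{j})=j\,l(g_{u})$, so $(j-i)\,l(g_{u})=0$ and hence $l(g_{u})=0$. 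This is the same mechanism underlying the Baumslag--Solitar vanishing of Lemma \ref{bs}, now driven by a counting argument rather than by an explicit relation.

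It remains to establish the finiteness of the $k$-conjugacy classes of unipotent elements, which I expect to be the main obstacle, since the clean statement (Lemma \ref{6.2}) holds only over the algebraic closure. Over $\bar{k}$ there are finitely many unipotent conjugacy classes by Lemma \ref{6.2}. To descend to $k$, I would fix a unipotent $u$ and apply Lemma \ref{6.3}: the $k$-conjugacy classes lying inside the fixed $\bar{k}$-class of $u$ inject into $H^{1}(\bar{k}/k,Z(u)(k))$, where $Z(u)$ is the centralizer of $u$, itself a linear algebraic group defined over $k$. Because $k$ is of type (F), Borel--Serre (Lemma \ref{6.5}) guarantees that this Galois cohomology set is finite, so each of the finitely many $\bar{k}$-classes breaks into only finitely many $k$-classes; summing over the finitely many $\bar{k}$-classes yields finitely many $k$-conjugacy classes of unipotent elements, which is exactly the pigeonhole input required above. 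The delicate points to watch are that the Jordan parts $g_{s},g_{u}$ genuinely lie in $G(k)$ (so the whole argument stays inside $G(k)$), and that the centralizers to which Lemma \ref{6.3} is applied are honest $k$-groups, so that Lemma \ref{6.5} is applicable.
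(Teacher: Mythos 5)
Your proposal is correct and follows essentially the same route as the paper: Jordan--Chevalley decomposition, the two subadditivity inequalities reducing everything to $l(g_{u})=0$, and finiteness of $k$-conjugacy classes of unipotent elements (via Lemmas \ref{6.2}, \ref{6.3}, \ref{6.5}) combined with pigeonhole on the powers $g_{u}^{n}$. Your write-up is in fact slightly more careful on one point -- the paper's proof states $g_{u}^{n_{1}}=g_{u}^{n_{2}}$ where it means the two powers are \emph{conjugate} in $G(k)$, which is what conjugation-invariance of $l$ actually requires -- but the argument is the same.
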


\begin{proof}
By the Jordan decomposition $g=g_{s}g_{u},$ we have $l(g)\leq
l(g_{s})+l(g_{u})$ and $l(g_{s})\leq l(g)+l(g_{u}^{-1}).$ Note that for any
integer $n,$ $g_{u}^{n}$ is also unipotent. By the Lemma \ref{6.2}, Lemma %
\ref{6.3} and Lemma \ref{6.5}, there are only finitely many conjugacy
classes of unipotent elements. This implies that $%
hg_{u}^{n_{1}}h^{-1}=g_{u}^{n_{2}}$ for distinct positive integers $%
n_{1},n_{2},$ and some $h\in G.$ Therefore, we have $%
n_{1}l(g_{u})=n_{2}l(g_{u}),$ which implies that $l(g_{u})=0$ and thus $%
l(g)=l(g_{s}).$
\end{proof}

\bigskip

A Lie group $G$ is semisimple if its maximal connected solvable normal
subgroup is trivial. Let $\mathfrak{g}$ be its Lie algebra and let $\exp :%
\mathfrak{g}\rightarrow G$ denote the exponential map. An element $x\in 
\mathfrak{g}$ is real semi-simple if $Ad(x)$ is diagonalizable over $\mathbb{%
R}$. An element $g\in G$ is called hyperbolic (resp. unipotent) if $g$ is of
the form $g=\exp (x)$ where $x$ is real semi-simple (resp. nilpotent). In
either case, the element $x$ is easily seen to be unique and we write $%
x=\log g$. The following is the Jordan decomposition in Lie groups. An
element $e\in G$ is elliptic if $Ad(e)$ is diagonalizable over $\mathbb{C}$
with eigenvalues $1$.

\begin{lemma}
\label{kostant2.1}(\cite{kos}, Prop. 2.1 and Remark 2.1)

\begin{enumerate}
\item Let $g\in G$ be arbitrary. Then g may be uniquely written 
\begin{equation*}
g=e(g)h(g)u(g)
\end{equation*}%
where $e(g)$ is elliptic, $h(g)$ is hyperbolic and $u(g)$ is unipotent and
where the three elements $e(g),h(g),u(g)$ commute.

\item An element $f\in G$ commutes with $g$ if and only if $f$ commutes with
the three components. Moreover, if $f,g$ commutes, then%
\begin{equation*}
e(fg)=e(f)e(g),h(fg)=h(f)h(g),u(fg)=u(f)u(g).
\end{equation*}
\end{enumerate}
\end{lemma}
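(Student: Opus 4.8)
The plan is to reduce the statement to the multiplicative Jordan decomposition in $\mathrm{GL}(\mathfrak{g})$ already recorded in Lemma \ref{mjordan}, and then to lift the resulting factors from the adjoint group back to $G$. First I would pass to the adjoint representation $\mathrm{Ad}:G\rightarrow \mathrm{GL}(\mathfrak{g})$. Since $G$ is semisimple its Lie algebra has trivial center, so $\mathrm{ad}:\mathfrak{g}\rightarrow\mathfrak{gl}(\mathfrak{g})$ is injective and $\mathrm{Ad}(G)$ is the identity component of the real points of a real algebraic group (namely $\mathrm{Aut}(\mathfrak{g})$), with Lie algebra $\mathrm{ad}(\mathfrak{g})\cong\mathfrak{g}$; moreover $\ker\mathrm{Ad}=Z(G)$, every element of which is elliptic because $\mathrm{Ad}(z)=\mathrm{id}$ has all eigenvalues equal to $1$. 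Applying Lemma \ref{mjordan} to $\mathrm{Ad}(g)\in\mathrm{GL}(\mathfrak{g})$ then yields a unique factorization $\mathrm{Ad}(g)=E\,H\,U$ into commuting elliptic, hyperbolic and unipotent pieces.

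The key step is to show each factor lies in $\mathrm{Ad}(G)$ and to lift it to $G$. Because $\mathrm{Ad}(G)$ is a real algebraic group, the Jordan--Chevalley factors of any of its elements remain inside it, and the finer splitting of the semisimple part into its elliptic and hyperbolic pieces is again canonical and preserves the group; since $\mathrm{Ad}(g)$ lies in the identity component and $H,U$ are connected to the identity, all of $E,H,U$ lie in $\mathrm{Ad}(G)$. Writing $H=\exp(\mathrm{ad}\,X)$ and $U=\exp(\mathrm{ad}\,Y)$ for the unique real semisimple $X$ and nilpotent $Y$ in $\mathrm{ad}(\mathfrak{g})\cong\mathfrak{g}$, I would set $h(g)=\exp X$ and $u(g)=\exp Y$, which are hyperbolic and unipotent by definition and satisfy $\mathrm{Ad}(h(g))=H$ and $\mathrm{Ad}(u(g))=U$; then $e(g):=g\,u(g)^{-1}h(g)^{-1}$ has $\mathrm{Ad}(e(g))=E$ elliptic, so $e(g)$ is elliptic. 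Commutativity of $E,H,U$ together with the uniqueness of the logarithms $X,Y$ forces $e(g),h(g),u(g)$ to commute in $G$, and uniqueness of the whole decomposition follows from the uniqueness in Lemma \ref{mjordan} applied through $\mathrm{Ad}$, once one checks that the only residual ambiguity, a central (hence elliptic) factor, is absorbed into the elliptic part.

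For part (2) I would argue that the three components are canonically attached to $g$. If $f$ commutes with $g$, then $\mathrm{Ad}(f)$ commutes with $\mathrm{Ad}(g)$, hence with each Jordan factor $E,H,U$ (these commute with everything commuting with $\mathrm{Ad}(g)$), and therefore with $\mathrm{Ad}(e(g)),\mathrm{Ad}(h(g)),\mathrm{Ad}(u(g))$; promoting this back through $\mathrm{Ad}$ and absorbing the central kernel into the elliptic factor shows that $f$ commutes with $e(g),h(g),u(g)$ in $G$, and the converse is immediate. Multiplicativity on a commuting pair $f,g$ is then formal: $e(f)e(g)$, $h(f)h(g)$, $u(f)u(g)$ are commuting elliptic, hyperbolic and unipotent elements whose product is $fg$, so by the uniqueness already established they are precisely the three components of $fg$.

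The hard part will be the lifting in the second paragraph. Over $\mathrm{GL}(\mathfrak{g})$ everything is clean by Lemma \ref{mjordan}, but promoting the factors to $G$ requires knowing that $\mathrm{Ad}(G)$ is a real algebraic group closed under the elliptic--hyperbolic--unipotent splitting, and that the hyperbolic and unipotent factors admit canonical logarithms in $\mathrm{ad}(\mathfrak{g})$ so the $\exp$-lifts are unambiguous. The elliptic factor is the delicate one, since $\mathrm{Ad}$ fails to be injective exactly on the central elliptic elements, and one must verify that this residual central ambiguity does not spoil uniqueness. This is precisely the content established by Kostant, and it is why I would lean on the algebraic-group structure of the adjoint image rather than attempting to decompose inside $G$ directly.
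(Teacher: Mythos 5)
The paper does not prove this lemma at all: it is quoted verbatim from Kostant (\cite{kos}, Prop.\ 2.1 and Remark 2.1), so there is no in-paper argument to compare against. Your reconstruction via the adjoint representation is the standard route and is essentially sound: reduce to the matrix decomposition of Lemma \ref{mjordan} applied to $\mathrm{Ad}(g)$, pull the hyperbolic and unipotent factors back through $\exp$ using their unique real-semisimple and nilpotent logarithms in $\mathrm{ad}(\mathfrak{g})=\mathrm{Der}(\mathfrak{g})$, and define the elliptic factor as the leftover. Two of the steps you flag deserve to be made explicit. First, the claim that the elliptic--hyperbolic splitting of the semisimple part of $\mathrm{Ad}(g)$ stays inside $\mathrm{Aut}(\mathfrak{g})(\mathbb{R})$ is not a formal consequence of the algebraic-group Jordan decomposition; it needs the direct check that the operator acting by $|\lambda|$ on each $\lambda$-eigenspace of the semisimple part is again a Lie algebra automorphism (using $[V_\lambda,V_\mu]\subset V_{\lambda\mu}$ and $|\lambda\mu|=|\lambda||\mu|$) and is defined over $\mathbb{R}$. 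Second, your worry about the central kernel of $\mathrm{Ad}$ resolves more cleanly than you suggest: since $h(g)=\exp X$ and $u(g)=\exp Y$ with $X,Y\in\mathfrak{g}$ uniquely determined, commutativity of $E,H,U$ forces $E(X)=X$, $H(Y)=Y$, etc.\ (conjugation preserves the unique logarithm and $\mathrm{ad}$ is injective), whence the components commute \emph{in $G$} on the nose, not merely modulo $Z(G)$; and no nontrivial central element can be absorbed into $h(g)$ or $u(g)$ because $z\exp X=\exp X'$ with $X,X'$ real semisimple forces $\mathrm{ad}X=\mathrm{ad}X'$ and hence $z=1$. With those two points written out, your argument for existence, uniqueness, and part (2) (including multiplicativity on commuting pairs via uniqueness) goes through; it is a legitimate self-contained replacement for the citation, at the cost of redoing a piece of Kostant's paper.
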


\begin{lemma}
\label{eber1.14}(\cite{Ebe}, Prop. 1.14.6, page 63) Let $G$ be a connected
semisimple Lie group whose center is trivial. Then there exists an integer $%
n\geq 2$ and an algebraic group $G^{\ast }<\mathrm{GL}_{n}(\mathbb{C})$
defined over $\mathbb{Q}$ such that $G$ is isomorphic to $G_{\mathbb{R}%
}^{\ast 0}$ (the connected component of $G_{\mathbb{R}}^{\ast }$ containing
the identity) as a Lie group.
\end{lemma}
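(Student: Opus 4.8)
The plan is to realize $G$ through its adjoint representation and then to equip the resulting linear algebraic group with a rational structure coming from the classification of semisimple Lie algebras. Write $\mathfrak{g}$ for the Lie algebra of $G$ and set $n=\dim_{\mathbb{R}}\mathfrak{g}$. First I would use the adjoint map $\mathrm{Ad}:G\rightarrow \mathrm{GL}(\mathfrak{g})$. For a connected Lie group the kernel of $\mathrm{Ad}$ is exactly the center $Z(G)$, which is trivial by hypothesis, so $\mathrm{Ad}$ is an injective homomorphism of Lie groups. Since $\mathfrak{g}$ is semisimple, every derivation is inner, so the derivation algebra equals $\mathrm{ad}(\mathfrak{g})$, and the image $\mathrm{Ad}(G)=\mathrm{Inn}(\mathfrak{g})$ is precisely the identity component $\mathrm{Aut}(\mathfrak{g})^{0}$ of the automorphism group of $\mathfrak{g}$. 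This already exhibits $G$ as the identity component of a real Lie group inside $\mathrm{GL}_{n}(\mathbb{R})\subset \mathrm{GL}_{n}(\mathbb{C})$; note $n\geq 3$ because the smallest nontrivial semisimple Lie algebra is $\mathfrak{sl}_{2}$, so in particular $n\geq 2$.

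Next I would check that $\mathrm{Aut}(\mathfrak{g})$ is a linear algebraic group. Fixing a basis $e_{1},\dots ,e_{n}$ of $\mathfrak{g}$ with structure constants $c_{ij}^{k}$ defined by $[e_{i},e_{j}]=\sum_{k}c_{ij}^{k}e_{k}$, a matrix $T\in \mathrm{GL}_{n}$ lies in $\mathrm{Aut}(\mathfrak{g})$ exactly when $T[e_{i},e_{j}]=[Te_{i},Te_{j}]$ for all $i,j$. Expanding, these are polynomial equations in the entries of $T$ whose coefficients are the $c_{ij}^{k}$. Hence $\mathrm{Aut}(\mathfrak{g})$ is the set of real points of an algebraic subgroup of $\mathrm{GL}_{n}$ cut out by these equations, and its Zariski closure over $\mathbb{C}$ is $\mathrm{Aut}(\mathfrak{g}\otimes_{\mathbb{R}}\mathbb{C})$.

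The crux is to make these defining equations rational, that is, to choose the basis so that every $c_{ij}^{k}\in \mathbb{Q}$. Here I would invoke the fact that every real semisimple Lie algebra admits a basis with rational structure constants. This can be extracted from the classification: each real form of a complex semisimple Lie algebra is described by a Cartan involution which, after a Chevalley basis of the complexification $\mathfrak{g}\otimes\mathbb{C}$ is chosen (structure constants in $\mathbb{Z}$), may be realized as the composite of a diagram automorphism with an inner automorphism by a rational torus element; the fixed real form is then spanned by vectors with rational coordinates, yielding rational structure constants for $\mathfrak{g}$. With such a basis the equations above have coefficients in $\mathbb{Q}$, so $G^{\ast}:=\mathrm{Aut}(\mathfrak{g}\otimes\mathbb{C})$ is an algebraic subgroup of $\mathrm{GL}_{n}(\mathbb{C})$ defined over $\mathbb{Q}$, and its real points are $G^{\ast}_{\mathbb{R}}=\mathrm{Aut}(\mathfrak{g})$. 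Taking identity components gives $G\cong \mathrm{Ad}(G)=\mathrm{Aut}(\mathfrak{g})^{0}=G_{\mathbb{R}}^{\ast 0}$, as required.

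The step I expect to be the main obstacle is this rationality, together with the bookkeeping that the $\mathbb{Q}$-structure is compatible with the given real form: a naive Chevalley basis produces the split real form rather than the prescribed $\mathfrak{g}$, so one must select the real structure (equivalently the defining involution) to be itself defined over $\mathbb{Q}$ and verify that its fixed locus reproduces $\mathfrak{g}$ up to isomorphism. By contrast, the faithfulness of $\mathrm{Ad}$, the identification of the image with $\mathrm{Aut}(\mathfrak{g})^{0}$, and the algebraicity of $\mathrm{Aut}(\mathfrak{g})$ are standard and require only the semisimplicity of $\mathfrak{g}$.
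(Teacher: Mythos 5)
The paper offers no proof of this lemma; it is quoted directly from Eberlein (Prop.\ 1.14.6), so there is nothing internal to compare against. Your argument --- $\mathrm{Ad}$ is faithful because $Z(G)=1$, its image is $\mathrm{Aut}(\mathfrak{g})^{0}$ since all derivations of a semisimple $\mathfrak{g}$ are inner, $\mathrm{Aut}(\mathfrak{g}_{\mathbb{C}})$ is cut out by polynomial equations in the structure constants, and a basis of $\mathfrak{g}$ can be chosen with rational structure constants so that these equations are defined over $\mathbb{Q}$ --- is precisely the standard proof, and the one Eberlein gives. The one ingredient you cite rather than prove (every real semisimple Lie algebra admits a $\mathbb{Q}$-form) is a genuine classical theorem, so the proposal is sound.
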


\bigskip

Let $G=KAN$ be an Iwasawa decomposition. The Weyl group $W$ is the finite
group defined as the quotient of the normalizer of $A$ in $K$ modulo the
centralizer of $A$ in $K.$ For an element $h\in A,$ let $W(h)$ be the set of
all elements in $A$ which are conjugate to $h$ in $G.$

\begin{lemma}
\label{hyperbolicconju}(\cite{kos}, Prop. 2.4) An element $h\in G$ is
hyperbolic if and only if it is conjugate to an element in $A.$ In such a
case, $W(h)$ is a single $W$-orbit in $A.$
\end{lemma}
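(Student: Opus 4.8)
The plan is to pass to the Lie algebra through the exponential map and then reduce every statement to the conjugacy theory of maximal abelian subspaces of $\mathfrak{p}$. Fix a Cartan decomposition $\mathfrak{g}=\mathfrak{k}\oplus\mathfrak{p}$ compatible with the Iwasawa datum, so that $\mathfrak{a}\subset\mathfrak{p}$ is maximal abelian, $A=\exp\mathfrak{a}$, and the form $B_\theta(U,V)=-B(U,\theta V)$ ($\theta$ the Cartan involution, $B$ the Killing form) is a positive-definite inner product for which $ad(U)$ is symmetric when $U\in\mathfrak{p}$. I use freely that a hyperbolic $h$ has a unique logarithm $X=\log h$ with $ad(X)$ diagonalizable over $\mathbb{R}$ (recorded before Lemma \ref{kostant2.1}), that $\exp$ is a bijection $\mathfrak{a}\to A$, and ---to make the ``split torus'' picture precise--- the realization of $G$ inside a real algebraic group from Lemma \ref{eber1.14}; however the facts I need are all Lie-algebraic.

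First I would dispose of the formal parts. For $X\in\mathfrak{a}$ the operator $ad(X)$ is $B_\theta$-symmetric, hence $\mathbb{R}$-diagonalizable, so every element of $A$ is hyperbolic; and hyperbolicity is conjugation invariant because $ghg^{-1}=\exp(Ad(g)X)$ with $ad(Ad(g)X)=Ad(g)\,ad(X)\,Ad(g)^{-1}$ still $\mathbb{R}$-diagonalizable. This settles the implication ``conjugate into $A$ $\Rightarrow$ hyperbolic''. For the converse, a real semisimple $X=\log h$ spans an abelian subalgebra of $\mathbb{R}$-diagonalizable elements, which I extend to a maximal one $\mathfrak{a}'$; by the conjugacy of maximal abelian subspaces of $\mathfrak{p}$ (equivalently of maximal $\mathbb{R}$-split tori) there is $g$ with $\mathfrak{a}'=Ad(g)\mathfrak{a}$, whence $Ad(g^{-1})X\in\mathfrak{a}$ and $g^{-1}hg\in A$.

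For the $W$-orbit statement, take $a_1,a_2\in A$ with $a_2=ga_1g^{-1}$ and write $a_i=\exp X_i$, $X_i\in\mathfrak{a}$; uniqueness of the logarithm gives $X_2=Ad(g)X_1$, so I must upgrade this $G$-conjugacy to a $W$-conjugacy. I would do this in two moves. Using the polar decomposition $g=k\exp Z$ with $k\in K$, $Z\in\mathfrak{p}$, the element $e^{ad Z}X_1=Ad(k)^{-1}X_2$ lies in $\mathfrak{p}$; applying $\theta$ and the identity $\theta\circ ad(Z)\circ\theta^{-1}=-ad(Z)$ yields $e^{2\,ad Z}X_1=X_1$, and positivity of $e^{2\,ad Z}$ forces $[Z,X_1]=0$, so in fact $X_2=Ad(k)X_1$ with $k\in K$. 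Then $\mathfrak{a}$ and $Ad(k)\mathfrak{a}$ are two maximal abelian subspaces of $\mathfrak{p}$ lying in the reductive centralizer $Z_{\mathfrak{g}}(X_2)$, hence conjugate by some $k'\in Z_K(X_2)$; the product $k'k$ normalizes $\mathfrak{a}$ and fixes $X_2$, so its class $w\in W=N_K(A)/Z_K(A)$ satisfies $w\cdot X_1=X_2$. Conversely every $W$-translate of $X_1$ gives a $G$-conjugate of $a_1$ inside $A$, so $W(h)$ is exactly one $W$-orbit.

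I expect the two conjugacy reductions to be the main obstacle, as they are the only places where genuine structure theory rather than bookkeeping enters. The deepest input is the conjugacy of maximal abelian subspaces of $\mathfrak{p}$, which I would cite from the standard theory (Helgason, Borel--Tits); the passage from $G$-conjugacy to $K$-conjugacy looks harder but is actually the short Cartan-involution computation above, and everything else follows formally from the Jordan decomposition already available in Lemma \ref{kostant2.1}.
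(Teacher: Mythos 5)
The paper offers no proof of this lemma --- it is imported verbatim from Kostant (\cite{kos}, Prop.~2.4) --- so there is no internal argument to compare yours against; what you have written is a correct reconstruction of the standard proof, essentially Kostant's own. The easy direction is fine: $\mathrm{ad}(X)$ is symmetric for $B_\theta$ when $X\in\mathfrak{p}$, hence elements of $A$ are hyperbolic, and hyperbolicity is conjugation-invariant. In the converse, note that the maximal abelian subalgebra $\mathfrak{a}'$ of $\mathbb{R}$-diagonalizable elements you build around $\log h$ is not a priori contained in $\mathfrak{p}$, so the theorem you need is strictly stronger than the conjugacy of maximal abelian subspaces of a fixed $\mathfrak{p}$: you need that every abelian subalgebra of $\mathrm{ad}$-$\mathbb{R}$-diagonalizable elements is $\mathrm{Ad}(G)$-conjugate into $\mathfrak{a}$ (equivalently, conjugacy of maximal $\mathbb{R}$-split tori in the algebraic realization of Lemma \ref{eber1.14}); your parenthetical acknowledges this, and it is the one genuinely nontrivial external input. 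The $W$-orbit argument is the right two-step reduction: the Cartan-involution computation giving $e^{2\,\mathrm{ad}Z}X_1=X_1$ and hence $[Z,X_1]=0$ correctly upgrades $G$-conjugacy of $X_1,X_2\in\mathfrak{a}$ to $K$-conjugacy, and passing to the $\theta$-stable reductive centralizer $Z_{\mathfrak{g}}(X_2)$, in which $\mathfrak{a}$ and $\mathrm{Ad}(k)\mathfrak{a}$ are two maximal abelian subspaces of the $\mathfrak{p}$-part, produces an element of $N_K(A)$ carrying $X_1$ to $X_2$; the reverse inclusion is immediate. I see no gap; the only caveat is that the two conjugacy theorems you cite (for split toral subalgebras, and for maximal abelian subspaces of $\mathfrak{p}$ inside the reductive centralizer) must be quoted in the precise forms indicated above.
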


\begin{theorem}
\label{lie}Let $G$ be a connected semisimple Lie group whose center is
finite with an Iwasawa decomposition $G=KAN$. Let $W$ be the Weyl group.

\begin{enumerate}
\item[(i)] Any length function $l$ on $G$ that is continuous on the maximal
compact subgroup $K$ is determined by its image on $A.$

\item[(ii)] Conversely, any length function $l$ on $A$ that is $W$-invariant
(i.e. $l(w\cdot a)=l(a)$) can be extended to be a length function on $G$
that vanishes on the maximal compact subgroup $K.$
\end{enumerate}
\end{theorem}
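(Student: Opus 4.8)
The plan is to treat both parts through the multiplicative Jordan decomposition of Lemma \ref{kostant2.1}, which writes $g=e(g)h(g)u(g)$ as a product of commuting elliptic, hyperbolic and unipotent factors, and then to transport the hyperbolic factor into $A$ using Lemma \ref{hyperbolicconju}. The guiding idea is that a length function only ``sees'' the hyperbolic part of an element.

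For part (i), I would first show that $l$ annihilates the elliptic and unipotent factors. Continuity of $l$ on the compact connected group $K$ forces $l|_{K}=0$ by Lemma \ref{compactl}; since every elliptic element lies in a maximal compact subgroup and all such subgroups are conjugate to $K$, conjugation-invariance gives $l(e(g))=0$. For the unipotent factor $u(g)=\exp(X)$ with $X$ nilpotent, the Jacobson--Morozov theorem embeds $X$ in an $\mathfrak{sl}_{2}$-triple whose semisimple member, exponentiated, conjugates $u(g)$ to $u(g)^{2}$; hence $l(u(g))=l(u(g)^{2})=2l(u(g))$, so $l(u(g))=0$ (this is the Lie-group analogue of Lemma \ref{bs}). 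Subadditivity on the commuting triple then yields $l(g)\le l(h(g))$, and writing $h(g)=e(g)^{-1}g\,u(g)^{-1}$ as a product of commuting elements gives the reverse inequality, so $l(g)=l(h(g))$. By Lemma \ref{hyperbolicconju}, $h(g)$ is conjugate to some $a\in A$, whence $l(g)=l(a)$; consequently any two length functions continuous on $K$ that agree on $A$ agree everywhere, proving (i).

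For part (ii), given a $W$-invariant length function $l$ on $A$, I would define $\tilde{l}(g):=l(a)$, where $a\in A$ is any element conjugate to $h(g)$. Lemma \ref{hyperbolicconju} guarantees such an $a$ exists and that the candidates form a single $W$-orbit, so $W$-invariance of $l$ makes $\tilde{l}$ well-defined. Every $a\in A$ is hyperbolic with $h(a)=a$, so $\tilde{l}|_{A}=l$; every $k\in K$ is elliptic (its $\mathrm{Ad}$ is diagonalizable with unit-modulus eigenvalues), so $h(k)=1$ and $\tilde{l}(k)=l(1)=0$. Conjugation-invariance of $\tilde{l}$ is immediate from uniqueness of the Jordan decomposition, since conjugation carries $h(g)$ to $h(xgx^{-1})$. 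For homogeneity, $g^{n}=e(g)^{n}h(g)^{n}u(g)^{n}$ is the Jordan decomposition of $g^{n}$, so $h(g^{n})=h(g)^{n}$ is conjugate to $a^{n}$, giving $\tilde{l}(g^{n})=l(a^{n})=|n|\,l(a)=|n|\,\tilde{l}(g)$.

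The crux is subadditivity, and I expect it to be the main obstacle. For commuting $g_{1},g_{2}$, Lemma \ref{kostant2.1}(2) gives $h(g_{1}g_{2})=h(g_{1})h(g_{2})$ with $h(g_{1}),h(g_{2})$ commuting hyperbolic elements. The key structural input I would establish is that two commuting hyperbolic elements can be simultaneously conjugated into $A$: conjugate $h(g_{1})$ to $a_{1}\in A$, observe that $h(g_{2})$ then lies in the centralizer $Z_{G}(a_{1})$, which is reductive and contains $A$ as a maximal $\mathbb{R}$-split torus, and apply the hyperbolic-conjugacy statement of Lemma \ref{hyperbolicconju} inside $Z_{G}(a_{1})$ to move $h(g_{2})$ to some $a_{2}\in A$ by an element fixing $a_{1}$. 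Then $h(g_{1}g_{2})$ is conjugate to $a_{1}a_{2}\in A$, and since $a_{1},a_{2}$ commute in the abelian group $A$, subadditivity of $l$ on $A$ yields $\tilde{l}(g_{1}g_{2})=l(a_{1}a_{2})\le l(a_{1})+l(a_{2})=\tilde{l}(g_{1})+\tilde{l}(g_{2})$. This simultaneous-conjugation step, resting on the structure of centralizers of split elements, is the genuinely non-formal part; the remaining verifications are bookkeeping with the Jordan decomposition.
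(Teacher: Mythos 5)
Your argument is correct and follows the same skeleton as the paper's proof: reduce everything to the hyperbolic Jordan factor via Lemma \ref{kostant2.1}, kill the elliptic part with Lemma \ref{compactl}, and transport the hyperbolic part into $A$ with Lemma \ref{hyperbolicconju}, using the $W$-orbit statement for well-definedness in part (ii). There are two local differences worth recording. First, for the vanishing of $l$ on unipotent elements in part (i), the paper passes through the algebraicity of the adjoint group (Lemma \ref{eber1.14}) and the finiteness of unipotent conjugacy classes over a field of type (F) (Lemma \ref{lemsemisimple}), whereas you use Jacobson--Morozov to conjugate $u$ to $u^{2}$ and then the homogeneity trick of Lemma \ref{bs}; your route is the one the paper itself uses for $\mathrm{SL}_{n}(\mathbb{R})$ (Lemma \ref{unip} and Theorem \ref{1}), it is more self-contained, and it avoids the type-(F) machinery, at the cost of invoking Jacobson--Morozov. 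Second, for subadditivity in part (ii) the paper simply asserts that commuting hyperbolic elements are simultaneously conjugate into $A$; you actually sketch a justification via the centralizer $Z_{G}(a_{1})$, which is a genuine improvement --- though note that $Z_{G}(a_{1})$ need not be connected or semisimple, so quoting Lemma \ref{hyperbolicconju} verbatim inside it requires a word about reducing to its identity component and splitting off the center; this is standard but should be said. With that caveat your proposal is, if anything, more complete than the paper's proof.
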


\begin{proof}
(i) For any $g\in G,$ the Jordan decomposition gives $g=ehu,$ where $e$ is
elliptic, $h$ is hyperbolic and $u$ is unipotent and where the three
elements $e,h,u$ commute (cf. Lemma \ref{kostant2.1}). We assume that $G$
has the trivial center. By Lemma \ref{eber1.14}, the Lie group $G$ is an
algebraic group. Lemma \ref{lemsemisimple} implies that $l$ vanishes on
unipotent elements and $l(g)=l(eh).$ Since $l$ vanishes on $e$ (cf. Lemma %
\ref{compactl}), we have $l(g)=l(h).$ Therefore, the function $l$ is
determined by its image on $A.$ Let $Z$ be the center of $G.$ Then $G/Z$ is
connected with trivial center. For any $z\in Z,g\in G,$ we have $l(z)=0$ and 
$l(gz)=l(g).$ For any $[g]\in G/Z,$ define $l^{\prime }([g])=l(g).$ The
length function $l$ factors through a length function $l^{\prime }$ on $G/Z$
(By Corollary \ref{cor1.1}, it is enough to show that $l^{\prime }$ is
subadditive for commuting pair $a,b\in G/Z.$ But $l^{\prime
}(ab)=l(h_{a}h_{b}u_{a}u_{b})\leq l(h_{a}u_{a})+l(h_{b}u_{b})=l^{\prime
}(a)+l^{\prime }(b),$ where $h_{a},h_{b},$ $u_{a},u_{b}$ are hyperbolic,
unipotent parts of preimages of $a^{\prime },b^{\prime },$ respectively.)

(ii) Let $l$ be a length function $l$ on $A$ that is $W$-invariant. We first
extend $l$ to the set $H$ of all the conjugates of $A.$ For any $g\in G,a\in
A,$ define $l^{\prime }(gag^{-1})=l(a).$ If 
\begin{equation*}
g_{1}a_{1}g_{1}^{-1}=g_{2}a_{2}g_{2}^{-1}
\end{equation*}
for $g_{1},g_{2}\in G,a_{1},a_{2}\in A,$ then $%
g_{1}^{-1}g_{2}a_{2}g_{2}^{-1}g_{1}=a_{1}.$ By Lemma \ref{hyperbolicconju},
there exists an element $w\in W$ such that $w\cdot a_{2}=a_{1}.$ Therefore,
we have $l(a_{1})=l(a_{2})$ and thus $l^{\prime }$ is well-defined on the
set $H$ of conjugates of elements in $A.$ Such a set $H$ is the set
consisting of hyperbolic elements by Lemma \ref{hyperbolicconju}. We then
extend $l^{\prime }$ on the set of all conjugates of elements in $K.$ For
any $g\in G,k\in K,$ define $l^{\prime }(gkg^{-1})=0.$ If 
\begin{equation*}
g_{1}kg_{1}^{-1}=g_{2}ag_{2}^{-1}
\end{equation*}
for $g_{1},g_{2}\in G,k\in K,a\in A,$ then $%
g_{1}^{-1}g_{2}ag_{2}^{-1}g_{1}=k.$ Then $k$ is both hyperbolic and
elliptic. The only element which is both elliptic and hyperbolic is the
identity element. Therefore, we have $k=a=1$ and 
\begin{equation*}
l^{\prime }(g_{1}kg_{1}^{-1})=l(g_{2}ag_{2}^{-1})=1.
\end{equation*}
This shows that $l^{\prime }$ is well-defined on the set of hyperbolic
elements and elliptic elements. For any unipotent element $u\in G,$ define $%
l^{\prime }(u)=0.$ For any element $g\in G,$ let $g=ehu$ be the Jordan
decomposition. Define $l^{\prime }(g)=l^{\prime }(h).$

We check the function $l^{\prime }$ is a length function on $G.$ The
definition shows that $l^{\prime }$ is conjugate invariant. For any positive
integer $n$ and any $g\in G$ with Jordan decomposition $g=ehu,$ we have $%
g^{n}=e^{n}h^{n}u^{n}$ and thus $l^{\prime }(g^{n})=l^{\prime }(h^{n}).$ But 
$h^{n}$ is hyperbolic and conjugate to an element in $A$ (see Lemma \ref%
{hyperbolicconju}). Therefore, we have $l^{\prime }(h^{n})=|n|l^{\prime }(h)$
and thus $l^{\prime }(g^{n})=|n|l^{\prime }(g).$ If $g_{1}=e_{1}h_{1}u_{1}$
commutes with $g_{2}=e_{2}h_{2}u_{2},$ then 
\begin{equation*}
g_{1}g_{2}=e_{1}e_{2}h_{1}h_{2}u_{1}u_{2}
\end{equation*}%
(cf. Lemma \ref{kostant2.1}) and $l^{\prime }(g_{1}g_{2})=l^{\prime
}(h_{1}h_{2}).$ Since $h_{1},h_{2}$ are commuting hyperbolic elements, they
are {}conjugate simultaneously to elements in $A.$ Therefore, we have 
\begin{equation*}
l^{\prime }(h_{1}h_{2})\leq l^{\prime }(h_{1})+l^{\prime }(h_{2})
\end{equation*}%
and 
\begin{equation*}
l^{\prime }(g_{1}g_{2})\leq l^{\prime }(g_{1})+l^{\prime }(g_{2}).
\end{equation*}
\end{proof}

\bigskip

\begin{remark}
A length function $l$ on $A$ is determined by a group homomorphism $%
f:A\rightarrow \mathbb{B},$ for a real Banach space $(\mathbb{B},\Vert \Vert
),$ satisfying $l(a)=\Vert f(a)\Vert $ (see Lemma \ref{tao}). The previous
theorem implies that a length function $l$ on the Lie group $G$ (that is
continuous on compact subgroup) is uniquely determined by such a group
homomorphism $f:A\rightarrow \mathbb{B}$ such that $\Vert f(a)\Vert =\Vert
f(wa)\Vert $ for any $a\in A$ and $w\in W,$ the Weyl group.
\end{remark}

Let $G$ be a connected semisimple Lie group whose center is finite with an
Iwasawa decomposition $G=KAN$. Let $\exp :\mathfrak{g}\rightarrow G$ be the
exponent map from the Lie algebra $\mathfrak{g}$ with subalgebra $\mathfrak{h%
}$ corresponding to $A$.

\begin{theorem}
Suppose that $l$ is a length function on $G$ that is continuous on $K$ and $%
A.$ Then $l$ is determined by its image on $\exp (v)$ $($unit vector $v\in 
\mathfrak{h})$ in a fixed closed Weyl chamber of $A.$
\end{theorem}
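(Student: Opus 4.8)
The plan is to reduce the statement to a fact about $W$-invariant seminorms on the Lie algebra $\mathfrak{h}$. By Theorem \ref{lie}(i), the length function $l$ is already determined by its restriction $l|_{A}$, so it suffices to show that $l|_{A}$ is determined by the numbers $l(\exp v)$ with $v$ a unit vector in a fixed closed Weyl chamber $\overline{C}\subset\mathfrak{h}$. Here I fix once and for all the Euclidean structure on $\mathfrak{h}$ given by the Killing form, with respect to which ``unit vector'' is understood and with respect to which the Weyl group $W$ acts by orthogonal transformations.

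First I would transport the problem to $\mathfrak{h}$. Since $A$ is the analytic subgroup corresponding to the abelian subalgebra $\mathfrak{h}$, the exponential map $\exp\colon\mathfrak{h}\to A$ is a diffeomorphism, so define $F\colon\mathfrak{h}\to[0,\infty)$ by $F(x)=l(\exp x)$; then $F$ is continuous because $l$ is continuous on $A$. I claim $F$ is a seminorm. Absolute homogeneity over $\mathbb{Z}$ is immediate from axiom (1) of Definition \ref{def}, since $\exp(nx)=(\exp x)^{n}$ gives $F(nx)=|n|F(x)$; writing $x=q\cdot(x/q)$ then yields $F(x/q)=F(x)/q$, hence $F(tx)=|t|F(x)$ for all rational $t$, and continuity extends this to all real $t$. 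Subadditivity $F(x+y)\le F(x)+F(y)$ follows from axiom (3) because $A$ is abelian, so $\exp x$ and $\exp y$ commute and $\exp(x+y)=\exp x\cdot\exp y$. Thus $F$ is a continuous seminorm on the finite-dimensional space $\mathfrak{h}$.

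Next I would establish $W$-invariance and conclude. By Lemma \ref{hyperbolicconju}, any two elements of $A$ conjugate in $G$ lie in a single $W$-orbit, and each $w\in W$ is realized by conjugation by a representative in $N_{K}(A)$; since $l$ is conjugation-invariant (axiom (2)), we get $F(w\cdot x)=F(x)$ for all $x\in\mathfrak{h}$ and $w\in W$. Now a seminorm on $\mathfrak{h}$ is determined by its restriction to the unit sphere $S=\{v:\|v\|=1\}$, since $F(x)=\|x\|\,F(x/\|x\|)$ for $x\neq 0$ and $F(0)=0$. The closed Weyl chamber $\overline{C}$ is a strict fundamental domain for the action of $W$ on $\mathfrak{h}$, and $W$ acts orthogonally, so $S=\bigcup_{w\in W}w\cdot(S\cap\overline{C})$. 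The $W$-invariance of $F$ then forces the values of $F$ on $S$, and hence $F$ itself, to be determined by its values on $S\cap\overline{C}$, that is, by the numbers $l(\exp v)$ for unit vectors $v\in\overline{C}$. Combining with the reduction to $A$ completes the proof.

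The main obstacle I anticipate is bookkeeping rather than conceptual: one must verify that $\exp\colon\mathfrak{h}\to A$ really is a global diffeomorphism, so that $F$ inherits continuity and the homogeneity and subadditivity transfer cleanly, and one must use the \emph{closed} chamber $\overline{C}$ rather than the open chamber so that boundary unit vectors are also accounted for. Everything else is a routine check of the seminorm axioms from the three defining properties of a length function.
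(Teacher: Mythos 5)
Your proposal is correct and follows essentially the same route as the paper: reduce to the restriction on $A$ (you invoke Theorem \ref{lie}(i) directly, where the paper re-runs the Jordan-decomposition argument inline), then use $\mathbb{Z}$-homogeneity plus continuity to get homogeneity along rays in $\mathfrak{h}$, and finally use $W$-conjugacy to move every direction into the fixed closed Weyl chamber. The seminorm packaging and the subadditivity check are harmless extras, since only homogeneity along rays and $W$-invariance are actually needed for the determination.
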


\begin{proof}
Let $Z$ be the center of $G.$ Then $G/Z$ is connected with trivial center.
The length function $l$ factors through a length function on $G/Z$. For any $%
g\in G$ we have $g=ehu,$ where $e$ is elliptic, $h$ is hyperbolic and $u$ is
unipotent and where the three elements $e,h,u$ commute (cf. Lemma \ref%
{kostant2.1}). By Lemma \ref{eber1.14} and Lemma \ref{lemsemisimple}, we
have $l(g)=l(eh).$ Since $l$ vanishes on $e$ (cf. Lemma \ref{compactl}), we
have $l(g)=l(h).$ Any element $h\in A$ is conjugate to an element in a fixed
Weyl chamber $C$ (cf. \cite{hum2}, Theorem 8.20, page 254). For any element $%
\exp (x)\in C,$ with unit vector $x\in \mathfrak{h},$ the one-parameter
subgroup $\exp (\mathbb{R}x)$ lies in $A.$ Since $l$ is continuous on $A,$
the function $l$ is determined by its image on $\exp (\mathbb{Q}x).$ Note
that $l(e^{\frac{m}{n}x})=\frac{1}{n}l(e^{mx})=\frac{m}{n}l(e^{x})$ for any
rational number $\frac{m}{n}.$ The function $l$ is determined by $l(e^{x}),$
for all unit vectors $x$ in the fixed closed Weyl chamber.
\end{proof}

\begin{corollary}
Let $G$ be a connected semisimple Lie group whose center is finite of real
rank $1.$ There is essentially only one length function on $G.$ In order
words, any continuous length function is proportional to the translation
function on the symmetric space $G/K.$
\end{corollary}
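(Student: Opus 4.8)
The plan is to combine the structural theorem just proved with the one special feature of the rank-one case, namely that the data determining a length function collapses to a single nonnegative real number. Throughout, by a \emph{continuous} length function I mean one continuous on both $K$ and $A$, as in the hypotheses of the preceding theorem, and I write $X = G/K$ for the symmetric space.

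First I would dispose of the center. Since the center $Z$ of $G$ is finite, each of its elements is torsion and hence has length zero by Lemma \ref{1.2}; if $Z \neq 1$ then $l$ factors through $G/Z$ by Corollary \ref{cor1.1}, and $G/Z$ is again connected semisimple of real rank one but now with trivial center and with the same associated symmetric space $X$. In either case I may assume $Z = 1$. Next I would invoke the preceding theorem: every continuous length function $l$ on $G$ is determined by the values $l(\exp v)$, where $v$ runs over the unit vectors of $\mathfrak{h}$ lying in a fixed closed Weyl chamber $C$ of $A$. This is the point where the rank-one hypothesis is used. Because $G$ has real rank one, $\dim \mathfrak{h} = 1$, so $\mathfrak{h} \cong \mathbb{R}$; the restricted root system has rank one and the Weyl group $W$ is $\mathbb{Z}/2$ acting by $v \mapsto -v$. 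Hence the closed Weyl chamber $C$ is a half-line in $\mathfrak{h}$, meeting the unit sphere of $\mathfrak{h}$ in a single unit vector $v_0$. Thus $l$ is completely determined by the one number $l(\exp v_0) \in [0,\infty)$, and the assignment $l \mapsto l(\exp v_0)$ is injective on the set of continuous length functions, which is therefore parametrized by a one-dimensional cone.

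Finally I would exhibit the translation function $\tau(g) = \inf_{x \in X} d(x, gx)$ as a nonzero element of this cone and conclude by proportionality. Since $X$ is a symmetric space of noncompact type, it is a complete $\mathrm{CAT}(0)$ space, so $\tau$ is a length function by Corollary \ref{procat}. Every element of $K$ is elliptic and fixes the basepoint, so $\tau$ vanishes on $K$ and is trivially continuous there; along $A$ the maximal flat of $X$ is a single geodesic translated by $\exp(t v_0)$, whence $\tau(\exp(t v_0)) = c\,|t|$ for a constant $c > 0$, so $\tau$ is continuous on $A$ and $\tau(\exp v_0) > 0$. Now for an arbitrary continuous length function $l$, set $\lambda = l(\exp v_0)/\tau(\exp v_0)$. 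Then $l$ and $\lambda \tau$ are continuous length functions agreeing at $\exp v_0$, so by the injectivity established above they coincide, i.e. $l = \lambda \tau$.

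The only nonroutine points are the two bookkeeping facts just used: that in real rank one the closed Weyl chamber meets the unit sphere of $\mathfrak{h}$ in exactly one point (this is precisely where $\mathrm{rk} = 1$ enters), and that $\tau$ is continuous and strictly positive along $A$. I expect neither to be a genuine obstacle, since the substance of the argument is carried entirely by the preceding theorem; the rank-one hypothesis serves only to reduce the determining data to a single scalar, after which proportionality with $\tau$ is immediate.
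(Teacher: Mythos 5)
Your proof is correct and follows essentially the same route as the paper: reduce via the preceding theorem to the single value $l(\exp v_0)$ on the one-dimensional closed Weyl chamber, then conclude proportionality to $\tau$. You are somewhat more careful than the paper in that you explicitly verify that $\tau$ is itself a continuous length function with $\tau(\exp v_0)>0$ (via Corollary \ref{procat} and the geodesic translation along the maximal flat), a step the paper's two-sentence proof leaves implicit.
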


\begin{proof}
When the real rank of $G$ is $1,$ a closed Weyl chamber is of dimension $1$.
Therefore, the previous theorem implies that any continuous length function
is determined by its image on a unit vector in a split torus.
\end{proof}

\section{Rigidity of group homomorphisms on arithmetic groups\label{section8}%
}

Let $V$ denote a finite-dimensional vector space over $\mathbb{C}$, endowed
with a $\mathbb{Q}$-structure. Recall that the arithmetic subgroup is
defined as the following (cf. Borel \cite{borel}, page 37).

\begin{definition}
Let $G$ be a $\mathbb{Q}$-subgroup of $\mathrm{GL}(V)$. A subgroup $\Gamma $
of $G_{\mathbb{Q}}$ is said to be arithmetic if there exists a lattice $L$
of $V_{\mathbb{Q}}$ such that $\Gamma $ is commensurable with $G_{L}=\{g\in
G_{\mathbb{Q}}:gL=L\}.$
\end{definition}

Let $\Gamma $ be an arithmetic subgroup of a simple algebraic $\mathbb{Q}$%
-group of $\mathbb{Q}$-rank at least $2.$ Suppose that $G$ is virtually
poly-positive. We will prove Theorem \ref{th0.2}, i.e. any group
homomorphism $f:\Gamma \rightarrow G$ has its image finite.

Recall that a group is quasi-simple, if any non-trivial normal subgroup is
either finite or of finite index. The Margulis-Kazhdan theorem (see \cite%
{z84}, Theorem 8.1.2) implies that an irreducible lattice (and hence) in a
semisimple Lie group of real rank $\geq 2$ is quasi-simple.

\begin{lemma}
\label{lemaheis}Let $\Gamma $ be a finitely generated quasi-simple group
that contains a Heisenberg subgroup, i.e. there are elements torsion-free
elements $a,b,c\in \Gamma $ satisfying $[a,b]=c,[a,c]=[c,b]=1$. Suppose that 
$G$ is virtually poly-positive. Then any group homomorphism $f:\Gamma
\rightarrow G$ has its image finite.
\end{lemma}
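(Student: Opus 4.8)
The plan is to deduce finiteness of the image from the quasi-simplicity of $\Gamma$, using that the Heisenberg centre $c$ is invisible to every length function. Since $c$ has infinite order, $\Gamma$ is infinite, so by quasi-simplicity the normal subgroup $\ker f$ is either of finite index---in which case $f(\Gamma)$ is finite and we are done---or finite. I would therefore argue by contradiction and assume $\ker f$ is finite; then no nonzero power of the torsion-free element $c$ lies in $\ker f$, so $f(c)$ has infinite order. The engine of the proof is Lemma~\ref{nilpfac}: for \emph{any} length function $\lambda$ on $G$, the composite $\lambda\circ f$ is a length function on $\Gamma$ by Lemma~\ref{lemq}, and its restriction to the Heisenberg subgroup $\langle a,b,c\rangle$ vanishes on the centre; hence $\lambda(f(c))=0$ for every length function $\lambda$ on $G$. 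The strategy is to locate, in the poly-positive data of $G$, a \emph{purely positive} length function that nonetheless must vanish on (a power of) $f(c)$, forcing $f(c)$ to be torsion---a contradiction.

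I would first treat the base case where $G$ is \emph{locally purely positive}, i.e.\ every finitely generated subgroup of $G$ has a purely positive length function (this corresponds to a subnormal series of length one). Here $f(\langle a,b,c\rangle)$ is a finitely generated subgroup of $G$, and it is a quotient of the Heisenberg group, since its generators $f(a),f(b),f(c)$ satisfy the same relations. By local pure positivity it carries a purely positive length function $l$. Pulling $l$ back to the abstract Heisenberg group (Lemma~\ref{lemq}) and applying Lemma~\ref{nilpfac} gives $l(f(c))=0$; pure positivity then forces $f(c)$ to have finite order, contradicting the previous paragraph. Thus in the base case $\ker f$ has finite index and $f(\Gamma)$ is finite.

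For the general case I would induct on the length $n$ of the subnormal series witnessing that $G$ is virtually poly-positive. Replacing $G$ by the finite-index normal subgroup $K=\mathrm{core}_G(H)$ and $\Gamma$ by $f^{-1}(K)$, I may assume the series is $1=H_n\vartriangleleft\cdots\vartriangleleft H_0=G$ with $H_1\vartriangleleft G$. The quotient $G/H_1$ is locally purely positive, so the composite $q\circ f\colon\Gamma\to G/H_1$ falls under the base case and has finite image; hence $\Gamma_1:=f^{-1}(H_1)$ is a finite-index normal subgroup of $\Gamma$ with $f(\Gamma_1)\le H_1$, while $H_1$ is virtually poly-positive with a series of length $n-1$. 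Choosing $s\ge 1$ with $a^s,b^s\in\Gamma_1$, the torsion-free elements $a^s,b^s,c^{s^2}$ (where $[a^s,b^s]=c^{s^2}$) generate a Heisenberg subgroup of $\Gamma_1$. Applying the inductive hypothesis to $f|_{\Gamma_1}\colon\Gamma_1\to H_1$ then yields that $f(\Gamma_1)$, and therefore $f(\Gamma)$, is finite.

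The step I expect to be the main obstacle is the propagation of the source hypotheses under this descent: to invoke the inductive hypothesis (and already to reduce to $G=K$) I need $\Gamma_1$, a finite-index normal subgroup of the quasi-simple group $\Gamma$, to be itself quasi-simple. This is not automatic for an abstract quasi-simple group, and it is exactly where the quasi-simplicity assumption must be used with care---for instance via the observation that, since $\langle\langle c\rangle\rangle$ is an infinite and hence finite-index normal subgroup of $\Gamma$, one can control the infinite normal subgroups of $\Gamma_1$ through their normal cores in $\Gamma$. In the intended applications this difficulty disappears, since finite-index subgroups of the relevant groups are again quasi-simple (by the Margulis--Kazhdan normal subgroup theorem for arithmetic lattices, and by the analogous normality results for $E_n(R)$). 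The remaining verifications---that taking powers preserves the Heisenberg relations and torsion-freeness, and that the intersected series $H_i\cap K$ still has locally purely positive quotients---are routine.
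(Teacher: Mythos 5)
Your overall mechanism is the right one and is the same as the paper's: pull a purely positive length function back along $f$ restricted to the Heisenberg subgroup, use Lemma \ref{lemq} and Lemma \ref{nilpfac} to force $l(f(c))=0$, conclude that a power of $c$ maps into the next term of the subnormal series, and descend. Your base case is correct. The genuine gap is the one you flag yourself, and in your set-up it is not a removable technicality: you induct on the statement of Lemma \ref{lemaheis} itself, so at each stage you must verify that the new source group (already $f^{-1}(K)$ in the reduction to the normal core, and then $\Gamma_1=f^{-1}(H_1)$ in the inductive step) is again finitely generated quasi-simple. Finite generation and the Heisenberg configuration of powers $a^{s},b^{s},c^{s^{2}}$ do pass to finite-index subgroups, but quasi-simplicity does not: a finite-index normal subgroup of a quasi-simple group need not be quasi-simple, and your suggested repair via normal cores does not close this, since an infinite normal subgroup of $\Gamma_1$ may a priori have trivial core in $\Gamma$. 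Appealing to the intended applications (arithmetic lattices, $E_n(R)$) proves the corollaries but not the lemma as stated, which concerns an abstract quasi-simple $\Gamma$.

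The paper avoids this by never re-invoking the lemma's hypotheses for a new source group. It keeps the original $\Gamma$ throughout and tracks only the element $c$: at each stage one has a finite-index subgroup of $\Gamma$ mapping into $H_i$ and containing a Heisenberg configuration of powers of $a,b,c$; Lemma \ref{nilpfac} plus pure positivity of the length function on the finitely generated image in $H_i/H_{i+1}$ shows that a further power of $c$ maps into $H_{i+1}$; and quasi-simplicity is applied only to (sub)normal subgroups of the original $\Gamma$ that contain that infinite-order power of $c$ (hence are infinite, hence of finite index), which is what allows the descent to continue with new powers of $a$ and $b$. At the end $f(c^{d})=1$ for some $d\geq 1$, so $\ker f$ is an infinite normal subgroup of the quasi-simple group $\Gamma$ and is therefore of finite index --- the only place your opening dichotomy is really needed. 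If you restate your inductive claim as being about the fixed pair $(\Gamma,c)$ --- for every target admitting a series of length $m$ and every finite-index subgroup of $\Gamma$ containing a Heisenberg configuration of powers of $a,b,c$ and mapping into it, some power of $c$ dies --- rather than about an arbitrary quasi-simple source, your argument goes through and coincides with the paper's.
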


\begin{proof}
Suppose that $G$ has a finite-index subgroup $H$ and a subnormal series%
\begin{equation*}
1=H_{n}\vartriangleleft H_{n-1}\vartriangleleft \cdots \vartriangleleft
H_{0}=H
\end{equation*}%
such that every finitely generated subgroup of $H_{i}/H_{i+1}$ has a purely
positive length function. Without loss of generality, we assume that $H$ is
normal. Let $f:\Gamma \rightarrow G$ be a homomorphism. The kernel of the
composite%
\begin{equation*}
f_{0}:\Gamma \overset{f}{\rightarrow }G\rightarrow G/H
\end{equation*}%
is finitely generated. Suppose that the image of the composite%
\begin{equation*}
f_{1}:\ker f_{0}\overset{f}{\rightarrow }H\rightarrow H/H_{1}
\end{equation*}%
has a purely positive length function $l.$ After passing to finite-index
subgroups, we may still suppose that $\ker f_{0}$ contains a Heisenberg
subgroup $\langle a,b,c\rangle .$ By Lemma \ref{nilpfac}, the length
function $l$ vanishes on $f_{1}(c).$ Therefore $f_{1}(c^{k})=1\in H/H_{1}$
for some integer $k>0.$ The normal subgroup $\ker f_{1}$ containing $c^{k}$
is of finite index. Now we have map $\ker f_{1}\rightarrow H_{1}$ induced by 
$f.$ An induction argument shows that $f$ maps some power $c^{d}$ of the
central element of the Heisenberg subgroup into the identity $1\in G.$
Therefore, the image of $f$ is finite.
\end{proof}

\begin{proof}[Proof of Theorem \protect\ref{th0.2}]
It is well-known that $\Gamma $ contains a $\mathbb{Q}$-split simple
subgroup whose root system is the reduced subsystem of the $\mathbb{Q}$-root
system of $\Gamma $ (see \cite{bt}, Theorem 7.2, page 117). Replacing $%
\Gamma $ with this $\mathbb{Q}$-subgroup, we may assume $\Gamma $ is $%
\mathbb{Q}$-split and the root system of $\Gamma $ is reduced. Because $%
\Gamma $ is simple and $\mathbb{Q}$-rank($\Gamma $)$\geq 2$, we know that
the $\mathbb{Q}$-root system of $\Gamma $ is irreducible and has rank at
least two. Therefore, the $\mathbb{Q}$-root system of $\Gamma $ contains an
irreducible subsystem of rank two, that is, a root subsystem of type $%
A_{2},B_{2},G_{2}$ (see \cite{witte}, page 338). For $A_{2},$ choose $%
\{\alpha _{1},\alpha _{3}\}$ as a set of simple roots (see Figure 1). 
\begin{equation*}
\FRAME{itbpF}{2.2667in}{3.25in}{0in}{}{}{qby3yu00.bmp}{\special{language
"Scientific Word";type "GRAPHIC";maintain-aspect-ratio TRUE;display
"USEDEF";valid_file "F";width 2.2667in;height 3.25in;depth
0in;original-width 7.3059in;original-height 10.5135in;cropleft "0";croptop
"1";cropright "1";cropbottom "0";filename 'QBY3YU00.bmp';file-properties
"XNPEU";}}
\end{equation*}%
Then the root element $x_{\alpha _{1}+\alpha _{3}}(rs)=x_{\alpha _{2}}(rs)$
is a commutator $[x_{\alpha _{1}}(r),x_{\alpha _{3}}(s)],$ with $x_{\alpha
_{2}}(rs)$ commutes with $x_{\alpha _{1}}(r),x_{\alpha _{3}}(s).$ For $G_{2}$%
, the long roots form a subsystem of $A_{2}.$ For $B_{2},$ choose $\{\alpha
_{1},\alpha _{4}\}$ as a set of simple roots (see Figure 2). The long root
element $x_{\alpha _{3}}(2rs)$ is a commutator $[x_{\alpha
_{2}}(r),x_{\alpha _{4}}(s)]$ of the two short root elements, and $x_{\alpha
_{3}}(2rs)$ commutes with $x_{\alpha _{2}}(r),x_{\alpha _{4}}(s)$ (cf. \cite%
{hum1}, Proposition of page 211). This shows that the arithmetic subgroup $%
\Gamma $ contains a Heisenberg subgroup. The theorem is then proved by Lemma %
\ref{lemaheis}.
\end{proof}

If we consider special length functions, general results can be proved. When
we consider the stable word lengths, the following is essentially already
known (cf. Polterovich \cite{po}, Corollary 1.1.D and its proof).

\begin{proposition}
\label{polyword}Let $\Gamma $ be an irreducible non-uniform lattice in a
semisimple connected, Lie group without compact factors and with finite
center of real rank $\geq 2.$ Assume that a group $G$ has a virtually
poly-positive stable word length. In other words, the group $G$ has a
finite-index subgroup $H$ and a subnormal series%
\begin{equation*}
1=H_{n}\vartriangleleft H_{n-1}\vartriangleleft \cdots \vartriangleleft
H_{0}=H
\end{equation*}%
such that every finitely generated subgroup of $H_{i}/H_{i+1}$ ($%
i=0,1,\ldots ,n-1$) has a purely positive stable word length. Then any group
homomorphism $f:\Gamma \rightarrow G$ has its image finite.
\end{proposition}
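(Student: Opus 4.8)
The plan is to run the same induction as in the proof of Lemma~\ref{lemaheis}, but to replace its use of the central element of a Heisenberg subgroup (killed by Lemma~\ref{nilpfac}) with a \emph{distorted unipotent} element, whose \emph{stable word length} vanishes for a purely geometric reason. Two external facts drive everything. First, by the Margulis--Kazhdan normal subgroup theorem (used already after Theorem~\ref{thmarith}), $\Gamma$ and each of its finite-index subgroups are quasi-simple, so every nontrivial normal subgroup is finite or of finite index. Second, since $\Gamma$ is non-uniform it contains a nontrivial unipotent element (Godement's compactness criterion), and by the Lubotzky--Mozes--Raghunathan theorem the word metric of an irreducible lattice of real rank $\ge 2$ is quasi-isometric to the Riemannian metric of the ambient Lie group; because unipotent one-parameter subgroups are exponentially distorted there, any nontrivial unipotent $u\in\Gamma$ satisfies $\lim_{n}\phi_S(u^n)/n=0$, i.e.\ its stable word length is $0$. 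The same applies in any finite-index subgroup, which is again a non-uniform irreducible lattice of real rank $\ge 2$.

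The elementary glue is that the stable word length is monotone under surjections: if $q:\Gamma'\twoheadrightarrow K$ is a surjection of finitely generated groups and $S$ generates $\Gamma'$, then $\phi_{q(S)}(q(g)^n)\le\phi_S(g^n)$, so the stable word length of $q(g)$ in $K$ is at most that of $g$ in $\Gamma'$. Hence if $g$ is distorted in $\Gamma'$ and $K$ carries a purely positive stable word length, then $q(g)$ must be torsion in $K$.

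First I would replace $H$ by its normal core, so that $H\triangleleft G$ with $G/H$ finite, and set $\Gamma_0=\ker(\Gamma\to G/H)$, a finite-index (hence finitely generated) non-uniform lattice of real rank $\ge 2$ with $f(\Gamma_0)\subseteq H$. Then I would induct along the series: assuming a finite-index subgroup $\Gamma_i\le\Gamma$ with $f(\Gamma_i)\subseteq H_i$, compose with the projection to obtain $\bar f_i:\Gamma_i\to H_i/H_{i+1}$ whose image $K_i$ is finitely generated and so carries a purely positive stable word length. Choosing a nontrivial unipotent $u_i\in\Gamma_i$, the distortion input gives stable word length $0$ for $u_i$ in $\Gamma_i$, so monotonicity forces $\bar f_i(u_i)$ to be torsion in $K_i$; thus $u_i^{k}\in\Gamma_{i+1}:=\ker\bar f_i$ for some $k\ge1$. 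Since $u_i$ is a nontrivial unipotent it has infinite order, so $\Gamma_{i+1}$ is an infinite normal subgroup of the quasi-simple group $\Gamma_i$, hence of finite index, and $f(\Gamma_{i+1})\subseteq H_{i+1}$. After $n$ steps I reach a finite-index $\Gamma_n\le\Gamma$ with $f(\Gamma_n)\subseteq H_n=1$, so $\ker f$ has finite index and $f(\Gamma)$ is finite.

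The hard part is entirely the geometric input of the first paragraph: that non-uniformity produces a unipotent and that this unipotent is distorted in the word metric, i.e.\ the Lubotzky--Mozes--Raghunathan quasi-isometry. Everything after that is the normal-subgroup-theorem bookkeeping already carried out in Lemma~\ref{lemaheis}. (In the special case of $\mathbb{Q}$-rank $\ge 2$ one could instead take $u_i=[a,b]$ inside a Heisenberg subgroup and invoke Lemma~\ref{nilpfac} directly, avoiding Lubotzky--Mozes--Raghunathan; but genuine distortion is what yields the full generality of non-uniform real-rank-$\ge 2$ lattices claimed here.)
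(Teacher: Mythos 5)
Your proposal is correct and follows essentially the same route as the paper: it uses the Lubotzky--Mozes--Raghunathan distortion of unipotent elements in non-uniform higher-rank irreducible lattices to kill their stable word length in each successive quotient, then invokes quasi-simplicity (Margulis--Kazhdan) to pass to finite-index subgroups and induct down the subnormal series. Your write-up actually supplies details the paper leaves implicit (Godement's criterion for the existence of unipotents, monotonicity of stable word length under surjections, and the reduction to the normal core of $H$), but the underlying argument is the same.
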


\begin{proof}
Without loss of generality, we assume that $f$ takes its image in $H.$ Since
a lattice is finitely generated, $\Gamma $ has its image in $H_{0}/H_{1}$
finitely generated. When the image has a purely positive word length, any
distorted element in $\Gamma $ must have a trivial image in $H_{0}/H_{1}$
(see Lemma \ref{1.2}). Lubotzky, Mozes and Raghunathan \cite{lmr} prove that
irreducible non-uniform lattices in higher rank Lie groups have non-trivial
distortion elements (They prove the stronger result that there are elements
in the group whose word length has logarithmic growth). Then a finite-index
subgroup $\Gamma _{0}<\Gamma $ will have image in $H_{1}$, since high-rank
irreducible lattices are quasi-simple. An induction argument finishes the
proof.
\end{proof}

\bigskip

When we consider the length given by quasi-{}cocycles, the following is also
essentially already known (cf. Py \cite{py}, Prop. 2.2, following
Burger-Monod \cite{bm1} \cite{bm2}). Recall that a locally compact group has
property TT if any continuous rough action on a Hilbert space has bounded
orbits (see \cite{monod}, page 172). Burger-Monod proves that an irreducible
lattice $\Gamma $ in a high-rank semisimple Lie group has property TT.

\begin{proposition}
\label{polycocycle}Let $\Gamma $ be an irreducible lattice in a semisimple
connected, Lie group without compact factors and with a finite center of
real rank $\geq 2.$. Assume that a group $G$ has a virtually poly-positive
average norm for quasi-cocycles. In other words, the group $G$ has a
finite-index subgroup $H$ and a subnormal series%
\begin{equation*}
1=H_{n}\vartriangleleft H_{n-1}\vartriangleleft \cdots \vartriangleleft
H_{0}=H
\end{equation*}%
such that every finitely generated subgroup of $H_{i}/H_{i+1}$ ($%
i=0,1,\ldots ,n-1$) has a purely positive length given by a quasi-cocycle
with values in Hilbert spaces. Then any group homomorphism $f:\Gamma
\rightarrow G$ has its image finite.
\end{proposition}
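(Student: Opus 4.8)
The plan is to follow the template of Lemma~\ref{lemaheis} and Proposition~\ref{polyword}, but with property (TT) playing the role of the Heisenberg-subgroup input (resp. the distortion-element input), and to descend through the subnormal series by induction on its length $n$.

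First I would isolate the single analytic input, which is essentially Py~\cite{py}, Prop.~2.2 (after Burger--Monod~\cite{bm1}\cite{bm2}): if a discrete group $\Lambda$ has property (TT) and $E$ is a real Hilbert space carrying an isometric $\Lambda$-action, then every quasi-cocycle $b\colon\Lambda\to E$ is bounded, so the average norm $l(g)=\lim_{n}\|b(g^{n})\|/n$ vanishes identically. Indeed, the rough affine action $g\mapsto(x\mapsto g\cdot x+b(g))$ has, by (TT), bounded orbits; the orbit of the origin is exactly $\{b(g):g\in\Lambda\}$, so $b$ is bounded and $\|b(g^{n})\|\le\sup\|b\|$ for every $n$. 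Burger--Monod prove that every irreducible lattice $\Gamma$ as in the statement has property (TT); since a finite-index subgroup of such a $\Gamma$ is again an irreducible higher-rank lattice, each finite-index subgroup I pass to inherits (TT) by the same theorem.

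Next I would set up the reduction and the induction. Replacing $\Gamma$ by the finite-index subgroup $f^{-1}(H)$ (again an irreducible higher-rank lattice, hence with (TT) and, by the Margulis normal subgroup theorem \cite{z84}, quasi-simple), I may assume $f(\Gamma)\subset H$, and it suffices to prove $f(\Gamma)$ finite. I induct on the length $n$ of the series $1=H_{n}\vartriangleleft\cdots\vartriangleleft H_{0}=H$. Let $\pi\colon H\to H/H_{1}$ be the projection (note $H_{1}\vartriangleleft H_{0}=H$) and $\varphi=\pi\circ f\colon\Gamma\to H_{0}/H_{1}$; its image $K:=\varphi(\Gamma)$ is a finitely generated subgroup of $H_{0}/H_{1}$ because $\Gamma$ is finitely generated. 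By hypothesis $K$ carries a purely positive length $l_{1}$ coming from a quasi-cocycle $b_{1}\colon K\to E_{1}$ with $E_{1}$ Hilbert. Pulling back along $\varphi$ makes $E_{1}$ an isometric $\Gamma$-Hilbert space and $b_{1}\circ\varphi$ a quasi-cocycle on $\Gamma$ (its defect at $(g,h)$ is the defect of $b_{1}$ at $(\varphi(g),\varphi(h))$, hence uniformly bounded), whose average norm is $l_{1}\circ\varphi$. By the analytic input $l_{1}\circ\varphi\equiv0$, so $l_{1}$ vanishes on $K$; since $l_{1}$ is purely positive, every element of $K$ has finite order.

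Finally I would upgrade ``torsion image'' to ``finite image'' using quasi-simplicity. The kernel $N=\ker\varphi\vartriangleleft\Gamma$ is, by Margulis, either finite (central) or of finite index. If $N$ were finite, then $K=\Gamma/N$ would be infinite; but $\Gamma$, being an infinite finitely generated linear group, contains an element $g$ of infinite order (a finitely generated torsion linear group is finite, by Schur), and its image in $\Gamma/N$ still has infinite order, since $g^{k}\in N$ for some $k\ge1$ would place $g^{k}$ in the finite group $N$, forcing $g$ to be torsion. This contradicts that $K$ is torsion, so $N$ has finite index, $K$ is finite, and $f(N)\subset H_{1}$. As $N$ is a finite-index subgroup of $\Gamma$ it is again an irreducible higher-rank lattice with (TT), and $1=H_{n}\vartriangleleft\cdots\vartriangleleft H_{1}$ is a series of length $n-1$ for $H_{1}$ satisfying the same hypotheses; the induction hypothesis gives $f(N)$ finite, whence $f(\Gamma)$ is finite. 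I expect the main obstacle to be the analytic input --- that (TT) forces the quasi-cocycle average norm to vanish --- but this is exactly Py~\cite{py}, Prop.~2.2 together with the Burger--Monod theorem, and may be quoted; the genuinely new bookkeeping is the inductive descent through the subnormal series and the use of the Margulis normal subgroup theorem to turn the torsion image into a finite one.
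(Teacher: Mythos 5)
Your proposal is correct and follows essentially the same route as the paper: the analytic input is Burger--Monod's property (TT) for higher-rank irreducible lattices forcing the pulled-back quasi-cocycle average norm to vanish (you apply the bounded-orbits definition of (TT) directly, while the paper uses the equivalent cohomological form via Py's decomposition $u=v+w$ into a bounded map plus a genuine coboundary), followed by the same descent through the subnormal series using Margulis quasi-simplicity to upgrade the torsion image in each quotient to a finite one. Your inductive bookkeeping is in fact more explicit than the paper's, which only sketches the final induction.
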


\begin{proof}
Note that a group $\Gamma $ has property TT if and only if $H^{1}(\Gamma
;E)=0$ and 
\begin{equation*}
\ker (H_{b}^{2}(\Gamma ;E)\rightarrow H^{2}(\Gamma ;E))=0
\end{equation*}%
for any linear isometric action of $\Gamma $ on a Hilbert space $E.$ Here $%
H_{b}^{2}(\Gamma ;E)$ is the second bounded cohomology group. Suppose that $%
u:\Gamma \rightarrow E$ is a quasi-{}cocycle. There is a bounded map $%
v:\Gamma \rightarrow E$ and a $1$-cocycle $w:\Gamma \rightarrow E$ such that 
\begin{equation*}
u=v+w,
\end{equation*}%
by Proposition 2.1 of Py \cite{py}. Since $\Gamma $ has property T, there
exists $x_{0}\in E$ such that $w(\gamma )=\gamma x_{0}-x_{0}.$ Therefore, we
have%
\begin{eqnarray*}
\frac{\Vert u(\gamma ^{n})\Vert }{n} &=&\frac{\Vert v(\gamma ^{n})+w(\gamma
^{n})\Vert }{n} \\
&=&\frac{\Vert v(\gamma ^{n})+\gamma ^{n}x_{0}-x_{0}\Vert }{n} \\
&\leq &\frac{\Vert v(\gamma ^{n})\Vert +2\Vert x_{0}\Vert }{n}\rightarrow 0.
\end{eqnarray*}

Without loss of generality$,$ we assume that $G=H.$ Suppose that any
finitely generated subgroup of $H/H_{1}$ has a purely positive average norm $%
l$ given by a cocycle. The composite 
\begin{equation*}
\Gamma \overset{f}{\rightarrow }H\rightarrow H/H_{1}
\end{equation*}
has a finite-index kernel $\Gamma _{0}$, since $l$ vanishes on
infinite-order elements of the image. This implies that $f(\Gamma _{0})$
lies in $H_{1}.$ A similar argument proves that $\ker f$ is of finite index
in the general case.
\end{proof}

\section{Rigidity of group homomorphisms on matrix groups\label{section9}}

\subsection{Steinberg groups over finite rings}

Recall that a ring $R$ is right Artinian if any non-empty family of right
ideals contains minimal elements. A ring $R$ is semi-local if $R/\mathrm{rad}%
(R)$ is right Artinian (see Bass' K-theory book \cite{ba} page 79 and page
86), where $\mathrm{rad}(R)$ is the Jacobson radical. Let $n$ be a positive
integer and $R^{n}$ the free $R$-module of rank $n$ with the standard basis.
A vector $(a_{1},\ldots ,a_{n})$ in $R^{n}$ is called \emph{right unimodular}
if there are elements $b_{1},\ldots ,b_{n}\in R$ such that $%
a_{1}b_{1}+\cdots +a_{n}b_{n}=1$. The \emph{stable range condition} $\mathrm{%
sr}_{m}$ says that if $(a_{1},\ldots ,a_{m+1})$ is a right unimodular vector
then there exist elements $b_{1},\ldots ,b_{m}\in R$ such that $%
(a_{1}+a_{m+1}b_{1},\ldots ,a_{m}+a_{m+1}b_{m})$ is right unimodular. It
follows easily that $\mathrm{sr}_{m}\Rightarrow \mathrm{sr}_{n}$ for any $%
n\geq m$. A semi-local ring has the stable range $\mathrm{sr}_{2}$ ( \cite%
{ba}, page 267, the proof of Theorem 9.1). A finite ring $R$ is right
Artinian and thus has $\mathrm{sr}_{2}.$ The stable range 
\begin{equation*}
\mathrm{sr}(R)=\min \{m:R\text{ has }\mathrm{sr}_{m+1}\}.
\end{equation*}%
Thus $\mathrm{sr}(R)=1$ for a finite ring $R$.

We briefly recall the definitions of the elementary subgroups $E_{n}(R)$ of
the general linear group $\mathrm{GL}_{n}(R)$, and the Steinberg groups $%
\mathrm{St}_{n}(R)$. Let $R$ be an associative ring with identity and $n\geq
2$ be an integer. The general linear group $\mathrm{GL}_{n}(R)$ is the group
of all $n\times n$ invertible matrices with entries in $R$. For an element $%
r\in R$ and any integers $i,j$ such that $1\leq i\neq j\leq n,$ denote by $%
e_{ij}(r)$ the elementary $n\times n$ matrix with $1s$ in the diagonal
positions and $r$ in the $(i,j)$-th position and zeros elsewhere. The group $%
E_{n}(R)$ is generated by all such $e_{ij}(r),$\textsl{\ i.e. }%
\begin{equation*}
E_{n}(R)=\langle e_{ij}(r)|1\leq i\neq j\leq n,r\in R\rangle .
\end{equation*}%
Denote by $I_{n}$ the identity matrix and by $[a,b]$ the commutator $%
aba^{-1}b^{-1}.$

The following lemma displays the commutator formulas for $E_{n}(R)$ (cf.
Lemma 9.4 in \cite{mag}).

\begin{lemma}
\label{ecom}Let $R$ be a ring and $r,s\in R.$ Then for distinct integers $%
i,j,k,l$ with $1\leq i,j,k,l\leq n,$ the following hold:

\begin{enumerate}
\item[(1)] $e_{ij}(r+s)=e_{ij}(r)e_{ij}(s);$

\item[(2)] $[e_{ij}(r),e_{jk}(s)]=e_{ik}(rs);$

\item[(3)] $[e_{ij}(r),e_{kl}(s)]=I_{n}.$
\end{enumerate}
\end{lemma}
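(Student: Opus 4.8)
The plan is to reduce all three identities to a single multiplication rule for matrix units. Write $E_{ij}$ for the $n\times n$ matrix with $1$ in the $(i,j)$-entry and zeros elsewhere, so that $e_{ij}(r)=I_n+rE_{ij}$. The matrix units satisfy $(aE_{ij})(bE_{kl})=ab\,\delta_{jk}E_{il}$ for all $a,b\in R$, where $\delta$ is the Kronecker symbol; this rule holds over any associative ring with identity and is the only input I need. In particular, since $i\neq j$ forces $E_{ij}E_{ij}=0$, expanding $(I_n+rE_{ij})(I_n+sE_{ij})$ collapses to $I_n+(r+s)E_{ij}=e_{ij}(r+s)$, which is (1); as a byproduct this shows $e_{ij}(r)^{-1}=e_{ij}(-r)$, a fact I will use in the remaining two parts.

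For (3), with $i,j,k,l$ distinct, I would expand both products $e_{ij}(r)e_{kl}(s)$ and $e_{kl}(s)e_{ij}(r)$. In each the only cross term carries a factor $E_{ij}E_{kl}=\delta_{jk}E_{il}$ or $E_{kl}E_{ij}=\delta_{li}E_{kj}$, both of which vanish because $j\neq k$ and $l\neq i$. Hence both products equal $I_n+rE_{ij}+sE_{kl}$, so the two elements commute and their commutator is $I_n$.

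The one computation requiring care is (2), and it is the step I expect to occupy most of the write-up. Using the inverse from (1), I would evaluate $[e_{ij}(r),e_{jk}(s)]=e_{ij}(r)e_{jk}(s)e_{ij}(-r)e_{jk}(-s)$ in two stages. First the conjugation: expanding $e_{ij}(r)e_{jk}(s)$ produces the surviving term $rsE_{ij}E_{jk}=rsE_{ik}$ (here the middle indices agree), and multiplying on the right by $e_{ij}(-r)$ eliminates the $E_{ij}$ contributions, leaving $e_{ij}(r)e_{jk}(s)e_{ij}(-r)=I_n+sE_{jk}+rsE_{ik}$, since every remaining cross term carries a factor $\delta_{ki}$ or $\delta_{ji}$ that vanishes under the distinctness hypothesis. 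Second, multiplying this on the right by $e_{jk}(-s)$ cancels the $sE_{jk}$ term (again using $j\neq k$), and what remains is exactly $I_n+rsE_{ik}=e_{ik}(rs)$.

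The main obstacle, such as it is, is purely bookkeeping: because $R$ need not be commutative, I must keep the coefficients $r$ and $s$ in their correct left-to-right order throughout, and verify at each step that the surviving matrix-unit products are precisely those dictated by the index-matching factor $\delta_{jk}$. No deeper property of $R$ (commutativity, finiteness, or a stable range condition) enters; the identities are formal consequences of the multiplication rule and hold over any associative ring with identity.
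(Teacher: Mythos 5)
Your computation is correct: the identity $(aE_{ij})(bE_{kl})=ab\,\delta_{jk}E_{il}$ is all that is needed, your index checks in each part are accurate, and you keep $r$ and $s$ in the right order for a noncommutative $R$. The paper does not prove this lemma at all --- it simply cites Lemma 9.4 of Magurn's book --- and your direct matrix-unit expansion is exactly the standard argument given there, so there is nothing further to reconcile.
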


By Lemma \ref{ecom}, the group $E_{n}(R)$ $(n\geq 3)$ is finitely generated
when the ring $R$ is finitely generated. Moreover, when $n\geq 3,$ the group 
$E_{n}(R)$ is normally generated by any elementary matrix $e_{ij}(1).$

The commutator formulas can be used to define Steinberg groups as follows.
For $n\geq 3,$ the Steinberg group $\mathrm{St}_{n}(R)$ is the group
generated by the symbols $\{x_{ij}(r):1\leq i\neq j\leq n,r\in R\}$ subject
to the following relations:

\begin{enumerate}
\item[(St$1$)] $x_{ij}(r+s)=x_{ij}(r)x_{ij}(s);$

\item[(St$2$)] $[x_{ij}(r),x_{jk}(s)]=x_{ik}(rs)$ for $i\neq k;$

\item[(St$3)$] $[x_{ij}(r),x_{kl}(s)]=1$ for $i\neq l,j\neq k.$
\end{enumerate}

\noindent There is an obvious surjection $\mathrm{St}_{n}(R)\rightarrow
E_{n}(R)$ defined by $x_{ij}(r)\longmapsto e_{ij}(r).$

For any ideal $I\vartriangleleft R,$ let $p:R\rightarrow R/I$ be the
quotient map. Then the map $p$ induces a group homomorphism $p_{\ast }:%
\mathrm{St}_{n}(R)\rightarrow \mathrm{St}_{n}(R/I).$ Denote by $\mathrm{St}%
_{n}(R,I)$ (\textsl{resp., }$E_{n}(R,I)$) the subgroup of $\mathrm{St}%
_{n}(R) $ (\textsl{resp., }$E_{n}(R)$) normally generated by elements of the
form $x_{ij}(r)$ (\textsl{resp.,} $e_{ij}(r)$) for $r\in I$ and $1\leq i\neq
j\leq n.$ In fact, $\mathrm{St}_{n}(R,I)$ is the kernel of $p_{\ast }$ (cf.
Lemma 13.18 in Magurn \cite{mag} and its proof). However, $E_{n}(R,I)$ may
not be the kernel of $E_{n}(R)\rightarrow E_{n}(R/I)$ induced by $p.$

\begin{lemma}
\label{finitesteibg}When $n\geq \mathrm{sr}(R)+2,$ the natural map $\mathrm{%
St}_{n}(R)\rightarrow \mathrm{St}_{n+1}(R)$ is injective. In particular,
when $R$ is finite, the Steinberg group $\mathrm{St}_{n}(R)$ is finite for
any $n\geq 3.$
\end{lemma}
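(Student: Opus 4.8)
The first assertion is a stabilization statement, and I would not attempt to reprove it: it is precisely the injective stability theorem for $K_2$ (Suslin--Tulenbaev, building on van der Kallen and Dennis--Stein), which asserts that $\mathrm{St}_n(R)\to\mathrm{St}_{n+1}(R)$ is injective exactly when $n\geq\mathrm{sr}(R)+2$. So my plan for the general statement is simply to quote this theorem with the correct bound. Iterating the injections then shows that, in the same range $n\geq\mathrm{sr}(R)+2$, the canonical map into the stable Steinberg group $\mathrm{St}_n(R)\hookrightarrow\mathrm{St}(R)=\varinjlim_m\mathrm{St}_m(R)$ is injective as well.

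For the ``in particular'' clause I would use that a finite ring is right Artinian, so $\mathrm{sr}(R)=1$ as already recorded above; hence $\mathrm{sr}(R)+2=3$ and the injectivity of $\mathrm{St}_n(R)\to\mathrm{St}_{n+1}(R)$ holds for \emph{every} $n\geq3$. This yields a chain of injections
\[
\mathrm{St}_3(R)\hookrightarrow\mathrm{St}_4(R)\hookrightarrow\mathrm{St}_5(R)\hookrightarrow\cdots,
\]
and therefore it suffices to prove that $\mathrm{St}_n(R)$ is finite for all sufficiently large $n$; the remaining cases $n=3,4$ then follow by embedding into $\mathrm{St}_5(R)$.

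To establish finiteness for $n\geq5$ I would work with the defining short exact sequence
\[
1\longrightarrow C_n\longrightarrow\mathrm{St}_n(R)\longrightarrow E_n(R)\longrightarrow1,\qquad C_n:=\ker\bigl(\mathrm{St}_n(R)\to E_n(R)\bigr),
\]
and combine three inputs. First, $E_n(R)$ is a subgroup of the finite group $\mathrm{GL}_n(R)$, hence finite, since $R$ is finite. Second, for $n\geq3$ relation (St2) of Lemma \ref{ecom} expresses every generator as a commutator, $x_{ik}(t)=[x_{ij}(t),x_{jk}(1)]$ with $j$ chosen distinct from $i,k$ (which is why $n\geq3$ is needed), so $\mathrm{St}_n(R)$ is perfect. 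Third, once $n$ lies in the stable range (say $n\geq5$) the kernel $C_n$ is central in $\mathrm{St}_n(R)$, the classical centrality statement underlying Milnor's treatment of the stable group. Thus $\mathrm{St}_n(R)$ is a \emph{perfect central extension} of the finite group $E_n(R)$, so $C_n$ is a quotient of the Schur multiplier $H_2(E_n(R);\mathbb{Z})$, which is finite because $E_n(R)$ is finite. Hence $C_n$ is finite and so is $\mathrm{St}_n(R)$.

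The one genuine obstacle is the first assertion: injective stability for the unstable Steinberg groups is a deep theorem, and I would invoke it rather than derive it. A secondary subtlety is that centrality of $C_n$ is only guaranteed in the stable range, which is exactly why I handle $n=3,4$ indirectly through the injections furnished by the first part. Alternatively one could sidestep centrality entirely by observing that the injection $\mathrm{St}_n(R)\hookrightarrow\mathrm{St}(R)$ carries $C_n$ into $K_2(R)$ and quoting that $K_2(R)$ is finite for every finite ring; but the Schur-multiplier route has the advantage of keeping the whole argument internal to finite groups.
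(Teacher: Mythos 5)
Your proposal is correct and follows essentially the same route as the paper: quote injective stability for the unstable Steinberg groups (the paper cites Kolster), note $\mathrm{sr}(R)=1$ for a finite ring so the maps are injective for all $n\geq 3$, deduce finiteness for large $n$ from the central-extension structure over the finite group $E_n(R)$ together with finiteness of $H_2(E_n(R);\mathbb{Z})$, and transfer finiteness down to $n=3,4$ via the injections. The only (harmless) variation is that the paper invokes the universal central extension property to identify the kernel with $H_2(E_n(R);\mathbb{Z})$ exactly, whereas you only use that it is a perfect central extension and hence a quotient of the Schur multiplier.
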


\begin{proof}
Let $W(n,R)$ be the kernel of the natural map $\mathrm{St}_{n}(R)\rightarrow 
\mathrm{St}_{n+1}(R).$ When $n\geq \mathrm{sr}(R)+2,$ the kernel $W(n,R)$ is
trivial (cf. Kolster \cite{kol}, Theorem 3.1 and Cor. 2.10). When $n$ is
sufficient large, the Steinberg group $\mathrm{St}_{n}(R)$ is the universal
central extension of $E_{n}(R)$ (cf. \cite{weib}, Proposition 5.5.1. page
240). Therefore, the kernel $\mathrm{St}_{n}(R)\rightarrow E_{n}(R)$ is the
second homology group $H_{2}(E_{n}(R);\mathbb{Z}).$ When $R$ is finite, both 
$E_{n}(R)$ and $H_{2}(E_{n}(R);\mathbb{Z})$ are finite. Therefore, the group 
$\mathrm{St}_{n}(R)$ is finite for any $n\geq 3.$
\end{proof}

\subsection{Rigidity of group homomorphisms on matrix groups}

\begin{theorem}
\label{th1'}Suppose that $G$ is a group satisfying that

1) $G$ has a purely positive length function, i.e. there is a length
function $l:G\rightarrow \lbrack 0,\infty )$ such that $l(g)>0$ for any
infinite-order element $g;$ and

2) any torsion abelian subgroup of $G$ is finitely generated.

Let $R$ be an associative ring with identity and $\mathrm{St}_{n}(R)$ the
Steinberg group. Suppose that $S<\mathrm{St}_{n}(R)$ is a finite-index
subgroup. Then any group homomorphism $f:\mathrm{St}_{n}(R)\rightarrow G$
has its image finite when $n\geq 3$.
\end{theorem}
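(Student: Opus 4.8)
The plan is to pull the length function back along $f$ and exploit the Heisenberg subgroups hard-wired into the Steinberg relations, in the spirit of Lemma~\ref{lemaheis}. Set $\lambda:=l\circ f$; by Lemma~\ref{lemq} this is a length function on the domain. I would first carry out the case $S=\mathrm{St}_n(R)$, which contains the whole idea, and reduce the finite-index case to it at the end.

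\emph{Heisenberg step.} Fix distinct indices $i,j,k$ (available since $n\ge 3$) and $r\in R$. By the Steinberg relations (St1)--(St3) the assignment $a\mapsto x_{ij}(r)$, $b\mapsto x_{jk}(1)$, $c\mapsto x_{ik}(r)$ extends to a homomorphism from the Heisenberg group $\langle a,b,c\mid [a,b]=c,\ [a,c]=[b,c]=1\rangle$, since $[x_{ij}(r),x_{jk}(1)]=x_{ik}(r)$ is central in $\langle x_{ij}(r),x_{jk}(1)\rangle$. Composing $\lambda$ with this homomorphism yields a length function on the Heisenberg group, so Lemma~\ref{nilpfac} forces it to vanish on the central generator, giving $\lambda(x_{ik}(r))=0$. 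As $l$ is purely positive, $f(x_{ik}(r))$ has finite order, for every $r\in R$ and every pair $i\ne k$.

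\emph{Ideal step.} For each pair $i\ne k$ the root subgroup $X_{ik}=\{x_{ik}(r):r\in R\}\cong(R,+)$ has abelian image under $f$, and by the previous step this image is torsion; hypothesis (2) then makes it finitely generated, hence finite. Consequently $I_{ik}:=\{r\in R: f(x_{ik}(r))=1\}$ is an additive subgroup of finite index in $R$. I would then set $I:=\bigcap_{i\ne k}I_{ik}$, a finite-index additive subgroup, and check it is a two-sided ideal: given $r\in I$ and $s\in R$, picking an intermediate index $j$, the relation $x_{ik}(rs)=[x_{ij}(r),x_{jk}(s)]$ gives $f(x_{ik}(rs))=[f(x_{ij}(r)),f(x_{jk}(s))]=1$ because $r\in I_{ij}$, and symmetrically $x_{ik}(sr)=[x_{ij}(s),x_{jk}(r)]$ yields $f(x_{ik}(sr))=1$ using $r\in I_{jk}$; letting $(i,k)$ vary shows $rs,sr\in I$. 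Hence $R/I$ is finite.

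\emph{Conclusion.} Since $f$ kills every $x_{ij}(r)$ with $r\in I$ and $\ker f$ is normal, the relative Steinberg group $\mathrm{St}_n(R,I)=\langle\langle x_{ij}(r):r\in I\rangle\rangle$ lies in $\ker f$, so $f$ factors through $\mathrm{St}_n(R)/\mathrm{St}_n(R,I)\cong\mathrm{St}_n(R/I)$, which is finite by Lemma~\ref{finitesteibg} because $R/I$ is finite. For a general finite-index $S$, I would pass to its normal core $S_0\vartriangleleft\mathrm{St}_n(R)$, observe that it suffices to bound $f(S_0)$ (as $S/S_0$ is finite), and rerun the Heisenberg and ideal steps on the root elements $x_{ij}(r)$ lying in $S_0$, i.e.\ those with $r$ in the finite-index set $R_{ij}=\{r:x_{ij}(r)\in S_0\}$. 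The main obstacle is precisely this reduction: over an arbitrary, possibly non-finitely-generated, ring the products forced into $S_0$ only manifestly fill $m^2R$ with $m=[\mathrm{St}_n(R):S_0]$, so extra care—or the sharper ring hypotheses used in Theorems~\ref{th1} and \ref{th2'}—is needed to guarantee that the ideal produced still has finite index.
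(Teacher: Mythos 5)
Your treatment of the case $S=\mathrm{St}_n(R)$ is correct and is essentially the paper's own proof: the paper likewise feeds the Heisenberg configuration $[x_{12}(r),x_{23}(1)]=x_{13}(r)$ into Lemma~\ref{nilpfac} to make the pulled-back length vanish on root elements, uses hypothesis~(2) to conclude that the torsion abelian image of a root subgroup is finite, verifies via the commutator relations that the resulting kernel is a two-sided ideal $I$ of finite additive index, and factors $f$ through $\mathrm{St}_n(R/I)$, finite by Lemma~\ref{finitesteibg}. The only cosmetic differences are that the paper works with the single set $I=\ker f|_{x_{12}}$ and invokes conjugacy of root subgroups where you intersect the $I_{ik}$, and that it first replaces $R$ by a free ring, which your version shows is unnecessary.

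Concerning the finite-index case: note that as literally stated the homomorphism has domain $\mathrm{St}_n(R)$, so the case you prove already covers the statement and $S$ is vestigial; the intended version is $f:S\to G$, which the paper itself dispatches with one sentence (``a similar proof shows that $S$ contains $\mathrm{St}_n(R,I)$\dots''). The obstacle you flag there, however, is aimed at the wrong step, because homogeneity rescues the Heisenberg argument. Let $S_0$ be the normal core of $S$ and $m$ the exponent of $\mathrm{St}_n(R)/S_0$. For any $t$ with $x_{ik}(t)\in S_0$, the element
\begin{equation*}
x_{ik}(t)^{m^2}=x_{ik}(m^2t)=[\,x_{ij}(m\cdot 1),\,x_{jk}(mt)\,]
\end{equation*}
is the central element of a Heisenberg configuration lying entirely in $S_0$ (both entries are $m$-th powers of root elements), so Lemma~\ref{nilpfac} gives $m^2\lambda(x_{ik}(t))=\lambda(x_{ik}(t)^{m^2})=0$, hence $\lambda(x_{ik}(t))=0$ for \emph{every} such $t$; no finiteness of the index of $m^2R$ is needed, and hypothesis~(2) then makes $f(X_{ik}\cap S_0)$ finite exactly as before. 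The step that genuinely deserves your worry for arbitrary (non--finitely-generated, additively torsion) $R$ is the next one: promoting ``$f$ kills a finite-index additive subgroup $K$ of each root subgroup of $S_0$'' to ``$\ker f$ has finite index in $S_0$'', since the two-sided ideal one can manufacture inside $K$ (such as $Rm^2KR$) need not have finite additive index when $(R,+)$ has $m$-torsion, and $\ker f$ is only normal in $S_0$, not in $\mathrm{St}_n(R)$. The paper leaves this point implicit as well, so on the secondary case your write-up and the paper's are equally incomplete; on the main case they coincide.
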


\begin{proof}
Since any ring $R$ is a quotient of a free (non-commutative) ring $\mathbb{Z}%
\langle X\rangle $ for some set $X$ and $\mathrm{St}_{n}(R)$ is functorial
with respect to the ring $R,$ we assume without loss of generality that $R=$ 
$\mathbb{Z}\langle X\rangle $. We prove the case $S=\mathrm{St}_{n}(R)\ $%
first. Let $x_{ij}=\langle x_{ij}(r):r\in R\rangle ,$ which is isomorphic to
the abelian group $R.$ Note that 
\begin{equation*}
\lbrack x_{12}(1),x_{23}(1)]=x_{13}(1)
\end{equation*}%
and $x_{13}(1)$ commutes with $x_{12}(1)$ and $x_{23}(1).$ Lemma \ref%
{nilpfac} implies any length function vanishes on $x_{13}(1).$ By Lemma \ref%
{lemq}, the length $l(f(x_{13}(1)))=0.$ Note that $x_{ij}(r)$ is conjugate
to $x_{13}(r)$ for any $r\in R$ and $i,j$ satisfying $1\leq i\neq j\leq n.$
Since $l$ is purely positive, we get that $f(x_{12}(1))$ is of finite order.
Let $I=\ker f|_{x_{12}}.$ Then $I\neq \varnothing ,$ as $f(x_{12}(1))$ is of
finite order. For any $x\in I,$ and $y\in R,$ we have 
\begin{equation*}
x_{12}(xy)=[x_{13}(x),x_{32}(y)].
\end{equation*}%
Therefore, 
\begin{equation*}
f(x_{12}(xy))=[f(x_{13}(x),f(x_{32}(y)))]=1
\end{equation*}%
and thus $xy\in I.$ Similarly, we have $%
f(x_{12}(yx))=f([x_{13}(y),x_{32}(x)])=1.$ This proves that $I$ is a
(two-sided) ideal. Note that $f(x_{12})=R/I$ is a torsion abelian group. By
the assumption 2), the quotient ring $R/I$ is finite. Let $\mathrm{St}%
_{n}(R,I)$ be the normal subgroup of $\mathrm{St}_{n}(R)$ generated by $%
x_{ij}(r),r\in I.$ There is a short exact sequence%
\begin{equation*}
\begin{array}{ccc}
1\rightarrow \mathrm{St}_{n}(R,I)\rightarrow & \mathrm{St}_{n}(R)\rightarrow
& \mathrm{St}_{n}(R/I)\rightarrow 1.%
\end{array}%
\end{equation*}%
Since $R/I$ is finite, we know that $\mathrm{St}_{n}(R/I)$ is finite by
Lemma \ref{finitesteibg}. This proves that $\func{Im}f$ is finite since $f$
factors through $\mathrm{St}_{n}(R/I)$. For general finite-index subgroup $%
S, $ we assume $S$ is normal in $\mathrm{St}_{n}(R)$ after passing to a
finite-index subgroup of $S.$ A similar proof shows that $S$ contains $%
\mathrm{St}_{n}(R,I)$ for some ideal $I$ with the quotient ring $R/I$
finite. Therefore, the image $\func{Im}f$ is finite.
\end{proof}

\bigskip

\begin{theorem}
\label{th2}Suppose that $G$ is a group having a purely positive length
function $l$. Let $R$ be an associative ring of characteristic zero such
that any nonzero ideal is of a finite index (eg. the ring of algebraic
integers in a number field). Suppose that $S<\mathrm{St}_{n}(R)$ is a
finite-index subgroup of the Steinberg group. Then any group homomorphism $%
f:S\rightarrow G$ has its image finite when $n\geq 3$.
\end{theorem}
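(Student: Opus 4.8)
The plan is to reuse the mechanism behind the proof of Theorem \ref{th1'}, but to feed in the ring-theoretic hypotheses in place of condition 2) on $G$. Throughout I write $\lambda = l\circ f$, which is a length function on the source group by Lemma \ref{lemq}; since $l$ is purely positive, $\lambda(g)=0$ forces $f(g)$ to have finite order. The one structural change from Theorem \ref{th1'} is that I must \emph{not} reduce to the free ring $\mathbb{Z}\langle X\rangle$, since the hypothesis that every nonzero ideal has finite index is not inherited by it. Instead the characteristic-zero assumption will guarantee that the ideal produced by the argument is nonzero, and the finite-index hypothesis then plays the role of the "torsion abelian subgroups are finitely generated" step. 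I would handle $S=\mathrm{St}_n(R)$ first and then reduce the general finite-index case to it.

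For the main case $S=\mathrm{St}_n(R)$: using (St2), the element $x_{13}(1)=[x_{12}(1),x_{23}(1)]$ is central in $\langle x_{12}(1),x_{23}(1)\rangle$, a quotient of the Heisenberg group, so Lemma \ref{nilpfac} gives $\lambda(x_{13}(1))=0$. Since $x_{12}(1)=[x_{13}(1),x_{32}(1)]$ is likewise central in a Heisenberg subgroup (equivalently, $x_{12}(1)$ is conjugate to $x_{13}(1)$), we also get $\lambda(x_{12}(1))=0$, so $f(x_{12}(1))$ has some finite order $m$. Because $R$ has characteristic zero, $m\cdot 1\neq 0$, and as $x_{12}(1)^m=x_{12}(m\cdot 1)$ the additive set $I=\{r\in R: f(x_{12}(r))=1\}$ is a \emph{nonzero} subgroup of $(R,+)$. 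Exactly as in Theorem \ref{th1'}, the commutator identities $x_{12}(ry)=[x_{13}(r),x_{32}(y)]$ and $x_{12}(yr)=[x_{13}(y),x_{32}(r)]$, together with the fact that each $x_{13}(r)$ and $x_{32}(r)$ with $r\in I$ is a conjugate of $x_{12}(\pm r)$ and hence killed by $f$, show that $I$ is a two-sided ideal. The hypothesis on $R$ now forces $R/I$ to be finite, so $\mathrm{St}_n(R/I)$ is finite by Lemma \ref{finitesteibg}; and $f$ annihilates $\mathrm{St}_n(R,I)$ (its normal generators $x_{ij}(r)$, $r\in I$, are conjugates of the $x_{12}(\pm r)$), so $f$ factors through the finite group $\mathrm{St}_n(R/I)$ and $\mathrm{Im}\,f$ is finite.

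For the general case I would replace $S$ by its normal core $S_0=\bigcap_{g}gSg^{-1}$, a finite-index normal subgroup of $\mathrm{St}_n(R)$ contained in $S$; since $\ker f\supseteq\ker(f|_{S_0})$, it suffices to prove $f(S_0)$ finite. For each $i\neq j$ the set $I_{ij}=\{r\in R: x_{ij}(r)\in S_0\}$ is a finite-index additive subgroup of $R$, and $m\cdot 1\in\bigcap_{i\neq j}I_{ij}$ for a suitable integer $m$. The decisive point is that normality of $S_0$ lets one \emph{spread} these memberships: conjugating $x_{ij}(a)$ (with $a\in I_{ij}$) by $x_{jk}(y)$ produces $x_{ik}(\pm ay)\,x_{ij}(a)$, whence $ay\in I_{ik}$ for every $y\in R$ and every $k\neq i,j$; running this over the middle index (possible since $n\geq 3$) shows that $A:=\bigcap_{i\neq j}I_{ij}$ is a \emph{nonzero two-sided ideal of finite index} with $x_{ij}(A)\subseteq S_0$, so $\mathrm{St}_n(R,A)\subseteq S_0$. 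Applying the Heisenberg/Lemma \ref{nilpfac} step inside $\mathrm{St}_n(R,A)$ to the commutators $x_{13}(rs)=[x_{12}(r),x_{23}(s)]$ ($r,s\in A$), using characteristic zero to convert the resulting torsion into genuine kernel elements $x_{ij}(\mu)\in\ker f$ with $\mu\neq 0$, and spreading once more by the same conjugation calculus (now with conjugators in $\mathrm{St}_n(R,A)$), one obtains a nonzero finite-index ideal's worth of elementary matrices in $\ker f$; together with the finiteness of the relevant quotient this yields $f(\mathrm{St}_n(R,A))$ finite, hence $f(S_0)$ and $f(S)$ finite.

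The main obstacle is precisely this last, general case: the clean Weyl conjugation available for $S=\mathrm{St}_n(R)$ is lost inside a finite-index subgroup, so the passage from a single kernel relation to a full finite-index ideal must be carried out by hand through the commutator identities and the normality of $S_0$ — in effect re-deriving the standard normal-subgroup structure of $\mathrm{St}_n(R)$ for $n\geq 3$ — and at each stage one must verify, via the characteristic-zero and finite-index-ideal hypotheses, that the ideals produced are nonzero and cofinite. By contrast, the case $S=\mathrm{St}_n(R)$ is immediate once Lemma \ref{nilpfac} is in hand.
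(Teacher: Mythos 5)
Your proposal follows the paper's own argument for Theorem \ref{th2} essentially verbatim: the Heisenberg subgroup $\langle x_{12}(1),x_{23}(1),x_{13}(1)\rangle$ together with Lemma \ref{nilpfac} and pure positivity forces $f(x_{12}(k))$ to have finite order, characteristic zero makes the resulting kernel ideal nonzero, the hypothesis on $R$ makes the quotient ring finite, and Lemma \ref{finitesteibg} concludes. The paper handles the general finite-index subgroup $S$ far more tersely than you do (it simply sets $I=\ker f|_{S\cap \langle x_{12}(r):r\in R\rangle}$ and asserts the conclusion), so your normal-core and ``spreading'' discussion is an elaboration of, not a departure from, its route.
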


\begin{proof}
The proof is similar to that of Theorem \ref{th1'}. Let $I=\ker f|_{x_{12}},$
where $x_{12}=S\cap \langle x_{12}(r):r\in R\rangle .$ Since $R$ is of
characteristic zero and the length $l(f(x_{12}(k)))=0$ for some integer $k,$
we have $f(x_{12}(k))$ is of finite order. Therefore, $f(x_{12}(k^{\prime
}))=1$ for some integer $k^{\prime },$ which proves that the ideal $I$ is
nonzero. Since $I$ is of finite index in $R,$ we get that $\mathrm{St}%
_{n}(R,I)$ is of finite index in $S.$ This finishes the proof.
\end{proof}

Since the natural map $\mathrm{St}_{n}(R)\rightarrow E_{n}(R)$ is
surjective, any group homomorphism $f:E_{n}(R)\rightarrow G$ can be lifted
to be a group homomorphism $\mathrm{St}_{n}(R)\rightarrow G.$ Moreover, a
finite-index subgroup $E$ of $E_{n}(R)$ is lifted to be a finite-index
subgroup $S$ of $\mathrm{St}_{n}(R).$ Theorem \ref{th1} and Theorem \ref%
{th2'} follow from Theorem \ref{th1'} and Theorem \ref{th2}, by inductive
{}arguments on the subnormal series as those of the proofs of Theorem \ref%
{th0.2}.

\bigskip

\begin{proof}[Proof of Corollary \protect\ref{mcor} and Corollary \protect
\ref{mor'}]
For Corollary \ref{mcor}, it is enough to check the two conditions for $G$
in Theorem \ref{th1}. Lemma \ref{pol} proves that $G$ has a purely positive
length function. When $G$ is a $\mathrm{CAT}(0)$ group, (i.e. $G$ acts
properly and cocompactly on a $\mathrm{CAT}(0)$ space), then any solvable
subgroup of $G$ is finitely generated (and actually virtually abelian, see
the Solvable Subgroup Theorem of \cite{bh}, Theorem 7.8, page 249). When $G$
is hyperbolic, it's well-known that $G$ contains finitely many conjugacy
classes of finite subgroups and thus a torsion abelian subgroup is finite
(see \cite{bh}, Theorem 3.2, page 459). Birman-Lubotzky-McCarthy \cite{blm}
proves that any abelian subgroup of the mapping class groups for orientable
surfaces is finitely generated. Bestvina-Handel \cite{besth} proves that
every solvable subgroup of $\mathrm{Out}(F_{k})$ has a finite index subgroup
that is finitely generated and free abelian. When $G$ is the diffeomorphism
group $\mathrm{Diff}(\Sigma ,\omega ),$ there is a subnormal series (see the
proof of Lemma \ref{area-pres})%
\begin{equation*}
1\vartriangleleft \mathrm{Ham}(\Sigma ,\omega )\vartriangleleft \mathrm{Diff}%
_{0}(\Sigma ,\omega )\vartriangleleft \mathrm{Diff}(\Sigma ,\omega ),
\end{equation*}%
with subquotients in $\mathrm{Ham}(\Sigma ,\omega ),$ $H_{1}(\Sigma ,\mathbb{%
R})$ and the mapping class group $\mathrm{MCG}(\Sigma ).$ Any abelian
subgroup of a finitely generated subgroup of these groups is finitely
generated.

Corollary \ref{mor'} follows from Theorem \ref{th2} and Lemma \ref{pol}.
\end{proof}

\begin{remark}
An infinite torsion abelian group may act properly on a simplicial tree (see 
\cite{bh}, Example 7.11, page 250). Therefore, condition 2) in Theorem \ref%
{th1} does not hold for every group $G$ acting properly on a $\mathrm{CAT}%
(0) $ (or a Gromov hyperbolic) space. We don't know whether condition 2) can
be dropped.
\end{remark}

\bigskip

\section{Length functions on Cremona groups\label{section10}}

Let $k$ be a field and $k(x_{1},x_{2},\ldots ,x_{n})$ be the field of
rational functions in $n$ indeterminates over $k.$ It is well-known that the
Cremona group $\mathrm{Cr}_{n}(k)$ is isomorphic to the automorphism group $%
\mathrm{Aut}_{k}(k(x_{1},x_{2},\ldots ,x_{n}))$ of the field $%
k(x_{1},x_{2},\ldots ,x_{n})$.

\begin{lemma}
\label{cre}Let $f:k(x_{1},x_{2},\ldots ,x_{n})\rightarrow
k(x_{1},x_{2},\ldots ,x_{n})$ be given by $f(x_{1})=\alpha
x_{1},f(x_{i})=x_{i}$ for some $0\neq \alpha \in k$ and any $i=2,\cdots ,n.$
Then $f$ lies in the center of a Heisenberg subgroup. In other words, there
exists $g,h\in \mathrm{Cr}_{n}(k)$ such that $[g,h]=ghg^{-1}h^{-1}=f,[g,f]=1$
and $[h,f]=1.$
\end{lemma}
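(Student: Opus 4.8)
The plan is to exhibit an explicit monomial automorphism $g$ and an explicit diagonal (toric) automorphism $h$ of $k(x_1,\ldots,x_n)$ and then verify the three Heisenberg relations by computing the action on the generators $x_1,\ldots,x_n$. Under the identification $\mathrm{Cr}_n(k)\cong \mathrm{Aut}_k(k(x_1,\ldots,x_n))$ recalled just above the lemma, an automorphism is determined by its values on the $x_i$, so every check reduces to a short monomial computation. Concretely I would set
\[
g(x_1)=x_1 x_2^{-1},\quad g(x_2)=x_2,\quad g(x_j)=x_j\ (j\ge 3),
\]
\[
h(x_1)=x_1,\quad h(x_2)=\alpha x_2,\quad h(x_j)=x_j\ (j\ge 3),
\]
and recall that $f$ itself is the diagonal automorphism $f(x_1)=\alpha x_1$, $f(x_j)=x_j$ $(j\ge 2)$. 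Both $g,h$ are genuine automorphisms: $h$ is a torus element, and $g$ is monomial with exponent matrix in $\mathrm{GL}_n(\mathbb{Z})$, its inverse being $g^{-1}(x_1)=x_1x_2$, $g^{-1}(x_i)=x_i$ otherwise.

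First I would check $[g,h]=ghg^{-1}h^{-1}=f$, the group law being composition of automorphisms. Applying the four maps to $x_1$ from the inside out gives $h^{-1}(x_1)=x_1$, then $g^{-1}(x_1)=x_1x_2$, then $h(x_1x_2)=\alpha x_1x_2$, and finally $g(\alpha x_1x_2)=\alpha\,(x_1x_2^{-1})\,x_2=\alpha x_1$; applying them to $x_2$ returns $x_2$ (the factors of $\alpha$ cancel), and each $x_j$ $(j\ge 3)$ is fixed throughout. Hence $ghg^{-1}h^{-1}=f$. (Should the reader's composition convention be the opposite order, the same computation yields the scaling by $\alpha^{-1}$, which is again of the required form since $\alpha\in k^{\times}$ is arbitrary; one then simply replaces $g$ by $g^{-1}$.)

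Next I would verify that $f$ is central in $\langle g,h\rangle$, i.e.\ $[g,f]=[h,f]=1$. Since $f$ and $h$ are both diagonal and the diagonal torus is abelian, they commute, giving $[h,f]=1$. For $[g,f]=1$ it suffices to compare $fg$ and $gf$ on generators: $fg(x_1)=f(x_1x_2^{-1})=\alpha x_1x_2^{-1}=g(\alpha x_1)=gf(x_1)$, while both fix $x_2$ and all $x_j$ $(j\ge 3)$, so $gf=fg$. This establishes all three relations $[g,h]=f$, $[g,f]=1$, $[h,f]=1$, which is exactly the assertion that $g,h,f$ satisfy the defining relations of the Heisenberg group with $f$ in its center; equivalently there is a homomorphism from the Heisenberg group of Lemma \ref{nilpfac} sending the central generator to $f$, and combined with Lemma \ref{lemq} this is precisely what later forces length functions on $\mathrm{Cr}_n(k)$ to vanish on $f$.

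There is no serious obstacle beyond locating the right pair, and I expect the only step needing care is bookkeeping the composition order in the commutator. The essential idea is that a monomial transformation can transport the scaling of one coordinate onto another, so that $ghg^{-1}$ differs from $h$ by exactly the scaling $f$ on $x_1$; everything else is a direct verification on the finitely many generators $x_i$, uniform in $n\ge 2$ and valid over an arbitrary field $k$.
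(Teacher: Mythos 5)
Your proposal is correct and follows essentially the same route as the paper: the paper takes $g(x_1)=x_1x_2$ and $h(x_2)=\alpha^{-1}x_2$ (fixing all other generators), which differs from your pair $g(x_1)=x_1x_2^{-1}$, $h(x_2)=\alpha x_2$ only by replacing $g,h$ with conjugate-inverse choices, and the verification of $[g,h]=f$, $[g,f]=[h,f]=1$ on generators is the same direct computation. Your explicit handling of the composition convention and of $g^{-1}$ is a welcome extra level of care that the paper leaves to the reader.
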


\begin{proof}
Let $g,h:k(x_{1},x_{2},\ldots ,x_{n})\rightarrow k(x_{1},x_{2},\ldots
,x_{n}) $ be given by 
\begin{equation*}
g(x_{1})=x_{1}x_{2},g(x_{i})=x_{i}(i=2,\ldots ,n)
\end{equation*}%
and 
\begin{equation*}
h(x_{1})=x_{1},h(x_{2})=\alpha ^{-1}x_{2},h(x_{j})=x_{j}(j=3,\ldots ,n).
\end{equation*}%
It can be directly checked that $[g,h]=f,[g,f]=1$ and $[h,f]=1.$
\end{proof}

\begin{lemma}
\label{dia}Let $l:\mathrm{Bir}(\mathbb{P}_{k}^{n})\rightarrow \lbrack
0,\infty )$ be a length function $(n\geq 2)$. Then $l$ vanishes on diagonal
elements and unipotent elements of $\mathrm{Aut}(\mathbb{P}_{k}^{n})=\mathrm{%
PGL}_{n+1}(k).$
\end{lemma}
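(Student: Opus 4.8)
The plan is to treat diagonal and unipotent elements by two separate reductions to vanishing statements already available: Lemma \ref{nilpfac} (length functions vanish on the centre of a Heisenberg group) for the diagonal case, and the Jordan-form/Baumslag--Solitar mechanism of Lemma \ref{jordan}, Corollary \ref{unip} and Lemma \ref{bs} for the unipotent case. Throughout I would work in the model $\mathrm{Bir}(\mathbb{P}_k^n) \cong \mathrm{Aut}_k(k(x_1,\dots,x_n))$, so that $\mathrm{Aut}(\mathbb{P}_k^n) = \mathrm{PGL}_{n+1}(k)$ sits inside $\mathrm{Bir}(\mathbb{P}_k^n)$ and $l$ restricts to a conjugation-invariant, subadditive function there.

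\textbf{Diagonal elements.} In the function-field model a diagonal element of $\mathrm{PGL}_{n+1}(k)$ acts on the affine coordinates by $x_i \mapsto \beta_i x_i$ for nonzero scalars $\beta_i$. First I would observe that the one-coordinate scaling $f$ of Lemma \ref{cre} (with $f(x_1)=\alpha x_1$ and $f(x_i)=x_i$) is, by that lemma, the centre of a Heisenberg subgroup $\langle g,h\rangle$; this uses $n\geq 2$ to supply a second variable for building $g,h$. Hence Lemma \ref{nilpfac} gives $l(f)=0$. For a general diagonal $d$ I would write $d=f_1\cdots f_n$, where $f_i$ scales only $x_i$. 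The factors $f_i$ pairwise commute, and each $f_i$ is conjugate (by the coordinate transposition $x_1\leftrightarrow x_i$, a permutation matrix in $\mathrm{PGL}_{n+1}(k)\subset \mathrm{Bir}(\mathbb{P}_k^n)$) to an $f$ as above, so $l(f_i)=0$ by conjugation invariance. Subadditivity on the commuting factors then gives $l(d)\leq \sum_i l(f_i)=0$.

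\textbf{Unipotent elements.} The key idea is that a unipotent element is conjugate to a suitable proper power of itself. By Lemma \ref{jordan} a unipotent $u=I+N$ is conjugate in $\mathrm{GL}_{n+1}(k)$ to a direct sum of unipotent Jordan blocks, valid over any field since $N$ is nilpotent. Taking the $m$-th power sends a size-$k$ block to another size-$k$ unipotent block exactly when $m$ is invertible in $k$ (its first superdiagonal entry is $m\neq 0$), so $u^m$ has the same Jordan type as $u$, whence $u^m$ is conjugate to $u$ for any $m\geq 2$ coprime to $\mathrm{char}(k)$ (the case $m=2$ being Corollary \ref{unip}). Choosing such an $m$ and letting $t$ conjugate $u$ to $u^m$, the argument of Lemma \ref{bs} gives
\[
l(u)=l(tut^{-1})=l(u^m)=m\,l(u),
\]
so $l(u)=0$. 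Projecting the conjugacy $u^m=tut^{-1}$ from $\mathrm{GL}_{n+1}(k)$ to $\mathrm{PGL}_{n+1}(k)$ transfers the conclusion to a unipotent element of $\mathrm{Aut}(\mathbb{P}_k^n)$, the scalar subgroup meeting the unipotents only in the identity.

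The diagonal case is then essentially a formal consequence of Lemmas \ref{cre} and \ref{nilpfac}, so the hard part will be the unipotent case in small characteristic. Corollary \ref{unip} only supplies $u\sim u^2$ when $\mathrm{char}(k)\neq 2$, so for $\mathrm{char}(k)=2$ I would instead use a power $m$ coprime to the characteristic (e.g.\ $m=3$) and verify carefully that passing to $u^m$ preserves the multiset of Jordan-block sizes, which is what makes $u^m$ conjugate to $u$. I would also make sure that every conjugacy and the factorisation $d=f_1\cdots f_n$ live inside $\mathrm{Bir}(\mathbb{P}_k^n)$ (through $\mathrm{PGL}_{n+1}(k)$), so that the conjugation invariance and subadditivity of $l$ apply directly.
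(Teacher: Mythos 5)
Your proposal is correct and follows essentially the same route as the paper: diagonal elements are reduced by conjugation and subadditivity to the one-coordinate scaling of Lemma \ref{cre}, which is killed by Lemma \ref{nilpfac}, and unipotent elements are killed because they are conjugate to a proper power of themselves (Corollary \ref{unip} combined with the Baumslag--Solitar relation of Lemma \ref{bs}). The only divergence is in characteristic $2$, where you propose establishing $u\sim u^{3}$; the paper instead observes that a unipotent element in characteristic $2$ is torsion (since $(I+N)^{2^{k}}=I+N^{2^{k}}=I$ for large $k$), so $l$ vanishes on it by Lemma \ref{1.2} --- both arguments work, but the torsion observation is shorter.
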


\begin{proof}
Let $g=\mathrm{diag}(a_{0},a_{1},\ldots ,a_{n})\in \mathrm{PGL}_{n+1}(k)$ be
a diagonal element. Note that $l$ is subadditive on the diagonal subgroups.
In order to prove $l(g)=0,$ it is enough to prove that $l(\mathrm{diag}%
(1,\ldots ,1,a_{i},1,\ldots ,1))=0,$ where $\mathrm{diag}(1,\ldots
,1,a_{i},1,\ldots ,1)$ is the diagonal matrix with $a_{i}$ in the $(i,i)$-th
position and all other diagonal entries are $1.$ But $\mathrm{diag}(1,\ldots
,1,a_{i},1,\ldots ,1)$ is conjugate to $\mathrm{diag}(1,\alpha ,1,\ldots ,1)$
for $\alpha =a_{i}.$ Lemma \ref{cre} implies that $\mathrm{diag}(1,\alpha
,1,\ldots ,1)$ lies in the center of a Heisenberg group. Therefore, $l(%
\mathrm{diag}(1,\alpha ,1,\ldots ,1))=0$ by Lemma \ref{nilpfac}. This proves 
$l(g)=0.$ The vanishing of $l$ on unipotent elements follows from Corollary %
\ref{unip} when the characteristic of $k$ is not $2$. When the
characteristic of $k$ is $2,$ any unipotent element $A=I+u$ (where $u$ is
nilpotent) is of finite order. This means $l(A)=0$.
\end{proof}

\bigskip

\begin{proof}[Proof of Theorem \protect\ref{th3}]
When $k$ is algebraically closed, the Jordan normal form implies that any
element $g\in \mathrm{PGL}_{n}(k)$ is conjugate to the form $sn$ with $s$
diagonal and $n$ the strictly upper triangular matrix. Moreover, $sn=ns.$
Therefore, $l(f)\leq l(s)+l(n).$ By Lemma \ref{dia}, $l(s)=l(n)=0$ and thus $%
l(g)=0$.
\end{proof}

\bigskip

\begin{proof}[Proof of Corollary \protect\ref{fincor}]
Let $f:\mathrm{Bir}(\mathbb{P}_{k}^{2})\rightarrow G$ be a group
homomorphism. Suppose that $G$ has a purely positive length function $l.$ By
Theorem \ref{th3}, the purely positive length function $l$ on $G$ will
vanish on $f(\mathrm{PGL}_{3}(k)).$ Since $k$ is infinite and $\mathrm{PGL}%
_{3}(k)$ is a simple group, we get that $\mathrm{PGL}_{3}(k)$ lies in the $%
\ker f.$ By the Noether-Castelnuovo Theorem, $\mathrm{Bir}(\mathbb{P}%
_{k}^{2})$ is generated by $\mathrm{PGL}_{3}(k)$ and an involution.
Moreover, the $\mathrm{Bir}(\mathbb{P}_{k}^{2})$ is normally generated by $%
\mathrm{PGL}_{3}(k).$ Therefore, the group homomorphism $f$ is trivial. The
general case is proved by an inductive argument on the subnormal series of a
finite-index subgroup of $G.$
\end{proof}

\begin{lemma}
Let $\mathrm{Bir}(\mathbb{P}_{\mathbb{R}}^{n})$ $(n\geq 2)$ be the real
Cremona group. Any length function $l:\mathrm{Bir}(\mathbb{P}%
_{k}^{n})\rightarrow \lbrack 0,\infty ),$ which is continuous on $\mathrm{PSO%
}(n+1)<\mathrm{Aut}(\mathbb{P}_{\mathbb{R}}^{n}),$ vanishes on $\mathrm{PGL}%
_{n+1}(\mathbb{R}).$
\end{lemma}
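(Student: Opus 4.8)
The plan is to run the real multiplicative Jordan decomposition exactly as in the proofs of Theorem \ref{1} and Theorem \ref{th3}, the one new ingredient being the continuity hypothesis on $\mathrm{PSO}(n+1)$, which is needed to kill the rotation (elliptic) part that is invisible over a non-algebraically-closed field. First I would fix $g\in\mathrm{PGL}_{n+1}(\mathbb{R})=\mathrm{Aut}(\mathbb{P}_{\mathbb{R}}^{n})$, lift it to $\hat g\in\mathrm{GL}_{n+1}(\mathbb{R})$, and apply Lemma \ref{mjordan} to write $\hat g=ehu$ with $e$ elliptic, $h$ hyperbolic, $u$ unipotent and the three factors pairwise commuting. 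Projecting to $\mathrm{PGL}_{n+1}(\mathbb{R})$ gives a commuting factorization $g=\bar e\,\bar h\,\bar u$ whose factors are again elliptic, hyperbolic and unipotent. Since the factors commute, condition 3) of Definition \ref{def} yields $l(g)\le l(\bar e)+l(\bar h)+l(\bar u)$, and it suffices to show each term vanishes.

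Next I would dispose of the unipotent and hyperbolic parts by invoking Lemma \ref{dia}, which is valid over $\mathbb{R}$ because $\mathrm{char}(\mathbb{R})=0$. The unipotent factor $\bar u$ has $l(\bar u)=0$ (a unipotent element is conjugate to its square by Corollary \ref{unip}), and the hyperbolic factor $\bar h$ is conjugate to a diagonal matrix with positive real entries, on which $l$ vanishes by Lemma \ref{dia}; conjugation invariance then gives $l(\bar h)=0$.

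The essential point, and the only place the continuity hypothesis is used, is the elliptic factor $\bar e$. Here $e$ is $\mathbb{R}$-semisimple with all complex eigenvalues of modulus $1$, so the closure of $\langle e\rangle$ in $\mathrm{GL}_{n+1}(\mathbb{R})$ is compact; it therefore preserves a positive-definite form, whence $e$ is conjugate into $\mathrm{O}(n+1)$ and $\bar e$ is conjugate into the maximal compact subgroup $\mathrm{PO}(n+1)$ of $\mathrm{PGL}_{n+1}(\mathbb{R})$. Writing $e'$ for such a conjugate, conjugation invariance gives $l(\bar e)=l(e')$. Since the component group $\mathrm{PO}(n+1)/\mathrm{PSO}(n+1)$ has order at most $2$, the square $(e')^{2}$ lies in the identity component $\mathrm{PSO}(n+1)$. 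As $l$ is assumed continuous on the compact connected Lie group $\mathrm{PSO}(n+1)$, Lemma \ref{compactl} forces $l\equiv 0$ there, so $l\big((e')^{2}\big)=0$; homogeneity then gives $2\,l(e')=l\big((e')^{2}\big)=0$, i.e. $l(\bar e)=0$. Combining the three vanishings gives $l(g)=0$ for every $g\in\mathrm{PGL}_{n+1}(\mathbb{R})$.

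The delicate step is plainly the elliptic part: the real Jordan form produces genuine rotation blocks that are not diagonalizable over $\mathbb{R}$ and hence not reachable by Lemma \ref{dia}, and an elliptic element need not be conjugate into the connected group $\mathrm{PSO}(n+1)$ (only into $\mathrm{PO}(n+1)$). The squaring trick, exploiting that the component group has exponent $2$, is exactly what allows the continuity-on-$\mathrm{PSO}(n+1)$ hypothesis to be fed into Lemma \ref{compactl}; this is the main obstacle one must handle carefully, and it is also the reason the hypothesis is stated for $\mathrm{PSO}(n+1)$ rather than for the full maximal compact $\mathrm{PO}(n+1)$.
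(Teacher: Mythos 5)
Your argument is correct, and it is the same underlying strategy as the paper's: kill each factor of the multiplicative Jordan decomposition using Lemma \ref{dia} for the diagonal/unipotent parts and Lemma \ref{compactl} for the elliptic part. The difference is one of packaging. The paper's proof is two lines: it quotes Lemma \ref{dia} for the diagonal matrices and then invokes Theorem \ref{th0.1}(i), whose own proof is the Lie-theoretic version of exactly the decomposition argument you carry out (Kostant's decomposition plus Lemma \ref{lemsemisimple} plus Lemma \ref{compactl}). You instead work at the matrix level with Lemma \ref{mjordan} and Corollary \ref{unip}, which is more elementary and self-contained. Your version also buys something concrete: Theorem \ref{th0.1} is stated for a \emph{connected} semisimple Lie group with $l$ continuous on the full maximal compact subgroup, whereas $\mathrm{PGL}_{n+1}(\mathbb{R})$ is disconnected for $n+1$ even and the hypothesis here only gives continuity on $\mathrm{PSO}(n+1)$, not on the maximal compact $\mathrm{PO}(n+1)$. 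Your squaring trick (an elliptic element is conjugate into $\mathrm{PO}(n+1)$, and its square lies in the identity component $\mathrm{PSO}(n+1)$, where Lemma \ref{compactl} applies; then homogeneity finishes) closes precisely this gap, which the paper's citation of Theorem \ref{th0.1} glosses over. So your write-up is not just correct but in fact a more careful justification of the stated hypothesis.
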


\begin{proof}
By Lemma \ref{dia}, the length function $l$ vanishes on diagonal matrices of 
$\mathrm{PGL}_{n+1}(\mathbb{R}).$ Theorem \ref{th0.1} implies that $l$
vanishes on the whole group $\mathrm{PGL}_{n+1}(\mathbb{R}).$
\end{proof}

\bigskip

\noindent \textbf{Acknowledgements}

The author wants to thank many people for helpful discussions, including
Wenyuan Yang on a discussion of hyperbolic groups, C. Weibel on a discussion
on Steinberg groups of finite rings, Feng Su on a discussion of Lie groups,
Ying Zhang on a discussion of translation lengths of hyperbolic spaces,
Enhui Shi for a discussion on smooth measure-theoretic entropy. The author
is also grateful to Yongle Jiang and Shripad Garge for comments on a
previous version of this article. This work is supported by NSFC (No.
11971389).

\bigskip

\bigskip

NYU Shanghai, 1555 Century Avenue, Shanghai, 200122, China.

NYU-ECNU Institute of Mathematical Sciences at NYU Shanghai, 3663 Zhongshan
Road North, Shanghai, 200062, China

E-mail: sy55@nyu.edu

\end{document}